\newcommand{\url}{}
\newtheorem {theoreme} {Theorem} [section]
\newtheorem {lemme} [theoreme] {Lemma}
\newtheorem {proposition}  [theoreme]{Proposition}
\newtheorem {corollaire} [theoreme] {Corollary}
\newtheorem {remarque} {Remark} [section]
\numberwithin{equation}{section}
\numberwithin{figure}{section}
\newcommand{\tr}{{\rm tr}}
\newcommand{\Tr}{{\rm Tr}}
\newcommand{\E}{\mathbb {E}}\newcommand{\dE}{\E}
\newcommand{\dP}{\mathbb{P}}
\newcommand{\R}{\mathbb {R}}\newcommand{\dR}{\R}
\newcommand{\C}{\mathbb {C}} \newcommand{\dC}{\C}
\newcommand{\cW}{\mathcal {W}}
\newcommand{\cP}{\mathcal {P}}
\newcommand{\cN}{\mathcal {N}}
\newcommand{\cS}{\mathcal {S}}
\newcommand{\supp}{ \mathrm{supp}}
\newcommand{\bl}{[\hspace{-1pt}[}
\newcommand{\br}{]\hspace{-1pt}]}
\newcommand{\1}{1\!\!{\sf I}}\newcommand{\IND}{\1}
\newcommand{\veps}{\varepsilon}
\newcommand{\ibf}{\mathbf i}
\newcommand{\jbf}{\mathbf j}
\newcommand{\DIST}{\mathrm{dist}}
\newcommand{\DIAG}{\mathrm{diag}}
\title{Outlier eigenvalues for deformed i.i.d. random matrices}
\author{Charles Bordenave \and Mireille Capitaine}
\date{}
\begin{document}
\maketitle
\begin{abstract}
We consider a square random matrix of size $N$ of the form $A + Y$ where $A$ is deterministic and $Y$ has iid entries with variance $1/N$. Under mild assumptions, as $N$ grows, the empirical distribution of the eigenvalues of $A+Y$ converges weakly to a limit probability  measure $\beta$ on the complex plane. This work is devoted to the study of the outlier eigenvalues,  i.e. eigenvalues in the complement of the support of $\beta$. Even in the simplest cases, a variety of interesting phenomena can occur. As in earlier works, we give a sufficient condition to guarantee that outliers are stable and provide examples where their fluctuations vary with the
particular distribution of the entries of $Y$ or the Jordan decomposition of $A$.  We also  exhibit concrete examples where the outlier eigenvalues converge in distribution to the zeros of a Gaussian analytic function. 
\end{abstract}

%% MOTS CLES  random matrices, outlier eigenvalues, Gaussian analytic functions
%% (prim) 15B52, 60B20 (sec) 15A18,  60F05

\section{Introduction}

\subsection{Numerical instability of eigenvalues}

The instability of the eigenvalues of badly conditioned matrices has dramatic consequences in the numerical computation of eigenvalues. As an example, take $N\geq 1$ be an integer and consider the standard nilpotent matrix
\begin{equation}\label{eq:Anil}
A_N = \sum_{i=1}^{N-1} e_{i+1} e_i^*  = 
 \begin{pmatrix}
  0 & 1 & 0 &\cdots  & 0 \\
  0 & 0 &  1 & \cdots & 0  \\
  \vdots  & \vdots   &  \ddots  & \ddots& \vdots  
 \end{pmatrix}.
\end{equation}
Its eigenvalues are obviously all zero. Let $U_N$ be a Haar-distributed orthogonal matrix and consider the unitarily equivalent matrix $B_N = U_N A_N U_N^*$. If we ask a computer to compute the eigenvalues of $B_N$, we obtain a surprising answer. Figure \ref{fig:nummat} is a plot of these numerically computed eigenvalues of $B_N$.  

\begin{figure}[htb]
\begin{center}
\includegraphics[width = 8.1cm]{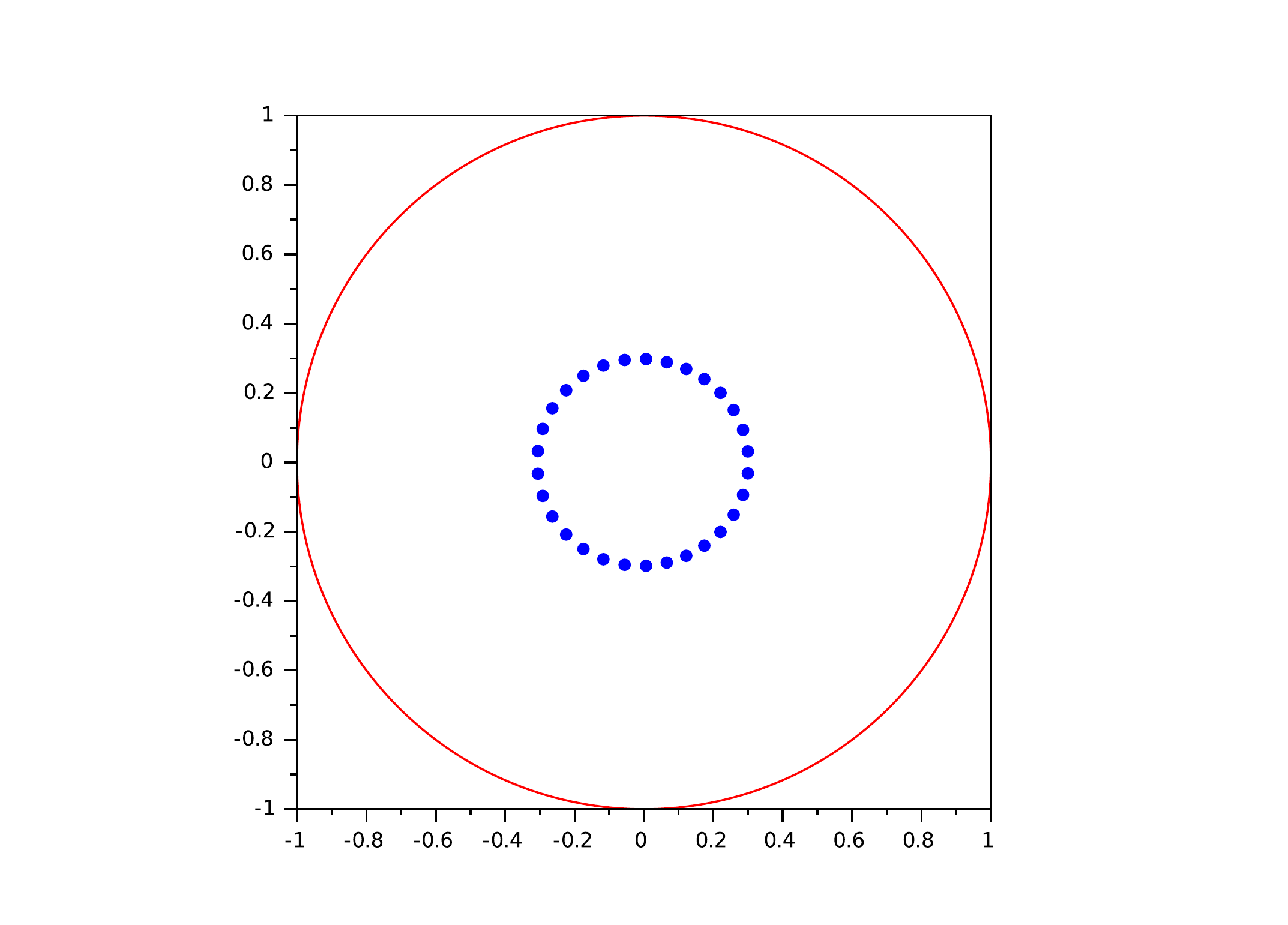}
\includegraphics[width = 8.1cm]{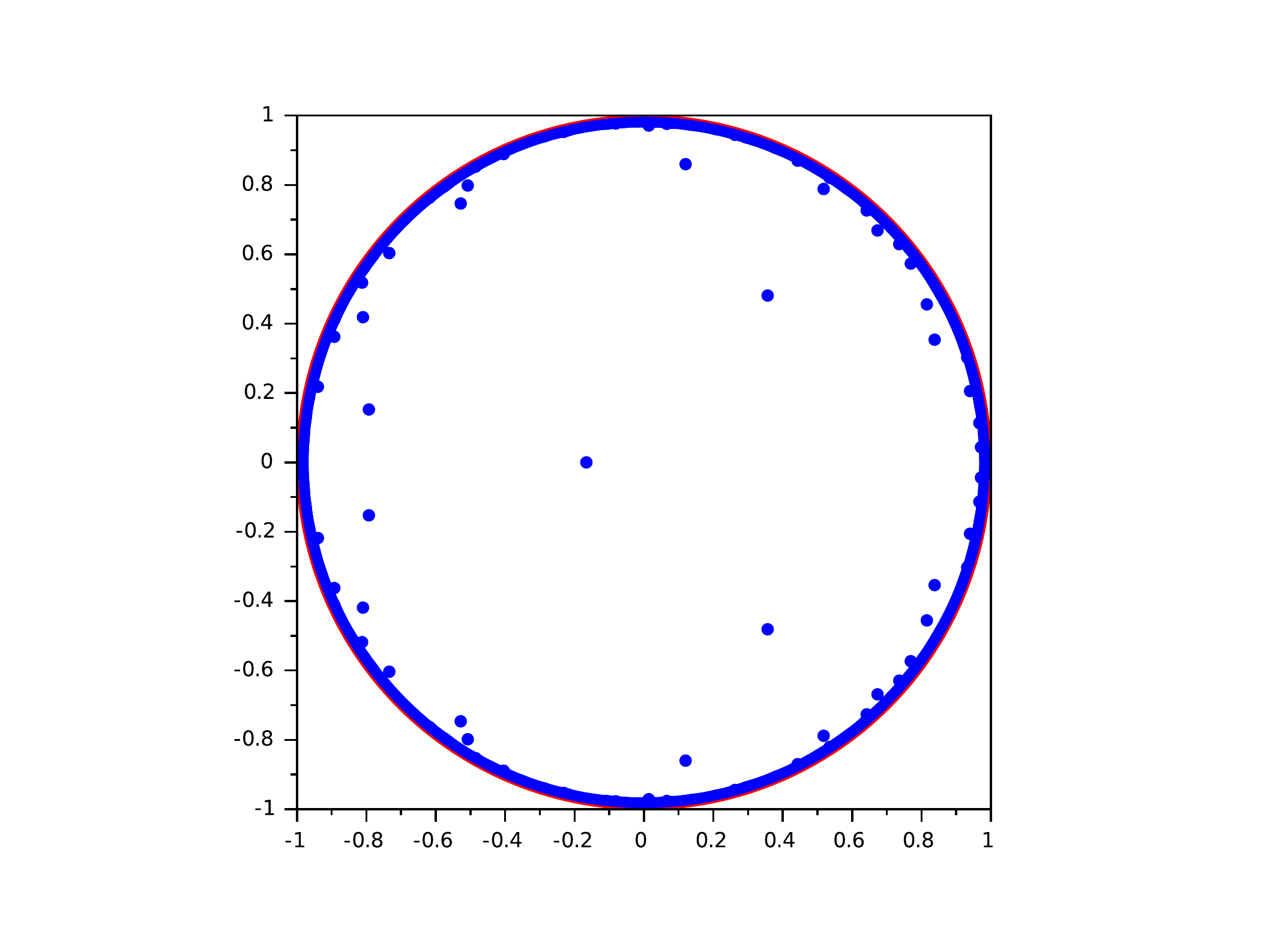}
\end{center}
\caption{The blue dots are the numerically computed eigenvalues of $B_N = U_N A_N U_N^*$, the nilpotent matrix \eqref{eq:Anil} conjugated by a Haar-distributed orthogonal matrix $U_N$ for $N = 30$ and $N = 2000$. 
}
\label{fig:nummat}\end{figure}

In the spirit of von Neumann and Goldstine \cite{MR0024235}, Spielman and Teng \cite{MR1989210} or Edelman and Rao \cite{MR2168344},  a possible way to try to explain this phenomenon is to approximate numerical rounding errors by randomness and study the spectrum of the matrix
$$
A_N+ \sigma Y_N,
$$
where $Y_N$ is a random matrix normalized to have an operator norm of order $1$ and $\sigma$ is small positive parameter. As we shall see, in Subsection \ref{subsec:unsou},  in the limit $N \to \infty$ and then $\sigma \to 0$, one obtain a reasonable explanation of the right picture of Figure \ref{fig:nummat}. A phenomenon related to the left figure, namely that the numerically estimated eigenvalues of a Jordan block are close to be roots of  a small complex number will also be illustrated in Subsection \ref{subsec:fluctstou}

\subsection{Deformed random matrices}

For any  $N\times N$ matrix $A$, denote by 
$\lambda_1(A), \ldots, \lambda_N(A)$
\noindent the  eigenvalues of $A$ and by $\mu_{A}$  the empirical spectral measure of $A$: $$\mu _{A} := \frac{1}{N} \sum_{i=1}^N \delta_{\lambda _{i}(A)}.$$

We will consider the deformed model:
\begin{equation}\label{modele}M_N=  A_N + \sigma Y_N,\end{equation}
where  $\sigma >0$,  $Y_N$ is a $N\times N$ random matrix and $A_N$ is a $N\times N$ deterministic matrix. The matrix $M_N$ can be thought as a random perturbation of the matrix $A_N$.  Throughout this paper, we set 
\begin{equation}\label{defY}
Y_N = \frac { X_N}{\sqrt N},
\end{equation}
and we shall consider the following set of statistical assumptions on the matrices $X_N = (X_{ij})_{1 \leq i , j \leq N}$: 
\begin{enumerate}[(X1)]
\item $(X_{ij})_{i,j \geq 1}$  are independent and identically distributed complex random variables with $\dE X_{ij} = 0$, $\dE |X_{ij}|^2 =1$. 
\item $\dE | X_{ij} |^4  < \infty$.
\item There exists $c >0$ such that for all $k \geq 1$ integer $\dE |X_{ij} |^k \leq ( c k )^c$.  
\end{enumerate}

Our first assumptions on the matrices $A_N$ are as follows:
\begin{enumerate}
\item[(A1)] There exists $M >0$ such that for all $N$, $\|A_N\| \leq M$.  

\item[(A2)] For all $z \in \dC$, $\mu_{(A_N - z I_N)(A_N - z I_N)^* }$ converges weakly to a probability measure $\nu_z$.    
\end{enumerate}

We start by recalling a generalization of the circular law to the deformed matrix model $M_N$. 
\begin{theoreme}\label{convbeta}
Under assumptions (X1) and (A1-A2), there exists a deterministic probability measure $\beta$ on $\dC$ such that, almost surely, $\mu_{M_N}$ converges weakly to $\beta$. Moreover, for any $z \in \dC$, there exists a  deterministic probability measure $\mu_z$ on $\dR_+$ such that, almost surely, $\mu_{(M_N - z I_N)(M_N - zI_N)^* }$ converges weakly to $\mu_z$. \end{theoreme}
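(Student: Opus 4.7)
The plan is to apply Girko's Hermitization strategy. For each $z \in \dC$, set $\nu_N^z := \mu_{(M_N - z I_N)(M_N - z I_N)^*}$. The logarithmic potential of $\mu_{M_N}$ at $z$ reads
$$U_{\mu_{M_N}}(z) = -\frac{1}{N} \log |\det(M_N - z I_N)| = -\frac{1}{2} \int_0^\infty \log t \, d\nu_N^z(t).$$
The proof decomposes into three steps: (i) prove that $\nu_N^z$ converges almost surely to a deterministic limit $\mu_z$ for every fixed $z$; (ii) show that $\log$ is uniformly integrable against $\nu_N^z$ so that $U_{\mu_{M_N}}(z) \to U_\beta(z) := -\tfrac{1}{2} \int \log t \, d\mu_z(t)$ for a.e. $z$; (iii) recover $\beta$ as the unique compactly supported probability measure with log-potential $U_\beta$ and transfer the a.e. convergence of potentials into weak convergence of measures.

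For step (i) I would work with the $2N \times 2N$ Hermitization
$$H_N(z) = \begin{pmatrix} 0 & M_N - z I_N \\ (M_N - z I_N)^* & 0 \end{pmatrix},$$
whose symmetric spectrum is $\{\pm \sigma_i(M_N - z I_N)\}$. This is a bounded deterministic Hermitian matrix (controlled by (A1) and (A2)) plus a Hermitian matrix with centered independent entries of variance $\sigma^2/N$. The resolvent $G_N(\eta) = (H_N(z) - \eta I_{2N})^{-1}$ is analyzed via a $2\times 2$ block matrix Dyson equation whose input datum is the singular value distribution of $A_N - z I_N$, which converges thanks to (A2). A standard martingale/resolvent concentration argument under (X1) alone shows that $\tfrac{1}{2N} \Tr G_N(\eta)$ concentrates around its expectation and the latter converges to the solution of a deterministic fixed-point equation, yielding the a.s. weak convergence $\nu_N^z \to \mu_z$ with $\mu_z$ depending only on $\nu_z$ and $\sigma$.

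The hard step is (ii). Large singular values of $M_N - z I_N$ cause no issue, being controlled by (A1) together with the operator norm bound $\|Y_N\| = O(1)$ (obtained after a mild truncation under (X1)). The true difficulty is to prevent $\nu_N^z$ from placing non-negligible mass near $0$, which amounts to a polynomial lower bound of the form $\sigma_{\min}(M_N - z I_N) \geq N^{-C}$ with probability summable in $N$, together with a control of intermediate singular values (a Tao--Vu-style bound on $\frac{1}{N}\sum_{i \ge N - N^{1-\delta}} \log \sigma_i$). This is a smoothed-analysis estimate for non-Hermitian matrices: the presence of the genuinely random deformation $\sigma Y_N$ regularizes the possible near-singularity of $A_N - z I_N$, and one can invoke the least singular value machinery developed by Tao--Vu, Rudelson--Vershynin, and Sankar--Spielman--Teng, valid under just the second moment assumption after truncation. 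Once (ii) is in place, step (iii) follows from the injectivity of the log-potential transform on compactly supported measures in $\dC$ (via Fubini and a $\Delta$ in the distributional sense), combined with the tightness of $(\mu_{M_N})$. The genuine obstacle is therefore the quantitative lower bound on $\sigma_{\min}(M_N - z I_N)$ uniformly in the deterministic perturbation $A_N$, which is where all the non-Hermitian probabilistic input concentrates.
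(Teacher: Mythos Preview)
The paper does not give its own proof of this theorem. Immediately after the statement it writes: ``The first statement was first proved in Tao and Vu \cite{tao-vu-cirlaw-bis}, the second statement was an ingredient of the proof, it is due to Dozier and Silverstein \cite{DozierSilver}.'' The characterization of $\mu_z$ via the self-consistent equation \eqref{eqgmuz} and the formula $h_\beta(z) = -\tfrac{1}{2}\int \log t\, d\mu_z(t)$ are recalled in Subsection \ref{subses:characbeta}, again with references rather than arguments.

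Your outline is precisely the Girko--Hermitization scheme carried out in Tao--Vu, so you have correctly reconstructed the strategy behind the cited result. Step (i) is the content of Dozier--Silverstein (who treat the information-plus-noise model $(\sigma Y_N + A_N - zI_N)(\sigma Y_N + A_N - zI_N)^*$ directly and derive \eqref{eqgmuz} rather than passing through the $2N\times 2N$ Hermitization, though the two are equivalent). Step (ii) is the Tao--Vu contribution: the polynomial lower bound on $s_N(M_N - zI_N)$ and the control of intermediate singular values, valid under (X1) and the boundedness of $A_N$. Step (iii) is standard potential theory. One small caution: under (X1) alone you do not get $\|Y_N\| = O(1)$ a.s.\ (that needs a fourth moment), so the upper tail of $\nu_N^z$ is handled by truncation and a moment bound on the Frobenius norm rather than by an operator norm estimate; your parenthetical ``after a mild truncation'' covers this, but be aware that the argument for the upper tail is slightly less direct than you suggest.
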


The first statement was first proved in Tao and Vu \cite{tao-vu-cirlaw-bis}, the second statement was an ingredient of the proof, it is due to Dozier and Silverstein \cite{DozierSilver}. For more references, we refer to the surveys  \cite{MR2507275,MR2908617}. In subsection \ref{subses:characbeta}, we will give a precise characterization of $\beta$ and $\mu_z$ in terms of $\sigma$ and $\nu_z$. The assumption (A1) can be weakened, see \cite{ECP2011-10}. If $A_N = 0$ then $\beta$ is the uniform distribution on $B(0,\sigma)$, the closed ball of radius $\sigma$ and center $0$ in $\dC$. Beware that (i) assumptions (A1-A2) do not imply that $\mu_{A_N}$ converges weakly to a measure $\alpha$ on $\dC$ (neither the opposite) and (ii) even though, it is not always the case  that as $\sigma$ goes to $0$, $\beta$ converges weakly to $\alpha$.
% It is however the case if $A_N$ is a sequence of normal matrices. 

We have not been able to compute the support of $\beta$ in general. There is however a typical situation where it takes a nice form. Observe first that $\lambda$ is an eigenvalue of $A_N$ if and only if $0$ is an eigenvalue of $(A_N - \lambda I_N)(A_N - \lambda I_N)^*$. We will assume that a similar property holds for $\beta$ and $\mu_z$, i.e.
\begin{enumerate}
\item[(A3)] $\supp( \beta ) = \{ z \in \dC : 0 \in \supp ( \mu_z)\}$. 
\end{enumerate}

We are not aware of an example where (A3) fails to hold. We shall prove that (A3) holds if $\nu_z$ is the law of $|L-z|^2$ where $L$ is a random variable on $\dC$ with distribution $\alpha$, (see the forthcoming Lemma \ref{le:A3normalcase}). This case will occur  if $A_N$ is a normal matrix and $\mu_{A_N}$ converges weakly to $\alpha$ (or if, for some  normal matrix $B_N$,  either $A_N -B_N$ has rank $o(N)$ or $\Tr (A_N - B_N)(A_N - B_N) ^* = o(N)$).  Under assumption (A3) the support of $\beta$ takes a particularly simple expression. We introduce $$S = \{ z \in \dC : 0 \in \supp( \nu_z ) \}.$$ 

\begin{proposition}\label{prop:supportbeta}
Suppose that assumptions (X1) and (A1-A3) hold. Then 
\begin{equation}\label{eq:supportbeta}
\supp (\beta) = \left\{ z \in \dC : z \in S \hbox{ or }   \int \lambda^{-1} d \nu_z (\lambda)  \geq \sigma^{-2}\right\}. 
\end{equation}
\end{proposition}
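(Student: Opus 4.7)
By (A3), the complement of $\supp(\beta)$ equals $\{z \in \dC : 0 \notin \supp(\mu_z)\}$, so the proposition reduces to the equivalence
\begin{equation*}
0 \notin \supp(\mu_z) \ \Longleftrightarrow\ z \notin S \ \text{ and } \ b_z := \int \lambda^{-1}\,d\nu_z(\lambda) < \sigma^{-2}.
\end{equation*}
Let $g_z(w) = \int (\lambda-w)^{-1}\,d\mu_z(\lambda)$ and $m_{\nu_z}(u) = \int (\lambda-u)^{-1}\,d\nu_z(\lambda)$ denote the Stieltjes transforms. Under (X1) and (A1)--(A2), the Dozier--Silverstein subordination equation \cite{DozierSilver} characterizes $g_z$ as the unique analytic $\dC^+ \to \dC^+$ solution of
\begin{equation*}
g_z(w) \;=\; \bigl(1 + \sigma^2 g_z(w)\bigr)\, m_{\nu_z}\!\bigl(w\,(1 + \sigma^2 g_z(w))^2\bigr), \qquad w \in \dC^+,
\end{equation*}
which extends by analytic continuation to $\dC \setminus \supp(\mu_z)$. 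I use the standard fact that $0 \notin \supp(\mu_z)$ iff $g_z$ extends holomorphically across a neighborhood of $0$ in $\dC$ (with real values on $\dR$ there).

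For the necessary direction, suppose $g_z$ extends holomorphically through $0$ and set $a = g_z(0) = \int \lambda^{-1}\,d\mu_z \in [0,\infty)$. The map $u(w) = w(1+\sigma^2 g_z(w))^2$ is holomorphic near $0$ with $u'(0) = (1+\sigma^2 a)^2 \neq 0$, hence a local biholomorphism. Rewriting the subordination equation as $m_{\nu_z}(u(w)) = g_z(w)/(1+\sigma^2 g_z(w))$ and composing with $u^{-1}$ shows that $m_{\nu_z}$ itself extends holomorphically across $0$, which forces $0 \notin \supp(\nu_z)$, i.e., $z \notin S$. Evaluating the subordination equation at $w = 0$ then yields $a = (1+\sigma^2 a) b_z$, whence $b_z = a/(1+\sigma^2 a) < \sigma^{-2}$.

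For the sufficient direction, assume $z \notin S$ and $b_z < \sigma^{-2}$. I first claim that $a := \lim_{w \to 0^-} g_z(w)$ is finite and equals $a^\star := b_z/(1 - \sigma^2 b_z)$. Indeed, $g_z$ is increasing on $(-\infty,0)$ so the limit exists in $[0,\infty]$; if $a = \infty$ then $u(w) = w(1 + \sigma^2 g_z(w))^2$ would converge along $w \to 0^-$ to some $u_\infty \in [-\infty, 0]$ with $m_{\nu_z}(u_\infty) = \sigma^{-2}$, impossible since $m_{\nu_z}$ is bounded by $b_z < \sigma^{-2}$ on $(-\infty, 0]$. Hence $a$ is finite and the equation at $w = 0^-$ gives $a = a^\star$. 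Since $z \notin S$, $m_{\nu_z}$ is holomorphic on a complex neighborhood of $0$ with $m_{\nu_z}(0) = b_z$, and the map
\begin{equation*}
\Phi(w,\zeta) \;=\; \zeta - (1 + \sigma^2 \zeta)\, m_{\nu_z}\!\bigl(w(1 + \sigma^2 \zeta)^2\bigr)
\end{equation*}
is holomorphic near $(0, a^\star)$ with $\partial_\zeta \Phi(0, a^\star) = 1 - \sigma^2 b_z \neq 0$. The implicit function theorem produces a holomorphic $\zeta$ on a neighborhood $V$ of $0$ with $\zeta(0) = a^\star$ and $\Phi(w, \zeta(w)) \equiv 0$. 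On a small real interval $(-\epsilon, 0)$ both $\zeta(w)$ and $g_z(w)$ solve $\Phi(w, \cdot) = 0$ and lie near $a^\star$, hence coincide by local uniqueness; by analyticity on the connected set $V \setminus [0,\infty)$, $g_z \equiv \zeta$ there, so $\zeta$ furnishes the desired holomorphic extension of $g_z$ across $0$. Therefore $0 \notin \supp(\mu_z)$.

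The main obstacle is the preliminary finiteness argument in the sufficient direction: one must rule out $g_z(0^-) = +\infty$, and this is exactly where the strict inequality $b_z < \sigma^{-2}$ enters via the upper bound $m_{\nu_z} \leq b_z$ on $(-\infty, 0]$. Beyond this, identifying $a$ with $a^\star$ and running the implicit function theorem is routine, with the non-degeneracy $\partial_\zeta \Phi(0, a^\star) = 1 - \sigma^2 b_z \neq 0$ being an immediate by-product of the same inequality.
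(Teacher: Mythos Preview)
Your proof is correct. The paper, however, takes a much shorter route: it simply combines assumption (A3) with Proposition~\ref{zeroinclus}, the equivalence
\[
0 \notin \supp(\mu_z)\ \Longleftrightarrow\ 0 \notin \supp(\nu_z)\ \text{ and }\ \int \lambda^{-1}\,d\nu_z(\lambda) < \sigma^{-2},
\]
which is quoted without proof from \cite[Theorem 1.3 A)]{MC}. You instead reprove this equivalence from scratch via the Dozier--Silverstein subordination relation and the implicit function theorem. What you gain is self-containment and a transparent explanation of where the threshold $\sigma^{-2}$ comes from (namely $b_z = a/(1+\sigma^2 a)$ with $a\geq 0$, and the non-degeneracy $\partial_\zeta\Phi(0,a^\star)=1-\sigma^2 b_z$); what the paper gains is brevity, at the cost of importing the external support characterization (which in \cite{MC} is proved through the finer description of $\dR\setminus\supp(\mu_z)$ recorded here as Proposition~\ref{support1t}).

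One small imprecision worth tightening: in the sufficient direction you assert that $u(w)=w(1+\sigma^2 g_z(w))^2$ converges to some $u_\infty\in[-\infty,0]$ as $w\to 0^-$. That limit need not exist, but your real argument does not use it: from the subordination equation one has $m_{\nu_z}(u(w)) = g_z(w)/(1+\sigma^2 g_z(w))$ for $w<0$, and if $g_z(0^-)=+\infty$ the right side tends to $\sigma^{-2}$ while the left side is bounded by $b_z<\sigma^{-2}$ for all $u(w)\in(-\infty,0]$, a contradiction. Stating it this way avoids the spurious convergence claim.
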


For example, if $A_N = 0$, then $\nu_z$ is a Dirac mass at $|z|^2$ and we retrieve the support of the circular law. If $A_N$ is given by \eqref{eq:Anil}, then $\nu_z$ is the law of $|L -z|^2 $ with $L$ uniformly distributed on the unit complex circle. We find that $\supp( \beta)$ is the annulus  with inner radius $ \sqrt {( 1- \sigma^2)_+} $ and outer radius $\sqrt{1 + \sigma^2}$. In the limit $\sigma \to 0$, $ \supp( \beta)$ converges to the unit complex circle. This is consistent with Figure \ref{fig:nummat}.

\subsection{Stable outliers}

We are now interested by describing the individual eigenvalues of $M_N$ outside $B ( \supp ( \beta), \veps)$ for some $\veps >0$. To this end, we shall fix a set $\Gamma \subset \dC $ and assume that all but $O(1)$ of the eigenvalues of the matrix $A_N$ are outside $\Gamma$. To this end, we write 
$$
A_N = A'_N + A''_N.
$$
We first extend assumption (A1) to both $A'_N$ and $A''_N$: 
\begin{enumerate}
\item[(A1')] There exists $M >0$ such that for all $N$, $\|A'_N\| + \|A''_N\| \leq M$.  
\end{enumerate}

Our next key assumption asserts that $A'_N - z I_N$ is well conditioned in $\Gamma$ while $A''_N$ has small rank. We fix some integer $r \geq 0$.

\begin{enumerate}
\item[(A4)] 
$A''_N$ has rank $r$, and for any  $z \in \Gammaŝ$,  there exists $\eta = \eta_z >0$ such that for all $N$ large enough, $(A'_N - z I_N)$ has no singular value in $[0, \eta]$. 
\end{enumerate}

When $\Gamma$ is compact, observe that (A4) implies that for some $\veps >0$, the eigenvalues of $A'_N$ are in $\dC\backslash B ( \Gamma, \veps)$ for all $N$ large enough. In the case where $A'_N$ is a normal matrix then the singular values of $(A'_N - z I_N)$ are $|\lambda_k (A'_N) - z |, 1 \leq k \leq N$. Hence,  the assumption (A4) for $A'_N$ normal and $\Gamma$ compact holds if and only if for some $\veps >0$ and all $N$ large enough, the eigenvalues of $A'_N$ lie in $\dC \backslash B(\Gamma,\veps)$.

Our first main result gives a sufficient condition to guarantee that outliers are stable.

\begin{theoreme}\label{th:main}
Suppose that assumptions (X1-X2) and assumptions (A1'-A4) hold for $\Gamma \subset \dC \backslash \supp (\beta)$ a compact set with continuous boundary. If for some $\veps >0$ and all $N$ large enough,
\begin{equation}\label{eq:ratioAA'}
\min_{z  \in \partial \Gamma} \left| \frac { \det ( A_N - z) }{\det ( A'_N -z ) } \right| \geq \veps, 
\end{equation}
then a.s. for all $N$ large enough, the number of eigenvalues of $A_N$ and $M_N$ in $\Gamma$ is equal. 
\end{theoreme}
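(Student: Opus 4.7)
The plan is to reduce the statement to a Rouch\'e-type argument on the two $r \times r$ determinants produced by the matrix determinant lemma. Write $A''_N = U V^*$ with $U, V \in \dC^{N \times r}$ of bounded operator norm. Then
\[
\det(M_N - zI_N) = \det(A'_N + \sigma Y_N - zI_N)\, \psi_N(z), \quad \psi_N(z) := \det\bigl( I_r + V^* (A'_N + \sigma Y_N - zI_N)^{-1} U \bigr),
\]
and similarly $\det(A_N - zI_N) = \det(A'_N - zI_N)\, \phi_N(z)$ with $\phi_N(z) := \det(I_r + V^*(A'_N - zI_N)^{-1} U)$. In this language, the hypothesis \eqref{eq:ratioAA'} reads $|\phi_N(z)| \geq \veps$ for all $z \in \partial \Gamma$ and all $N$ large.

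The first key input is a ``no outlier'' statement for the auxiliary matrix $A'_N + \sigma Y_N$: a.s., for $N$ large, $A'_N + \sigma Y_N$ has no eigenvalue in some fixed open neighborhood of $\Gamma$. Since $A''_N$ has bounded rank, singular value interlacing shows that (A2) transfers from $A_N$ to $A'_N$ with the same $\nu_z$, hence Theorem~\ref{convbeta} applies to $A'_N + \sigma Y_N$ with the same limit $\beta$, and $\Gamma \cap \supp(\beta) = \emptyset$ already rules out a macroscopic fraction of eigenvalues in $\Gamma$. Upgrading this to the complete absence of eigenvalues requires a uniform $\Omega(1)$ lower bound on the smallest singular value of $A'_N + \sigma Y_N - zI_N$ on a neighborhood of $\Gamma$; here (A4), which gives $\|(A'_N - zI_N)^{-1}\| \leq \eta^{-1}$, combined with quantitative invertibility bounds of Rudelson--Vershynin type under the moment control (X2), should do the job.

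Granted this, both $\phi_N$ and $\psi_N$ are holomorphic in a neighborhood of $\Gamma$ and their zeros in $\Gamma$ count with multiplicity the eigenvalues of $A_N$ and $M_N$ in $\Gamma$, respectively. By Rouch\'e's theorem it then suffices to prove that, a.s.,
\[
\sup_{z \in \partial \Gamma}\, \bigl|\psi_N(z) - \phi_N(z)\bigr| \;\longrightarrow\; 0,
\]
so that $|\psi_N - \phi_N| < \veps \leq |\phi_N|$ on $\partial \Gamma$ for $N$ large and the two determinants must share the same number of zeros inside $\Gamma$.

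The main obstacle is this uniform convergence. With $G_N(z) := (A'_N - zI_N)^{-1}$ and $\widetilde G_N(z) := (A'_N + \sigma Y_N - zI_N)^{-1}$, the resolvent identity yields
\[
V^* \widetilde G_N(z) U - V^* G_N(z) U \;=\; -\,\sigma\, V^* G_N(z)\, Y_N\, \widetilde G_N(z)\, U,
\]
an $r \times r$ bilinear form in the i.i.d.\ entries of $Y_N$ tested against the $N$-vectors forming the columns of $G_N(z)^* V$ and $\widetilde G_N(z) U$. Since both resolvents have a.s.\ bounded operator norm on $\partial \Gamma$ by the previous step, each entry concentrates at the scale $N^{-1/2 + o(1)}$ at any fixed $z$; the dependence of $\widetilde G_N(z)$ on $Y_N$ is handled either by iterating the resolvent identity once more or by a leave-one-row-and-column-out decomposition, coupled with a Hanson--Wright type bound exploiting the moment assumption (X2). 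Uniformity over the compact set $\partial \Gamma$, which by hypothesis has continuous boundary, is then obtained via a net argument and the local Lipschitz control of the two resolvents in $z$.
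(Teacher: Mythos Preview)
Your high-level strategy---matrix determinant lemma, no-outlier result for $A'_N+\sigma Y_N$, uniform convergence $\sup_{\partial\Gamma}|\psi_N-\phi_N|\to 0$, then Rouch\'e---is exactly the paper's. The mechanisms you propose for the two key inputs, however, do not work as stated.

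For the $\Omega(1)$ lower bound on $s_N(A'_N+\sigma Y_N-zI_N)$: Rudelson--Vershynin bounds yield only $s_N\geq N^{-c}$, not $s_N\geq c$, and since $\|\sigma Y_N\|\to 2\sigma$ may well exceed the gap $\eta_z$ of (A4), no soft perturbation argument is available either. This bound is the content of Theorem~\ref{inclusion} (via Proposition~\ref{nonunif}), which relies on Bai--Silverstein-type ``no eigenvalue outside the support'' results for the singular-value law $\mu_{N,z}$. For the uniform convergence: iterating the resolvent identity ``once more'' still leaves a remainder containing $\widetilde G_N(z)$, and no finite number of iterations removes this dependence; a leave-one-out or Hanson--Wright scheme has no obvious implementation for a non-Hermitian resolvent at a spectral parameter with zero imaginary part. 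The paper takes the iteration to its limit, expanding the full Neumann series $Q_N R_N(z)P_N=\sum_{k\geq 0}Q_N(R'_N(z)\sigma Y_N)^kR'_N(z)P_N$, and the crux becomes the uniform geometric decay $\sup_{z\in\Gamma}\|(R'_N(z)\sigma Y_N)^k\|\leq C(1-\epsilon_0)^k$ (Proposition~\ref{borne}). This is obtained by a spectral-radius argument: a nonzero eigenvalue $\lambda$ of $R'_N(z)\sigma Y_N$ would make $z$ an eigenvalue of $(\sigma/\lambda)Y_N+A'_N$, and the no-outlier input applied with $\sigma/\lambda$ in place of $\sigma$---permissible for $|\lambda|$ slightly below $1$ precisely because $\varphi(z)<\sigma^{-2}$ strictly on the compact $\Gamma$---rules this out, forcing $\rho(R'_N(z)\sigma Y_N)<1-\epsilon_0$; holomorphic functional calculus then upgrades this to the norm bound on powers. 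Each term of the series is finally shown to vanish by a separate result on bilinear forms of words in $Y_N$ and deterministic matrices (Proposition~\ref{prop:bilinearP}), itself proved by truncation, log-Sobolev concentration, and a graph-counting moment computation.
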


Above, the notation (A1'-A4) stands for (A1')-(A2)-(A3)-(A4). We will first prove Theorem \ref{th:main} in the case $r =0$. 
\begin{theoreme}\label{inclusion}
Suppose that assumptions (X1-X2) and assumptions (A1-A4) hold with $A''_N = 0$, $A_N = A'_N$ and $\Gamma \subset \dC \backslash \supp (\beta)$ a compact set. Then,   a.s. for all $N$ large enough, $M_N$ has no eigenvalue in $\Gamma$. 

 In particular, if (A4) holds with $r =0$ and $\Gamma = \dC \backslash \supp ( \beta)$ then  for any $\veps >0$, a.s. for all $N$ large enough, all eigenvalues of $M_N$ are in $B(\supp (\beta) , \veps)$. 
\end{theoreme}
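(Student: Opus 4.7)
The claim ``$M_N$ has no eigenvalue in $\Gamma$'' is equivalent to the smallest singular value satisfying $s_N(M_N - zI_N) > 0$ for all $z \in \Gamma$. My plan is to prove the stronger \emph{uniform} lower bound: there exists a deterministic $\eta > 0$ such that, a.s.\ for all $N$ large, $\inf_{z\in\Gamma} s_N(M_N - zI_N) \geq \eta$. Because $z \mapsto s_N(M_N - zI_N)$ is $1$-Lipschitz and $\Gamma$ is compact, a finite $\eta/2$-net reduces this to a pointwise statement: for each $z_0 \in \Gamma$, produce $\eta_0 > 0$ with $s_N(M_N - z_0 I_N) \geq \eta_0$ a.s.\ for $N$ large. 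The second assertion of the theorem then follows by choosing $\Gamma = \overline{B(0,R)} \setminus B(\supp(\beta), \veps)$, where $R$ is an a.s.\ eventual upper bound on $\|M_N\| \leq \|A_N\| + \sigma\|Y_N\|$, finite under (X1)-(X2).

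Fix $z_0 \in \Gamma$. By (A3), $z_0 \notin \supp(\beta)$ gives $0 \notin \supp(\mu_{z_0})$, so I pick $\delta > 0$ with $[0, 4\delta^2] \cap \supp(\mu_{z_0}) = \varnothing$. Write $H_N := (M_N - z_0 I_N)(M_N - z_0 I_N)^*$ and $g_N(\xi) := N^{-1}\Tr(H_N - \xi I_N)^{-1}$; the target becomes: $H_N$ has no eigenvalue in $[0, 2\delta^2]$ a.s.\ for $N$ large. Two inputs enter. \emph{Bulk input:} Theorem \ref{convbeta} gives $\mu_{H_N} \to \mu_{z_0}$ a.s., equivalently $g_N \to m_{\mu_{z_0}}$ pointwise on the upper half-plane, hence (Vitali) uniformly on compacts; since $m_{\mu_{z_0}}$ extends analytically across $(-\infty, 4\delta^2)$, this convergence upgrades to uniform convergence on compacts of $\dC \setminus \supp(\mu_{z_0})$. \emph{Tail input:} under (X1)-(X2), classical smallest-singular-value estimates of Tao-Vu / Rudelson-Vershynin type applied to the shifted matrix $A_N - z_0 I_N + \sigma Y_N$ yield constants $B, a > 0$ with $\dP(s_N(M_N - z_0 I_N) \leq N^{-B}) \leq N^{-a}$, so by Borel-Cantelli $s_N(M_N - z_0 I_N) \geq N^{-B}$ a.s.\ for $N$ large.

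The tail input rules out eigenvalues of $H_N$ in $[0, N^{-2B}]$. To rule out eigenvalues in the remaining window $[N^{-2B}, 2\delta^2]$, I use a contour-integration variant of the bulk input: any such eigenvalue $\lambda_i$ lies at distance $\geq \delta^2$ from $\supp(\mu_{z_0})$, so enclosing it by a simple contour $\mathcal{C} \subset \dC \setminus \supp(\mu_{z_0})$ one has, by residues, $(2\pi i)^{-1}\oint_{\mathcal{C}} g_N\, d\xi = -k/N$, where $k$ counts eigenvalues of $H_N$ inside $\mathcal{C}$; meanwhile $\oint_{\mathcal{C}} m_{\mu_{z_0}}\,d\xi = 0$ by analyticity. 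Uniform convergence of $g_N$ on $\mathcal{C}$ then yields $k = o(N)$. \textbf{The main obstacle} is bridging ``$o(N)$'' down to ``$k = 0$'': weak convergence alone allows a vanishing fraction of outliers. This is precisely what the tail input buys---combined with a slightly quantitative version of the Stieltjes transform convergence (obtainable from standard concentration of $g_N$ under (X2), or from a Hermitian local law for $H_N$), the count $k/N$ is controlled at the scale $1/N$, which, being an integer, forces $k = 0$ for $N$ large. Combined, no eigenvalue of $H_N$ lies in $[0, 2\delta^2]$, so $s_N(M_N - z_0 I_N) \geq \delta\sqrt{2}$, finishing the pointwise step and, via the net argument, the theorem.
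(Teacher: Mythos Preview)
Your reduction via the $1$-Lipschitz property of $z \mapsto s_N(M_N - zI_N)$ and a finite net is correct and matches the paper's structure (Proposition~\ref{nonunif}). The gap lies in your argument that $H_N = (M_N - z_0 I_N)(M_N - z_0 I_N)^*$ has \emph{no} eigenvalue in $[0, 2\delta^2]$. Your claim that $g_N \to m_{\mu_{z_0}}$ uniformly on compacts of $\dC \setminus \supp(\mu_{z_0})$ does not follow from Vitali: Vitali requires the \emph{sequence} $(g_N)$ to be locally uniformly bounded, but $g_N$ has poles on the real axis at the eigenvalues of $H_N$ --- precisely the objects you are trying to exclude. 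That the \emph{limit} $m_{\mu_{z_0}}$ extends analytically across the gap is irrelevant. Any closed contour $\mathcal{C}$ enclosing a real interval must cross the real axis, and at those crossings you have no a priori control on $g_N$.

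More fundamentally, even granting $k = o(N)$, the passage to $k = 0$ is the entire substance of the theorem. Your appeal to ``standard concentration of $g_N$'' or ``a Hermitian local law for $H_N$'' is where all the work hides: establishing $|g_N(\xi) - m_{\mu_{z_0}}(\xi)| = o(1/N)$ uniformly on a contour approaching the real axis is exactly the content of Bai--Silverstein's ``no eigenvalues outside the support'' theorem, a substantial result and not a corollary of fourth-moment concentration. The paper does not attempt to re-derive this. Instead, it first verifies in Proposition~\ref{decolle} that the gap $[0,\epsilon_z]$ persists for the $N$-dependent deterministic equivalent $\mu_{N,z}$ (using the explicit support description of Proposition~\ref{support1t}), checks via interlacing that the column-deleted matrices $A_N^{(i,z)}$ inherit the singular-value gap required by condition (1.10) of \cite{MR2930382}, and then invokes the established exclusion result for the information-plus-noise model \cite[Proposition 3.3]{MC} as a black box. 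Your smallest-singular-value ``tail input'' plays no role in the paper's argument and does not help you either: it only excludes $[0, N^{-2B}]$, leaving the macroscopic window $[N^{-2B}, 2\delta^2]$ entirely to the unproved quantitative step.
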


Let us give a concrete application of Theorem \ref{th:main} with a specific decomposition of $A_N = A'_N + A''_N$. Assume that for all $N$, there exists a subset $J \subset \{1,\cdots, N\}$ of cardinal at most $r$ such that for any $\veps >0$, for all $N$ large enough and $k \in J$, $\lambda_k (A_N) \notin B(\supp ( \beta), \veps)$. We consider a triangular decomposition of $A_N$:
$$
A_N = P \left( \begin{array}{c|c}
T'' & * \\
\hline
0 & T' 
 \end{array} \right) P^{-1},
$$
where $P$ is an invertible matrix, $T'$ is an upper triangular matrix of size $N - |J|$ with the eigenvalues $\lambda_k (A_N)$, $k \notin J$, on the diagonal, and $T''$ is an upper triangular matrix of size $|J|$ with diagonal entries $\lambda_k (A_N)$, $k \in J$. 
Fix some $a \in  \supp (\beta)$,  we decompose $A_N$ as $A_N= A'_N + A''_N$
with
\begin{equation}\label{eq:defA'A"}
A'_N =  P \left( \begin{array}{c|c}
a I_J  & * \\
\hline
0 & T'
 \end{array} \right) P^{-1}
\quad \hbox{ and } \quad A''_N =  P \left( \begin{array}{c|c}
T'' - a I_J  & 0 \\
\hline
0 & 0
 \end{array} \right) P^{-1}.
\end{equation}

In particular, 
\begin{equation}\label{eq:detA'A"}
 \frac { \det ( A_N - z) }{\det ( A'_N -z ) }  =  \prod_{k\in J} \frac{ \lambda_k(A_N) - z }{ a - z }.
\end{equation}

The next statement will be an easy consequence of Theorem \ref{th:main}. It generalizes Tao \cite[Theorem 1.7]{tao-outliers} where $A'_N = 0$. When $A'_N$ is a Wigner random matrix, it is a special case of O'Rourke and Renfrew \cite[Theorem 2.4]{ORR13}.

\begin{corollaire}\label{cor:main}
Assume that assumptions (X1-X2) and assumptions (A1'-A4) hold with $A'_N$, $A''_N$ given by \eqref{eq:defA'A"} and $\Gamma = \dC \backslash \supp ( \beta)$. Fix $\veps >0$.  Assume that for all $N$ large enough, $\forall k \in J $, $\lambda_k (A_N) \notin  B( \supp ( \beta), 3\veps)$. Then, a.s. for all $N$ large enough, there are exactly $|J|$ eigenvalues of $M_N$ in  $\dC \backslash B( \supp ( \beta), 2\veps)$. Moreover, if we index them by $\lambda_k (M_N)$, $k \in J$, after labeling properly, a.s. 
$$\max_{ k \in J} | \lambda_k ( M_N)  - \lambda_k (A_N)| \to 0.$$ 
\end{corollaire}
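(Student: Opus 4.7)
The plan is to deduce the statement from two applications of Theorem~\ref{th:main}: a global one on an annular region that furnishes the count $|J|$, and local ones on small disks around the accumulation points of the outliers of $A_N$ that pin down their positions. Throughout, I would use the explicit formula \eqref{eq:detA'A"} to evaluate the determinantal ratio appearing in \eqref{eq:ratioAA'}.

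First I would count. Since $\|A_N\|+\sigma\|Y_N\|$ is a.s.\ bounded, pick $R$ larger than this bound and set
$$
\Gamma_1 \;=\; \overline{B(0,R)} \setminus B(\supp(\beta),2\veps),
$$
a compact subset of $\dC\setminus\supp(\beta)$ with continuous boundary. To verify \eqref{eq:ratioAA'}, note that for $z\in\partial B(\supp(\beta),2\veps)$ and $k\in J$ the triangle inequality applied to the nearest point of $\supp(\beta)$ yields $|\lambda_k(A_N)-z|\ge\veps$ (using $\lambda_k(A_N)\notin B(\supp(\beta),3\veps)$), while $|a-z|$ is controlled by $\mathrm{diam}(\supp(\beta))+2\veps$; on the outer circle $|z|=R$ both numerator and denominator factors are of order $R$. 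Assumption (A4) forces the eigenvalues of $A'_N$, namely $a$ with multiplicity $|J|$ and $\lambda_k(A_N)$ for $k\notin J$, to accumulate on $\supp(\beta)$, so the eigenvalues of $A_N$ lying in $\Gamma_1$ for $N$ large are exactly the $|J|$ outliers indexed by $J$, counted with algebraic multiplicity. Theorem~\ref{th:main} then delivers exactly $|J|$ eigenvalues of $M_N$ in $\Gamma_1$, which is the first assertion.

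For the localization I would argue by compactness and contradiction. If the conclusion fails, there are $\delta_0>0$ and a subsequence along which no bijection $\tau:J\to J$ satisfies $\max_k|\lambda_{\tau(k)}(M_N)-\lambda_k(A_N)|<\delta_0$; extracting further from the compact set $\overline{B(0,M)}\setminus B(\supp(\beta),3\veps)$, I may assume $\lambda_k(A_N)\to\lambda_k^\star$ for every $k\in J$. For each distinct value $\lambda^\star\in\{\lambda_k^\star\}_{k\in J}$ of multiplicity $m(\lambda^\star)=|\{k\in J:\lambda_k^\star=\lambda^\star\}|$, I would pick $\delta\in(0,\delta_0/3)$ smaller than both half the minimal spacing between distinct $\lambda_k^\star$ and $\DIST(\lambda^\star,\supp(\beta))$, and apply Theorem~\ref{th:main} to $\Gamma_{\lambda^\star}=\overline{B(\lambda^\star,\delta)}$. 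For $z\in\partial\Gamma_{\lambda^\star}$ and $N$ large, $|\lambda_k(A_N)-z|\ge\delta/2$ when $\lambda_k^\star=\lambda^\star$ and $|\lambda_k(A_N)-z|\ge\delta$ otherwise, so \eqref{eq:ratioAA'} holds. The theorem produces exactly $m(\lambda^\star)$ eigenvalues of $M_N$ in each $\Gamma_{\lambda^\star}$; summing over the distinct $\lambda^\star$ exhausts the $|J|$ outliers from the first step and yields a labeling with pairing distance at most $2\delta<\delta_0$, contradicting the hypothesis.

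The main obstacle is the uniform control of \eqref{eq:ratioAA'} when the outliers of $A_N$ drift or coalesce with $N$. The subsequence step neutralizes this difficulty by reducing the analysis to fixed disks centered at the limits $\lambda_k^\star$, after which both the numerator and denominator factors in \eqref{eq:detA'A"} become uniformly bounded and bounded away from zero in $N$.
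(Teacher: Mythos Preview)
Your proof is correct and follows essentially the same route as the paper: pass to a subsequence along which the outlier eigenvalues $\lambda_k(A_N)$ converge, then apply Theorem~\ref{th:main} on small disks around the limit points using the explicit ratio \eqref{eq:detA'A"}. Your argument is in fact more explicit than the paper's sketch, since you separate out the global count via the annular region $\Gamma_1$ before localizing, whereas the paper compresses this into the single sentence ``we may apply Theorem~\ref{th:main} to each of the $\Gamma_k$'' (with $\Gamma_0$ the complement in a large disk).
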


  In Figure \ref{fig:stable}, we illustrate numerically Corollary \ref{cor:main} when 
 \begin{equation}\label{eq:ex1A}
A_N =  \left( \begin{array}{c|c}
B & 0 \\
\hline
0 & C
 \end{array} \right)\quad  \hbox{with }  B \in M_r ( \dC) \hbox{ and  } C =   \sum_{i = r+1}^{N-1} e_{i} e_{i+1}^*   + e_{N} e_{r+1}^* .
\end{equation}

\begin{figure}[htb]
\begin{center}
\includegraphics[width = 10cm]{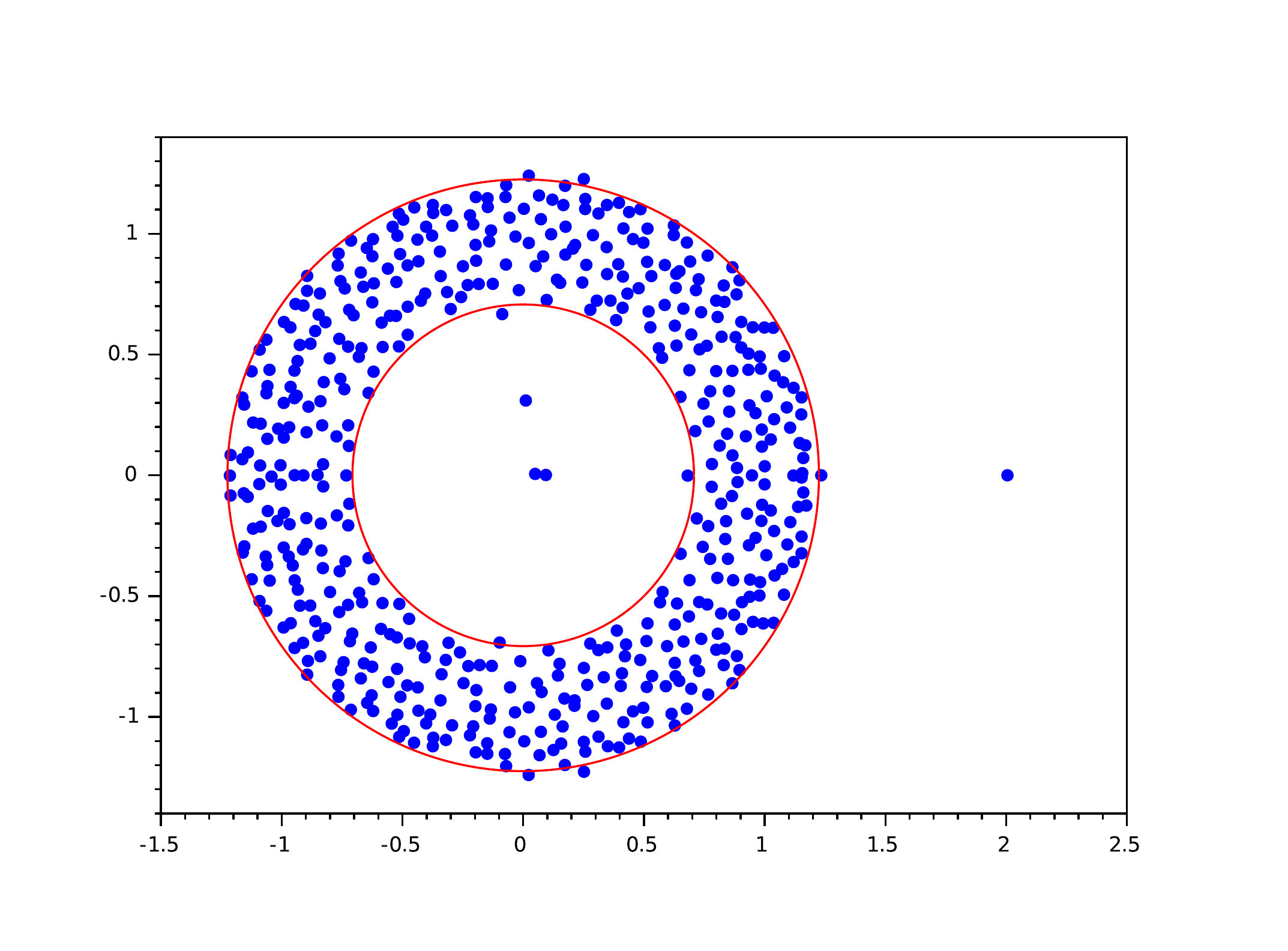}
\end{center}
\caption{Eigenvalues of $M_N$ where $A_N$ is given by \eqref{eq:ex1A} with $r =5$ and $B$ has eigenvalues $(0,0,i/3,1,2)$, $N = 500$, $\sigma^2 = 1/2$ and $X_N$ has real Gaussian entries. The support of $\beta$ is $\{z \in \mathbb{C} : 1 / \sqrt 2 \leq |  z |  \leq \sqrt{3/2}\}$, the stable outliers are $(0,0,i/3,2)$.}
\label{fig:stable}\end{figure}

Assumption \eqref{eq:ratioAA'} is the key assumption for the stability of the outliers. It holds generically in the unbounded component of the complement of $S$. 

\begin{lemme}\label{le:noouter}
Suppose that  assumptions (A1'-A4) hold with $ \Gamma = D$ an unbounded component of $\dC \backslash S$. The family of $D \to \dC$ functions $f_N : z \mapsto   \det ( A_{N} - z)  / \det ( A'_{N} -z ) $ is a precompact family of analytic functions and any subsequential limit of $(f_N)$ is non-zero. In particular, along any converging subsequence $(f_ {N_k})_{k \geq 1}$, for any $B(z,t) \subset D$ and any $\delta >0$, there exist  $\veps >0$ and $\Gamma = B(z,t')$ with $t - \delta < t' \leq t$  such that  \eqref{eq:ratioAA'} holds for all $N_k$, $k \geq 1$. 
\end{lemme}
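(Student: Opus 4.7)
The plan is to expose the low-rank structure of $A''_N$. Via a thin SVD $A''_N = U_N \Sigma_N V_N^*$ with $U_N, V_N \in M_{N,r}(\dC)$ having orthonormal columns and $\|\Sigma_N\| \leq \|A''_N\| \leq M$, Sylvester's determinant identity yields, at every $z$ where $A'_N - zI_N$ is invertible,
\[
f_N(z) \;=\; \det\bigl(I_r + G_N(z)\bigr), \qquad G_N(z) := \Sigma_N V_N^* \bigl(A'_N - zI_N\bigr)^{-1} U_N \in M_r(\dC).
\]
By (A4), $A'_N - zI_N$ is invertible at every $z \in D$ once $N$ is large, so this identity extends $f_N$ from an a priori rational function to an honestly analytic function on all of $D$, now represented as the determinant of a small matrix.

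The crucial analytic input is a uniform-on-compacts lower bound on $\sigma_{\min}(A'_N - zI_N)$. Since this map is $1$-Lipschitz in $z$, the pointwise bound $\eta_z > 0$ in (A4) propagates to $B(z, \eta_z/2)$, and a finite cover of any compact $K \subset D$ then delivers $\eta_K > 0$ and $N_K$ with $\sigma_{\min}(A'_N - zI_N) \geq \eta_K$ for all $z \in K$ and $N \geq N_K$. Combined with $\|U_N\|, \|V_N^*\| \leq 1$ and $\|\Sigma_N\| \leq M$, this gives $\|G_N(z)\| \leq M/\eta_K$ uniformly on $K$, and Montel's theorem delivers precompactness of $(G_N)$, and hence of $(f_N) = (\det(I_r + G_N))$, in the compact-open topology on $D$.

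Non-vanishing of any subsequential limit $f$ comes from the behavior at infinity. For $|z| > M$, (A1') gives $\|(A'_N - zI_N)^{-1}\| \leq (|z|-M)^{-1}$ and hence $\|G_N(z)\| \leq M/(|z|-M)$, uniformly in $N$; since $\det(I_r + A) \to 1$ as $\|A\| \to 0$, it follows that $f_N(z) \to 1$ as $|z| \to \infty$, uniformly in $N$. Passing to the limit, $f(z) \to 1$ as $|z| \to \infty$ along the unbounded component $D$, so $f$ is analytic and not identically zero on $D$, and its zero set is discrete.

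For the final quantitative assertion, fix $B(z_0, t) \subset D$ and $\delta > 0$; after passing to a further subsequence I may assume every $f_{N_k}$ is analytic on $D$. Each of $f$ and the $f_{N_k}$ is analytic and not identically zero on $D$, hence has only countably many zeros in $B(z_0, t)$, so the set of radii
\[
R := \bigl\{|w - z_0| \,:\, w \in B(z_0, t) \cap \bigl(Z(f) \cup \textstyle\bigcup_{k \geq 1} Z(f_{N_k})\bigr)\bigr\}
\]
is countable. Picking $t' \in (t - \delta, t) \setminus R$, the circle $\partial B(z_0, t')$ is compact and contained in $B(z_0, t) \subset D$, so $|f|$ and each $|f_{N_k}|$ are continuous and strictly positive there, bounded below by $\veps_\infty > 0$ and $\veps_k > 0$ respectively. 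Uniform convergence of $f_{N_k}$ to $f$ on this compact forces $\veps_k \to \veps_\infty$, whence $\veps := \inf_k \veps_k > 0$ realizes \eqref{eq:ratioAA'} with $\Gamma = B(z_0, t')$ for every $k \geq 1$. The main obstacle is the uniform-on-compacts lower bound on $\sigma_{\min}(A'_N - zI_N)$ extracted from the merely pointwise assumption (A4); once that is in hand, the rest is routine complex analysis and determinant manipulations.
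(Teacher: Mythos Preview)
Your proof is correct and follows essentially the same route as the paper: both rewrite $f_N$ as $\det(I_r + G_N(z))$ for an $r \times r$ matrix $G_N(z)$ built from the low-rank factorization of $A''_N$ and the resolvent $(A'_N - zI_N)^{-1}$, use the uniform-on-compacts bound from (A4) together with Montel for precompactness, and then exploit the uniform-in-$N$ decay $\|G_N(z)\| \to 0$ as $|z| \to \infty$ to force any limit $f$ to satisfy $f(\infty) = 1$, hence $f \not\equiv 0$. Your countability argument for selecting $t'$ is a clean way to justify the final clause, which the paper only indicates by ``follows easily''; the one cosmetic point is that ``passing to a further subsequence'' is unnecessary, since the finitely many early indices where $f_{N_k}$ might fail to be analytic on $D$ are still rational functions with finitely many zeros and poles and can simply be absorbed into the countable set $R$.
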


In the sequel, in order to circumvent the multiple possible choices to order the eigenvalues, we will consider the finite point set $\sum_{i=1} ^n \delta_{x_i}$ of a vector $(x_1, \cdots, x_n)$.  Let us first recall some basic facts on finite point processes (we refer to Daley and Vere-Jones \cite[Appendix A.2.5]{MR1950431} for details and terminology). If $\cS$ is a complete separable metric space, we denote by $\mathcal {N}(\mathcal S)$, the set of finite point (integer valued) measures on $\mathcal S$, equipped with the usual weak topology.  Recall that a point process is random variable on $\mathcal N (\mathcal S)$. The set $\cP( \mathcal N (\mathcal S)) $ of probability measures on $\cN(\cS)$ is a complete separable metric space and the L\'evy-Prohorov distance is a metric for the weak convergence of measures in $\cP ( \cN(\cS))$ (or with a slight abuse of language, for weak convergence of point processes on $\cS$).

\subsection{Fluctuations of stable outliers}
\label{subsec:fluctstou}

We have studied the fluctuations of the convergence of outliers eigenvalues in the simplest case for the decomposition of $A_N$ on its outlier eigenspace. More precisely, we will suppose that $A_N$ has the following decomposition, for some integer $r \geq 1$ and complex number $\theta_N$, 
\begin{equation}\label{eq:decompAN}
A_N =   \left( \begin{array}{c|c}
\theta_N I_r& 0 \\
\hline
0 & \hat{A}_{N-r}  
 \end{array} \right). 
\end{equation}

\begin{theoreme}\label{th:TCL}
Suppose that assumptions (X1-X2) and assumptions (A1-A3) hold with $A_N$ given by \eqref{eq:decompAN}. We suppose further that $\theta_N $ converges toward $\theta \in \mathbb{C} \setminus \mathrm{supp}(\beta )$  when $N$ goes to infinity and that for some $\eta >0$ and all large $N$, $ \hat{A}_{N-r} - \theta I_{N-r}$ has no singular value in $[0,\eta]$.  Finally, assume that either $\dE X_{11}^2 = 0$ or that $\frac{1}{N-r} \Tr \left\{ (  \theta  I_{N-r} - \hat{A}_{N-r}  )^{-1} (  \theta I_{N-r} -\hat{A}_{N-r}^{\top}  )^{-1}  \right\} $ converges to $\psi \in \dC$ (in the first case, we set $\psi = 0$).  We set $\varphi = \int \lambda^{-1} d \nu_{\theta} (\lambda)$.

Then, for any  $0 < \delta< \eta$, almost surely  for all large $N$  there are exactly $r$ eigenvalues
$\lambda_i$, $i=1, \ldots,r$ of $M_N$ in $B(\theta,\delta)$. Moreover, the point process of $\left( \sqrt{N} ( \lambda_1 -\theta_N), \ldots, \sqrt{N} ( \lambda_r -\theta_N) \right) $ converges in distribution towards the point process of the eigenvalues of a $r\times r $ matrix $V $ defined as 
\begin{equation}\label{defV} V =\sigma X_{r}+  \sigma^2  G ,\end{equation}
where $X_r$ is independent of $G$, a $r \times r$ Ginibre matrix whose entries are independent copies of a centered complex Gaussian variable $Z$ whose covariance is characterized by, 
$$
\dE |Z|^2 =    \frac{\varphi}{1 - \sigma^2 \varphi} \quad \hbox{ and } \quad \dE  Z^2   =  \frac{ ( \dE X_{11}^2)^2 \psi}{1 - \sigma^2 \dE X_{11}^2 \psi} . 
$$
\end{theoreme}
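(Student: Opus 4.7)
The plan is to combine a Schur-complement reduction with a quadratic-form CLT.

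Write $M_N - \lambda I_N$ in blocks adapted to \eqref{eq:decompAN}:
\[
M_N - \lambda I_N = \begin{pmatrix} (\theta_N - \lambda) I_r + \sigma Y_{11} & \sigma Y_{12} \\ \sigma Y_{21} & \hat M_{N-r} - \lambda I_{N-r} \end{pmatrix},\qquad \hat M_{N-r} := \hat A_{N-r} + \sigma Y_{22},
\]
with $Y_{ij}$ the corresponding blocks of $Y_N$ and $X_r := \sqrt N \, Y_{11}$. Applying Theorem~\ref{inclusion} to $\hat M_{N-r}$ on a compact disk $\Gamma = \overline{B(\theta,\delta)} \subset \dC \setminus \supp(\beta)$ (permitted because the singular-value hypothesis on $\hat A_{N-r}-\theta I$ extends by continuity to a neighborhood) rules out a.s.\ any eigenvalue of $\hat M_{N-r}$ in $B(\theta,\delta)$ for large $N$; a quantitative strengthening of the same argument yields $\sup_{|\lambda-\theta|\leq\delta/2}\|(\hat M_{N-r}-\lambda)^{-1}\|=O(1)$ a.s. The Schur-complement formula then shows that $\lambda\in B(\theta,\delta/2)$ is an eigenvalue of $M_N$ iff
\[
\det\bigl(\mu I_r - \sigma X_r + \sigma^2 Q_N(\lambda)\bigr)=0,\quad \mu := \sqrt N (\lambda - \theta_N),\quad Q_N(\lambda) := \sqrt N\, Y_{12}(\hat M_{N-r}-\lambda)^{-1}Y_{21}.
\]

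Since $Y_{12},Y_{21}$ are independent of $\hat M_{N-r}$ and of $X_r$, I identify the joint limit law of $Q_N(\theta)$ conditionally on $R := (\hat M_{N-r}-\theta)^{-1}$, using Lipschitz continuity of $\lambda \mapsto Q_N(\lambda)$ to replace $\lambda$ by $\theta$. Each entry
\[
(Q_N(\theta))_{kl} = \frac{1}{\sqrt N}\sum_{i,j=1}^{N-r} X_{k,r+i}\, R_{ij}\, X_{r+j,l}
\]
is a centered quadratic form in independent variables (the block index structure ensures $k\ne r+j$ and $l\ne r+i$), and direct moment computations give
\[
\dE\bigl[(Q_N)_{kl}\overline{(Q_N)_{k'l'}}\bigr] = \delta_{kk'}\delta_{ll'}\cdot \tfrac 1N\Tr(RR^*),\qquad \dE\bigl[(Q_N)_{kl}(Q_N)_{k'l'}\bigr] = \delta_{kk'}\delta_{ll'}(\dE X_{11}^2)^2\cdot \tfrac 1N\Tr(RR^\top).
\]
A standard CLT for quadratic forms under (X1)-(X2) (by moments or via a martingale decomposition along the rows of $Y_{12},Y_{21}$) then yields joint convergence of the entries of $Q_N(\theta)$ to a centered complex Gaussian matrix $\tilde G$ with iid entries. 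To match the covariance of $Z$ in \eqref{defV}, I identify $\lim\tfrac 1N\Tr(RR^*) = \varphi/(1-\sigma^2\varphi)$ and $\lim\tfrac 1N\Tr(RR^\top) = \psi/(1-\sigma^2\dE X_{11}^2\,\psi)$. The first is the Stieltjes transform of $\mu_\theta$ at $0$, which reduces to the announced value through the Dozier-Silverstein subordination formula relating $\mu_\theta$ and $\nu_\theta$. The second is not a spectral quantity of a Hermitian matrix; I obtain it by iterating the resolvent identity $R = R_A - \sigma R Y_{22} R_A$, with $R_A := (\hat A_{N-r}-\theta)^{-1}$, and closing a self-consistent equation $t = \psi + \sigma^2\dE X_{11}^2\,\psi\, t$ via concentration for polynomial functionals of the iid entries of $Y_{22}$. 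With $G := -\tilde G \stackrel{d}{=} \tilde G$, passing to the limit in the determinantal equation and applying Hurwitz's theorem to $\mu \mapsto \det(\mu I_r - \sigma X_r + \sigma^2 Q_N(\theta+\mu/\sqrt N))$ yields the claimed convergence of the rescaled outliers' point process toward the eigenvalues of $V=\sigma X_r+\sigma^2 G$.

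The two main obstacles I anticipate are $(i)$ the uniform pseudospectral bound $\sup_{|\lambda-\theta|\leq\delta/2}\|(\hat M_{N-r}-\lambda)^{-1}\|=O(1)$, which is strictly stronger than the absence of eigenvalues given by Theorem~\ref{inclusion} and must be extracted from (or reproved alongside) that theorem's proof; and $(ii)$ the identification of the non-Hermitian trace $\tfrac 1N\Tr(RR^\top)$, which is technically delicate because $RR^\top$ is not attached to any standard spectral distribution and requires propagating concentration bounds with enough uniformity to close the self-consistent equation.
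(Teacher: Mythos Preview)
Your proposal follows essentially the same route as the paper: Schur complement to reduce to an $r\times r$ determinant, a uniform resolvent bound for $\hat M_{N-r}$ on the disk (the paper extracts this from Proposition~\ref{nonunif}, which indeed gives the pseudospectral bound you flag as obstacle~(i), not just absence of eigenvalues), a bilinear-form CLT for $Q_N(\theta)$ (the paper's Appendix proposition), and a Rouch\'e/Hurwitz argument on the resulting random analytic function.

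Two remarks. First, your ``Lipschitz continuity'' step for replacing $Q_N(\lambda)$ by $Q_N(\theta)$ is more delicate than the phrasing suggests: the operator-norm Lipschitz constant of $\lambda\mapsto Q_N(\lambda)$ is $\sqrt N\|Y_{12}\|\|R\|^2\|Y_{21}\|=O(\sqrt N)$, which is too crude. What actually works is that $Q_N(\lambda)-Q_N(\theta_N)=\mu\cdot Y_{12}R(\lambda)R(\theta_N)Y_{21}$, and the bilinear form $Y_{12}BY_{21}$ is $O_P(N^{-1/2})$ for bounded $B$ independent of $Y_{12},Y_{21}$ (a variance computation, or the paper's Proposition~\ref{prop:bilinearP}). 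The paper packages this as Lemma~\ref{le:boundCN}. Second, your identification of $\lim\frac1N\Tr(RR^*)$ via the Dozier--Silverstein equation at $0$ is a cleaner alternative to the paper, which instead expands $R$ as a Neumann series in $Y_{22}$ (Proposition~\ref{borne} guarantees convergence) and computes term by term using a trace identity for polynomial functionals (Proposition~\ref{prop:ABX}); the paper uses that same Neumann expansion for $\Tr(RR^\top)$, which is essentially your self-consistent equation. Both routes land at the same geometric series.
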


Theorem \ref{th:TCL} shows that the fluctuation of stable outliers are not universal (they may depend on the law of entries). There is a similar phenomenon for deformed Wigner matrices, see notably Capitaine, Donati-Martin and F\'eral \cite{MR2489158,MR2919200}.  The left plot of Figure \ref{fig:nummatTCL} illustrates Theorem \ref{th:TCL}.

\begin{figure}[htb]
\begin{center}
\includegraphics[width = 8.1cm]{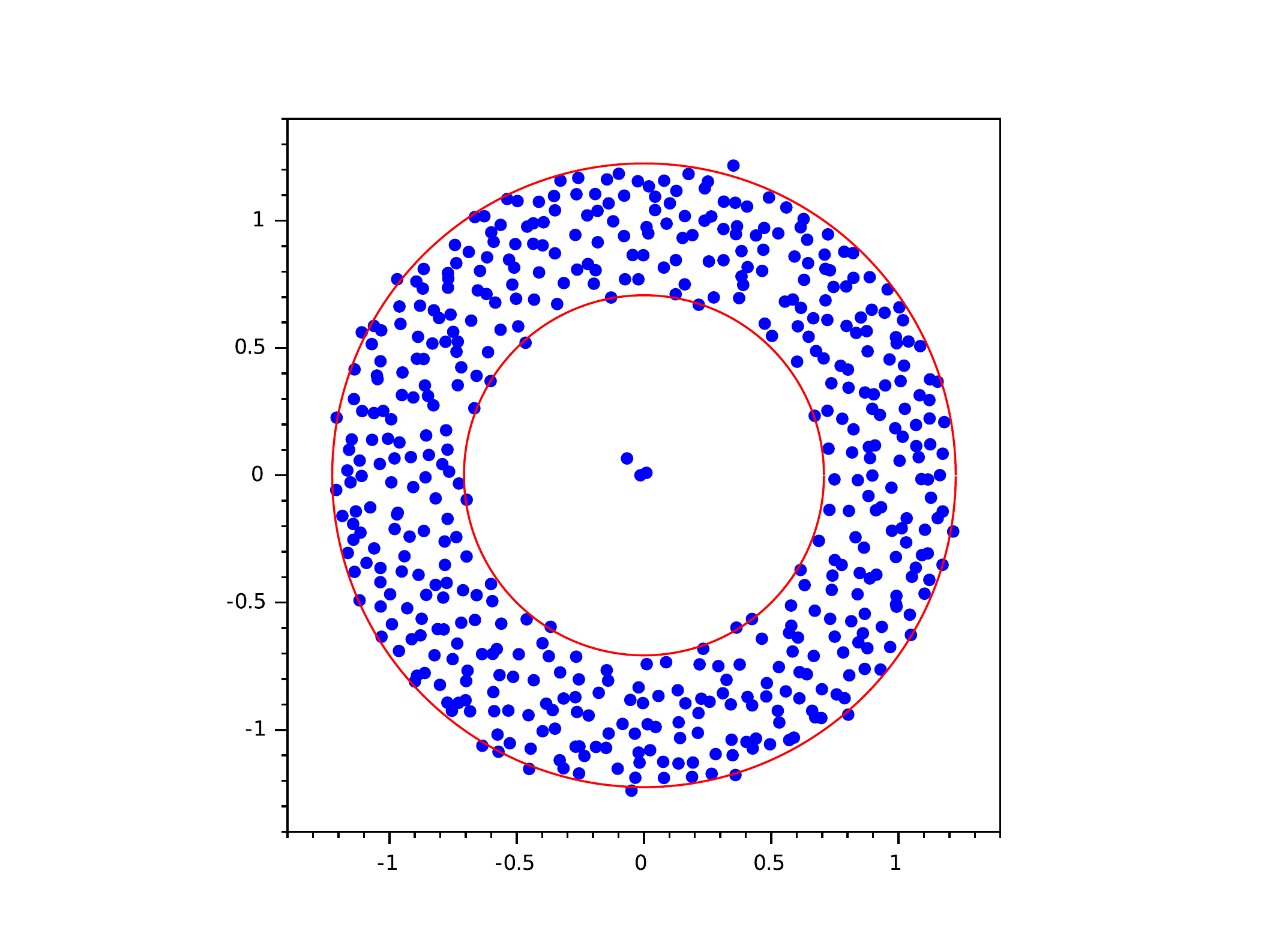}
\includegraphics[width = 8.1cm]{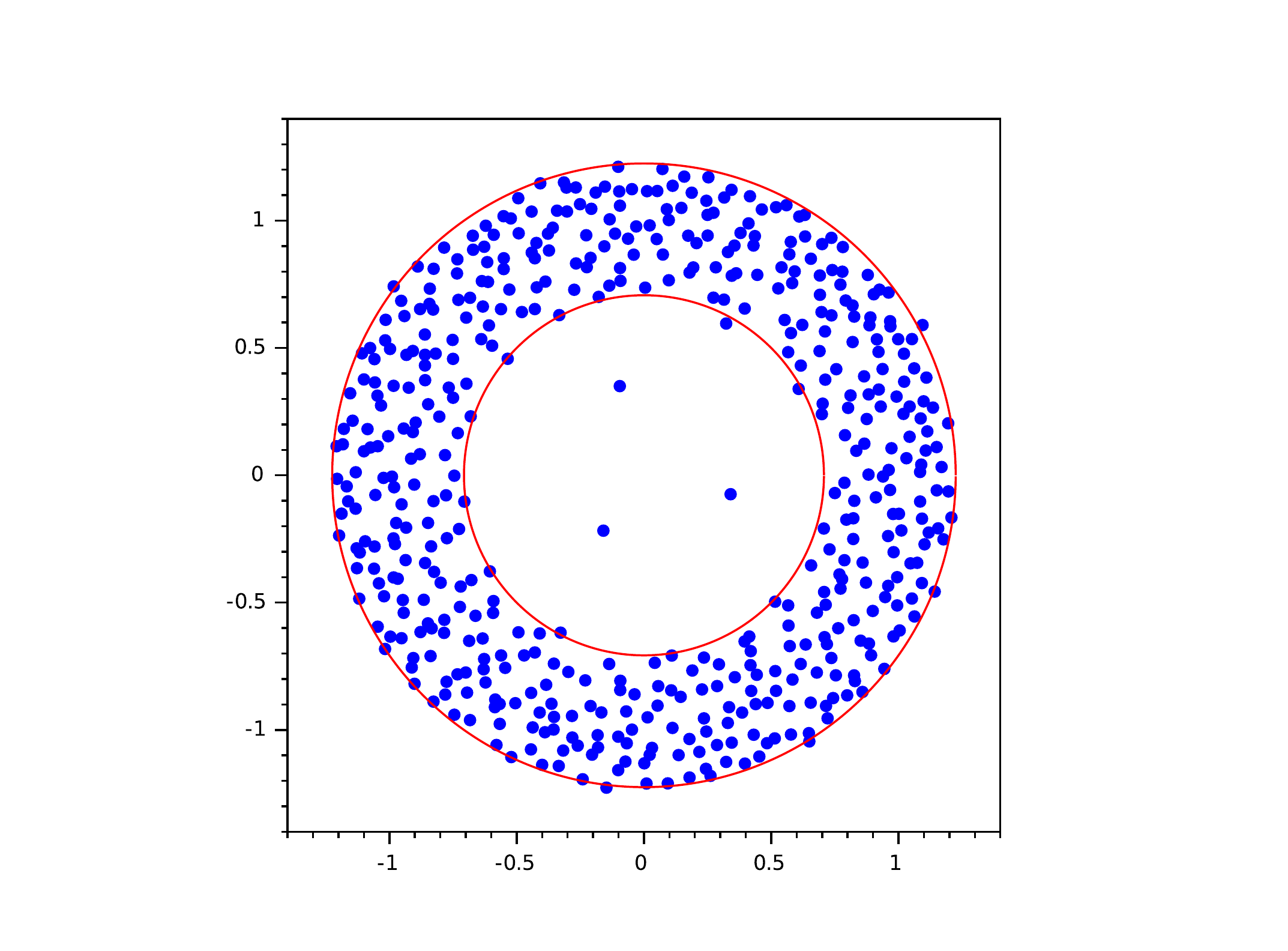}
\end{center}
\caption{Eigenvalues of $M_N$ for $A_N$ given \eqref{eq:ex1A} with $\sigma^2 = 1/2$, $X_{N}$ has complex Gaussian entries, $N = 500$ and $r = 3$. On the left: $B = 0$. On the right: $B  = e_{1} e_2^ * + e_{2} e_3 ^*$, we have $N^{1/(2r)}  = N^{1/6} \simeq 0.35$ ! 
}
\label{fig:nummatTCL}\end{figure}

It was recently discovered by  Benaych-Georges and Rochet \cite{BGR} in the related study of the outliers in the single ring theorem  (see discussion and related results below) that the fluctuations can be larger than $1/ \sqrt N$ when the Jordan decomposition of an eigenvalue is not a diagonal matrix. Inspired by their work, we have also studied the fluctuations when for some integer $r \geq 2$ and complex number $\theta_N$, 
\begin{equation}\label{eq:decompAN2}
A_N =   \left( \begin{array}{c|c}
\hat A_r & 0 \\
\hline
0 & \hat{A}_{N-r}  
 \end{array} \right), \hbox{ with } \quad \quad \hat A_r =  P_N J_N  P_N^{-1}, \quad P_N \in \mathrm{GL}_r(\dC),
\end{equation}
and  $J_N \in M_r (\dC)$ is the Jordan matrix
$$
J_N =  \begin{pmatrix}
  \theta_N & 1 &   &   &   \\
    & \theta_N &  1 &  &   \\
   &  &  \ddots  & \ddots&  
 \end{pmatrix}.
$$

\begin{theoreme}\label{th:TCL2}
Suppose that assumptions (X1-X2) and assumptions (A1-A3) hold with $A_N$ given by \eqref{eq:decompAN2}. We suppose further that $\theta_N $ converges toward $\theta \in \mathbb{C} \setminus \mathrm{supp}(\beta )$  when $N$ goes to infinity and that for some $\eta >0$ and all large $N$, $ \hat{A}_{N-r} - \theta I_{N-r}$ has no singular value in $[0,\eta]$.  Assume finally that $\| P_N - P \| \to 0$ for some $P \in \mathrm{GL}_r(\dC)$, and that either $\dE X_{11}^2 = 0$ or that $\frac{1}{N-r} \Tr \left\{ (  \theta I_{N-r} -   \hat{A}_{N-r}  )^{-1} (  \theta I_{N-r} -\hat{A}_{N-r}^{\top}  )^{-1}  \right\} $  converges to $\psi \in \dC$ (in the first case, we set $\psi = 0$).

Then, for any  $0 < \delta< \eta$, almost surely  for all large $N$  there are exactly $r$ eigenvalues
$\lambda_i$, $i=1, \ldots,r$ of $M_N$ in $B(\theta,\delta)$. Moreover, the point process of  $\left(N^{1/(2r)} ( \lambda_1 -\theta_N), \ldots,N^{1/(2r)} ( \lambda_r -\theta_N) \right) $ converges in distribution towards the point process of the roots of the random polynomial 
$$
z^r - e_r^*  P^{-1} V P e_1,
$$
where $V$  is defined by \eqref{defV}.
\end{theoreme}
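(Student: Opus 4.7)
The plan is to reduce the problem, via a Schur complement, to the analysis of an $r \times r$ determinant whose structure is governed by the Jordan block $J_N$. Corollary \ref{cor:main} already guarantees that a.s.\ for all large $N$ there are exactly $r$ eigenvalues of $M_N$ in $B(\theta,\delta)$, so the task is to describe their joint law after rescaling by $N^{1/(2r)}$. Decomposing $M_N - zI$ according to \eqref{eq:decompAN2} and using that (as in the proof of Theorem \ref{th:TCL}) the bottom-right block $\hat A_{N-r} - zI + \sigma Y_{22}$ is a.s.\ uniformly invertible with operator norm $O(1)$ for $z$ in a small neighborhood of $\theta$ disjoint from $\supp(\beta)$, the eigenvalues of $M_N$ there coincide with the zeros of $\det S(z)$, where
\[
S(z) = \hat A_r - zI + \sigma Y_{11} - \sigma^2 Y_{12}\bigl(\hat A_{N-r} - zI + \sigma Y_{22}\bigr)^{-1} Y_{21}.
\]
Writing $R(z) := (\hat A_{N-r} - zI + \sigma Y_{22})^{-1}$ and conjugating by $P_N$, this is equivalent to locating the zeros of $\det(J_N - zI + \omega(z))$ near $\theta_N$, where
\[
\omega(z) := P_N^{-1}\bigl(\sigma Y_{11} - \sigma^2 Y_{12} R(z) Y_{21}\bigr) P_N.
\]

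The proof of Theorem \ref{th:TCL} provides the Gaussian CLT $\sqrt{N}\bigl(\sigma Y_{11} - \sigma^2 Y_{12} R(\theta_N) Y_{21}\bigr) \vers V$ in distribution for $V$ as in \eqref{defV}; combined with $\|P_N - P\| \to 0$ this gives $\sqrt{N}\,\omega(\theta_N) \vers P^{-1} V P$. To extend the convergence to the rescaled window $z = z_N(w) := \theta_N + w N^{-1/(2r)}$ with $w$ in a fixed compact of $\dC$, the resolvent identity yields
\[
\omega(z) - \omega(\theta_N) = -\sigma^2 (z - \theta_N)\, P_N^{-1} Y_{12} R(z) R(\theta_N) Y_{21} P_N,
\]
and a second-moment calculation using $\|R(z) R(\theta_N)\|_F^2 = O(N)$ gives entrywise $\omega(z) - \omega(\theta_N) = O_{\dP}(N^{-1/2 - 1/(2r)})$, uniformly in $w$. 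Hence $\sqrt N\,\omega(z_N(w)) \vers P^{-1} V P$ locally uniformly in $w$.

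Setting $\lambda := w N^{-1/(2r)}$ and $E := J_N - \theta_N I_r$ (the standard nilpotent shift), expand the characteristic polynomial of $E + \omega(z_N(w))$:
\[
(-1)^r \det\bigl(J_N - z_N(w) I + \omega(z_N(w))\bigr) = \lambda^r + \sum_{k=1}^{r} (-1)^k c_k\bigl(E + \omega(z_N(w))\bigr)\, \lambda^{r-k},
\]
where $c_k(M)$ denotes the sum of $k \times k$ principal minors of $M$. Since $E$ is nilpotent, $c_k(E) = 0$ for $k \geq 1$, whence $c_k(E + \omega) = O(|\omega|) = O_{\dP}(N^{-1/2})$. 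Moreover, cofactor expansion along the first column gives $\det(E + W) = (-1)^{r+1} W_{r,1} + O(|W|^2)$, the only nonvanishing cofactor of $E$ being the $(r,1)$-one, equal to $(-1)^{r+1}$. Multiplying the displayed identity by $\sqrt N$, the $k = r$ term contributes $(-1)^r \sqrt N\, c_r = -\sqrt N\, \omega_{r,1}(z_N(w)) + o_{\dP}(1) \vers -e_r^* P^{-1} V P e_1$, while each $k < r$ term is $O_{\dP}(\sqrt N \cdot N^{-1/2} \cdot N^{-(r-k)/(2r)}) = o_{\dP}(1)$. Consequently,
\[
(-1)^r \sqrt{N}\, \det\bigl(J_N - z_N(w) I + \omega(z_N(w))\bigr) \;\vers\; w^r - e_r^* P^{-1} V P e_1
\]
in distribution, locally uniformly in $w$. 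The limit polynomial has an a.s.\ nonzero, continuously distributed constant term, hence $r$ simple roots a.s.; Hurwitz's theorem then promotes the uniform convergence to convergence in distribution of the point process $\sum_{i=1}^{r}\delta_{N^{1/(2r)}(\lambda_i - \theta_N)}$ toward the point process of roots of $z^r - e_r^* P^{-1} V P e_1$. The main technical obstacle lies in the uniform-in-$w$ control of the second step: the naive operator-norm Lipschitz bound on $R$ would only give $\sqrt N(\omega(z)-\omega(\theta_N)) = O(N^{1/2-1/(2r)})$, which diverges for $r \geq 2$; the finer Frobenius-norm bound on $R(z)R(\theta_N)$ is what rescues the argument and cleanly isolates the $(r,1)$-entry of $P^{-1}VP$ as the sole driver of the limit.
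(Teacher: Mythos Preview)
Your approach is correct and offers a genuinely different, more direct route than the paper's. Both proofs start from the same Schur complement reduction to an $r \times r$ determinant, and both must establish that $\sqrt{N}\,\omega(z_N(w))$ converges to $P^{-1}VP$ uniformly on compacts in $w$. From there the two arguments diverge.

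The paper diagonalizes the nilpotent shift $E = J_N - \theta_N I_r$ as $U(D - vu^*)U^*$ with $D$ the diagonal of $r$-th roots of unity, then applies the Sylvester identity \eqref{eq:idcle} to reduce to the \emph{scalar} equation $1 - u^* D_N(z)^{-1} v = 0$, and exploits the algebraic identity $u^* D^{-q} v = \IND\{q \equiv 1 \pmod r\}$ inside a Neumann expansion of $D_N^{-1}$ (Lemma \ref{le:TCL2}) to isolate the two surviving terms $(z-\theta_N)^r$ and $N^{-1/2}\,u^* D^{-1} W_N D^{-1} v$; unwinding the change of basis turns the latter into $N^{-1/2}\,e_r^* P^{-1} V_N P\, e_1$. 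Your route stays in the Jordan basis and extracts the same conclusion from two elementary facts about $E$: that $c_k(E) = 0$ for all $k \geq 1$, and that $\mathrm{adj}(E) = (-1)^{r+1} e_1 e_r^\top$, so only the $(r,1)$ entry of the perturbation survives to first order in $\det(E+\omega)$. This is cleaner and avoids the DFT detour entirely.

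The one point that deserves a further sentence is the uniform-in-$w$ control you flag at the end. Your Frobenius-norm second-moment bound gives, conditionally on $Y_{22}$ and on the good event $\{\|R\| \leq C\}$,
\[
\dE\Bigl[\,\bigl|\sqrt{N}\bigl(\omega(z_N(w)) - \omega(\theta_N)\bigr)_{ab}\bigr|^2 \,\Big|\, Y_{22}\Bigr] \;=\; O\bigl(|w|^2 N^{-1/r}\bigr),
\]
hence convergence to $0$ in probability \emph{at each fixed} $w$; but a second-moment bound alone does not deliver a supremum bound. What closes the gap is that $w \mapsto \sqrt{N}(\omega(z_N(w)) - \omega(\theta_N))$ is a random \emph{analytic} function on any disc, so the above uniform $L^2$ bound feeds into the tightness criterion for random analytic functions (Lemma \ref{criterion}), which upgrades pointwise convergence to locally uniform convergence in probability. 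The paper achieves the same end by a different mechanism: it iterates the resolvent identity $\ell \geq r$ times (Lemma \ref{le:boundCN2}), so that the remainder carries a factor $|z-\theta_N|^{\ell+1} = o(N^{-1/2})$ controllable by crude operator-norm bounds, while each coefficient in the resulting polynomial in $(z-\theta_N)$ is a $w$-independent bilinear form of size $O_{\dP}(N^{-1/2})$. Your analytic-tightness shortcut is quicker.
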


When $\hat A_ {N -r} = 0$ and $X_N$ is a complex Ginibre matrix, the above result is contained in \cite[Theorem 2.6]{BGR}.  This result shows the strong correlation of the outlier eigenvalues in the setting of Theorem \ref{th:TCL2}: properly rescaled they are asymptotically the $r$-th roots of the same random complex number. The right plot of Figure \ref{fig:nummatTCL} illustrates Theorem \ref{th:TCL2}.

\subsection{Unstable outliers}

\label{subsec:unsou}
Lemma \ref{le:noouter} does not rule out the possibility that \eqref{eq:ratioAA'} fails to hold in a bounded component of $\dC \backslash S$. Let us give a typical situation where \eqref{eq:ratioAA'} does not hold. Consider the nilpotent matrix $A_N$ given by \eqref{eq:Anil}. We have $A_N = A'_N + A''_N$ with 
$$
A'_N = \sum_{i=1}^{N-1} e_{i} e_{i+1}^*   + e_N e_1^*
\quad \hbox{ and } \quad A''_N = - e_N e_1^*. 
$$
$A'_N$ is a permutation matrix whose eigenvalues are  for $1 \leq \ell \leq N$, $\omega_\ell =  e^{\frac{ 2 i \pi \ell}{ N}}$ with associated normalized eigenvector $f_\ell  = (\omega_\ell ^k  / \sqrt N)_{1 \leq k \leq N}$. In particular, if $|z| \leq 1 - \veps$, 
$$
\left| \frac{ \det ( A_N - z ) }{\det ( A'_N - z) }\right| = \frac{ |z| ^N }{ |1 - z^N |}  \leq  \frac{ ( 1 - \veps)^N}{ 1 -  (1-\veps)^N}
$$
decreases exponentially fast. Hence Assumption \eqref{eq:ratioAA'} does not hold for $\Gamma \subset B ( 0, 1 - \veps)$. In Figure \ref{fig:unstable1}, we see numerically that the conclusion of Theorem \ref{th:main} does not seem to hold. Also, \eqref{eq:Anil} can be interpreted as a limit case of \eqref{eq:decompAN2} when $r =N$. Then, Theorem \ref{th:TCL2} hints at macroscopic fluctuations of outlier eigenvalues.  
\begin{figure}[htb]
\begin{center}
\includegraphics[width = 10cm]{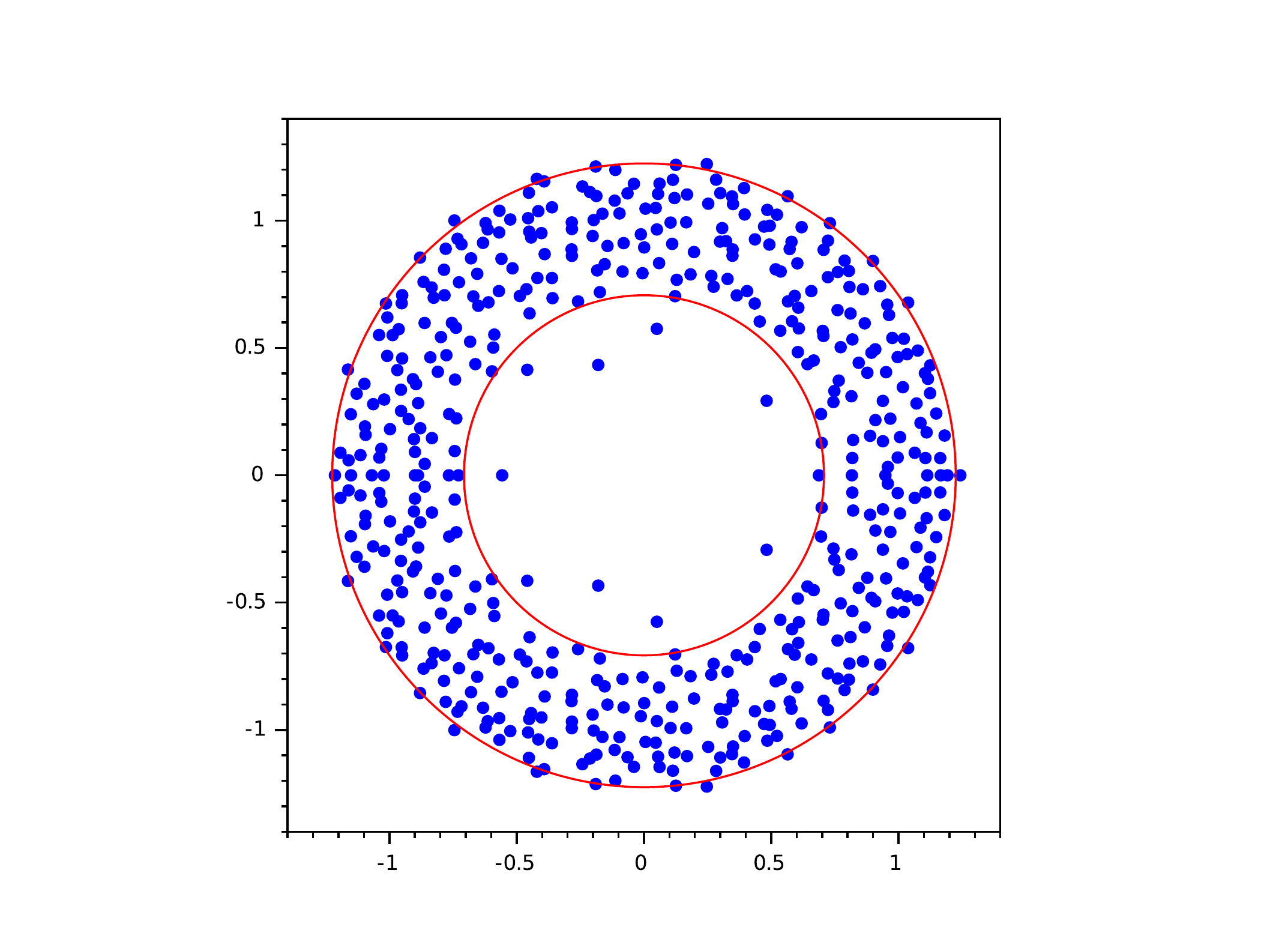}
\end{center}
\caption{Eigenvalues of $M_N$ where $A_N$ given by \eqref{eq:Anil} with $N = 500$,  $\sigma^2 = 1/2$ and $X_N$ has real Gaussian entries.  There are outlier eigenvalues in the bounded component of the complement of $\supp(\beta) = \{z \in \mathbb{C} : 1 / \sqrt 2 \leq |  z |  \leq \sqrt{3/2}\}$.}
\label{fig:unstable1}\end{figure}
To have a better picture, we may rewrite $A_N$ in the orthonormal basis of eigenvectors of $A'_N$. We have $A'_N = U_N B'_N U_N ^*$ with 
\begin{equation}\label{eq:A'Nnil}
B'_N = \mathrm{diag}  ( \omega_1, \cdots , \omega_N) 
\end{equation} 
and $A''_N = U_N B''_N U_N^*$ with 
\begin{equation}\label{eq:A''Nnil}
B''_N = - f_N f_1^\top.
\end{equation}
Hence, an important difference with the setting of Theorem \ref{th:TCL} is that $A''_N$ has a strongly delocalized decomposition in the orthonormal basis of eigenvectors of $A'_N$. This motivates the results of this paragraph.

For simplicity, we will reinforce the assumption (A4) by assuming that $A'_N$ is diagonal.

\begin{enumerate}
\item[(A4')] 
$A''_N$ has rank $r$, $A'_N$ is diagonal and for some $\varepsilon  > 0$ and  all $N$ large enough, all eigenvalues of $A'_N$ lie in $\dC \backslash B(\Gamma, \veps)$. 
\end{enumerate}
If $A'_N$ is a normal matrix and $X_{ij}$ are standard complex Gaussian variables, then, by unitary invariance, we can always assume that (A4') holds if (A4) holds.

We endow the set of analytic functions on a bounded connected open subset $U$ of $\dC$ with the distance
\begin{equation}\label{eq:distHU}
d(f,g)  = \sum_{j \geq 1} 2^{-j} \frac{ \| f -g\|_{K_j} }{ 1 + \| f - g \|_{K_j}},
\end{equation}
where $(K_j)_{j \geq 1}$ is an exhaustion by compact sets of $U$ and $\|f-g\|_{K_j}$ denotes the infinity norm of $f-g$ on $K_j$. We recall that it is complete separable metric space.   The interior of a set $K$ is denoted by $\mathring K$. 

%In order to circumvent the multiple possible choices to order the eigenvalues, we will consider the finite point set $\sum_{i=1} ^n \delta_{x_i}$ of a vector %$(x_1, \cdots, x_n)$.  Let us first recall some basic facts on finite point processes (we refer to Daley and Vere-Jones \cite[Appendix A.2.5]{MR1950431} for %details and terminology). If $\cS$ is a complete separable metric space, we denote by $\mathcal {N}(\mathcal S)$, the set of finite point (integer valued) %measures on $\mathcal S$, equipped with the usual weak topology.  Recall that a point process is random variable on $\mathcal N (\mathcal S)$. The set %$\cP( \mathcal N (\mathcal S)) $ of probability measures on $\cN(\cS)$ is a complete separable metric space and the L\'evy-Prohorov distance is a metric %for the weak convergence of measures in $\cP ( \cN(\cS))$ (or with a slight abuse of language, for weak convergence of point processes on $\cS$). Finally, %recall that the set of analytic functions on a compact set endowed with the uniform norm is complete separable metric space. 

\begin{theoreme}\label{th:nilpotent}
Assume that assumptions (X1-X3) and assumptions (A1'-A4') hold with $r=1$ and $\Gamma \subset \dC \backslash \supp (\beta)$ a compact set with continuous boundary. Assume further that  
$
A''_N =  v_{N} u_{N}^*
$
where $\| u_{N} \|_{\infty},  \| v_{N} \|_{\infty}$ are of order $O ( 1 / \sqrt N)  $ and
\begin{equation}
\max_{z \in \Gamma} \left| \frac { \det ( A_N - z) }{\det ( A'_N -z ) } \right| = o \left( \frac 1 {\sqrt N} \right) \label{eq:ratiodets}.
\end{equation}

Consider the centered Gaussian process $(g_N(z))_{z \in \Gamma}$ with covariance given by, for $z , w \in \Gamma$, 
\begin{eqnarray*}
\dE g_N(z) \bar g_N(w) &=& \frac{  u_N^* R'_N(z) (R'_N(w) )^* u_N v_N^* (R'_N(w) )^*  R'_N(z)  v_N }{ 1 -\sigma^2  \varphi_N(z,w)} \\
\dE g_N(z) g_N(w) &=& \frac{  u_N^* R'_N(z) R'_N(w)  \bar u_N v_N^T (R'_N(w)) ^T  R'_N(z)  v_N  \dE X_{11}^2}{ 1 - \dE X_{11}^2 \sigma^ 2 \psi_N(z,w)} 
\end{eqnarray*}
where $R'_N (z) = ( zI_N - A'_N  )^{-1}$, $\varphi_N(z,w) = \frac 1 N \Tr R'_N(z) (R'_N(w))^*$ and $\psi_N(z,w) = \frac 1 N \Tr R'_N(z) R'_N(w)$. 

Then, $g_N$ is a tight sequence of random analytic functions in $\mathring \Gamma$. Moreover, the L\'evy-Prohorov distance between the point process of eigenvalues of $M_N$ in $\mathring \Gamma$ and the point process of zeros of $g_N$ in $\mathring \Gamma$ goes to $0$ as $N$ goes to $\infty$.  
\end{theoreme}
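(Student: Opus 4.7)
The plan is to reduce the problem to locating the zeros of a scalar random analytic function built from a rank-one Schur complement, establish a Gaussian CLT for that function, and then apply a Hurwitz-type argument to transfer the convergence to the zero point processes.

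\textbf{Step 1 (Schur reduction).} Set $M'_N = A'_N + \sigma Y_N$ and $\tilde R_N(z) = (z I_N - M'_N)^{-1}$. Since $A''_N = v_N u_N^*$ has rank one, the matrix determinant lemma gives
$$
\frac{\det(M_N - zI_N)}{\det(M'_N - zI_N)} = 1 - u_N^* \tilde R_N(z) v_N, \qquad \frac{\det(A_N - zI_N)}{\det(A'_N - zI_N)} = 1 - u_N^* R'_N(z) v_N .
$$
The matrix $A'_N$ satisfies (A1)--(A4) on $\Gamma$ (since $A''_N$ has rank one, the empirical singular value distributions of $A_N - zI_N$ and $A'_N - zI_N$ share the same limit $\nu_z$, so (A2) and (A3) for $A_N$ carry over to $A'_N$, while (A4) follows from (A4')). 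Hence Theorem~\ref{inclusion} applies to $M'_N$ and yields that, a.s. for all $N$ large enough, $\det(M'_N - zI_N)$ does not vanish in an open neighborhood of $\Gamma$. Eigenvalues of $M_N$ in $\mathring\Gamma$ thus coincide, with multiplicities, with zeros of the random analytic function
$$
F_N(z) := \sqrt N\,\bigl(1 - u_N^* \tilde R_N(z) v_N\bigr).
$$
Assumption~\eqref{eq:ratiodets} reads $\sqrt N\,(1 - u_N^* R'_N(z) v_N) = o(1)$ uniformly on $\Gamma$, so
$$
F_N(z) = \sqrt N\, u_N^* \bigl(R'_N(z) - \tilde R_N(z)\bigr) v_N + o(1)
$$
uniformly on $\Gamma$.

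\textbf{Step 2 (CLT for the bilinear form).} Set $\Delta_N(z) := \sqrt N\, u_N^*(R'_N(z) - \tilde R_N(z)) v_N$. The resolvent identity $\tilde R_N - R'_N = \sigma R'_N Y_N \tilde R_N$ gives $\Delta_N(z) = -\sigma\sqrt N\, u_N^* R'_N(z) Y_N \tilde R_N(z) v_N$. The leading first-order contribution is the linear form in $Y_N$
$$
L_N(z) := \sigma\sqrt N\, \alpha_N(z)^* Y_N \beta_N(z), \qquad \alpha_N(z) := R'_N(z)^* u_N, \quad \beta_N(z) := R'_N(z) v_N.
$$
The delocalization $\|u_N\|_\infty, \|v_N\|_\infty = O(N^{-1/2})$ combined with (A4') ensures $\|\alpha_N\|_2, \|\beta_N\|_2 = O(1)$ uniformly on compacts of $\mathring\Gamma$, and a direct pairing computation gives $\dE\,L_N(z)\overline{L_N(w)}$ and $\dE\,L_N(z) L_N(w)$ equal to the \emph{numerators} of the two kernels in the statement. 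To obtain the denominators $(1 - \sigma^2 \varphi_N)^{-1}$ and $(1 - \dE X_{11}^2 \sigma^2 \psi_N)^{-1}$, I would substitute the resolvent identity once more for the remaining $\tilde R_N$ and apply Gaussian integration by parts (in the Gaussian case) on the resulting quadratic-in-$Y$ term. This yields a closed self-consistent relation of the schematic form
$$
\dE\,\Delta_N(z)\overline{\Delta_N(w)} = \sigma^2 \varphi_N(z,w)\,\dE\,\Delta_N(z)\overline{\Delta_N(w)} + \dE\,L_N(z)\overline{L_N(w)} + o(1),
$$
which is then solved for the two-point function; an analogous relation produces $\dE\,\Delta_N(z)\Delta_N(w)$ with $\varphi_N$ replaced by $\dE X_{11}^2\, \psi_N$. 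Higher-order joint moments close by Wick's formula, giving convergence in finite-dimensional distributions to the centered Gaussian field $-g_N$. Transfer to general $X_{ij}$ under (X1)--(X3) is a standard Lindeberg swap at the scale $1/\sqrt N$, in which the delocalization of $u_N, v_N$ controls the third-order Taylor remainders and (X3) provides the high-moment bounds needed for the swap and for tightness.

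\textbf{Step 3 (tightness and zeros).} Tightness of $F_N$ in the space of analytic functions on $\mathring\Gamma$ equipped with~\eqref{eq:distHU} reduces, via Montel's theorem, to a uniform bound on $F_N$ on compact subsets of $\mathring\Gamma$, which follows from the resolvent bound $\|\tilde R_N(z)\| = O(1)$ on such compacts, itself guaranteed by Theorem~\ref{inclusion} applied to $M'_N$. Tightness of $g_N$ follows from its explicit covariance together with (A4'). Combining Steps~1--2 with Hurwitz's theorem, applied along a.s.-convergent subsequences obtained via Skorokhod's representation, yields that, for every relatively compact open subset of $\mathring\Gamma$ whose boundary a.s. avoids the zero set of $g_N$, the multisets of zeros of $F_N$ and of $g_N$ in that subset coincide up to $o(1)$. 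A uniform version over a countable basis of such subsets gives the claimed L\'evy--Prohorov convergence.

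\textbf{Main obstacle.} The most delicate point is the CLT of Step~2 with the exact resummed denominators: the Neumann expansion of $\tilde R_N - R'_N$ is not norm-convergent throughout $\mathring\Gamma$ (since $\|\sigma R'_N(z) Y_N\|$ may well exceed $1$ even outside $\mathrm{supp}(\beta)$), so the geometric resummation cannot be obtained termwise but must be extracted from a nonperturbative identity. Tracking both covariance kernels jointly---with their distinct dependence on $\dE X_{11}^2$ and on $\varphi_N$ vs.\ $\psi_N$---and ensuring universality under (X1)--(X3) is where the bulk of the technical work lies.
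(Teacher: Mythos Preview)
Your overall architecture (Schur reduction to a scalar function $F_N$, CLT, tightness, Hurwitz) matches the paper, but two of your three technical steps contain genuine gaps.

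\textbf{The ``main obstacle'' is not an obstacle.} You correctly observe that $\|\sigma R'_N(z)Y_N\|$ can exceed $1$ on $\Gamma$, and conclude that the Neumann expansion of $\tilde R_N$ cannot be used termwise. This is precisely where the paper's argument differs from yours: Proposition~\ref{borne} shows that although the \emph{norm} of $T_N:=\sigma R'_N(z)Y_N$ may be large, its \emph{spectral radius} satisfies $\rho(T_N)\le 1-\epsilon_0<1$ a.s.\ for all large $N$, uniformly in $z\in\Gamma$ (Lemma~\ref{rayonspectral}, obtained by relating eigenvalues of $T_N$ to eigenvalues of $w^{-1}Y_N+A'_N$ and invoking Proposition~\ref{nonunif}). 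The Cauchy integral formula on a circle of radius $1-\epsilon_0/2$ then gives $\|T_N^k\|\le C(1-\epsilon_0/2)^{k}$. Hence the Neumann series \emph{does} converge in operator norm, and one can write
\[
F_N(z)=\varepsilon_N(z)+\sum_{k\ge 1}a_k(z),\qquad a_k(z)=-\sqrt N\,u_N^*\bigl(R'_N(z)\sigma Y_N\bigr)^kR'_N(z)v_N,
\]
with $\varepsilon_N(z)=o(1)$ by~\eqref{eq:ratiodets}. Each $a_k$ is a polynomial in $Y_N$ sandwiched between deterministic vectors, so the method of moments (Proposition~\ref{prop:fTCL}) gives a CLT for any finite collection $(a_k(z_i))$; the limiting covariances are proportional to $\varphi_N(z,w)^{k-1}$ and $\psi_N(z,w)^{k-1}$, and summing the geometric series produces the denominators $(1-\sigma^2\varphi_N)^{-1}$ and $(1-\dE X_{11}^2\,\sigma^2\psi_N)^{-1}$ directly. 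There is no need for a nonperturbative self-consistent equation.

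\textbf{Your tightness argument is insufficient.} In Step~3 you claim that tightness of $F_N$ follows from $\|\tilde R_N(z)\|=O(1)$. But $\|\tilde R_N(z)\|=O(1)$ only yields $|F_N(z)|\le C\sqrt N$, not $O(1)$. Writing $F_N(z)=-\sigma\sqrt N\,u_N^*R'_N(z)Y_N\tilde R_N(z)v_N+o(1)$, the vector $\tilde R_N(z)v_N$ is \emph{random}, so this is not a bilinear form in deterministic vectors and one cannot directly read off a variance bound. The paper instead proves (Proposition~\ref{prop:momentunif}, a path-counting moment estimate requiring (X3) and the diagonality in (A4')) that $\dE|a_k(z)|^2\le Ck^4(1-\delta)^k$ for $k\le N^{1/C}$, and combines this with the geometric decay of $\|T_N^k\|$ for large $k$ to control $\sup_z\dE|F_N(z)|$ on $\Gamma$ (Lemma~\ref{le:hnanalytic}).

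\textbf{On your alternative CLT route.} Your proposed self-consistent relation for the covariance is plausible in the Gaussian case, but the passage ``higher-order joint moments close by Wick's formula, giving convergence\dots to the Gaussian field'' is circular: Wick's formula is what you are trying to \emph{establish} for the limit, not an input. A cumulant expansion or Stein-type argument could in principle close this, but it is substantially more work than the paper's term-by-term CLT once the Neumann series is known to converge.
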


The intensity of zeros of $g_N$ can be computed explicitly thanks the Edelman-Kostlan's formula, see \cite[Theorem 3.1]{MR1290398}. With the material of this paper, it is possible to generalize Theorem \ref{th:nilpotent} for $r \geq 1$. The analog of condition \eqref{eq:ratiodets} will however be more complicated and the analog of $g_N$ will be the determinant of $r \times r$ random Gaussian matrix (see forthcoming Remark \ref{re:rgeneral}).

In \S \ref{subsec:recipe}, we will give  a general method to find perturbations $A''_N$ such that \eqref{eq:ratiodets} holds. In the specific case of the nilpotent matrix \eqref{eq:Anil}, formulas are simpler. Following the terminology of Hough, Krishnapur, Peres, Vir{\'a}g \cite{MR2552864}, we say that $g$ is a Gaussian analytic function on a domain $\Gamma \subset \dC$, if $g$ is a random analytic function on $\Gamma$, $(g(z), z \in \Gamma)$ is a Gaussian process and  for $z , w \in \Gamma$, $\dE g(z)  g(w) =0.$
% $z_1, \cdots, z_n$ in $\Gamma$,  $(g( z_1 ), . . . , g( z_n ))$ is a mean zero complex Gaussian
%distribution of the form $a x$ with $a \in M_n (\dC)$ and $(x_1, \cdots, x_n)$ i.i.d. complex Gaussian variables with $\dE |x_i |^2 = 1$, $\dE x_i ^2 = 0$. The distribution of a Gaussian analytic function is characterized by its kernel $K(z,w) = \dE g(z) \bar g(w)$ (see \cite[Chapter 2]{MR2552864}). The  Edelman-Kostlan's formula takes a particularly simple form for Gaussian analytic functions: the intensity of zeros of $g$ is given by $\frac 1 {4 \pi} \Delta \log K(z,z)$, where $\Delta$ is the usual Laplacian. 

The next corollary deals with the phenomenon illustrated by Figure \ref{fig:unstable1}.

\begin{corollaire}
\label{cor:nilpotent}
Let $A_N = B'_N + B''_N$ with $B'_N$ and $B''_N$ given by \eqref{eq:A'Nnil} and \eqref{eq:A''Nnil}. Suppose that  assumptions (X1-X3) hold. We set 
\begin{equation}\label{eq:defvarphi}
\varphi(z,w) = \frac 1 { 1 - z \bar w }. 
\end{equation}
  The support of $\beta$ is  $ \{ z \in \dC :  \sqrt{ (1 - \sigma^2)_+ } \leq  |z| \leq \sqrt {1 + \sigma^2} \}$. If $\sigma < 1$, the point process of eigenvalues of $M_N$ in $\mathring B(0,\sqrt {1 - \sigma^2} )$ converges weakly to the zeros of the Gaussian analytic function $g(z)$ on $\mathring B(0,\sqrt {1 - \sigma^2})$ with kernel given by, for $z , w \in \Gamma$, 
\begin{eqnarray*}
K(z,w) = \frac{   \varphi(z,w)^2  }{ 1 -\sigma^2  \varphi(z,w)}.  
\end{eqnarray*}
\end{corollaire}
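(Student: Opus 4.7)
The strategy is to apply Theorem \ref{th:nilpotent} on compact disks $\Gamma_r := \bar B(0, r)$ with $r < \sqrt{1 - \sigma^2}$ and to identify the limiting covariance of the Gaussian process $g_N$ as $K$.

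I first verify the hypotheses of Theorem \ref{th:nilpotent} with $A'_N = B'_N$ and $A''_N = B''_N = v_N u_N^*$, where $v_N = -f_N$ and $u_N = \bar f_1$. Since $A_N$ is unitarily equivalent to the transpose of the nilpotent matrix \eqref{eq:Anil}, assumptions (A1)-(A2) hold with $\nu_z$ the law of $|L-z|^2$ for $L$ uniform on the unit circle, as recalled in \S 1.2; Lemma \ref{le:A3normalcase} applied to the normal $B'_N$ together with the rank-one $B''_N$ then gives (A3) and pins down $\supp(\beta)$ as the announced annulus. The eigenvalues of $A'_N$ lie on the unit circle, so (A4') holds on $\Gamma_r$; the entries of $u_N, v_N$ have modulus $1/\sqrt N$; and $|\det(A_N - z)/\det(A'_N - z)| = |z|^N/|z^N - 1|$ is exponentially small on $\Gamma_r$, verifying \eqref{eq:ratiodets}.

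The core step is the limit of the covariance of $g_N$. Since $A'_N$ is diagonal with entries $\omega_k = e^{2i\pi k/N}$, $R'_N(z)$ is diagonal with $k$-th entry $(z - \omega_k)^{-1}$. Using the Laurent expansions in $z$ and $\bar w$ together with the discrete Fourier orthogonality $\frac{1}{N}\sum_k \omega_k^{-p} = \1\{N \mid p\}$, one obtains $\varphi_N(z, w) \to \varphi(z, w) = 1/(1 - z\bar w)$ and $\psi_N(z, w) \to 0$ exponentially on $\Gamma_r \times \Gamma_r$ (the surviving Fourier modes of $\psi_N$ satisfy $n + m \equiv N-2 \bmod N$ with $n, m \ge 0$, hence $n + m \ge N - 2$). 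Because $|u_N^{(k)}|^2 = |v_N^{(k)}|^2 = (v_N^{(k)})^2 = 1/N$ are all constant in $k$, both $u_N^* R'_N(z)(R'_N(w))^* u_N$ and $v_N^*(R'_N(w))^* R'_N(z) v_N$ reduce to $\varphi_N(z, w)$, while $v_N^T (R'_N(w))^T R'_N(z) v_N$ equals $\psi_N(z, w)$. Substituting in the covariance formulas of Theorem \ref{th:nilpotent} gives $\dE g_N(z) \overline{g_N(w)} \to K(z, w)$ and $\dE g_N(z) g_N(w) \to 0$, so the limit is the advertised GAF.

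Finally, tightness of $g_N$ (from Theorem \ref{th:nilpotent}) combined with the convergence of covariances (equivalent to convergence of finite-dimensional distributions, as both sides are centered Gaussian) yields $g_N \to g$ in distribution as random analytic functions on $\mathring \Gamma_r$. A Hurwitz-type argument (the zeros of the non-degenerate GAF $g$ a.s.\ avoid $\partial \Gamma_r$) transfers this to weak convergence of the zero processes, which by Theorem \ref{th:nilpotent} coincide asymptotically with the eigenvalue processes of $M_N$ on $\mathring \Gamma_r$; an exhaustion $r \uparrow \sqrt{1 - \sigma^2}$ extends the convergence to $\mathring B(0, \sqrt{1 - \sigma^2})$. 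The main obstacle is the vanishing $\dE g_N(z) g_N(w) \to 0$ that makes the limit a Gaussian \emph{analytic} function rather than a generic Gaussian process: it uses crucially that $v_N = -f_N$ has constant (phase-free) entries, so that the pseudo-covariance reduces to $\psi_N$, whose lowest-order surviving Fourier mode is pushed off the origin by the factor $\omega_k^{-2}$ coming from two co-oriented resolvents and hence decays exponentially.
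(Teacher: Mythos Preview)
Your proof is correct and follows essentially the same route as the paper: verify the hypotheses of Theorem \ref{th:nilpotent}, compute the limits of $\varphi_N$ and $\psi_N$, and conclude via tightness of $g_N$ plus a Hurwitz/Rouch\'e argument (Lemma \ref{le:roucheRAF}). The only cosmetic difference is that the paper evaluates $\varphi(z,w)$ and $\psi(z,w)$ by recognizing the Riemann sums as contour integrals and applying the residue theorem, whereas you use Laurent expansions and the discrete Fourier orthogonality $\frac{1}{N}\sum_k \omega_k^{-p}=\1\{N\mid p\}$; both methods yield $\varphi_N\to 1/(1-z\bar w)$ and $\psi_N\to 0$ on the inner disk, and the paper likewise relies on the vanishing of the $\psi$-term (rather than any assumption on $\dE X_{11}^2$) to force the limit to be a Gaussian analytic function.
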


We may notice the following surprising fact. As $\sigma \to 0$, the kernel $K(z,w)$ appearing in Corollary \ref{cor:nilpotent} does not vanish, it converges pointwise to the kernel $K_0(z,w) = \varphi(z,w)^2$ on the unit complex disc. The kernel $K_0$ is the kernel of the Gaussian analytic function
$$
g(z) = \sum_{k=0}^\infty   z^k \gamma_k\sqrt { k+1}  , 
$$
where $\gamma_k$ are iid complex Gaussian variables with $\dE \gamma_k ^2 = 0$, $\dE |\gamma_k|^2 = 1$. This Gaussian analytic function may thus be related to the numerical phenomenon illustrated by the right plot of Figure \ref{fig:nummat}.

Finally, as in Rajan and Abbott \cite{PhysRevLett.97.188104} and Tao \cite{tao-outliers}, interesting outliers may appear when $\|A''_N\|$ is of order $\sqrt N$.

\begin{theoreme}\label{th:largenorm}
Assume that assumptions (X1-X3) and assumptions (A2-A4') hold with $r=1$ and $\Gamma \subset \dC \backslash \supp (\beta)$ a compact set with continuous boundary. We set $R'_N (z) =  ( z I_N - A'_N)^{-1}$, and assume further that $\|A'_N \|  = O(1)$ and $
A''_N =  \sqrt N v_{N} u_{N}^*
$ where $\| u_{N} \|_{\infty},  \| v_{N} \|_{\infty}$ and $u_N^* R'_N(z) v_N $  are of order $O ( 1 / \sqrt N)  $. 

Consider the centered Gaussian process $(g_N(z))_{z \in \Gamma}$ of Theorem \ref{th:nilpotent}. Then, the L\'evy-Prohorov distance between the point process of eigenvalues of $M_N$ in $\mathring \Gamma$ and the point process of zeros of $1 - \sqrt{N} u_N^* R'_N(z) v_N + \sigma g_N(z)$ in $\mathring \Gamma$ goes to $0$ as $N$ goes to $\infty$.  
\end{theoreme}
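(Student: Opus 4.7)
The plan is to reduce Theorem~\ref{th:largenorm} to the Gaussian CLT for resolvent bilinear forms that is the technical core of the proof of Theorem~\ref{th:nilpotent}, with the sole difference that the deterministic contribution which was $o(1)$ under~\eqref{eq:ratiodets} is now of order one and must be retained. Since $A''_N = \sqrt{N}\, v_N u_N^*$ has rank one, the matrix determinant lemma gives
\begin{equation*}
\det(M_N - zI_N) = \det(A'_N + \sigma Y_N - zI_N)\bigl(1 - \sqrt{N}\, u_N^* G_N(z) v_N\bigr),
\end{equation*}
where $G_N(z) := (zI_N - A'_N - \sigma Y_N)^{-1}$. Because $A''_N$ has rank one, the limiting measure $\beta$ is the same for $A_N + \sigma Y_N$ and for $A'_N + \sigma Y_N$, so Theorem~\ref{inclusion} applied with $A'_N$ in the role of $A_N$ and $A''_N = 0$ (using $\|A'_N\| = O(1)$ and (A2-A4')) implies that a.s., for all $N$ large enough, the first factor does not vanish on $\Gamma$. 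Consequently, the eigenvalues of $M_N$ in $\mathring\Gamma$ coincide with the zeros of the random analytic function $F_N(z) := 1 - \sqrt{N}\, u_N^* G_N(z) v_N$.

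Using the resolvent identity $G_N = R'_N + \sigma R'_N Y_N G_N$, I decompose
\begin{equation*}
F_N(z) = D_N(z) - \sigma\, \Phi_N(z), \quad D_N(z) := 1 - \sqrt{N}\, u_N^* R'_N(z) v_N, \quad \Phi_N(z) := \sqrt{N}\, u_N^* R'_N(z) Y_N G_N(z) v_N.
\end{equation*}
Under (A4'), $R'_N$ is uniformly bounded on compact subsets of $\mathring\Gamma$; the standing hypothesis $u_N^* R'_N(z) v_N = O(1/\sqrt{N})$ then implies that $(D_N)$ is locally uniformly bounded on $\mathring\Gamma$, hence precompact in the Fr\'echet space of analytic functions endowed with the metric~\eqref{eq:distHU}.

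The principal step, which is already the technical heart of Theorem~\ref{th:nilpotent}, is the convergence of $\Phi_N$, jointly with $D_N$, to a centered Gaussian analytic function on $\mathring\Gamma$ with the same covariance structure as $g_N$. A direct second-moment computation based on $Y_N = X_N/\sqrt{N}$ and the independence of the $X_{ij}$ produces the numerators of $\dE g_N(z)\overline{g_N(w)}$ and $\dE g_N(z) g_N(w)$ with $G_N$ replaced by $R'_N$; the self-consistent denominators $1 - \sigma^2 \varphi_N(z,w)$ and $1 - \dE X_{11}^2\, \sigma^2 \psi_N(z,w)$ arise by resumming the geometric expansion in $\sigma R'_N Y_N$. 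Tightness in the space of analytic functions and asymptotic Gaussianity are obtained as in Theorem~\ref{th:nilpotent} via higher-cumulant bounds, which is where assumption (X3) enters. Since $g_N$ is centered, $-\Phi_N$ and $g_N$ have the same asymptotic law, hence $(D_N, F_N)$ is asymptotically distributionally close to $(D_N, D_N + \sigma g_N)$ in the topology induced by~\eqref{eq:distHU}.

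To conclude, I apply the same Hurwitz-type continuity argument used elsewhere in the paper: the map sending a non-identically-zero analytic function on $\mathring\Gamma$ to its finite point process of zeros on any relatively compact subdomain is continuous, so the L\'evy-Prohorov distance between the point processes of zeros of $F_N$ and of $D_N + \sigma g_N$ tends to zero. Non-triviality of the limit holds almost surely because the covariance kernel of $g_N$ is non-degenerate, which prevents $D_N + \sigma g_N$ from vanishing identically. The main obstacle is thus the joint CLT and tightness for $\Phi_N$ on $\mathring\Gamma$; this is precisely the machinery already assembled for Theorem~\ref{th:nilpotent}, and the only novelty here is to carry the $O(1)$ deterministic term $D_N$, rather than an $o(1)$ one, through that machinery.
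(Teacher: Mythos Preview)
Your proposal is correct and follows essentially the same route as the paper: reduce to the rank-one determinant $F_N(z)=1-\sqrt N\,u_N^* G_N(z)v_N$, split off the deterministic part $D_N(z)=1-\sqrt N\,u_N^* R'_N(z)v_N$, and recognize the remaining random part as exactly the series $\sum_{k\ge1}a_k(z)$ whose tightness and asymptotic Gaussianity (towards $g_N$) were already established in the proof of Theorem~\ref{th:nilpotent}, then conclude via Lemma~\ref{le:roucheRAF}. The only cosmetic difference is that the paper writes the full Neumann series \eqref{eq:devTaylor34} directly rather than stopping at one resolvent-identity step and then resumming, but the content is identical.
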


\begin{figure}[htb]
\begin{center}
\includegraphics[width = 10cm]{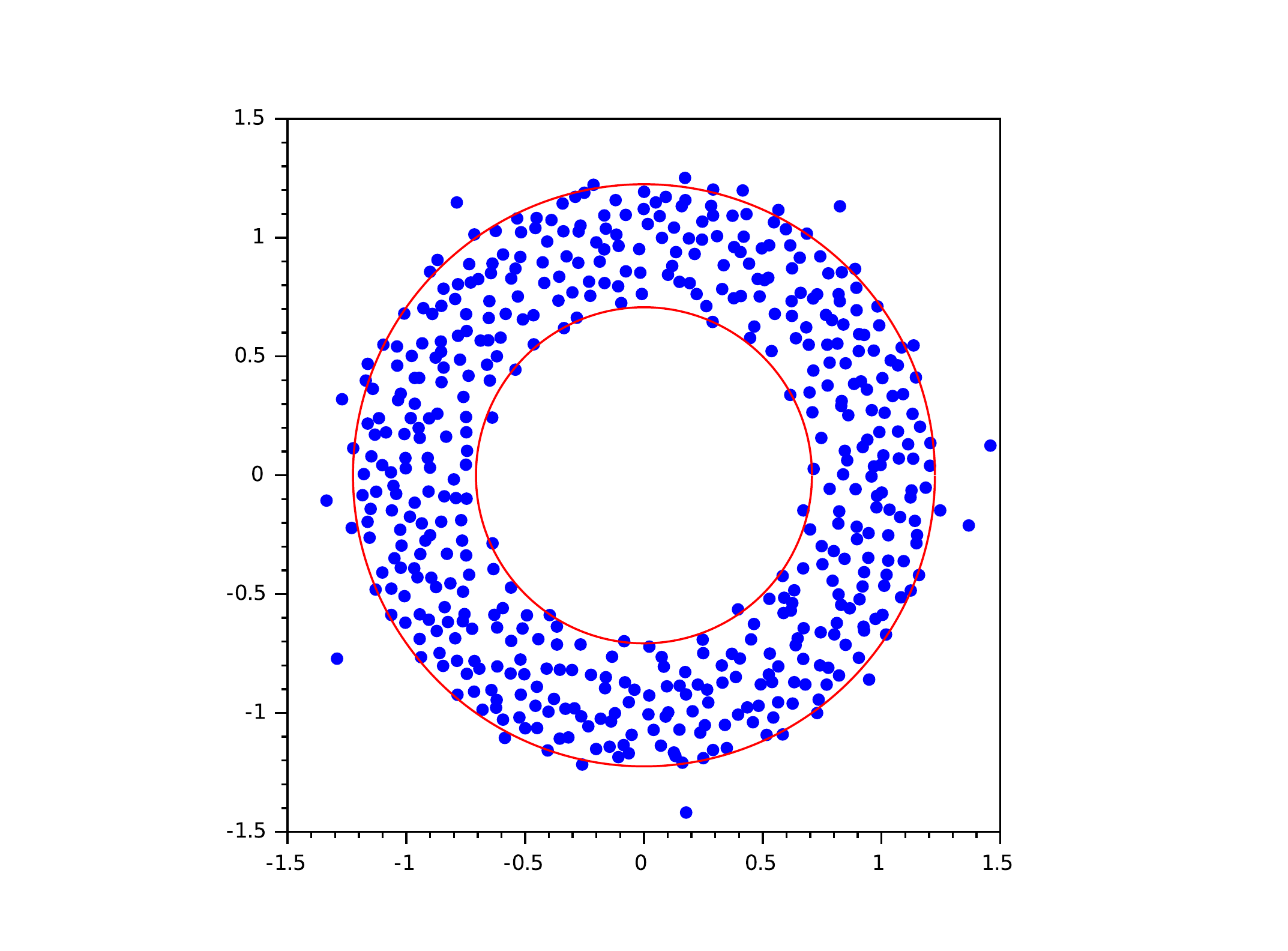}
\end{center}
\caption{Eigenvalues of $M_N$ where $A_N =B'_N - \sqrt N B''_N $ given by \eqref{eq:A'Nnil} and \eqref{eq:A''Nnil}, $N = 500$, $\sigma^2 = 1/2$ and $X_N$ has complex Gaussian entries.  For $|z| > 1$, we have $f_N^T  R'_N(z) f_1 = 1 / ( z^N - 1) = o ( 1 / \sqrt N)$. The outlier eigenvalues in the unbounded component of the complement of $\supp(\beta) = \{z \in \mathbb{C} : 1 / \sqrt 2 \leq |  z |  \leq \sqrt{3/2}\}$ converge in distribution to the zeros of $1 + \sigma g$ where $g$ is the Gaussian analytic function with kernel $H(z,w) =     \varphi(z,w)^2  / ( 1 + \sigma^2  \varphi(z,w)) $ and $\varphi$ given by \eqref{eq:defvarphi}.}
\label{fig:unstable2}\end{figure}

 This result is illustrated with Figure \ref{fig:unstable2}. As an application, we have for example the following corollary which is related to \cite[Theorem 1.11]{tao-outliers}.
\begin{corollaire}\label{cor:largenorm}
Assume that assumptions (X1-X3) hold, that $\sigma = 1$, $A'_N = 0$ and $A_N = A''_N = \theta_N v_N u_N ^T$ with $\sqrt N /  \theta_N  \to  \kappa \in \dC$, $u_N , v_N \in \dR^N$, $\|u_N \| = \|v_N \| = 1$, $ \theta_N u_N^T  v_N  \to \lambda \in \dC$ and $\| u_{N} \|_{\infty},  \| v_{N} \|_{\infty}$ are of order $O ( 1 / \sqrt N) $.  Fix $\veps >0$ and set $\Gamma = \dC \backslash B(0, 1 + \veps)$.  We consider $g(z) = \sum_{k \geq 0} \gamma_k z^{-k}$ with $\gamma_k$ independent complex Gaussian variables with variance given by $
\dE |\gamma_k |^2  = 1$ and $\dE \gamma_k ^2 = ( \dE X_{11}^2)^{k+1}$.

Then,  as $N$ goes to $\infty$,  the point process of eigenvalues of $M_N$ in $\Gamma$ converges vaguely to the point process of zeros of $\kappa  z ( z  - \lambda )  +  g (z)$ in $\Gamma$.  
\end{corollaire}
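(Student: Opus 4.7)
The plan is to apply Theorem \ref{th:largenorm} to each compact annulus $\Gamma_R := \overline{B(0,R)} \setminus B(0,1+\veps)$ for arbitrary large $R$, identify the limit explicitly, and then let $R\to \infty$ to obtain vague convergence on the unbounded set $\Gamma$. To cast $A_N$ in the form required by Theorem \ref{th:largenorm}, observe that since $u_N$ is real, $u_N^T = u_N^*$, so $A''_N = \sqrt N\, \tilde v_N u_N^*$ with $\tilde v_N := (\theta_N/\sqrt N)\, v_N$. Because $\sqrt N/\theta_N \to \kappa$ (necessarily non-zero for the perturbation to have norm of order $\sqrt N$), the factor $\theta_N/\sqrt N$ is bounded and $\|\tilde v_N\|_\infty = O(1/\sqrt N)$. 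Assumptions (A2)--(A4') follow directly from $A'_N = 0$ and $\sigma = 1$: we have $\nu_z = \delta_{|z|^2}$, $\supp(\beta) = \overline{B(0,1)}$, and $A'_N$ is trivially diagonal with its unique eigenvalue $0$ bounded away from $\Gamma_R$. With $R'_N(z) = z^{-1} I_N$, the quantity $u_N^* R'_N(z)\tilde v_N = z^{-1}(\theta_N/\sqrt N)\, u_N^T v_N$ is $O(1/\sqrt N)$ uniformly on $\Gamma_R$ thanks to $\theta_N u_N^T v_N \to \lambda$. Theorem \ref{th:largenorm} then implies that the eigenvalues of $M_N$ in $\mathring\Gamma_R$ are L\'evy--Prohorov close to the zero set of
\[
h_N(z) \;:=\; 1 - \sqrt N\, u_N^* R'_N(z)\tilde v_N + g_N(z) \;=\; 1 - \frac{\theta_N u_N^T v_N}{z} + g_N(z).
\]

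Next I would compute all limits. The deterministic part converges uniformly on $\Gamma_R$ to $1 - \lambda/z$. Using $R'_N(z) = z^{-1} I_N$, $\|u_N\| = \|v_N\| = 1$, $u_N \in \dR^N$ (so $\bar u_N = u_N$), and $\tilde v_N^* \tilde v_N = |\theta_N|^2/N$, direct computation gives $\varphi_N(z,w) = 1/(z\bar w)$, $\psi_N(z,w) = 1/(zw)$, and the covariance formulas from Theorem \ref{th:nilpotent} reduce to
\[
\dE g_N(z)\overline{g_N(w)} = \frac{|\theta_N|^2/N}{z\bar w(z\bar w-1)}, \qquad \dE g_N(z) g_N(w) = \frac{(\theta_N^2/N)\,\dE X_{11}^2}{zw(zw - \dE X_{11}^2)}.
\]
Since $|\theta_N|^2/N \to 1/|\kappa|^2$ and $\theta_N^2/N \to 1/\kappa^2$, the tightness provided by Theorem \ref{th:nilpotent} upgrades to convergence in law of $g_N$, in the space of random analytic functions on $\mathring\Gamma_R$, toward a centered Gaussian analytic function $\tilde g$. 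Expanding $g(z) = \sum_{k\ge 0}\gamma_k z^{-k}$ from the statement in geometric series, one checks that $\tilde g$ has exactly the covariances of $g(z)/(\kappa z^2)$.

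To match the limit to the form stated in the corollary, I would use that $\kappa z^2$ does not vanish on $\Gamma$, so the zeros of $h_\infty(z) = 1 - \lambda/z + g(z)/(\kappa z^2)$ in $\Gamma$ coincide with the zeros of $\kappa z^2\, h_\infty(z) = \kappa z(z - \lambda) + g(z)$. Combining the convergence $h_N \Rightarrow h_\infty$ in the analytic topology on $\mathring\Gamma_R$ with a Hurwitz-type continuity of zero sets, and then with the L\'evy--Prohorov closeness from Theorem \ref{th:largenorm}, yields vague convergence of the eigenvalue process toward the zeros of $\kappa z(z-\lambda) + g(z)$ on $\mathring\Gamma_R$. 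Letting $R \to \infty$ produces the vague convergence on $\Gamma$ asserted in the corollary.

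The main technical obstacle will be the bookkeeping for the two covariances of $g_N$: the non-universal factor $(\dE X_{11}^2)^{k+1}$ in the series definition of $g$ has to be reconciled with the pseudo-covariance $\dE g_N(z) g_N(w)$ produced by Theorem \ref{th:nilpotent}, and the change of variable $\tilde g = g/(\kappa z^2)$ is precisely what bridges the two descriptions. One also has to justify that convergence of covariances, together with the tightness from Theorem \ref{th:nilpotent}, implies functional convergence of $g_N$ as analytic functions, so that continuity of the zero map can be invoked at the end.
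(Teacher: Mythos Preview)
Your approach is essentially the paper's: both reduce to the Taylor expansion of $R_N$ around $R'_N(z)=z^{-1}I_N$, identify the random series with a Gaussian analytic limit via the CLT of Proposition~\ref{prop:TCL}, and multiply by a nonvanishing analytic factor to obtain the stated form. Your identification of the limiting covariances and the rescaling $\tilde g = g/(\kappa z^2)$ are correct.

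There is one gap. You assert parenthetically that $\kappa$ is ``necessarily non-zero'', but the corollary allows $\kappa=0$ (and the discussion immediately after it treats that case explicitly). When $\kappa=0$, $\theta_N/\sqrt N$ is unbounded, so $\|\tilde v_N\|_\infty$ is no longer $O(1/\sqrt N)$ and the hypotheses of Theorem~\ref{th:largenorm} are not met by your rewriting $A''_N=\sqrt N\,\tilde v_N u_N^*$; correspondingly, the limiting covariance you compute blows up, and multiplying the \emph{limit} by $\kappa z^2$ is degenerate. The paper avoids this by multiplying at the finite-$N$ level rather than at the limit: starting from the expansion \eqref{eq:devTaylor34} one considers the zeros of
\[
\frac{\sqrt N}{\theta_N}\,z^2 f_N(z)\;=\;\frac{\sqrt N}{\theta_N}\,z\bigl(z-\theta_N u_N^\top v_N\bigr)\;-\;\sum_{k\ge 0} z^{-k}\bigl(\sqrt N\,u_N^\top Y_N^{k+1} v_N\bigr),
\]
whose deterministic part converges to $\kappa z(z-\lambda)$ regardless of whether $\kappa=0$, and whose random coefficients $\sqrt N\,u_N^\top Y_N^{k+1} v_N$ converge jointly to the $\gamma_k$ by Proposition~\ref{prop:TCL} (here $B=I_N$, $\rho_0=\rho_1=1$, and $u_N,v_N$ are real). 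If you make this renormalization before passing to the limit, your argument covers the full range of $\kappa$ and coincides with the paper's.
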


Observe that $ \theta_N  u_N^T  v_N$ is the outlier eigenvalue of $A_N$. Hence, in the limit $|\kappa| \to \infty$, we find a result consistent with Theorem \ref{th:main}.   
Moreover, when $\dE X_{11} ^2 = 0$, $g(z)$ is a Gaussian analytic function, and, if  $\kappa = 0$, we are interested by the level set zero of this Gaussian analytic function. From Peres and Vir\'ag \cite{MR2231337}, it is known that this level set forms a determinantal point process.

\subsection{Discussion and related results}

In a recent work \cite{BGR}, Benaych-Georges and Rochet consider matrices of the type $M_N=X_N + A_N$ where the rank of $A_N$ stays bounded as the dimension goes to infinity and $X_N$ is a random non Hermitian matrix whose distribution is invariant under the left and right actions of the unitary group. The limiting empirical eigenvalues distribution of such a model is described by the so-called single ring theorem, see \cite{MR2831116,MR3000558,MR3164983}, and its  support is of the form
$\{z, a \leq  \vert z \vert \leq b\}$. Benaych-Georges and Rochet  prove that if $A_N$ has some eigenvalues out of the maximal circle of the single ring, then $M_N$ has outliers in the neighborhood of these eigenvalues of $A_N$. Nevertheless, when $a>0$, the eigenvalues of $A_N$ which may be in the inner disk of the 
complement of the limiting support do not generate outliers in the spectrum of 
$M_N$.

Now, in the framework of the present paper dealing with full rank perturbations of iid matrices, there can be outlier eigenvalues in bounded components of the complement of $\supp (\beta)$, see the example given by \eqref{eq:ex1A} and Figure \ref{fig:stable}.

Actually, the nature of the bounded connected component of the complement of the support of the limiting empirical eigenvalues distributions considering above is different: the first (in \cite{BGR}) comes from the limiting support of the non-deformed model whereas the second one (in the framework of our paper) is created by the deformation. Subordination-like properties of the Stieltjes transform $g_{\mu} (z) = \int d \mu ( \lambda)/ (z -  \lambda )$ of limiting spectral measures may help to understand these phenomena as explained below. 

In the case of \cite{BGR}, since  the limiting  empirical eigenvalues distribution $\mu$  is radial, we have  $g_\mu(z)=\frac{1}{z} $ if $\vert z \vert >b$ and 
$g_\mu(z)=0 $ if $\vert z \vert <a$ so that roughly speaking $$g_\mu(z)= g_{\delta_0}(\omega(z))~~
\mbox{~~where~~} \left\{ \begin{array}{ll} \omega(z) =z \mbox{~if~} \vert z \vert >b\\ \mbox{``}\omega(z) =\infty\mbox{"~if~} \vert z \vert <a. \end{array} \right.$$

In our case, dealing for instance with diagonal perturbations of a Ginibre matrix,  the limiting  empirical eigenvalues distribution $\beta$ is the Brown measure of $c+a$ where $c$ is a circular element which is free with $a$ whose Brown measure is $\alpha$ (see {\'S}niady \cite{MR1929504}). We have the following subordination property.
$$\forall z \in \mathbb{C} \setminus \supp(\beta), \; g_{c+a}(z)= g_a(\omega(z)) \mbox{~~where~~} \omega(z)=z.$$
It can be deduced from \cite[Proposition 4.3]{bordenave-caputo-chafai-cirgen}, see also \cite{MR1744647,MR1876844}.

In both cases, the intuition is that $$g_{\mu_{M_N}}(z)\approx  g_{\mu_{A_N}}(\omega(z))$$ and that therefore  they will be eigenvalues $\rho$  of $M_N$ that separate from the bulk
whenever some of the equations
$\omega(\rho) =\theta$ admits a solution $\rho$ outside the limiting  support, when  $\theta$ describes the spectrum of $A_N$. 

Therefore, we understand in one hand  that in the framework of \cite{BGR}, there is no solution inside the inner  disk of such an equation since there $\omega\equiv\infty$ and in the other hand that  the outliers of the deformed model stay in the neighborhood of the eigenvalues of the perturbation 
which are located where $\omega$ is the identity function.

In \cite{FPZ}, Feldheim, Paquette and Zeitouni have recently studied the model \eqref{modele} when $\sigma$ decays polynomially of $N$ and $A_N$ is a block diagonal matrix with blocks of size $\log N$. 

Note that  motivated by the seminal article of Baik, Ben Arous and P\'ech\'e \cite{MR2165575}, previous works are devoted to the study of the outlier eigenvalues of  deformed random Hermitian  matrix models,  see notably \cite{BFF,MR2887686,BBCF,MR2835249,MR2782201,MR2944410,MC,MR2489158,MR2919200,MR2835253,MR2291807,
MR3103909,KY,
MR2851051,MR2336602,MR2221787,MR3060148,SR,MR3039820}. 
Actually, the papers  \cite{BBCF,MC,MR2835253} already  show that the  results on existence and location of outliers of
deformed Wigner matrices/ deformed unitarily invariant matrices/ sample covariance matrices/ information-plus-noise type matrices can be completely described in terms of 
subordination functions involved in free additive/multiplicative/rectangular convolutions.
Thus, free subordination properties definitely seem to provide a global explanation for the problem of existence and location of outliers of deformed random matrix models.

In the Hermitian deformed models, the outliers of the deformed model are not located in a neighborhood of the spikes of the deformation. It contrasts with Corollary \ref{cor:main}. It is rather non-intuitive that additive perturbation of $A_N$ by a  Hermitian random matrix has more effect on outlier eigenvalues than additive perturbation by a non-Hermitian random matrix.

The remainder of the papers is organized as follows. In Section \ref{sec:prelimS}, we review some properties of the limiting spectral measures $\beta$ and $\mu_z$ and recall basic matrix identities, we notably prove Proposition \ref{prop:supportbeta} and Lemma \ref{le:noouter}. In Section \ref{sec:nooutlier} and Section \ref{sec:main}, we prove the first order results, Theorem \ref{inclusion} and Theorem \ref{th:main} respectively. In Section \ref{sec:TCL}, we prove the central limit theorem for stable outliers, Theorem \ref{th:TCL}. In Section \ref{sec:unstable}, we prove all results concerning unstable outliers, Theorem \ref{th:nilpotent}, Corollary \ref{cor:nilpotent}, Theorem \ref{th:nilpotent} and Corollary \ref{cor:nilpotent}. Finally, an appendix contains a central limit theorem for a random bilinear form.

\section{Preliminaries on limiting spectral measures and useful matrix relations}
\label{sec:prelimS}

\subsection{Useful matrix identities and perturbation inequalities}

If $A \in M_N ( \dC)$ we will denote by $s_N (A) \leq \ldots \leq s_1 ( A)$ the singular values of $A$. We have $s_1(A) = \|A\|$ and $s_N (A) ^{-1} = \|A ^{-1} \|$ where $\|A \|$ denotes the operator norm. In this work, we will repeatedly use the following classical perturbation inequalities. If $A,B$ in $M_n(\dC)$ then
\begin{equation}\label{eq:CF}
| s_i ( A) - s_i (B) | \leq \|A - B \|.  
\end{equation}
It follows immediately from Courant-Fisher variational formulas for the singular values, see e.g. \cite[Theorem A.46]{MR2567175}. We will often apply it in the following context, if $A - z I_N$ has no singular value in the interval $[0,\eta]$ and $\| B \| \leq \eta / 3$ then 
\begin{equation}\label{eq:CFres}
\sup_{ w : |z - w | \leq \eta /2 } \| ( A  - w I_N )^{-1} \| \leq 2 \eta^{-1} \quad \hbox{ and } \quad  \sup_{ w : |z - w | \leq \eta /3 } \| ( A + B  - w I_N )^{-1} \| \leq 3 \eta^{-1}.
\end{equation}
Similarly, if $K\subset \dC$ is compact and for all $z \in K$, there exists $\eta_z >0$ such that $A-zI_N$ has no singular value in $[0,\eta_z]$ then, there exists $C_K$ such that  
\begin{equation}\label{eq:CFresunif}
\sup_{ z  \in K } \| ( A - z I_N )^{-1} \| \leq C_K,
\end{equation}
($K$ is covered by $\cup_{z \in K} B(z, \eta_z / 2)$ and use compactness). 

As in previous works such as \cite{MR2782201,MR2944410}, we will use the identity, if $P, Q^{\top} \in M_{N,r} (\dC)$, 
$$
\det ( I_N + PQ ) = \det ( I_r + QP ).
$$
It will imply notably that for any  $M \in M_N( \dC)$ and $\lambda \in \dC$ such that $M_N - \lambda I_N$ is invertible,  
\begin{equation}\label{eq:idcle}
\det ( \lambda I_N - M - PQ ) = \det ( \lambda I_N - M ) \det ( I_r -  Q ( \lambda I_N - M)^{-1} P). 
\end{equation}
In particular the eigenvalues of $M + PQ$ which are not eigenvalues of $M$ are the zeros of an $r \times r$ determinant. With $M = A'_N +\sigma Y_N$ and $PQ = A''_N$, the above identity will be our starting point to study the outlier eigenvalues. 

\subsection{Characterization of the limit measure $\beta$}
\label{subses:characbeta}

For a probability measure $\tau$ on $\C$ such that $\int \log (1 + |\lambda|) d\tau (\lambda) < \infty$, we denote by $h_\tau$ its logarithmic potential defined for $z \in \C$,  by
$$
h_\tau (z) = - \int_{\C} \log | \lambda - z| d \tau (\lambda). 
$$
%We have $h_{\tau} ( z) \in[-\infty ; +\infty)$. However, Fubini's Theorem and the local integrability of $z \mapsto \log |z|$ with respect to the Lebesgue measure imply that $h_{\tau}$ is finite for allmost all $z \in \C$. 

There are various possible characterizations of the limit measure $\beta$, the usual relies on its logarithmic potential.  It is expressed in terms of Cauchy-Stieltjes transform of the limit measures of shifted singular values of $M_N$. More precisely, for a probability measure $\tau $ on $\R$, 
 denote by $g_\tau $ its Stieltjes transform defined for $z \in \C\setminus \R$ by 
$$g_\tau (z) = \int_\R \frac{d\tau (x)}{z-x}.$$

For any $z \in \mathbb{C}$,  denote by 
$$M_N^z= \sigma Y_N+A_N-zI_N.$$
According to Dozier and Silverstein \cite{DozierSilver},  almost surely the empirical spectral measure 
$\mu _{M_N^z {M_N^z}^*}$ of ${M_N^z {M_N^z}^*}$ converges weakly  towards a nonrandom distribution $\mu_z$  which is characterized in terms of its Stieltjes transform which satisfies the following equation:
 for any $w \in \mathbb{C}^+$,
\begin{equation}\label{eqgmuz} g_{\mu_z}(w)=\int \frac{1}{(1-\sigma^2g_{ \mu_z}(w))w- \frac{ t}{1- \sigma^2 g_{ \mu_z}(w)} }d\nu_z(t).\end{equation}

According to \cite{tao-vu-cirlaw-bis,MR1929504}, see also \cite{MR2908617}, almost surely the empirical spectral measure of $\mu_{M_N}$ converges weakly  to a probability measure $\beta$ on $\C$ which is characterized by its logarithmic potential
$$
h_{\beta} (z) = - \frac 1 2 \int \log (t ) d\mu_z(t).  
$$
The probability measure $\beta$ has a natural interpretation within the framework of operator algebra and free probability, see \cite{MR1929504,MR1876844,MR1784419}.

Explicit computation of $\beta$ are rare, see Biane and Lehner \cite{MR1876844}. There is  also an alternative characterization based on the limit of a quaternionic resolvent of $M_N$, for details and references we refer to the survey \cite[Section 4.6]{MR2908617} and Rogers \cite{rogers2010}. Using this characterization, when for all $z \in \dC$, $\nu_z$ is the law of $|L-z|^2$, the density of $\beta$ has a tractable expression.  Let $L$ be a random variable with law $\alpha$.   In this case, almost surely, $\beta$ is the limit spectral distribution of the sequence of matrices $\sigma X_N/\sqrt N + D_N$ if $D_N$ is a diagonal matrix whose empirical spectral measure converges weakly to $\alpha$ and $\| D_N \| \leq M$. We set 
\begin{equation}\label{eq:defSigma}
\Sigma = \{ z \in \C : \E |L - z |^{-2} > \sigma^{-2} \}. 
\end{equation}
Observe that, from Fatou's lemma, $\Sigma$ is a an open set. There exists a unique function $f : \Sigma \to (0,+\infty)$ such that
  for all $z\in \Sigma$,
  \[
  \E \frac{1}{|L-z|^2 + f(z)^2} =\sigma^ {-2}.
  \]
The map $f$ is $C^\infty$ on $\Sigma$ (see \cite[Section 4.4]{bordenave-caputo-chafai-cirgen}). On $\Sigma^c$, we set $f(z) = 0$. We introduce the $\C^2 \to \R_+$ function
$$
\Phi(w,z) = \left\{ \begin{array} {ll}
(|w-z|^2 + f(z)^2)^{-2} & \hbox{if $z\in \Sigma$ } \\
0 & \hbox{if $z \notin \Sigma$.}
\end{array} \right. 
$$
It is shown in \cite{bordenave-caputo-chafai-cirgen} that $\beta$ admits a density on $\C$ with respect to Lebesgue measure given by 
\begin{equation}\label{eq:densitybeta}
\rho(z) =     \frac{1}{\pi} f(z)^2 \E \Phi(L,z) %
    + \frac{1}{\pi}
    \frac{\left|  \E  (L-z)\Phi(L,z) ^2  \right| }{\E \Phi(L,z)}.
\end{equation}
Note that $\Sigma$ is the set of $z \in \C$ such that $\rho(z) >0$.  In particular, the support of $\beta$ is $\overline \Sigma$.

\subsection{Properties of the support and proof of Proposition \ref{prop:supportbeta}}

\label{sec:prelim}

Proposition \ref{prop:supportbeta} is direct consequence of assumption (A3) and Proposition \ref{zeroinclus} below established in Capitaine \cite[Theorem 1.3 A)]{MC}.

\begin{proposition}\label{zeroinclus} 
Suppose that assumptions (X1) and (A1-A2) hold. Then $0 \notin \mathrm{supp}(\mu_z)$ if and only if $0\notin \mathrm{supp}( \nu_z)$ and  $g_{\nu_z}(0) >-\sigma^{-2}$, or equivalently, if and only if $z \notin S$ and $\int \lambda^{-1} d \nu_z (\lambda) < \sigma^{-2}$. 
\end{proposition}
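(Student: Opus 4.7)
The plan is to exploit the Dozier--Silverstein fixed-point equation \eqref{eqgmuz} rewritten in subordination form. Multiplying numerator and denominator of the integrand in \eqref{eqgmuz} by $1 - \sigma^2 g_{\mu_z}(w)$ yields the equivalent identity
\[
\frac{g_{\mu_z}(w)}{1 - \sigma^2 g_{\mu_z}(w)} \;=\; g_{\nu_z}\bigl(\Omega(w)\bigr), \qquad \Omega(w) := (1-\sigma^2 g_{\mu_z}(w))^2\, w,
\]
which reduces the question to an analysis of the subordination map $w \mapsto \Omega(w)$ near the origin.

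For the necessity direction, I would assume $0 \notin \supp(\mu_z)$, so that $g_{\mu_z}$ extends analytically to a neighborhood of $0$ with $g_0 := g_{\mu_z}(0) = -\int t^{-1}\,d\mu_z(t) \in (-\infty,0)$. Then $1 - \sigma^2 g_0 > 1$, and $\Omega$ extends analytically too, with $\Omega(0) = 0$ and $\Omega'(0) = (1 - \sigma^2 g_0)^2 > 0$, so $\Omega$ is a local biholomorphism at $0$ sending a real interval around $0$ to a real interval around $0$. The subordination identity then transports the analytic extension of the left-hand side through $\Omega^{-1}$ to yield an analytic extension of $g_{\nu_z}$ across a real neighborhood of $0$, whence Stieltjes inversion forces $0 \notin \supp(\nu_z)$. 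Taking $w=0$ in the identity gives $g_0 = (1-\sigma^2 g_0)\, g_{\nu_z}(0)$, equivalently $g_0(1 + \sigma^2 g_{\nu_z}(0)) = g_{\nu_z}(0)$; since $g_0 < 0$ and $g_{\nu_z}(0) < 0$, both sides have the same sign only if $1 + \sigma^2 g_{\nu_z}(0) > 0$, that is, $g_{\nu_z}(0) > -\sigma^{-2}$.

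For the sufficiency direction, I assume $0 \notin \supp(\nu_z)$ and $g_{\nu_z}(0) > -\sigma^{-2}$, set $g_0 := g_{\nu_z}(0)/(1 + \sigma^2 g_{\nu_z}(0)) \in (-\infty, 0)$, and apply the implicit function theorem to
\[
F(w,g) \;:=\; g - (1-\sigma^2 g)\, g_{\nu_z}\bigl((1-\sigma^2 g)^2 w\bigr),
\]
which is analytic in $(w,g)$ near $(0, g_0)$ since $g_{\nu_z}$ is analytic near $0$. A direct computation gives $F(0,g_0) = 0$ and $\partial_g F(0,g_0) = 1 + \sigma^2 g_{\nu_z}(0) > 0$, producing a local analytic solution $\tilde g(w)$ with $\tilde g(0) = g_0$ and $\tilde g'(0) = (1-\sigma^2 g_0)^3 g_{\nu_z}'(0)/(1+\sigma^2 g_{\nu_z}(0)) > 0$. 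Because $\tilde g'(0) > 0$, $\tilde g$ sends a small upper half-disc into $\dC^+$ and satisfies \eqref{eqgmuz} there; by uniqueness of the $\dC^+$-valued solution of the Dozier--Silverstein equation, $\tilde g$ coincides with $g_{\mu_z}$ on $\dC^+$ near $0$, so $g_{\mu_z}$ extends analytically across $0$ and Stieltjes inversion gives $0 \notin \supp(\mu_z)$.

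The main obstacle, in the necessity direction, is deducing the spectral-gap conclusion $0 \notin \supp(\nu_z)$ from $0 \notin \supp(\mu_z)$ --- one could \emph{a priori} worry that $\nu_z$ has a continuous density vanishing rapidly at $0$, giving $\int t^{-1} d\nu_z(t) < \infty$ without a gap. The subordination viewpoint circumvents this because the nonvanishing of $\Omega'(0)$ propagates the analytic extension of $g_{\mu_z}$ directly to $g_{\nu_z}$ across a real neighborhood of the origin, and this is precisely what the statement of Proposition \ref{zeroinclus} needs.
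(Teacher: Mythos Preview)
The paper does not actually prove Proposition~\ref{zeroinclus}: it is stated and attributed to Capitaine \cite[Theorem 1.3 A)]{MC}, with no argument given here. So there is nothing to compare against in this paper.

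Your argument is correct and is precisely the subordination approach that underlies the cited reference --- indeed, the maps $\Phi_{\nu_z}(x) = x(1+\sigma^2 g_{\nu_z}(x))^2$ and $\omega_{\nu_z}(x) = x(1-\sigma^2 g_{\mu_z}(x))^2$ appearing in Proposition~\ref{support1t} (also from \cite{MC}) are your $\Omega^{-1}$ and $\Omega$ restricted to the real line. Two small points worth making explicit if you intend this as a standalone proof: (i) in the necessity direction, after pulling back the analytic extension through $\Omega^{-1}$ you should note that the resulting extension of $g_{\nu_z}$ is real on a real neighborhood of $0$ (by Schwarz reflection, since $g_{\nu_z}$ is already real on $(-\infty,0)$), so that Stieltjes inversion genuinely gives $\nu_z=0$ there; (ii) in the sufficiency direction, the appeal to ``uniqueness of the $\dC^+$-valued solution'' is exactly the uniqueness clause in Dozier--Silverstein's characterization of $g_{\mu_z}$, which you are entitled to invoke. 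With those clarifications the proof is complete.
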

%\begin{remarque}\label{boundedsupport}
%\textcolor{red}{VOIR CE DONT ON A REELLEMENT BESOIN COMME HYPOTHESE SUR $\nu_z$}
%\end{remarque}

The characterization of the  complement in $\mathbb{R}\setminus \{0\}$ of the support of $\mu_z$ established in \cite{DozierSilver2} and  Proposition \ref{zeroinclus} allow the author in \cite{MC}  to put forward the following complete characterization of the  complement of the support of $\mu_z$  in $\mathbb{R}$. 

\begin{proposition}\label{support1t} Suppose that assumptions (X1) and (A1-A2) hold. Then,

\begin{equation}\label{support1}\mathbb{R}\setminus\supp(\mu_z) = \Phi_{\nu_z}\left\{ u \in   \mathbb{R}\setminus\supp(\nu_z), \Phi'_{\nu_z} (u) >0, g_{\nu_z}(u) >-\sigma^{-2}\right\}, \end{equation}
where $$\Phi_{\nu_z} :\begin{array}{ll} \mathbb{R}\setminus\supp(\nu_z) \rightarrow \mathbb{R}\\
x \mapsto 
x (1+\sigma^2g_{ \nu_z}(x))^2. \end{array}$$
More precisely, 
 $\Phi_{\nu_z} $ is a homeomorphism from $${\cal E}_{\nu_z}=\left\{ u \in   \mathbb{R}\setminus\supp(\nu_z), \Phi'_{\nu_z} (u) >0, g_{\nu_z}(u) >-\sigma^{-2}\right\}$$ onto $\mathbb{R}\setminus\supp(\mu_z)$ with inverse $\omega_{\nu_z}$,
$$\omega_{\nu_z} :\begin{array}{ll} \mathbb{R}\setminus\supp(\mu_z) \rightarrow \mathbb{R}\\
x \mapsto 
x (1- \sigma^2 g_{ \mu_z}(x))^2. \end{array}$$ 
Moreover, for any $y> x  $ in ${\cal E}_{\nu_z}$, we have $\Phi_{\nu_z}(y) > \Phi_{\nu_z}(x)$,  and respectively  for any $y> x  $ in  $\mathbb{R}\setminus\supp(\mu_z)$, $\omega_{\nu_z}(y) > \omega_{\nu_z}(x)$.

\end{proposition}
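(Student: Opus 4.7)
The plan is to combine the Silverstein--Choi-style characterization of the complement of $\supp(\mu_z)$ within $\mathbb{R}\setminus\{0\}$ (essentially the content of \cite{DozierSilver2}) with Proposition~\ref{zeroinclus}, which disposes of the isolated point $0$. What remains is to derive the bijection $\Phi_{\nu_z}:\mathcal{E}_{\nu_z}\to\mathbb{R}\setminus\supp(\mu_z)$ in a form matching \eqref{support1}, verify the inversion identity $\omega_{\nu_z}\circ\Phi_{\nu_z}=\mathrm{id}$, and prove the monotonicity claims.

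The heart of the argument is an algebraic relation between $\Phi_{\nu_z}$ and $\omega_{\nu_z}$ derived directly from \eqref{eqgmuz}. Setting $m=g_{\mu_z}(w)$ and $c=1-\sigma^{2}m$ for $w\in\mathbb{C}^{+}$, and multiplying numerator and denominator of the integrand in \eqref{eqgmuz} by $c$, one obtains $m=c\,g_{\nu_z}(c^{2}w)$. Substituting back into the definition of $c$ yields $c(1+\sigma^{2}g_{\nu_z}(c^{2}w))=1$, and squaring this identity and multiplying by $c^{2}w$ produces $w=\Phi_{\nu_z}(c^{2}w)$, i.e.\ $w=\Phi_{\nu_z}(\omega_{\nu_z}(w))$ since $\omega_{\nu_z}(w)=c^{2}w$ by definition. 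This identity, initially derived for $w\in\mathbb{C}^{+}$, extends by continuity to each connected component of $\mathbb{R}\setminus\supp(\mu_z)$ using the standard fact that $g_{\mu_z}$ extends analytically and real-valued to such intervals.

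Next I would run the bijection both ways. Given $w\in\mathbb{R}\setminus\supp(\mu_z)$, the identity above already provides $\omega_{\nu_z}(w)$ as a real preimage of $w$ under $\Phi_{\nu_z}$; the remaining task is to verify $\omega_{\nu_z}(w)\in\mathcal{E}_{\nu_z}$. The two conditions $\omega_{\nu_z}(w)\notin\supp(\nu_z)$ and $g_{\nu_z}(\omega_{\nu_z}(w))>-\sigma^{-2}$ follow from the relation $c=1/(1+\sigma^{2}g_{\nu_z}(\omega_{\nu_z}(w)))$ together with the fact that $c$ is real and nonzero on the complement of the support. Conversely, for $u\in\mathcal{E}_{\nu_z}$, I would set $w=\Phi_{\nu_z}(u)$ and $m_{0}=g_{\nu_z}(u)/(1+\sigma^{2}g_{\nu_z}(u))$, and check by direct substitution that $m_{0}$ solves \eqref{eqgmuz} at $w$; the condition $\Phi'_{\nu_z}(u)>0$ then ensures that $m_{0}$ is the boundary value of the analytic extension of $g_{\mu_z}$, forcing $w\notin\supp(\mu_z)$. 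The main obstacle is precisely this last step: showing that $\Phi'_{\nu_z}>0$ is the correct branch-selection condition requires a careful analysis of the sign of $\mathrm{Im}\,g_{\mu_z}$ as $w$ approaches the real axis from $\mathbb{C}^{+}$, as in the Choi--Silverstein argument reproduced in \cite{DozierSilver2}.

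To finish, the point $0$ is handled directly by Proposition~\ref{zeroinclus}: since $\Phi_{\nu_z}(0)=0$ and $\Phi'_{\nu_z}(0)=(1+\sigma^{2}g_{\nu_z}(0))^{2}>0$ whenever $g_{\nu_z}(0)>-\sigma^{-2}$, the condition $0\notin\supp(\mu_z)$ is equivalent to $0\in\mathcal{E}_{\nu_z}$. For the monotonicity claims, $\Phi'_{\nu_z}>0$ on $\mathcal{E}_{\nu_z}$ yields local monotonicity on each connected component; since these components map onto disjoint intervals of $\mathbb{R}\setminus\supp(\mu_z)$ in an order-preserving fashion inherited from the strict monotonicity of $g_{\mu_z}$ on each gap, global monotonicity of $\Phi_{\nu_z}$ follows, and the corresponding statement for $\omega_{\nu_z}$ then comes from the inverse function theorem.
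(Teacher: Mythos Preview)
The paper does not give its own proof of this proposition: it is stated as a result taken from \cite{MC}, with the remark that it is obtained by combining the characterization of $\mathbb{R}\setminus\{0\}\cap(\mathbb{R}\setminus\supp(\mu_z))$ from \cite{DozierSilver2} with Proposition~\ref{zeroinclus} for the point $0$. Your proposal is therefore not competing against a proof in the paper but against a citation, and your plan is exactly the plan the paper attributes to \cite{MC}: invoke \cite{DozierSilver2} on $\mathbb{R}\setminus\{0\}$, settle $0$ via Proposition~\ref{zeroinclus}, and package the result via the explicit maps $\Phi_{\nu_z}$ and $\omega_{\nu_z}$.

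Your algebraic derivation of the key identity is correct: from \eqref{eqgmuz} one gets $m=c\,g_{\nu_z}(c^{2}w)$ with $c=1-\sigma^{2}m$, hence $c(1+\sigma^{2}g_{\nu_z}(c^{2}w))=1$, and since $\omega_{\nu_z}(w)=c^{2}w$ this indeed gives $\Phi_{\nu_z}(\omega_{\nu_z}(w))=w$. The point you flag as the ``main obstacle'' (showing that $\Phi'_{\nu_z}>0$ is precisely the branch condition selecting the Stieltjes-transform solution, and that distinct components of $\mathcal{E}_{\nu_z}$ map to distinct gaps in an order-preserving way) is genuine and is exactly the substantive content of the Choi--Silverstein/Dozier--Silverstein analysis; you are right not to pretend it is trivial, but you are also right that it is already carried out in \cite{DozierSilver2}. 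In short, your sketch is sound and aligned with the paper's own treatment.
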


The following corollary readily follows.
\begin{corollaire}
\label{cor:support1}Suppose that assumptions (X1) and (A1-A2) hold. For any  $z $ be in $ \mathbb{C}$, we have $$\DIST(0, \supp(\nu_z)) \geq \DIST(0, \supp(\mu_z)).$$
\end{corollaire}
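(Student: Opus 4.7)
The plan is to combine the two propositions just stated. Write $d_\mu = \DIST(0,\supp(\mu_z))$ and $d_\nu = \DIST(0,\supp(\nu_z))$; since both measures live on $[0,\infty)$, these are just the left endpoints of the supports. I would first dispense with two easy cases by direct appeal to Proposition \ref{zeroinclus}: if $0 \in \supp(\nu_z)$, then $d_\nu = 0$ and Proposition \ref{zeroinclus} forces $0 \in \supp(\mu_z)$, so $d_\mu = 0 = d_\nu$; if $0 \notin \supp(\nu_z)$ but $g_{\nu_z}(0) \leq -\sigma^{-2}$, then again Proposition \ref{zeroinclus} gives $0 \in \supp(\mu_z)$, so $d_\mu = 0 \leq d_\nu$. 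This leaves the interesting case $0 \notin \supp(\mu_z) \cup \supp(\nu_z)$, where $d_\mu, d_\nu > 0$.

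In this remaining case I would use the homeomorphism $\omega_{\nu_z} : \mathbb{R}\setminus \supp(\mu_z) \to \mathcal{E}_{\nu_z}$ from Proposition \ref{support1t}, which is strictly increasing on each component. Because $\supp(\mu_z)\subset[0,\infty)$ and $0\notin\supp(\mu_z)$, the leftmost connected component of $\mathbb{R}\setminus \supp(\mu_z)$ is exactly $(-\infty,d_\mu)$, and similarly $(-\infty,d_\nu)$ is the leftmost component of $\mathbb{R}\setminus\supp(\nu_z)$. By monotonicity of $\omega_{\nu_z}$, its image of $(-\infty,d_\mu)$ is an interval $(\alpha,b)$ with $\alpha = \lim_{x\to-\infty}\omega_{\nu_z}(x)$ and $b = \lim_{x\to d_\mu^-}\omega_{\nu_z}(x)$.

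Two elementary observations pin down $\alpha$ and $b$. First, as $x \to -\infty$, $g_{\mu_z}(x) \to 0$, so $\omega_{\nu_z}(x) = x(1-\sigma^2 g_{\mu_z}(x))^2 \sim x \to -\infty$; hence $\alpha = -\infty$. Second, for $x \in (0,d_\mu)$ every atom of $\mu_z$ satisfies $t \geq d_\mu > x$, so $g_{\mu_z}(x) < 0$, $(1-\sigma^2 g_{\mu_z}(x))^2 > 1$, and therefore $\omega_{\nu_z}(x) > x$; letting $x\to d_\mu^-$ yields $b \geq d_\mu$.

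To finish, I would observe that $(-\infty,b) = \omega_{\nu_z}((-\infty,d_\mu))$ is contained in $\mathcal{E}_{\nu_z}\subset \mathbb{R}\setminus\supp(\nu_z)$; if $b > d_\nu$ were true, then $d_\nu \in (-\infty,b) \subset \mathbb{R}\setminus\supp(\nu_z)$, contradicting $d_\nu \in \supp(\nu_z)$. Hence $b \leq d_\nu$, and chaining with the previous step gives $d_\mu \leq b \leq d_\nu$, as required. No step looks to be a real obstacle here — the whole argument is a monotone-image chase; the only point that demands a moment's care is verifying that the image interval extends all the way to $-\infty$ on the left so that the \emph{leftmost} component of $\mathbb{R}\setminus\supp(\nu_z)$ is the relevant target, and that is what the asymptotics of $g_{\mu_z}$ at $-\infty$ supply.
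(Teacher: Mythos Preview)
Your proof is correct and follows essentially the same route as the paper: both use Proposition~\ref{zeroinclus} to dispose of the case $0\in\supp(\mu_z)$, then apply the monotone homeomorphism $\omega_{\nu_z}$ from Proposition~\ref{support1t} together with the observation that $g_{\mu_z}(x)<0$ for $x\in(0,d_\mu)$ forces $\omega_{\nu_z}(x)\geq x$. The paper's version is marginally slicker: instead of mapping the whole half-line $(-\infty,d_\mu)$ and checking the asymptotic $\omega_{\nu_z}(x)\to-\infty$, it simply notes $\omega_{\nu_z}(0)=0$ and works with $[0,\veps]$ for $\veps<d_\mu$, which lands in $\mathcal{E}_{\nu_z}$ with right endpoint $\omega_{\nu_z}(\veps)\geq\veps$.
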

\begin{proof} From Proposition \ref{zeroinclus}, if $\mbox{dist}(0, \supp(\nu_z)) = 0$ then $\mbox{dist}(0, \supp(\mu_z)) = 0$. Moreove, suppose that $\mbox{dist}(0, \supp(\mu_z)) > \veps >0$. Then, according to Proposition \ref{support1t}, $\omega_{\nu_z}([0,\veps]) \subset {\cal E}_{\nu_z}$ and 
\begin{eqnarray*} \omega_{\nu_z}([0,\veps])=[\omega_{\nu_z}(0),\omega_{\nu_z}(\veps)]=[0, \omega_{\nu_z}(\veps)].
\end{eqnarray*}
Now,  $\omega_{\nu_z}(\veps)= \veps (1-  g_{ \mu_z}(\veps))^2 \geq \veps$
since $g_{ \mu_z}(\veps) \leq 0 $ . \end{proof}

%\begin{lemme}\label{le:Sclosed}
%Under assumption (A1)-(A2), the set $S$ is closed. 
%\end{lemme}
%
%\begin{proof}
%Let $z_k \in S$ and $z_k \to z$, we want to prove that $0 \in \supp (\nu_z)$, or equivalently, that for any $\veps >0$, $\nu_z ( [0,\veps] ) > 0$. We fix $\veps >0$.  By assumption (A1), there exists $c >1$ such that $2 \|A_N\| + |z_k| + |z| \leq c$, for all $k,N$. We may find an integer $k$ such that $|z - z_k | \leq \veps / (2c)$. Since $z_k \in S$, there exists $\delta >0$ such that $\nu_{z_k} ( [0,\veps/2)) \geq 2  \delta$. By assumption   (A2) and the portemanteau theorem, $\mu_{(A_N - z_k I_N) ( A_N - z_k I_N) ^* } ( [0,\veps/2)) \geq \delta$ for all $N$ large enough. 
%
%By \eqref{eq:CF}, for any $B \in M_N( \dC)$, if the eigenvalues of $\lambda_i( B - z) ( B-z ) ^*)$ are ordered non-increasingly, \begin{eqnarray*}
%| \lambda_i ( (B- z I)(B-zI)^*)   - \lambda_i ( ( B - z' I ) ( B - z' I)^* ) |  &=& |s_i^2 ( B- z I ) - s_i ^2 ( B - z' I) | \nonumber\\
%&  \leq& |z- z'|  ( 2 \| B \| + |z| + |z'|).\label{eq:CF2} 
%\end{eqnarray*}
%In particular, from our choice of $c$, we deduce that 
%$
%\mu_{(A_N - z I_N) ( A_N - z I_N) ^* } ( [0,\veps)) \geq \delta. 
%$
%A final use of the portemanteau theorem gives that $
%\nu_{z} ( [0,\veps]) \geq \delta. 
%$
%\end{proof}

\begin{lemme}\label{continuitephi}
Under assumption (A2)-(A4), the function $ \varphi  : z \mapsto \int \lambda^{-1} d\nu_z (\lambda)$ is continuous and subharmonic in $\Gamma$. 
\end{lemme}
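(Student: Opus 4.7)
The plan is to exhibit $\varphi$ as the locally uniform limit of smooth subharmonic functions and then pass subharmonicity to the limit. By (A4) and \eqref{eq:CFresunif}, the resolvent $R'_N(w) := (A'_N - wI_N)^{-1}$ is well defined and bounded uniformly on any compact set in $\Gamma$ for all $N$ large enough. I will work with the approximants
$$
\varphi_N(w) \;=\; \frac{1}{N}\,\Tr\bigl(R'_N(w)R'_N(w)^*\bigr) \;=\; \frac{1}{N}\sum_{i,j=1}^N \bigl|(R'_N(w))_{ij}\bigr|^2.
$$
The key observation is that each entry $(R'_N(w))_{ij}$ is a rational, hence holomorphic, function of $w$ on $\Gamma$, so $|(R'_N(w))_{ij}|^2$ is subharmonic; being a nonnegative linear combination of such terms, $\varphi_N$ is itself subharmonic on $\Gamma$. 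This sidesteps a direct Laplacian computation on $\varphi$, which does not look manifestly positive.

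Next I would show $\varphi_N\to\varphi$ locally uniformly on $\Gamma$, from which continuity of $\varphi$ is immediate. Since $A_N-A'_N = A''_N$ has rank at most $r$, the Hermitian matrices $(A_N-wI_N)(A_N-wI_N)^*$ and $(A'_N-wI_N)(A'_N-wI_N)^*$ differ by a Hermitian perturbation of rank $O(r)$, so by standard interlacing their empirical spectral measures share the same weak limit $\nu_w$. For any relatively compact open $U\Subset \Gamma$, (A4) combined with \eqref{eq:CF} gives uniform constants $0<c\le C<\infty$ such that all singular values of $A'_N-wI_N$ lie in $[c,C]$ for every $w\in U$ and $N$ large; in particular $\supp(\nu_w)\subset[c^2,C^2]$ on $U$. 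Weak convergence evaluated against the bounded continuous map $\lambda\mapsto\lambda^{-1}$ on $[c^2,C^2]$ then yields the pointwise limit $\varphi_N(w)\to\varphi(w)$. For local uniformity, \eqref{eq:CF} makes each $w\mapsto s_i(A'_N-wI_N)$ $1$-Lipschitz, and combined with the lower bound $s_i\ge c$, each $s_i^{-2}$ is Lipschitz on $U$ with a constant independent of $i$ and $N$. Averaging yields equi-Lipschitzness of $\{\varphi_N\}$ on $U$, which together with pointwise convergence and uniform boundedness gives uniform convergence on $U$.

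To conclude, fix any closed disk $\overline{B(z_0,r)}\subset\Gamma$. Subharmonicity of each $\varphi_N$ gives
$$
\varphi_N(z_0)\;\le\;\frac{1}{2\pi}\int_0^{2\pi}\varphi_N(z_0+re^{i\theta})\,d\theta,
$$
and uniform convergence of $\varphi_N$ on the closed disk permits passage to the limit in both sides, yielding the submean inequality for $\varphi$. Combined with the continuity established above (which supplies upper semicontinuity), this is precisely subharmonicity of $\varphi$ on $\Gamma$.

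The main subtlety is the switch between $A_N$ and $A'_N$: the definition of $\nu_w$ goes through $A_N$ via (A2), while invertibility and the clean holomorphic structure of the resolvent require $A'_N$. This is handled by the rank-$O(r)$ interlacing comparison, after which the argument reduces to the elementary principle that $|f|^2$ is subharmonic whenever $f$ is holomorphic, and this property is preserved by nonnegative linear combinations and local uniform limits.
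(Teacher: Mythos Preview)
Your proposal is correct and follows essentially the same route as the paper: both use the approximants $\varphi_N(w)=\frac1N\Tr(R'_N(w)R'_N(w)^*)$, establish equi-Lipschitzness via \eqref{eq:CF} to upgrade pointwise to locally uniform convergence, and then pass subharmonicity of $\varphi_N$ to the limit. The only cosmetic differences are that the paper obtains $\Delta\varphi_N\ge 0$ by the one-line computation $\Delta\varphi_N=\frac4N\Tr\bigl((A'_N-z)^{-2}(A'^*_N-\bar z)^{-2}\bigr)\ge 0$ rather than your entrywise $|f|^2$ argument (these are the same identity), and that you make explicit the rank-$r$ interlacing step needed to transfer (A2) from $A_N$ to $A'_N$, which the paper leaves implicit; your claimed upper bound $C$ on the singular values is unnecessary since $\lambda\mapsto\lambda^{-1}$ is already bounded continuous on $[c^2,\infty)$.
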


\begin{proof}
Let $z_0 \in \Gamma$, by assumption (A4), there is no singular value of $A'_N - z_0 I_N$ in $[0,\eta]$. By \eqref{eq:CFres}, for all $z \in B(z_0, \eta/2)$,   $\mu_{(A'_N -z I_N)(A'_N -z I_N)^* }( [0,\delta)) = 0$ with $\delta=(\eta/2)^2$. By assumption (A2) and using the equicontinuity of the $\varphi_N$ on the compact set $ B(z_0, \eta/2)$, the function 
$$\varphi_N (z) = \frac 1 N \Tr  ( (A'_N -zI_N)^{-1} (A'^*_N -\bar zI_N)^{-1} ) =  \int \lambda^{-1} d\mu_{(A'_N -zI_N)(A'_N -zI_N)^* } (\lambda)$$
converges uniformly to $\varphi(z)$ on $B(z_0,\eta/2)$ and $\varphi$ is continuous on $ B(z_0, \eta/2)$. Also, on $B(z_0, \eta/2)$, by \eqref{eq:CFres}
\begin{eqnarray}
| \varphi_N (z)  - \varphi_N (z_0)  |  =  \frac 1 N  \left| \sum_{i=1} ^N  s^{-2} _i ( A'_N- z I_N) -  s^{-2} _i ( A'_N- z_0 I_N)  \right| \leq  C | z - z_0 |,\label{eq:contphi}
\end{eqnarray}
where $C  = 16 \eta^{-3}$ and we have used  $| x^{-2} - y^{-2} | \leq 2 |x - y | / ( |x| \wedge |y| )^3$.  
%The continuity  of $\varphi$ follows from Arzel\`a-Ascoli Theorem. 
Moreover, since $\partial (A'_N - z)^{-1} = (A'_{N} - z I )^{-2}$, we find 
$$\Delta \varphi_N (z)  = 4 \bar \partial \partial \varphi_N (z) = \frac 4 N \Tr ( (A'_N -zI_N)^{-2} (A'^*_N -\bar zI_N)^{-2} )  \geq 0. $$
Consequently, $\varphi_N$ is subharmonic on $B(z_0, \eta/2)$, and, since the convergence is uniform, $\varphi$ is also subharmonic on $B(z_0, \eta/2)$. \end{proof}

\subsection{Case $\nu_z$ law of $|L-z|^2$}
In the subsection, we prove the following lemma. 

\begin{lemme}\label{le:A3normalcase}
Suppose that assumptions (X1), (A1-A2) hold and that for all $z\in \dC$, $\nu_z$ is the law of $|L-z|^2$ for some complex random variable $L$.
Denote by $\alpha$ the distribution of $L$ and assume moreover that for any $z_0$ in  $\mathbb{C} \setminus \mathrm{supp}(\alpha)$, $z \mapsto \int \frac{d\alpha(s)}{\vert s-z\vert^2}$ is not constant on any neighborhood of $z_0$.
 Then assumption (A3) is satisfied. 
\end{lemme}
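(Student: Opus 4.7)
The strategy is to translate both sides of (A3) into concrete sets. Setting $\Psi(z) := \int |s-z|^{-2}\, d\alpha(s)$, Proposition~\ref{zeroinclus} applied to $\nu_z$ equal to the law of $|L-z|^2$ gives
\begin{equation*}
\{ z \in \dC : 0 \in \supp(\mu_z) \} = \supp(\alpha) \cup \{ \Psi \geq \sigma^{-2} \},
\end{equation*}
while $\supp(\beta) = \overline{\Sigma}$ with $\Sigma = \{ \Psi > \sigma^{-2}\}$ by the discussion following~\eqref{eq:defSigma}. The inclusion $\overline{\Sigma} \subseteq \supp(\alpha) \cup \{ \Psi \geq \sigma^{-2}\}$ is immediate: by Fatou's lemma $\Psi$ is lower semicontinuous, so $\{\Psi \geq \sigma^{-2}\}$ is closed and contains $\Sigma$. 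Hence (A3) reduces to the reverse inclusion, which I would split into two cases.

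For $z_0 \notin \supp(\alpha)$ with $\Psi(z_0) \geq \sigma^{-2}$: if the inequality is strict then $z_0 \in \Sigma \subseteq \overline{\Sigma}$. If $\Psi(z_0) = \sigma^{-2}$, I would observe that on $\dC \setminus \supp(\alpha)$ the function $\Psi$ is smooth by dominated convergence and satisfies $\Delta \Psi(z) = 4 \int |s-z|^{-4}\, d\alpha(s) \geq 0$, hence is subharmonic. If $\Psi \leq \sigma^{-2}$ throughout some ball $B(z_0, r) \subseteq \dC \setminus \supp(\alpha)$, the maximum principle would force $\Psi \equiv \sigma^{-2}$ on that ball, contradicting the standing non-constancy hypothesis. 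So every neighborhood of $z_0$ meets $\Sigma$, giving $z_0 \in \overline{\Sigma}$.

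For $z_0 \in \supp(\alpha)$: I would argue via non--local-integrability of $\Psi$. For any $\veps > 0$, Tonelli gives
\begin{equation*}
\int_{B(z_0,\veps)} \Psi(z)\, dA(z) = \int \Bigl( \int_{B(z_0,\veps)} |s-z|^{-2}\, dA(z) \Bigr)\, d\alpha(s),
\end{equation*}
and for each $s \in B(z_0, \veps/2)$ the inclusion $B(s, \veps/2) \subseteq B(z_0, \veps)$ forces the inner integral to be at least $2\pi \int_0^{\veps/2} \rho^{-1}\, d\rho = +\infty$. Since $z_0 \in \supp(\alpha)$ implies $\alpha(B(z_0, \veps/2)) > 0$ for $\veps$ small enough, the whole integral is infinite, so $\Psi > \sigma^{-2}$ on a subset of $B(z_0, \veps)$ of positive Lebesgue measure; in particular $B(z_0, \veps) \cap \Sigma \neq \emptyset$ and $z_0 \in \overline{\Sigma}$. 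The delicate point will be precisely this second case, since the statement imposes no constraint on the behavior of $\Psi$ on $\supp(\alpha)$; the geometric fact that makes it go through is the divergence of $\int_{B(s,r)} |s-z|^{-2}\, dA(z)$ for every $r > 0$, which is also what explains why the non-constancy hypothesis in the lemma needs to be assumed only off $\supp(\alpha)$.
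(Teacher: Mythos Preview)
Your proof is correct and shares the paper's overall architecture: both identify $\supp(\beta) = \overline{\Sigma}$ via \eqref{eq:defSigma}--\eqref{eq:densitybeta}, translate $\{0 \in \supp(\mu_z)\}$ into $\supp(\alpha) \cup \{\Psi \geq \sigma^{-2}\}$ via Proposition~\ref{zeroinclus}, and handle the boundary case $z_0 \notin \supp(\alpha)$, $\Psi(z_0) = \sigma^{-2}$ by subharmonicity and the maximum principle together with the non-constancy hypothesis.

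The one genuine difference is the treatment of $z_0 \in \supp(\alpha)$. The paper locates near $z_0$ a point $u$ of positive lower $\alpha$-density, $\alpha(B(u,t)) \geq \delta \pi t^2$ for all small $t$, and deduces $\Psi(u) = +\infty$; this implicitly appeals to a differentiation theorem for Radon measures. Your route avoids that entirely: Tonelli plus the logarithmic divergence of $\int_{B(s,r)} |s-z|^{-2}\, dA(z)$ gives $\int_{B(z_0,\veps)} \Psi\, dA = +\infty$ directly from $\alpha(B(z_0,\veps/2)) > 0$, which is nothing more than the definition of the support. This is both more elementary and more robust, and it makes transparent why no hypothesis on $\Psi$ is needed along $\supp(\alpha)$.
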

\begin{proof}
 Observe that $S = \supp (\alpha)$. We set $\varphi (z ) = \int \lambda^{-1} d\nu_z (\lambda) = \dE |L - z |^{-2}$. From \eqref{eq:densitybeta}, the support of $\beta$ is given by $\overline \Sigma$ where $\Sigma$ is defined by \eqref{eq:defSigma}. From Proposition \ref{support1t}, it is sufficient to prove that 
$$
 \left(  \mathrm{supp} (\beta)  \right)^c     = \{ z  \in \C :  z \notin  S \hbox{ and } \varphi(z) < \sigma^{-2}\}.
$$ 
 Assume first that $ z \notin  S \hbox{ and } \varphi(z) < \sigma^{-2}$. Since $S = \supp( \alpha)$, it is closed and there exists an open ball $B_2 = B(z,2 r_0)$ with center $z$ and radius $r_0 >0$ such that $B_2 \cap S  =\emptyset $. In particular, with $B_1 = B (z, r_0)$, the $B_1 \to \R_+$  map $ \varphi$ is bounded and continuous.. Since $\varphi(z) <\sigma^{-2}$, there exists $B_0  = B(z,r_0) \subset B_1$ such that $\varphi (u) <\sigma^{-2}$ on $B_0$.  Hence the density of $\beta$ is $0$ on $B_0$ and $z \notin \mathrm{supp} (\beta)$.

The other way around. Assume that $z \notin \mathrm{supp} (\beta)$ or equivalently $z \notin \overline \Sigma$. Then there exists $d>0$ such that the open ball $ B = B(z,d)$ satisfies $B \cap \overline \Sigma = \emptyset$. In particular, $B\cap \Sigma = \emptyset$, and, for all $u \in B$, $\varphi ( u) \leq \sigma^{-2}$.  Let us first check that $z \notin S =  \mathrm{supp}(\alpha) $. By contradiction : if $z \in \supp(\alpha)$, then for any $\epsilon >0$, there exist $u \in B(z,\epsilon)$, $\tau$ and $\delta >0$ such that for all $0 \leq t \leq \tau$, $\alpha (B(u,t) ) \geq \delta \pi t^2$. It follows that 
$$
\varphi(u) = \int | s - u |^{-2}  d \alpha (s) \geq 2 \pi \delta \int_0^\tau t^{-2}  t dt = + \infty.    
$$
Applied to $\epsilon = d$, it leads to a contradiction. Hence $ z \notin \supp(\alpha)$. Arguing as above, for some, $r_0 >0$, $B (z, 2r_0) \cap \supp(\alpha) = \emptyset$. It follows that on $B_0 = B(z, r_0)$, the map $\varphi$ is bounded and continuous. Moreover, it is subharmonic on $B_0$. We can assume without loss on generality that $r_0 < d$. We may now finish the proof: it remains to check that  $\varphi (z) < \sigma^{-2}$. We know a priori that for all $u \in B$, $\varphi ( u) \leq \sigma^{-2}$.   Assume by contradiction that $\varphi(z) = \sigma^{-2}$. Then the maximum principle  implies that $\varphi = \sigma^{-2}$ on $B_0$. We get  a contradiction.     
\end{proof}

\subsection{Proof of Lemma \ref{le:noouter}}

Let $D$ be as in Lemma \ref{le:noouter}. We may write $A''_N = P_N Q_N$ with $P_N, Q_N^\top \in M_{N,r} ( \dC)$, and by assumption (A1'), $\|P_N\| = 1$, $\|Q_N\| \leq \sqrt r M$. 
From \eqref{eq:idcle}, we find if $z \in D$, 
\begin{equation}
f_N (z) = \frac{ \det ( A_N - z I_N) }{\det (A'_N - z I_N) }  =  \det ( I_r + Q_N (A'_N - z I_N )^{-1} P_N ). 
\end{equation}

By assumption (A4),  $f_N(z)$ is a uniformly bounded analytic function in $D$. In particular, from Montel's theorem, $f_N$ is a precompact and any accumulation point $f$ of $f_N$ is a bounded analytic function on $D$.

Observe moreover that for any $\delta$, for all $z \in \dC$ with  $|z|$ large enough,  $\| Q_N (A'_N - z I_N )^{-1}  P_N\| < \delta$. We use  the crude inequality, for any $B,C \in M_{r} ( \dC)$ 
\begin{equation}\label{eq:contdet}
|\det ( B + C ) - \det (B) | \leq r \| C\|  \left( \| B \| \vee \|B+C\|\right) ^{r-1}.
\end{equation} 
We deduce that  $| f_N(z) | \geq \det ( I_r) - 1/2 = 1/2$ for all $z \in D$ with $|z|$ large enough. 

It  follows that any accumulation point $f$ of $f_N$ is a non-zero bounded analytic function on $D$. In particular, $f$ has a finite number of zeros on any compact subset of $D$. Lemma \ref{le:noouter} follows easily.

\section{No outlier: proof of Theorem \ref{inclusion}}
\label{sec:nooutlier}

Theorem \ref{inclusion} is a direct consequence of the following proposition.

\begin{proposition}\label{nonunif} Suppose that assumptions (X1-X2), (A1), (A2) and (A4) hold.
Let $z$ be in $\Gamma$ such that $0 \notin \supp (\mu_z)$. There exists $\gamma_z>0$ such that almost surely for all large $N$, there is no singular value 
of $\sigma Y_N + A'_N-z I_N$  in $[0, \gamma_z]$. Consequently, for any compact $K \subset \Gamma \cap \{z, 0 \notin \supp (\mu_z)\}$, there exists $\gamma_K >0$ such that a.s. for all large $N$,
$$
\inf_{ z \in K} s_N (\sigma Y_N + A'_N-z I_N) \geq \gamma_K.
$$
\end{proposition}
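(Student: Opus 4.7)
The plan has two parts: a reduction of the uniform bound on the compact set $K$ to the pointwise bound, and then a proof of the pointwise bound via a ``no eigenvalues outside the support'' argument for the Hermitian positive semidefinite matrix $H_N^z := (\sigma Y_N + A'_N - zI_N)(\sigma Y_N + A'_N - zI_N)^*$, whose eigenvalues are the squared singular values of $\sigma Y_N + A'_N - zI_N$.

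For the reduction, the Courant--Fischer inequality \eqref{eq:CF} applied to $\sigma Y_N + A'_N - zI_N$ and $\sigma Y_N + A'_N - z'I_N$ shows that $z \mapsto s_N(\sigma Y_N + A'_N - zI_N)$ is $1$-Lipschitz, uniformly in $N$ and pathwise in the randomness. Given the pointwise bound $s_N(\sigma Y_N + A'_N - z_jI_N) \geq \gamma_{z_j}$ a.s.\ eventually at finitely many $z_j$ covering $K$ by balls $B(z_j, \gamma_{z_j}/2)$, the Lipschitz property propagates this to the uniform bound $\gamma_K = \min_j \gamma_{z_j}/2$ a.s.\ eventually on all of $K$.

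For the pointwise statement at fixed $z \in \Gamma$ with $0 \notin \supp(\mu_z)$, pick $\gamma_z > 0$ so that $[0, 4\gamma_z^2] \cap \supp(\mu_z) = \emptyset$; the claim reduces to showing that $H_N^z$ has no eigenvalue in $[0, \gamma_z^2]$ almost surely eventually. Since by Theorem~\ref{convbeta} one has $\mu_{H_N^z} \to \mu_z$ weakly a.s., the remaining task is to rule out outlier eigenvalues in the gap. The strategy is the Bai--Silverstein ``no eigenvalues outside the support'' methodology: establish concentration of the Stieltjes transform $G_N(w) = \frac{1}{N}\Tr(H_N^z - wI_N)^{-1}$ around its mean via a row-wise martingale decomposition of $Y_N$ combined with a Burkholder-type $L^4$ bound (using assumption (X2)); control the deterministic equivalent $\dE G_N$ in a complex neighborhood of the gap via the fixed-point equation~\eqref{eqgmuz}, whose unique bounded solution on the gap is $g_{\mu_z}$; then an eigenvalue of $H_N^z$ in $[0, \gamma_z^2]$ would produce a blow-up of $\Im G_N$ near that point, incompatible with the uniform a.s.\ convergence $G_N \to g_{\mu_z}$ near the gap obtained by Borel--Cantelli on a dense grid.

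The main obstacle is controlling $G_N$ up to sufficiently fine scales near the \emph{left edge} of the spectrum (at $0$) under only the fourth-moment assumption (X2). This follows the well-established Bai--Silverstein template for sample covariance matrices, adapted here to the ``deformed iid'' model $H_N^z$; the ingredients specific to the present context are Proposition~\ref{support1t}, which provides the homeomorphism between the gap components of $\dR \setminus \supp(\nu_z)$ and $\dR \setminus \supp(\mu_z)$ and ensures that the fixed-point equation~\eqref{eqgmuz} is invertible on the gap, and Proposition~\ref{zeroinclus}, which supplies the strict inequality $g_{\nu_z}(0) > -\sigma^{-2}$ needed to place $0$ in the image of this homeomorphism and thus in the interior of a gap of $\supp(\mu_z)$.
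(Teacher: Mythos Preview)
Your reduction from the compact-set bound to the pointwise one via the $1$-Lipschitz property of $z \mapsto s_N(\sigma Y_N + A'_N - zI_N)$ is correct and matches the paper (this is \eqref{eq:CFresunif}).

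For the pointwise statement, invoking the Bai--Silverstein ``no eigenvalues outside the support'' machinery for the information-plus-noise matrix $H_N^z$ is also what the paper does --- but by citation to \cite[Proposition~3.3]{MC} and \cite{MR2930382} rather than by re-derivation. Your sketch, however, has a real omission: you never say where assumption (A4) is used, and it is essential. The cited result requires more than the limiting gap $0 \notin \supp(\mu_z)$; one must also verify that (i) the \emph{finite-$N$} deterministic equivalent $\mu_{N,z}$ (the solution of \eqref{eqgmuz} with $\nu_{N,z} = \mu_{(A'_N-zI_N)(A'_N-zI_N)^*}$ in place of $\nu_z$) has a gap at zero for all large $N$, and (ii) each column-deleted matrix $A_N^{(i,z)}$ has its smallest singular value bounded away from that gap (condition (1.10) of \cite{MR2930382}). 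Neither (i) nor (ii) follows from weak convergence $\nu_{N,z} \to \nu_z$ alone; both are supplied by (A4). The paper devotes Proposition~\ref{decolle} to (i), using (A4) to force $\supp(\nu_{N,z}) \subset (\eta_z^2,\infty)$ and then pushing this through the homeomorphism $\Phi_{\nu_{N,z}}$ of Proposition~\ref{support1t}, and handles (ii) by the singular-value interlacing inequality under column deletion together with (A4). Your appeal to Propositions~\ref{support1t} and~\ref{zeroinclus} addresses only the limiting objects $\mu_z,\nu_z$.

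A smaller point: the heuristic ``an outlier would produce a blow-up of $\Im G_N$ incompatible with $G_N \to g_{\mu_z}$'' is not by itself a proof, since a single eigenvalue perturbs $G_N$ by only $O(1/N)$. The actual Bai--Silverstein argument needs concentration of $N(G_N - g_{\mu_{N,z}})$ at polynomial rate (obtained, after truncation, from high-moment Burkholder bounds, not just the fourth moment) combined with a contour-integral eigenvalue count.
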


The second statement of Proposition \ref{nonunif} is a consequence of the first statement and \eqref{eq:CFresunif}. We begin with introducing some notation: 
$$\nu_{N,z} =\mu_{(A'_N-zI_N)( A'_N-zI_N)^*},$$
and $\mu_{N,z}$ denotes the distribution whose Stieltjes transform satisfies the equation 
\begin{equation}\label{eqgmuz} g_{\mu_{N,z}}(w)=\int \frac{1}{(1-\sigma^2g_{ \mu_{N,z}}(w))w- \frac{ t}{1- \sigma^2 g_{ \mu_{N,z}}(w)} }d\nu_{N,z}(t).\end{equation}

\begin{proposition}\label{decolle} Suppose that assumptions (X1), (A1), (A2) and (A4) hold.
Let $z$ be in $\Gamma$ such that $0 \notin \supp (\mu_z)$; then
there exists $\epsilon_z>0$, such that $[0, \epsilon_z] \subset \mathbb{R}\setminus  \mathrm{supp}(\mu_{z})$  and, for all large $N$,  $[0, \epsilon_z] \subset \mathbb{R}\setminus  \mathrm{supp}(\mu_{N,z})$.
\end{proposition}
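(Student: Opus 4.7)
The plan is to combine the algebraic characterization of $\mathbb{R}\setminus\supp(\mu_z)$ given by Proposition \ref{support1t} with stability of this characterization under the weak convergence $\nu_{N,z} \to \nu_z$, applied both to the limit pair $(\mu_z,\nu_z)$ and to the finite-$N$ pair $(\mu_{N,z},\nu_{N,z})$. Note that the key algebraic content of Propositions \ref{zeroinclus} and \ref{support1t} is purely a consequence of the subordination equation \eqref{eqgmuz}, so it applies verbatim to $(\mu_{N,z},\nu_{N,z})$ since this pair satisfies the same equation with $\nu_{N,z}$ in place of $\nu_z$.

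The first inclusion $[0,\epsilon_z] \subset \mathbb{R}\setminus\supp(\mu_z)$ is immediate since $\supp(\mu_z)$ is closed and $0 \notin \supp(\mu_z)$. Next, Proposition \ref{zeroinclus} yields $0 \notin \supp(\nu_z)$ and $g_{\nu_z}(0) \in (-\sigma^{-2},0)$, hence $\Phi'_{\nu_z}(0) = (1+\sigma^2 g_{\nu_z}(0))^2 > 0$, so that $0 \in \mathcal{E}_{\nu_z}$. Since $\nu_z$ has a gap at $0$, $g_{\nu_z}$ and $\Phi_{\nu_z}$ are real-analytic in a neighborhood of $0$, and continuity provides $u_0 > 0$ with $[0,u_0] \subset \mathcal{E}_{\nu_z}$. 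By the variant of Proposition \ref{support1t} for the finite-$N$ pair, $\Phi_{\nu_{N,z}}$ is strictly increasing on $\mathcal{E}_{\nu_{N,z}}$ with $\Phi_{\nu_{N,z}}(0)=0$ and $\Phi_{\nu_{N,z}}(\mathcal{E}_{\nu_{N,z}}) \subset \mathbb{R}\setminus\supp(\mu_{N,z})$. It therefore suffices to show that $[0,u_0] \subset \mathcal{E}_{\nu_{N,z}}$ for all large $N$, and that $\Phi_{\nu_{N,z}}(u_0)$ stays bounded below by some $\epsilon_z > 0$ that does not depend on $N$.

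For this, assumption (A4) together with singular value interlacing for the rank-$r$ perturbation $A_N - A'_N = A''_N$ gives $\supp(\nu_{N,z}) \subset [\eta^2, M_1]$ for some uniform $M_1$ and all large $N$; combined with (A2) and the same interlacing, this also forces $\supp(\nu_z) \subset [\eta^2, M_1]$. Shrinking $u_0$ so that $u_0 < \eta^2/2$, weak convergence of $\nu_{N,z}$ to $\nu_z$ on the compact set $[\eta^2,M_1]$ upgrades to uniform convergence of $g_{\nu_{N,z}}$ and $g'_{\nu_{N,z}}$ to $g_{\nu_z}$ and $g'_{\nu_z}$ on $[0,u_0]$, since the integrands $(u-t)^{-1}$ and $(u-t)^{-2}$ are continuous and bounded uniformly in $u \in [0,u_0]$ and $t \in [\eta^2,M_1]$. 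The strict inequalities $g_{\nu_z}(u) > -\sigma^{-2}$ and $\Phi'_{\nu_z}(u) > 0$ hold on the compact $[0,u_0]$, hence transfer to $\nu_{N,z}$ for all $N$ large enough, yielding $[0,u_0] \subset \mathcal{E}_{\nu_{N,z}}$. Finally, $\Phi_{\nu_{N,z}}(u_0) \to \Phi_{\nu_z}(u_0) > 0$, so choosing any $\epsilon_z \in (0, \Phi_{\nu_z}(u_0)/2)$ compatible with the first inclusion gives $[0,\epsilon_z] \subset [0, \Phi_{\nu_{N,z}}(u_0)] = \Phi_{\nu_{N,z}}([0,u_0]) \subset \mathbb{R}\setminus\supp(\mu_{N,z})$ for all $N$ large enough.

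The main subtlety is precisely to upgrade pointwise convergence of $g_{\nu_{N,z}}(0)$ into a spectral gap for $\mu_{N,z}$ that is uniform in $N$, and the essential input for this is the $N$-independent singular-value gap of $A'_N - z I_N$ provided by assumption (A4).
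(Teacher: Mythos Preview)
Your proof is correct and follows essentially the same route as the paper: verify $0\in\mathcal{E}_{\nu_z}$ via Proposition~\ref{zeroinclus}, extend to an interval $[0,u_0]$ by continuity, use the uniform singular-value gap from (A4) to upgrade weak convergence $\nu_{N,z}\to\nu_z$ to uniform convergence of $g_{\nu_{N,z}}$ and $\Phi'_{\nu_{N,z}}$ on $[0,u_0]$, transfer the strict inequalities to get $[0,u_0]\subset\mathcal{E}_{\nu_{N,z}}$, and conclude via $\Phi_{\nu_{N,z}}(u_0)\to\Phi_{\nu_z}(u_0)>0$. The only cosmetic differences are that the paper specifies $u_0=\omega_{\nu_z}(\delta)$ explicitly and invokes Montel's theorem for uniform convergence, and that your appeal to interlacing for the support bound $\supp(\nu_{N,z})\subset[\eta^2,M_1]$ is unnecessary since $\nu_{N,z}$ is already defined in terms of $A'_N$ (interlacing is only needed to pass from (A2), which concerns $A_N$, to the weak convergence $\nu_{N,z}\to\nu_z$).
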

\begin{proof}
According to the assumption (A4), there exists some $\eta>0$, such that for all large $N$, the spectrum of  $(A'_N-zI_N) (A'_N-zI_N)^*$ is included in $]\eta^2 , +\infty[$.
Also,  there exists $\delta >0$ such that $[0 , \delta] \subset \mathbb{R}\setminus \mathrm{supp}(\mu_z)$. We may choose $\delta$ small enough so that  $\omega_{\nu_z} (\delta ) \leq \eta^2 /2$. We find that  for $N$ large enough, 
 \begin{equation}\label{distance} \DIST\left(  [0 , \omega_{\nu_z} (\delta) ],\supp(\nu_{N,z}) \right) \geq \eta^2 /2.
\end{equation}
Also, according to   Proposition  \ref{support1t}, there exists $\tau>0$ such that \begin{equation}\label{strict} \mbox{for any $x$ in $ [0, \omega_{\nu_z}(\delta)]$,}~~  g_{\nu_z}(x) >-\sigma^{-2}+ \tau  \mbox{    and~~} \Phi'_{\nu_z} (x) > \tau.\end{equation}

From \eqref{distance}, assumption (A2) and Montel's theorem, $g_{\nu_{N,z}}$, $g'_{\nu_{N,z}}$ and  $ \Phi^{'}_{\nu_{N,z}}$ 
converge to $g_{\nu_z}$,  $g_{\nu_z} '$ and $\Phi_{\nu_z}^{'}$ respectively uniformly on $ [0,\omega_{\nu_z}(\delta)]$. Hence, using (\ref{strict}), we can claim that for all large $N$, 
$$ [0,\omega_{\nu_z}(\delta)]\subset \left\{ u \in   \mathbb{R}\setminus\supp({\nu_{N,z}}),  \Phi'_{\nu_{N,z}} (u) >0, g_{{\nu_{N,z}}}(u) >-\sigma^{-2}\right\}.$$

According to Proposition \ref{support1t}, we can deduce that
$$ \Phi_{\nu_{N,z}}\left( [0,\omega_{\nu_z}(\delta)]\right) =\left[ 0,\Phi_{\nu_{N,z}}(\omega_{\nu_z}(\delta))\right]
\subset   \mathbb{R}\setminus \supp(\mu_{N,z}).$$

Finally, since $\Phi_{\nu_{N,z}}(\omega_{\nu_z}(\delta))$ converges  towards $\Phi_{\nu_z}(\omega_{\nu_z}(\delta))=\delta$, we have 
for all large $N$, $$\Phi_{\nu_{N,z}}(\omega_{\nu_z}(\delta) ) \geq \delta / 2,$$  and then $[0,\delta/2] \subset      \mathbb{R}\setminus \supp(\mu_{N,z})$. \end{proof}

We are now ready to prove Proposition \ref{nonunif}.

\begin{proof}[Proof of Proposition \ref{nonunif}] Let $\gamma_z  >0$ be such that $\gamma_z < \epsilon_z$ and $\omega_{\nu_z}(\gamma_z) < \eta_z ^22$ where $ \epsilon_z$ is defined  in Proposition \ref{decolle} and $\eta_z$ is defined in (A4). By definition
$$\omega_{\nu_{N,z}}(\gamma_z 
)= \gamma_z  (1- \sigma^2 g_{\mu_{N,z }}(\gamma_z ))^2.
$$

Since $\mu_{{N,z}}$ converges weakly towards $\mu_z$, by Proposition \ref{decolle}, for all large $N$, $$\omega_{\nu_{N,z}}(\gamma_z)< \eta_z2.$$

Now, for $z \in \dC$ and $i \in \{1, \cdots, N\}$, let ${A ^{(i,z)}_N}$  be the $N \times (N-1)$ matrix obtained from $A'_N - z I_N$ be removing the $i$-th column. The interlacing inequalities (see e.g. \cite[Lemma A.1]{tao-vu-cirlaw-bis}) imply that 
$$
s_{N} ( A'_N -z I_N) \leq s_{N-1} \left( A^ {(i,z)}_N \right).
$$
It follows that  $A^{(i,z)}_N$ has no singular value in $[0, \eta_z]$. The condition (1.10) of Bai and Silverstein in \cite{MR2930382} is thus fulfilled on $[0, \frac{\gamma_z}{2}]$. We may thus apply \cite[Proposition 3.3]{MC}, we get that almost surely for all large $N$, there is no eigenvalue of $(\sigma Y_N + A'_N-z I_N) (\sigma Y_N +A'_N-z I_N) ^*$ in $[0 , \frac{\gamma_z}{2}] $. \end{proof}

\section{Stable outliers: proof of Theorem \ref{th:main}}
\label{sec:main}

The strategy of proof is to use Theorem \ref{inclusion} in conjunction with \eqref{eq:idcle}.  

\subsection{Convergence of bilinear forms of random matrix polynomial}

We start the proof of Theorem \ref{th:main} with a result of independent interest. We denote by $ \dC \langle X_1, \cdots, X_k  \rangle$ the set of non-commutative polynomials in the non-commutative variables $\{X_1,\cdots, X_k\}$ ($\dC$-linear combinations of words in the $X_i$'s with the empty word identified as $1 \in \dC$).

\begin{proposition}\label{prop:bilinearP}
Let $k \geq 1$ be an integer and $P \in \dC \langle X_1, \cdots, X_k  \rangle$ 
such that the exponent of $X_k$ in each monomial of $P$ is nonzero. We consider a sequence $(B_N^{(1)}, \cdots, B_{N}^{(k-1)}) \in M_{N} (\dC)^{k-1}$ of matrices with operator norm uniformly bounded in $N$ and $u_N$, $v_N$ in $ \dC^N$  with unit norm. Then, if $X_N$ satisfies assumptions (X1-X2), a.s. 
$$
u_N ^* P \left(  B_N ^ {(1)}, \cdots, B_N^{(k-1)} , Y_N \right) v_N \to 0. 
$$
\end{proposition}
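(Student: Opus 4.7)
The plan is to reduce to a single monomial, establish $L^2$ decay via a pairing moment computation, and upgrade to almost sure convergence through a truncation combined with a fourth-moment argument.

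First, by $\dC$-linearity, it suffices to treat a single monomial. Every monomial with nonzero exponent of $X_k$ can be written in the form
\[
W = M_0 \, Y_N \, M_1 \, Y_N \, M_2 \cdots Y_N \, M_p,
\]
for some $p \geq 1$, where each $M_\ell$ is a monomial in $B_N^{(1)}, \ldots, B_N^{(k-1)}$ and hence has operator norm uniformly bounded in $N$. Expanding entrywise yields
\[
u_N^* W v_N = N^{-p/2} \sum_{\ibf, \jbf} \phi(\ibf, \jbf) \, X_{i_1 j_1} X_{i_2 j_2} \cdots X_{i_p j_p},
\]
with $\phi(\ibf, \jbf) = (u_N^* M_0)_{i_1} (M_p v_N)_{j_p} \prod_{\ell=1}^{p-1} (M_\ell)_{j_\ell, i_{\ell+1}}$. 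The second moment $\dE|u_N^* W v_N|^2$ decomposes as a finite sum over partitions of the $2p$ $X$-factors into groups of size at least two (forced by centering and independence). Using the operator norm bounds and the Frobenius estimate $\|M\|_F \leq \sqrt N\,\|M\|$, the dominant standard pairing contributes $N^{-p} \|u_N^* M_0\|^2 \|M_p v_N\|^2 \prod_{\ell=1}^{p-1} \|M_\ell\|_F^2 = O(1/N)$, and all other partitions are smaller, so $\dE|u_N^* W v_N|^2 = O(1/N)$.

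To upgrade $L^2$ to almost sure convergence, I would apply a Bai--Yin style truncation. Fix a small $\delta > 0$ and set $\tilde X_{ij} = X_{ij} \1_{|X_{ij}| \leq \delta \sqrt N} - \dE[X_{ij} \1_{|X_{ij}| \leq \delta \sqrt N}]$, then define $\tilde Y_N = \tilde X_N / \sqrt N$ and $\tilde W$ the polynomial with $\tilde Y_N$ in place of $Y_N$. The bound $|\tilde X_{ij}| \leq 2\delta \sqrt N$, together with $\dE|\tilde X_{ij}|^4 = O(1)$ from (X2), gives the key higher-moment estimate $\dE|\tilde X_{ij}|^{2q} \leq (2\delta \sqrt N)^{2q-4}\,\dE|\tilde X_{ij}|^4$ for $q \geq 2$. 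Repeating the pairing computation for the fourth moment of $u_N^* \tilde W v_N$, and using this estimate to absorb contributions from partition groups of size greater than four, yields $\dE|u_N^* \tilde W v_N|^4 = O(1/N^2)$. Markov's inequality and Borel--Cantelli then give $u_N^* \tilde W v_N \to 0$ almost surely. The untruncated remainder $u_N^*(W - \tilde W) v_N$ is controlled by noting that, from Markov and (X2), the expected number of entries $(i,j)$ with $|X_{ij}| > \delta \sqrt N$ is uniformly bounded, so a standard Bai--Yin-type argument shows that $Y_N - \tilde Y_N$ has small operator norm, and a diagonal $\delta \to 0$ passage concludes. The main technical obstacle is that (X2) assumes only a finite fourth moment, while a naive fourth-moment estimate for the degree-$p$ polynomial $u_N^* W v_N$ would require moments of $X_{11}$ of order up to $4p$; the truncation at scale $\delta\sqrt N$ is the device that exchanges these unavailable high moments for powers of $N$ that are absorbed by the $N^{-p/2}$ normalization.
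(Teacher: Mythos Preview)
Your strategy differs from the paper's. The paper (i) truncates at a \emph{fixed} level $K$ and then convolves with a small Gaussian, so the entries satisfy a log-Sobolev inequality; (ii) uses Herbst-type concentration to get $u^*P(B,Y)v - \dE u^*P(B,Y)v \to 0$ almost surely; and (iii) shows $\dE u^*P(B,Y)v = O(N^{-1/2})$ via a graph-counting argument. The concentration step replaces your fourth-moment computation entirely, and the fixed-level truncation plus Bai--Yin handles the remainder cleanly.

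Your moment-method route could in principle be made to work, but as written it has a real gap: the sentences ``all other partitions are smaller'' (for the second moment) and ``yields $\dE|u_N^*\tilde W v_N|^4 = O(1/N^2)$'' (for the fourth) are not arguments --- they \emph{are} the entire technical content of the proof. One must show, for every partition of the $2p$ (resp.\ $4p$) $X$-factors into blocks of size $\geq 2$, that the index constraints reduce the free sum by enough powers of $N$; this is exactly the graph analysis (connected components, spanning trees versus cycles, loop counting) that occupies the paper's Step~3. Your truncation bound $\dE|\tilde X|^{2q} \leq (2\delta\sqrt N)^{2q-4}\dE|\tilde X|^4$ controls individual block moments but not the combinatorial sum: you still have to verify that whenever a block has size $>2$ (costing extra powers of $\sqrt N$), the resulting merger of indices saves at least as many. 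The remainder step is also thinner than stated: under only (X2), $\sum_N N^2\,\dP(|X_{11}|>\delta\sqrt N)$ need not be finite, so one cannot simply assert $Y_N = \tilde Y_N$ eventually; showing $\|Y_N - \tilde Y_N\|\to 0$ a.s.\ requires a separate argument, which is why the paper truncates at a fixed level and lets Bai--Yin do that work on the small-variance piece.
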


 \begin{proof}
\noindent{\bf Step 1 : truncation / reduction.}
We set $B_N = (B_N^{(1)}, \cdots, B_N^{(k-1)})$. Without loss of generality, we can assume (up to changing $k$, $B_N^{(\ell)}$, $1 \leq \ell \leq k-1$,  and $u_N$, $v_N$) that, for any $1 \leq \ell \leq k-1$, $\|B^{(\ell)} _N \| \leq 1$ and that $P$ is of the form 
\begin{equation}\label{eq:formofP}
P \left(  B_N  , Y_N \right)  = Y_N \prod_{\ell = 1} ^{k-1} \left(  B_N ^{(\ell)} Y_N \right).
\end{equation}
We shall skip the index $N$ for ease of notation. Observe also that for any $S, T \in M_N ( \dC)$,  
\begin{align}
 \|  P \left(  B , S \right)   -  P \left( B  , T \right) \|    \leq k ( \|S\| \vee \|T\| )^{k-1} \|S - T \|.  \label{eq:diffnormP}
\end{align}

Moreover, for some fixed $K > 0$, consider the matrix $X^{(1)} _{ij} = X_{ij} \IND ( |X_{ij} | \leq K) - \dE  X_{ij} \IND ( |X_{ij} | \leq K)  $  and $X ^{(2)} _{ij} = X_{ij} \IND ( |X_{ij} | > K) - \dE X_{ij} \IND ( |X_{ij} | > K) $.  For $i = 1,2$, we set $Y^{(i)} = X^{(i)} / \sqrt N$, we have $Y = Y^{(1)} + Y^{(2)}$. From Bai-Yin theorem \cite{MR1235416} (Theorem 5.8 in \cite{MR2567175}) , there exists  $\veps (K) \to 0$ as $K \to \infty$, such that, a.s.  $\limsup_N \|Y^ {(2)}\| \leq \veps (K)$, $\limsup_N \|Y^ {(1)}\| \leq  2 + \veps(K)$ and $\lim_N \|Y\| =  2$.  In particular, from \eqref{eq:diffnormP}, we deduce that a.s. 
\begin{align*}
 \limsup_N  \|  P \left(  B, Y \right)   -  P \left(  B , Y^{(1)} \right) \| \leq  k  ( 2 + \veps(K) )^{k-1}  \veps (K), 
\end{align*}

Hence, in summary, it is sufficient to prove the statement of Proposition \ref{prop:bilinearP} with, for some $K>0$, $X_{ij}$ with bounded support in the ball of radius $K$, $P$ of the form \eqref{eq:formofP} and $\|B^{(i)}\| \leq 1$.

We now consider $G = (G_{ij}) \in M_N (\dC)$ a random matrix with i.i.d. $N(0,1/n)$ Gaussian entries independent of $X$. The above argument shows that for any $\theta>0$, a.s. 
\begin{align*}
& \limsup_N  \|  P \left(  B , Y \right)   -  P \left(  B, Y + \theta G  \right) \| \leq  k  \left(  2^2  ( 1 + \theta^2 ) \right)^{\frac{k-1}{2}}   2 \theta , 
\end{align*}

In  particular, without loss of generality we may assume that there exist $\theta,K >0$, such  that the law of $X_{ij}$ are a convolution of the Gaussian distribution $N(0,\theta^2)$ with a law of bounded support in the ball of radius $K$. It implies notably they satisfy a log-Sobolev inequality with a common constant $\delta >0$ (see \cite{MR3067796,wangwang}). This will be our final assumption of the laws of $X_{ij}$. 

\vline

\noindent{\bf Step 2 : concentration.} Our aim is now to check that a.s., as $N \to \infty$, 
\begin{equation}\label{eq:concbilinearP}
u^* P ( B, Y ) v - \dE u^* P ( B, Y ) v  \to 0. 
\end{equation}

This will follow from a general concentration argument. We identify $M_N ( \dC)$ with $\dR^{2 N^2}$: the Frobenius norm of a matrix, $\|x \|_2 = \sqrt{ \sum_{ij} \Re ( x_{ij} ) ^2  + \Im (x_{ij})^2 }$ is then its Euclidean norm. We consider the $\dR^{2 N^2} \to \dC$ function, $F(x) = u^* P(B,x) v$ and define $K \subset M_N ( \dC)$ as the convex subset of matrices with operator norm bounded by $4$. If $x,y \in K$ then by \eqref{eq:diffnormP}, 
$$
|F(x) - F(y)| \leq k 4^{k-1} \|x - y \| \leq k 4^{k-1} \|x - y \|_2. 
$$
It follows that if $\Pi$ is the Euclidean projection of a matrix on $K$, the function $G(x) = F (\Pi (x))$ is Lipschitz with constant $ k 4^{k-1}$.  From Herbst's argument (e.g. \cite[Lemma 2.3.3]{MR2760897}), we deduce that 
$$
\dP (  | G( Y ) - \dE G(Y) | \geq t ) \leq 4 \exp ( -  c N t ^2),
$$
 where $c$ is related to the Lipschitz constant of $G$ and the constant of the Log-Sobolev inequality satisfied by the laws of the $X_{ij}$'s. In particular, a.s.  as $N \to \infty$, 
$$
G(Y) -  \dE G(Y)  \to 0. 
$$
From Bai-Yin theorem \cite{MR1235416}, see also (\cite[Theorem 5.8]{MR2567175}), a.s. $\|Y \| \leq 2 + o(1) < 4$. Hence, a.s. $G(Y) = F(Y)$ for all $N$ large enough, and a.s. as $N \to \infty$, 
$$
F(Y) - \dE G(Y) \to 0. 
$$

Also, the same reasoning applied to the $1$-Lipschitz function $x \mapsto \|x\|$ gives that 
$$
\dP (   \|Y\| - \dE \|  Y\|   \geq t ) \leq  \exp ( -  c N t ^2).  
$$
Using again that a.s. $\|Y \| \leq  2 + o(1) < 4$ for all $N$ large enough, we deduce that as $N \to \infty$, 
$$
| \dE    G (Y) - \dE F(Y) | \leq 4^{k} \dP (  \| Y \| \geq 4) + \dE \|Y\|^k  \IND (  \| Y \| \geq 4) \to 0. 
$$
We thus have proved that \eqref{eq:concbilinearP} holds. 

\vline

\noindent{\bf Step 3 : graph counting.} The proof of Proposition \ref{prop:bilinearP} will be complete if we manage to check that 
\begin{equation}\label{eq:bilinearPaim}
\dE u^* P(B,Y) v  = O ( N^{-1/2} ). 
\end{equation}
For integers $0 \leq \ell  < k $, we set $\bl k \br = \{1, \cdots, k\}$ and $\bl \ell, k  \br = \{\ell, \cdots, k \}$. We have 
$$
\dE u^* P(B,Y) v   = N^{- \frac k 2} \sum \bar u_{i_1} v_{i_{2k }} \dE X_{i_1 i_2} \prod_{\ell = 1}^{k-1} B^{(\ell)}_{i_{2\ell}  i_{2 \ell+1}} X_{i_{2\ell+1}i_{2 \ell +2}},
$$
where the sum is over all $1 \leq i_s  \leq N$, $1 \leq s  \leq  2k$. The variables $X_{ij}$ are centered, independent and have uniformly bounded moments first $k$
 moments. It follows that the above expectation will be non-zero only if the pairs of index $(i_{2 \ell-1}, i_{2\ell})$, $1 \leq \ell \leq k$ appears at least twice.  Hence, there are $1 \leq q \leq \lfloor k/2 \rfloor$ distinct such pairs and $p \leq 2q$ distinct indices in $(i_1, \cdots , i_{2k})$.  We may thus bound our expectation as a finite sum (depending on $k$) of terms of the type $c N^{- \frac k 2}S$ with $c >0$ and 
\begin{equation}\label{eq:graphcount}
S =   \sum | u_{i_1} | | v_{i_{\pi(2k)}} | \prod_{\ell = 1}^{k-1} | B^{(\ell)}_ {i_{\pi(2\ell)}  i_{\pi(2 \ell+1)}}|,
\end{equation}
where the sum is over all $1 \leq i_1 ,  \cdots , i _{p} \leq N$ and $\pi :\bl 2k \br \to \bl  p \br$ is a fixed surjective map such that $\pi(1) = 1$ and for any $(u,v) \in \bl p \br ^2$, $n(u,v) = \sum_{\ell =1}^{k} \IND \{  ( \pi (2\ell-1) , \pi (2 \ell )) = (u,v) \} \ne 1$. We may further assume that if $\pi(2k) \ne \pi(1)$, $\pi(2k) = p$. 

Since $q \leq k/2$, the bound \eqref{eq:bilinearPaim} would follow if we manage to prove the bound 
\begin{equation}\label{eq:Saim}
S \leq N^{   \frac{2 q - 1}{2}}. 
\end{equation}

%The case $p=1$ is simple :  from the assumption $\|B^{(\ell)} \| \leq 1$, we have  $| B^{(\ell)}_ {i j}| \leq 1$. From Cauchy-Schwartz inequality, we readily get $S \leq 1$. 

To this end,  we introduce a natural graph associated to the map $\pi$. For $(u,v) \in \bl p \br ^2$, we set $m(u,v) = \sum_{\ell =1}^{k-1} \IND \{  ( \pi (2\ell) , \pi (2 \ell +1 )) = (u,v) \} $.  We consider the graph $G = (V,E)$ (with loops and multiple edges) on the vertex set $V = \bl p \br$ and $$M(\{u,v\}) = m(u,v) + m(v,u) \IND( v \ne u)$$ is the multiplicity of the edge $\{u,v\}$ ($E$ is a multiset and $\{u,v\}$ appears $M(u,v)$ times in $E$).

We will prove that \eqref{eq:Saim} holds when $\pi(2k) \ne \pi(1)$. The case $\pi(2k) = \pi(1)$ is analog and simpler. Then, the key observation is that the condition $n(u,v) \ne 1$ implies that any $u \in \bl 2, p -1\br$ has degree at least $2$: $\deg(u) = \sum_{v \in V} M(u,v) \geq 2$. We also have $\deg(1) \geq 1$ and $\deg(p) \geq 1$.

Let $\Lambda \subset V$ be the vertices with a loop, i.e. the set of $u \in V$ such that $m(u,u) \geq 1$. We note that
\begin{eqnarray}\label{eq:Stemp}
|\Lambda| \leq  2 q -  p.
\end{eqnarray}
Indeed, if $v = \pi( 2 \ell) = \pi( 2\ell +1)$ then the oriented egdes $(\pi(2 \ell-1),\pi(2\ell))$ and $(\pi(2\ell+1), \pi(2\ell+2))$ share at least one adjacent vertex. They are distinct (due to orientation) unless $\pi(2\ell -1) = \pi ( 2\ell +2) = v$. It follows that \eqref{eq:Stemp} can be proved easily by recursion on $|\Lambda|$. As a consequence \eqref{eq:Saim} is implied by the stronger result : 
\begin{equation}\label{eq:Saim2}
S \leq N^{   \frac{p  + |\Lambda | - 1}{2}}. 
\end{equation}

We now start the proof of this last equation \eqref{eq:Saim2}. We can certainly decompose \eqref{eq:graphcount} as a product over the connected components of $G$. Let $H'= (V',E')$ be a connected component of $G$. Assume first that the vertex set $V'$ of $H'$ contains neither $1$ nor $p$. Then, if $L' =  \{ \ell \in \bl k-1 \br :  \{\pi(2\ell) , \pi(2 \ell+1)\} \in E'\}$, we claim that 
\begin{equation}\label{eq:boundS'}
S' =  \sum_{i_v : v \in V'}   \prod_{\ell \in L' } | B^{(\ell)}_ {i_{\pi(2\ell)}  i_{\pi(2 \ell+1)}}| \leq N^{\frac{|V'| + \veps'}{2}},
\end{equation}
where $\veps' \in \{0,1\}$ is equal to $1$ if $H'$ contains a vertex in $\Lambda$ and is $0$ otherwise.
First, from the key observation, $H'$ is not a tree. In particular, there exists a spanning subgraph $H''\subset H'$ where $H'' = (V', E'')$ is a cycle of length $c$ (if $c =1$, $\veps' = 1$, the cycle is a loop and if $c=2$, it is a multiple edge) with attached pending trees. Recall that  $| B^{(\ell)}_ {i j}| \leq 1$. It follows that, if $L'' =  \{ \ell \in \bl k-1 \br :  \{\pi(2\ell) , \pi(2 \ell+1)\} \in E''  \}$, we find
$$
S' \leq S'' =  \sum_{i_v : v \in V'}   \prod_{\ell \in L'' } | B^{(\ell)}_ {i_{\pi(2\ell)}  i_{\pi(2 \ell+1)}}|.  
$$
$H''$ has $c$ vertices on its cycle and $|V'| - c$ vertices on the pending subtrees. Consider $v \in V'$ a leaf of one these pending subtrees, i.e. $\deg_{H''} (v) = 1$, then it appears only once in the above product. Since $\| B ^{(\ell)} e_ j \| \leq \|B^{(\ell)} \| \leq 1$, we have for any $j$, 
$$
\sum_{i_v} | B^{(\ell)}_ {i_v j} | \leq \sqrt N \sqrt { \sum_{i_v}| B^{(\ell)}_ {i_v j} |^2 } \leq \sqrt N,  
$$
and similarly for $ \sum_{i_v}| B^{(\ell)}_ {j i_v} | $. We may repeat iteratively this bound for all vertices in the pending subtrees, we deduce that, if $C = ( V_C, E_C)$ is the cycle of $H''$ and $L_C = \{ \ell :  \{\pi(2\ell) , \pi(2 \ell+1)\} \in E_C\}$, 
$$
S'' \leq N^{\frac{ |V'| - c}{2}} \sum_{i_v : v \in V_C}   \prod_{\ell \in L_C } | B^{(\ell)}_ {i_{\pi(2\ell)}  i_{\pi(2 \ell+1)}}|.  
$$
Now, if $c =1$ then it remains a unique loop vertex $i_v$ and a product of elements of form  $B^{(\ell)}_ {i_{v}  i_{v}}$, $\ell \in L_C$. From Cauchy-Schwartz inequality, we find in this case, 
$$
S'' \leq N^{\frac{ |V'| - 1}{2}} \sum_{i_v }   | B^{(\ell)}_ {i_{v}  i_{v}}| \leq N^{\frac{ |V'| -1 }{2}} \sqrt N \sqrt{ \sum_{i , j}   | B^{(\ell)}_ {i j}|^2 } \leq N^{\frac{ |V'| + 1 }{2}} =  N^{\frac{ |V'| + \veps' }{2}}.  
$$

Similarly, if $c >1$, take any $v \in V_C$, then it appears twice in the above product. From Cauchy-Schwartz inequality, we get for any $\ell, \ell'$ and $j,j'$, 
$$
\sum_{i_v} | B^{(\ell)}_ {i_v j} B^{(\ell')}_{i_v j'}| \leq 1,  
$$
and similarly for $\sum_{i_v} | B^{(\ell)}_ { j i_v } B^{(\ell')}_{i_v j'}|$ and $\sum_{i_v} | B^{(\ell)}_ {j i_v } B^{(\ell')}_{j' i_v }|$. Hence, we sum over $i_v$ and it remains a line-tree with $c-1$ vertices. Arguing as above, we may sum over each vertex: each will add extra factor $\sqrt N$ but the last one, which will give factor $N$.  So finally, 
$$
S'' \leq N^{\frac{ |V'| - c}{2}} N^{ \frac{ c-2}{2}} N = N^{\frac{ |V'| }{2}} \leq N^{\frac{ |V'| + \veps' }{2}} .
$$
It proves \eqref{eq:boundS'}.

Let us now turn to a connected component  $H' = (V',E')$ of $G$ such that  $ 1 \in V'$ and $p \notin V'$. We claim that  
\begin{equation}\label{eq:boundS'2}
S' =  \sum_{i_v : v \in V'}  |u_{i_1} |  \prod_{\ell \in L' } | B^{(\ell)}_ {i_{\pi(2\ell)}  i_{\pi(2 \ell+1)}}| \leq N^{\frac{|V'| + \veps'- 1}{2}}. 
\end{equation}
%Assume first that $|V'| = 1$. Then $V' =  \{ 1\}$ and Cauchy-Schwartz inequality gives
%$$
%S' =  \sum_{i_1}  |u_{i_1} |  \prod_{\ell \in L' } | B^{(\ell)}_ {i_{1}  i_1}| \leq  \left(  \sum_{i_1}  |u_{i_1} |^2 \sum_{i_1} | B^{(\ell_1)}_ {i_{1}  i_1}|^2 \right)^{\frac 1 2} \leq 1, 
%$$
%where $\ell_1$ is any element in $L'$ (which is necessarily non-empty since $\deg(1) \geq 1$). 

%Assume now that $|V'| \geq 2$. 
The argument is as above. There is a spanning subgraph $H'' \subset H'$ and $H''$ is a cycle with attached pending subtrees (indeed a connected graph with at least $2$ vertices and at most one vertex of degree $1$ cannot be a tree). We repeat the above pruning procedure of the pending trees and of the cycle. The only difference comes when this is the turn of $i_1$. Using Cauchy-Schwartz inequality, we improve by a factor $\sqrt N$ our previous bounds 
\begin{align*}
&\sum_{i_1} |u_{i_1}|  | B^{(\ell)}_ {i_{1}  j }| \leq \sqrt{ \sum_{i_1} |u_{i_1}|^2 } \sqrt{ \sum_{i_1}   | B^{(\ell)}_ {i_{1}  j }|^2} \leq 1 \quad   \hbox{ and } \\
&\quad  \sum_{i_1} |u_{i_1}|  | B^{(\ell)}_ {i_{1}  i_{1 }}| \leq \sqrt{\sum_{i_1} |u_{i_1}|^2} \sqrt{ \sum_{i_1}   | B^{(\ell)}_ {i_{1}  i_{1 }}| ^2} \leq \sqrt N.
\end{align*}
It gives \eqref{eq:boundS'2}. 

The same bound obviously holds if the connected component  $H' = (V',E')$ of $G$ is such that  $ 1 \notin V'$ and $p \in V'$. It remains to deal with the case $1 \in V$ and $p \in V$. In this case, we also have the bound
\begin{equation}\label{eq:boundS'3}
S' =  \sum_{i_v : v \in V'}  |u_{i_1} | |v_{i_{\pi(2k)}}| \prod_{\ell \in L' } | B^{(\ell)}_ {i_{\pi(2\ell)}  i_{\pi(2 \ell+1)}}| \leq N^{\frac{|V'| + \veps' - 1}{2}}. 
\end{equation}
The argument goes as follows: $H'$ contains $T = (V',E_T)$ a spanning subtree. If $L_T = \{ \ell :  \{\pi(2\ell) , \pi(2 \ell+1)\} \in E_T\}$, we get
$$
S' \leq  \sum_{i_v : v \in V'} |u_{i_1} | |v_{i_{\pi(2k)}}|  \prod_{\ell \in L_T } | B^{(\ell)}_ {i_{\pi(2\ell)}  i_{\pi(2 \ell+1)}}|.  
$$
We perform the above pruning of the tree starting from the leaves. Again, using Cauchy-Schwartz inequality, each vertex will contribute by a factor $N^{\gamma_v}$ where $\gamma_v = 1/2 + \delta_v/2 - \IND( v = 1)/2 - \IND( v = p)/2$ and $\delta_v =1$ is $v$ is the last vertex removed and $0$ otherwise. We obtain \eqref{eq:boundS'3}.

Summarizing, \eqref{eq:graphcount} can be written as a product over each connected component of $G$ of expressions of the form \eqref{eq:boundS'}, \eqref{eq:boundS'2} (possibly with $v_{i_{\pi(2k)}}$ replacing $u_{i_1}$) or \eqref{eq:boundS'3}. Observe that the sum over all connected components $H'$ of $\veps' = \veps'(H') $ is at most $|\Lambda|$. Two cases are possible, either $1$ and $p$ are in the same connected component and we obtain from \eqref{eq:boundS'}-\eqref{eq:boundS'3}, 
$$
S \leq N^{\frac{p + |\Lambda|-1}{2}},
$$
or $1$ and $p$ are in distinct connected components and, by \eqref{eq:boundS'2}-\eqref{eq:boundS'3}, 
$$
S \leq N^{\frac{p+ |\Lambda|-2}{2}}.
$$
In either case, \eqref{eq:Saim2} holds and it concludes the proof of \eqref{eq:bilinearPaim}. \end{proof}

\subsection{Convergence of resolvent outside the limit support}

Let $\Gamma $ be as in Theorem \ref{th:main}. From the singular value decomposition of $A''_N$, we write $A''_N = P_N Q_N$ with $P_N, Q_N^\top \in M_{N,r} ( \dC)$ with uniformly bounded norms. We introduce the resolvent matrices 
$$
R_N ( z) = ( z I_N - \sigma Y_N - A'_N )^{-1} \quad \hbox{ and } \quad R'_N ( z) = ( z I_N - A'_N)^{-1}.
$$
The objective of this section is to prove that $R_N(z)$ is close to $R'_N(z)$ outside $\supp(\beta)$. More precisely,  we shall prove the following proposition.

\begin{proposition}\label{convunif}
Suppose that assumptions (X1-X2) and assumptions (A1'-A4) hold with $\Gamma \subset \dC \backslash \supp( \beta) $ compact. Almost surely
$$\sup_{z \in \Gamma}\left\|Q_N R_N(z) P_N -Q_N R'_{N}(z) P_N \right\|$$
converges towards zero when $N$ goes to infinity.
\end{proposition}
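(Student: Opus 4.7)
The plan is to combine a priori uniform bounds on the resolvents with a Neumann expansion at infinity --- where Proposition~\ref{prop:bilinearP} applies term by term --- and then to upgrade pointwise convergence to uniform convergence on $\Gamma$ via an analyticity argument. I first establish the bounds: since $\Gamma \subset \dC \setminus \supp(\beta)$, assumption (A3) together with Proposition~\ref{prop:supportbeta} gives $0 \notin \supp(\mu_z)$ for every $z \in \Gamma$, so Proposition~\ref{nonunif} provides $\gamma_\Gamma > 0$ with $\sup_{z \in \Gamma}\|R_N(z)\| \leq \gamma_\Gamma^{-1}$ almost surely for $N$ large. Assumption (A4) and \eqref{eq:CFresunif} yield a deterministic $C_\Gamma$ with $\sup_{z \in \Gamma}\|R'_N(z)\| \leq C_\Gamma$. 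By \eqref{eq:CFres}, these bounds persist on a fixed open neighborhood $\Gamma_\rho \subset \dC \setminus \supp(\beta)$ of $\Gamma$, so the matrix $\phi_N(z) := Q_N R_N(z) P_N - Q_N R'_N(z) P_N$ is an analytic function of $z \in \Gamma_\rho$, uniformly bounded there almost surely eventually.

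Next I obtain pointwise a.s.\ convergence on the region $\{|z| \geq 2R_0\}$, where $R_0$ is a deterministic constant larger than $1 + 2\sigma + \sup_N \|A'_N\|$. On this region both Neumann series $R_N(z) = \sum_{k \geq 0} z^{-k-1}(\sigma Y_N + A'_N)^k$ and $R'_N(z) = \sum_{k \geq 0} z^{-k-1}(A'_N)^k$ converge absolutely a.s., with a geometric tail uniform in $z$. Subtracting and expanding by multilinearity, each coefficient of $\phi_N(z)$ becomes a bilinear form $u_N^* P(A'_N, Y_N) v_N$ in which the non-commutative polynomial $P$ contains at least one occurrence of $Y_N$; by Proposition~\ref{prop:bilinearP} each such coefficient vanishes almost surely, and the uniformly summable tail allows interchange of limit and summation, yielding $\sup_{|z| \geq 2R_0}\|\phi_N(z)\| \to 0$ almost surely.

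Finally, to propagate the convergence to $\Gamma$ I use analyticity. The matrix $\phi_N$ is rational in $z$, with poles in the random finite set of eigenvalues of $\sigma Y_N + A'_N$ and of $A'_N$. By Theorem~\ref{inclusion} the former set stays a.s.\ in $B(\supp(\beta), \veps)$ for any $\veps > 0$ eventually, and by (A4) the latter is disjoint from $\Gamma_\rho$ (and lies in $\overline{B(0, M)}$ by (A1')); hence one may choose a fixed open connected set $V \supset \Gamma_\rho \cup \{|z| \geq 2R_0\}$ on which $\phi_N$ is almost surely analytic and, combining the a priori bound of the first step with the Neumann bound of the second, uniformly bounded. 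Vitali's convergence theorem then combines the a.s.\ pointwise convergence on $\{|z| \geq 2R_0\}$ with the uniform bound on $V$ to yield uniform convergence of $\phi_N$ to $0$ on compact subsets of $V$, and in particular on $\Gamma$. The main subtlety is the construction of $V$ when $\Gamma$ lies in a bounded component of $\dC \setminus \supp(\beta)$: any connecting tube between $\Gamma$ and infinity must pass near $\supp(\beta)$ without meeting the random eigenvalues of $\sigma Y_N + A'_N$, which is possible since these concentrate on $B(\supp(\beta), \veps)$ with $\veps \to 0$.
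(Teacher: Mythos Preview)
Your first two steps are sound, but Step~3 contains a genuine gap. When $\Gamma$ lies in a bounded component of $\dC \setminus \supp(\beta)$, any connected open $V$ containing both $\Gamma$ and a point of $\{|z| \geq 2R_0\}$ must intersect $\supp(\beta)$. But at any $z_0 \in \supp(\beta)$ the resolvent $R_N$ is a.s.\ unbounded: assumption (A3) gives $0 \in \supp(\mu_{z_0})$, and since the finite-rank perturbation $A''_N$ does not affect the weak limit in Theorem~\ref{convbeta}, the smallest singular value of $z_0 I_N - \sigma Y_N - A'_N$ tends to $0$ almost surely, whence $\|R_N(z_0)\| \to \infty$. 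No fixed $V$ can therefore carry a uniform bound on $\phi_N$, and Vitali's theorem does not apply; allowing $V$ to depend on $N$ destroys the hypotheses of Vitali as well. Your closing remark --- that concentration of the random eigenvalues near $\supp(\beta)$ makes the tube construction ``possible'' --- is exactly backwards: this concentration is what forces any fixed tube through $\supp(\beta)$ to lie where $R_N$ blows up. There is also a secondary issue: assumption (A4) is stated only on $\Gamma$, so neither your invocation of Theorem~\ref{inclusion} on all of $\dC \setminus \supp(\beta)$ nor the a~priori bounds on $R_N$, $R'_N$ along a connecting path are justified by the hypotheses, even when $\Gamma$ lies in the unbounded component.

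The paper avoids this obstruction by never leaving $\Gamma$. The missing ingredient is Proposition~\ref{borne}: almost surely for all large $N$, $\sup_{z \in \Gamma}\|(R'_N(z)\sigma Y_N)^k\| \leq C(1-\epsilon_0)^k$ for some $\epsilon_0 > 0$. This is proved via a spectral-radius bound (Lemma~\ref{rayonspectral}) combined with holomorphic functional calculus, ultimately resting on the inequality $\int \lambda^{-1}\, d\nu_z(\lambda) < \sigma^{-2}$ on $\Gamma$. With this bound the iterated resolvent identity gives $Q_N R_N(z) P_N - Q_N R'_N(z) P_N = \sum_{k \geq 1} Q_N (R'_N(z)\sigma Y_N)^k R'_N(z) P_N$ as a norm-convergent series directly on $\Gamma$, and Proposition~\ref{prop:bilinearP} then applies term by term exactly as in your Step~2 --- but at each $z \in \Gamma$ rather than only at infinity. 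Uniformity over $\Gamma$ follows from a straightforward compactness-plus-Lipschitz argument using the resolvent identity, with no analytic continuation needed.
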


The main step in the proof of Proposition \ref{convunif} will be the following proposition. 

\begin{proposition}\label{borne}Suppose that assumptions (X1-X2) and assumptions (A1'-A4) hold with $\Gamma \subset \dC \backslash \supp( \beta) $ compact. 
There exists $0<\epsilon_0<1$ and $C>0$ such that almost surely for all large $N$, for any $k \geq 1$,
$$\sup_{ z \in \Gamma} \left\|  \left(R'_{N}(z) \sigma Y_N \right)^k\right\| \leq C(1-\epsilon_0)^k.$$
\end{proposition}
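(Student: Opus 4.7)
The plan is to apply the Cauchy--Dunford calculus to $T_N(z) := R'_N(z)\sigma Y_N$ on a circle of radius $1-\epsilon_0$, using the elementary resolvent identity obtained by left-multiplying $\zeta I_N - T_N(z)$ by $R'_N(z)^{-1} = zI_N - A'_N$:
\begin{equation*}
(\zeta I_N - T_N(z))^{-1} = \zeta^{-1} \bigl[ z I_N - A'_N - (\sigma/\zeta) Y_N \bigr]^{-1} (zI_N - A'_N), \qquad \zeta \ne 0.
\end{equation*}
Setting $\mu = \sigma/\zeta$, both the uniform resolvent bound on the circle $|\zeta|=1-\epsilon_0$ and the containment $\mathrm{Spec}(T_N(z)) \subset \{|\lambda| < 1-\epsilon_0\}$ reduce to a joint uniform lower bound on the smallest singular value of $A'_N + \mu Y_N - zI_N$, for $(\mu,z)$ in $\overline D \times \Gamma$ with $\overline D := \{\mu \in \dC : |\mu| \le \sigma/(1-\epsilon_0)\}$.

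To pick $\epsilon_0$, observe that since $\Gamma \subset \dC \setminus \supp(\beta)$, Lemma~\ref{continuitephi} and Proposition~\ref{prop:supportbeta} together imply that on the compact set $\Gamma$ the map $\varphi(z) := \int \lambda^{-1} d\nu_z(\lambda)$ is continuous, $z\notin S$, and $\varphi(z) < \sigma^{-2}$; hence $\sigma^2 \sup_\Gamma \varphi < 1$. Choose $\epsilon_0 > 0$ so that $(1-\epsilon_0)^2 > \sigma^2 \sup_\Gamma \varphi$. Then for every $\mu \in \overline D \setminus \{0\}$ and every $z \in \Gamma$, one has $\varphi(z) < |\mu|^{-2}$ and $z \notin S$, so Proposition~\ref{zeroinclus} applied at noise level $|\mu|$ gives $0 \notin \supp(\mu^{(|\mu|)}_z)$, where $\mu^{(|\mu|)}_z$ denotes the solution of \eqref{eqgmuz} with $\sigma$ replaced by $|\mu|$.

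The main technical step is to promote the $z$-uniform, $\mu$-pointwise conclusion of Proposition~\ref{nonunif} into a joint uniform lower bound over $(z,\mu) \in \Gamma \times \overline D$. For each fixed $\mu \ne 0$, writing $\mu Y_N = |\mu| \widetilde Y_N$ with $\widetilde Y_N = (\mu/|\mu|) Y_N$ still satisfying (X1-X2), Proposition~\ref{nonunif} applied with noise $|\mu|$ and matrix $\widetilde Y_N$ yields $\gamma(\mu)>0$ such that almost surely $\inf_{z \in \Gamma} s_N(A'_N + \mu Y_N - zI_N) \geq \gamma(\mu)$ for all $N$ large; for $\mu = 0$, the same follows from (A4) via \eqref{eq:CFresunif}. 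Combining this with the Lipschitz estimate $|s_N(A'_N + \mu Y_N - zI_N) - s_N(A'_N + \mu' Y_N - zI_N)| \leq |\mu - \mu'|\,\|Y_N\|$ (from \eqref{eq:CF}) and the a.s.\ bound $\|Y_N\| \leq 3$ for large $N$ from Bai--Yin, a finite net in $\overline D$ of sufficiently small spacing yields, via a union bound over the net and interpolation, a constant $\gamma>0$ with
\begin{equation*}
\inf_{z \in \Gamma,\ \mu \in \overline D} s_N(A'_N + \mu Y_N - zI_N) \geq \gamma
\end{equation*}
almost surely, for all $N$ large.

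From this uniform bound two conclusions follow. First, the resolvent identity above yields $\sup_{z \in \Gamma,\ |\zeta|=1-\epsilon_0} \|(\zeta I_N - T_N(z))^{-1}\| \leq K$ for a constant $K$ depending only on $\gamma$, $\epsilon_0$, $\sigma$ and $\sup_N \|A'_N\|$. Second, if $T_N(z)v = \lambda v$ with $\lambda \ne 0$, then $(A'_N + (\sigma/\lambda)Y_N)v = zv$; the lower bound on $s_N$ forbids this whenever $\sigma/\lambda \in \overline D$, i.e.\ whenever $|\lambda| \geq 1-\epsilon_0$. Hence $\rho(T_N(z)) < 1-\epsilon_0$ a.s.\ for $N$ large and uniformly in $z \in \Gamma$; the contour $|\zeta|=1-\epsilon_0$ encloses $\mathrm{Spec}(T_N(z))$, and the Cauchy--Dunford formula
\begin{equation*}
T_N(z)^k = \frac{1}{2\pi i} \oint_{|\zeta|=1-\epsilon_0} \zeta^k (\zeta I_N - T_N(z))^{-1} \, d\zeta
\end{equation*}
yields $\sup_{z \in \Gamma}\|(R'_N(z)\sigma Y_N)^k\| = \sup_{z \in \Gamma}\|T_N(z)^k\| \leq K(1-\epsilon_0)^{k+1} \leq C(1-\epsilon_0)^k$ uniformly in $k \geq 1$, as claimed.
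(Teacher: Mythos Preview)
Your proof is correct and follows essentially the same route as the paper: the paper first proves the uniform singular value lower bound on $(A'_N+\mu Y_N-zI_N)$ over $(\mu,z)\in\overline D\times\Gamma$ (Lemma~\ref{rayonspectralprelim}, via Proposition~\ref{nonunif} pointwise and a compactness/Lipschitz argument), deduces the spectral radius bound (Lemma~\ref{rayonspectral}), and then applies the holomorphic functional calculus on a circle inside the unit disc together with the same factorization of $\zeta I_N-T_N(z)$ that you write. The only cosmetic differences are that the paper splits the argument into two preparatory lemmas and integrates on $|\zeta|=1-\epsilon_0/2$ rather than $1-\epsilon_0$.
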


We first establish the following lemmas.
\begin{lemme}\label{rayonspectralprelim}
Suppose that assumptions (X1-X2) and assumptions (A1'-A4) hold with $\Gamma \subset \dC \backslash \supp( \beta) $ compact. There exists $ \rho  > 1$ and $\eta >0$ such that almost surely for all large $N$, for all $w $ in $ \mathbb{C}$ such that $\vert w \vert \leq \rho \sigma$ and for all $z$ in $\Gamma$,
there is no eigenvalue of $(w Y_N + A'_N-z I_N) (w Y_N  +A'_N-z I_N) ^*$ in $[0,\eta]$.

\end{lemme}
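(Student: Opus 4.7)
My plan is to reduce the statement to Proposition~\ref{nonunif} applied with $|w|$ in place of $\sigma$, and then make it uniform in $w$ by a finite covering argument combined with the Lipschitz dependence of singular values on $w$.

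First, by assumption (A3) (part of (A1'-A4)), Proposition~\ref{prop:supportbeta} yields $\supp(\beta) = S \cup \{z \in \dC : \varphi(z) \geq \sigma^{-2}\}$ where $\varphi(z) = \int \lambda^{-1} d\nu_z(\lambda)$. Since $\Gamma$ is a compact subset of $\dC \setminus \supp(\beta)$, we have $\Gamma \cap S = \emptyset$; combined with the continuity of $\varphi$ on $\Gamma$ (Lemma~\ref{continuitephi}) and compactness,
\[
m := \sup_{z \in \Gamma} \varphi(z) < \sigma^{-2}.
\]
I then pick any $\rho \in (1, 1/(\sigma\sqrt m))$ (any $\rho > 1$ if $m = 0$), so that $(\rho\sigma)^{-2} > m$. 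Then for every $w \in \dC$ with $|w| \leq \rho\sigma$ and every $z \in \Gamma$, one has $z \notin S$ and $\varphi(z) < |w|^{-2}$, so Proposition~\ref{zeroinclus} (applied with $\sigma$ replaced by $|w|$) gives $0 \notin \supp(\mu^{(|w|)}_z)$, where $\mu^{(|w|)}_z$ denotes the Dozier-Silverstein limit of $\mu_{(wY_N + A'_N - zI_N)(wY_N + A'_N - zI_N)^*}$.

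Second, for fixed $w \neq 0$ with $|w| \leq \rho\sigma$, write $wY_N = |w|\, Z_N$ with $Z_N = e^{i\arg w} Y_N$; the entries of $\sqrt N \, Z_N$ are i.i.d., centered, of unit variance and with finite fourth moment, hence $Z_N$ satisfies (X1-X2). Applying the compact-$K$ form of Proposition~\ref{nonunif} with $\sigma \leftarrow |w|$, $Y_N \leftarrow Z_N$, and $K = \Gamma$, I obtain $\gamma_w > 0$ such that almost surely, for all large $N$, $\inf_{z \in \Gamma} s_N(wY_N + A'_N - zI_N) \geq \gamma_w$; the case $w = 0$ is handled directly by (A4) and~\eqref{eq:CFresunif}. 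To reach uniformity in $w$, I cover the disc $\{|w| \leq \rho\sigma\}$ by finitely many balls of radius $\veps$ centered at $w_1, \ldots, w_L$; intersecting a finite family of a.s. events gives $\gamma := \min_\ell \gamma_{w_\ell} > 0$ for which almost surely, for all large $N$, the bound holds simultaneously at every $w_\ell$. For an arbitrary $w$ in the disc and $z \in \Gamma$, I pick the nearest $w_\ell$; the Courant-Fisher inequality~\eqref{eq:CF} gives
\[
\left|s_N(wY_N + A'_N - zI_N) - s_N(w_\ell Y_N + A'_N - zI_N)\right| \leq |w - w_\ell|\,\|Y_N\|,
\]
while Bai-Yin~\cite{MR1235416} ensures $\|Y_N\| \leq 3$ almost surely for all large $N$. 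Taking $\veps = \gamma/6$ then yields $s_N(wY_N + A'_N - zI_N) \geq \gamma/2$ uniformly, and the lemma follows with $\eta = (\gamma/2)^2$.

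The main difficulty is exactly this passage from a pointwise-in-$w$ a.s. lower bound to one that holds simultaneously over the uncountable disc $\{|w| \leq \rho\sigma\}$; the Lipschitz dependence of singular values in $w$, weighted by the almost surely bounded factor $\|Y_N\|$, turns this into a routine finite-net argument, and the choice of $\rho$ in the first step is dictated precisely by the amount of slack $\sigma^{-2} - m$ available in the support characterization.
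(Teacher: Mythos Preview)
Your proof is correct and follows essentially the same route as the paper: first use the support characterization (Proposition~\ref{prop:supportbeta} and Lemma~\ref{continuitephi}) to find $\rho>1$ so that $0\notin\supp(\mu^{(|w|)}_z)$ for all $(w,z)$ in the compact product $\{|w|\le\rho\sigma\}\times\Gamma$, then invoke Proposition~\ref{nonunif} pointwise and upgrade to a uniform lower bound on the smallest singular value via Courant--Fisher and Bai--Yin. The paper covers the two-dimensional compact $\tilde\Gamma=\{(w,z)\}$ directly, while you absorb the $z$-uniformity into the compact-$K$ clause of Proposition~\ref{nonunif} and then cover only in $w$; the two arguments are equivalent.

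One small wrinkle in your write-up: the choice $\veps=\gamma/6$ is circular as stated, because $\gamma=\min_\ell\gamma_{w_\ell}$ depends on the centers $w_1,\dots,w_L$, which in turn depend on $\veps$. The standard fix (and exactly what the paper does, in the spirit of~\eqref{eq:CFresunif}) is to use balls of \emph{variable} radius, say $r_w=\gamma_w/18$, and extract a finite subcover of $\{|w|\le\rho\sigma\}$ from the open cover $\{B(w,r_w)\}_w$; then for any $w$ in the disc there is an $\ell$ with $|w-w_\ell|\le\gamma_{w_\ell}/18$, whence $s_N(wY_N+A'_N-zI_N)\ge\gamma_{w_\ell}-3\cdot\gamma_{w_\ell}/18\ge\gamma_{w_\ell}/2$, and $\eta=(\tfrac12\min_\ell\gamma_{w_\ell})^2$ works.
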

\begin{proof}
According to Proposition \ref{prop:supportbeta}, for any $z$ in $\mathbb{C} \setminus \text{supp}(\beta)$,
$z \notin S$ and $\varphi(z) <\sigma^{-2}.$ 
Since according to  Lemma \ref{continuitephi}, the    function $\varphi$ is continuous  on $\mathbb{C} \setminus \text{supp}(\beta)$, it attains its lowest upper bound on
the compact set $\Gamma$, so that there exists $0< \gamma<1$ such that 
 for any $z$ in $\Gamma$, $\varphi(z)<(1-\gamma)\sigma^{-2}$.

Let $w \in \mathbb{C}\backslash\{0\} $. 
Since $w Y_N +A'_N-zI_N=\vert w \vert \exp (i \arg(w)) Y_N  +{A'_N}-{z}I_N$, by Theorem \ref{convbeta},  the spectral measure of $(w Y_N+A'_N-zI_N)(w Y_N +A'_N-zI_N)^*$ converges weakly towards a probability measure  $\mu_{w,z}$ and, by Proposition \ref{zeroinclus}, we have 
\begin{align*}
 \{ z \in \C :  0 \in \mathrm{supp}(\mu_{w, z}) \}  = \{ z  \in \C :  z \in  S \hbox{ or } \varphi(z) \geq  \vert w \vert^{-2}\}.
 \end{align*}

For $w=0$, we define $\mu_{0, z}=\nu_z$. Therefore, using also (A3) and  Corollary \ref{cor:support1} for $w=0$, setting $\rho = 1/ \sqrt{ 1 - \gamma}$, it follows that for any $z \in \Gamma $ and any $w$ such that $\vert w \vert \leq \rho \sigma,$  we have  $0\notin \mathrm{supp}( \mu_{w,z}).$
Define the compact set $$\tilde{\Gamma}=\{(w,z)\in \mathbb{C}^2, \vert w \vert \leq \rho \sigma,~z \in \Gamma\} .$$

According to Proposition \ref{nonunif},  for any $(w,z)$ in $\tilde \Gamma $, there exists $\gamma_{(w,z)}>0$ such that 
 almost surely for all large $N$,  
there is   no eigenvalue of 
$(w Y_N  + A'_N-z I_N) (w Y_N  +A'_N-z I_N) ^*$ in
$[0 , \gamma_{(w,z)}]$. Also from Bai-Yin theorem \cite{MR1235416}, almost surely, $\| Y_N \| \leq 2 + o(1)$. Then, using  \eqref{eq:CFres} and the  same compactness argument leading to  \eqref{eq:CFresunif}, it proves that there exists $\eta >0$ such that almost surely for all large $N$, for  any $(w,z) \in \tilde \Gamma$, there is no eigenvalue of $(w Y_N  + A'_N-z I_N) (w Y_N +A'_N-z I_N) ^*$ in $[0,\eta]$.
\end{proof}

\begin{lemme}\label{rayonspectral}
Suppose that assumptions (X1-X2) and assumptions (A1'-A4) hold with $\Gamma \subset \dC \backslash \supp( \beta) $ compact. There exists $0<\epsilon_0<1$ such that almost surely for all large $N$, we have for any $z$ in $\Gamma$,
$$\rho \left(R'_{N}(z)\sigma   Y_N \right) \leq 1-\epsilon_0,$$
where $\rho(M) $ denotes the spectral radius of a matrix $M$.
\end{lemme}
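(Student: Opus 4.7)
The plan is to deduce Lemma~\ref{rayonspectral} directly from Lemma~\ref{rayonspectralprelim} by reinterpreting nonzero eigenvalues of $R'_N(z)\sigma Y_N$ as parameters $w$ for which the matrix $wY_N+A'_N-zI_N$ becomes singular. First, pick the $\rho>1$ and $\eta>0$ supplied by Lemma~\ref{rayonspectralprelim}; we will show that the conclusion holds with $\epsilon_0=1-1/\rho\in(0,1)$.

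Now I would argue pointwise in $z\in\Gamma$ and in the randomness on the event of full probability given by Lemma~\ref{rayonspectralprelim}, for $N$ large enough. Let $\lambda\in\C$ be any eigenvalue of $R'_N(z)\sigma Y_N$. If $\lambda=0$, then $|\lambda|\leq 1-\epsilon_0$ trivially, so assume $\lambda\neq 0$. Pick $v\neq 0$ with $(zI_N-A'_N)^{-1}\sigma Y_N v=\lambda v$. Multiplying by $(zI_N-A'_N)/\lambda$ gives
\[
\Bigl(\tfrac{\sigma}{\lambda}Y_N+A'_N-zI_N\Bigr)v=0,
\]
so the matrix $wY_N+A'_N-zI_N$ is singular for $w=\sigma/\lambda$. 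By Lemma~\ref{rayonspectralprelim}, almost surely for all $N$ large enough the matrix $wY_N+A'_N-zI_N$ has smallest singular value at least $\sqrt{\eta}>0$ whenever $|w|\leq\rho\sigma$ and $z\in\Gamma$; in particular it is nonsingular. Thus we must have $|w|>\rho\sigma$, i.e.\ $|\lambda|<1/\rho=1-\epsilon_0$.

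Taking the maximum over the finitely many eigenvalues of $R'_N(z)\sigma Y_N$ yields $\rho(R'_N(z)\sigma Y_N)\leq 1-\epsilon_0$, uniformly in $z\in\Gamma$ (since $\rho$ and $\eta$ in Lemma~\ref{rayonspectralprelim} are already uniform in $z\in\Gamma$). There is no real obstacle here: the only subtlety is to make sure one applies Lemma~\ref{rayonspectralprelim} on the almost sure event where the uniform lower bound on singular values holds, and to separate the trivial case $\lambda=0$.
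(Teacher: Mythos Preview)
Your proof is correct and is essentially the same argument as the paper's: both deduce the bound by observing that a nonzero eigenvalue $\lambda$ of $R'_N(z)\sigma Y_N$ forces $wY_N+A'_N-zI_N$ to be singular for $w=\sigma/\lambda$, which Lemma~\ref{rayonspectralprelim} rules out when $|w|\leq\rho\sigma$. Your version is slightly more explicit in identifying $\epsilon_0=1-1/\rho$ and in separating out the trivial case $\lambda=0$.
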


\begin{proof}

Now, assume that $\lambda\neq 0$ is an eigenvalue of $R'_{N}(z) Y_N$.  Then there exists $v \in \mathbb{C}^N$, $v\neq 0$ such that 
$(zI_N -A'_N)^{-1} Y_Nv =\lambda v$ and thus $(\lambda^{-1}  Y_N  +A'_N -z I_N) v=0.$
It follows that $z$ is an eigenvalue of $\lambda^{-1} Y_N  +A'_N$. By Lemma \ref{rayonspectralprelim}, we can deduce that almost surely for all large $N$, the non nul eigenvalues of $R'_{N}(z)Y_N$ must satisfy $1 / | \lambda| >  \rho  \sigma$. The result follows.
\end{proof}

We are now ready to prove Proposition \ref{borne}.
\begin{proof}[Proof of Proposition \ref{borne}] For $z \in \Gamma$, we set 
$T_N=R'_{N}(z) \sigma Y_N .$ Let $\epsilon_0$ be as defined by Lemma \ref{rayonspectral}. Thanks to the Cauchy formula, for all $x \in \mathbb{C}$ such that $\vert x \vert < 1-\epsilon_0/2$, for any $k \geq 0$, $x^k =\frac{1}{2i\pi} \int_{\vert w\vert = 1-\epsilon_0/2} \frac{w^k}{w-x} dw.$
Therefore, according to Lemma \ref{rayonspectral} and using the holomorphic functional calculus, we have almost surely for all large $N$,  for any $z$ in $\Gamma$,
$$\forall k \geq 0~~, T_N^k =\frac{1}{2i\pi} \int_{\vert w\vert = 1-\epsilon_0/2} {w^k}{(w-T_N)^{-1}} dw,$$
and therefore $$\forall k \geq 0~~, \Vert  T_N^k \Vert \leq  \sup_{\vert w\vert = 1-\epsilon_0/2 } \Vert {(w-T_N)^{-1}}\Vert {( 1-\epsilon_0/2 ) }^{k+1}.$$

Now, since $$(wI_N-T_N)=  - w R'_N   \left(\frac{\sigma}{w} Y_N  +A'_N-zI_N\right),$$
Lemma \ref{rayonspectralprelim} readily implies that for $\epsilon_0$ small enough, there exists $C>0$ such that  we have almost surely for all large $N$,  for any $z$ in $\Gamma$,
$$ \sup_{\vert w\vert = 1-\epsilon_0/2 } \Vert {(wI_N-T_N)^{-1}}\Vert \leq C.$$
  Proposition \ref{borne} follows.\end{proof}

\begin{lemme}\label{serie}
Suppose that assumptions (X1-X2) and assumptions (A1'-A4) hold with $\Gamma \subset \dC \backslash \supp( \beta) $ compact.  For any $z$ in  $\mathbb{C}\setminus \mathrm{supp}(\beta )$, almost surely the series
$\sum_{k\geq 1} Q_N \left(R'_{N}(z) \sigma Y_N \right)^k R'_{N}(z)P_N$ converges in norm towards zero as $N$ goes to infinity.

\end{lemme}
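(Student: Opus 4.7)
\textbf{Proof plan for Lemma \ref{serie}.} Fix $z$ in the hypothesized compact set $\Gamma \subset \dC\setminus\supp(\beta)$ (so that (A4) applies and yields $\|R'_N(z)\| \leq \eta_z^{-1}$ uniformly in $N$; note (A1') also gives uniform bounds on $\|P_N\|$ and $\|Q_N\|$). The strategy is to combine two estimates of complementary strengths: a deterministic \emph{uniform-in-$k$} geometric decay of the operator norm of each term, obtained from Proposition \ref{borne}, together with an almost sure \emph{entrywise vanishing} of each individual term, obtained from Proposition \ref{prop:bilinearP}. Neither estimate suffices on its own, but together they give uniform summability plus termwise convergence.

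\textbf{Step 1 (uniform tail bound).} Applying Proposition \ref{borne} and submultiplicativity of the operator norm, almost surely for all $N$ large enough and every $k \geq 1$,
\[
\left\| Q_N \left(R'_N(z)\sigma Y_N\right)^k R'_N(z) P_N \right\|
\leq \|Q_N\| \cdot C(1-\veps_0)^k \cdot \|R'_N(z)\| \cdot \|P_N\|
\leq C_z (1-\veps_0)^k.
\]
In particular the series converges absolutely, uniformly in $N$ on the full-probability event where Proposition \ref{borne} applies, and for any $\delta>0$ one can pick an integer $K=K(\delta)$ such that $\sum_{k>K} C_z(1-\veps_0)^k \leq \delta/2$.

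\textbf{Step 2 (entrywise vanishing for each fixed $k$).} Write $Q_N^* = (q_1,\dots,q_r)$ and $R'_N(z) P_N = (p_1,\dots,p_r)$; then the $(i,j)$-entry of the $r\times r$ matrix above equals $\sigma^k\, q_i^* (R'_N(z) Y_N)^k p_j$. After normalization, this is a multiple of $u_N^* P(B_N, Y_N) v_N$ with $B_N = R'_N(z)$ (uniformly bounded in norm), $u_N,v_N$ of unit norm, and $P(X_1,X_2) = (X_1 X_2)^k$ a noncommutative polynomial in which $X_2 = Y_N$ appears with exponent $k \geq 1$ in the unique monomial. Proposition \ref{prop:bilinearP} therefore gives a.s. convergence of each entry to $0$. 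Taking a countable union over the finitely many $(i,j) \in \bl r\br^2$ and over $k \in \N$, we obtain a full-probability event on which $\|Q_N(R'_N(z)\sigma Y_N)^k R'_N(z) P_N\| \to 0$ as $N\to\infty$, for \emph{every} fixed $k$.

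\textbf{Step 3 (combining the two).} On the intersection of the full-probability events of Steps 1 and 2, fix $\delta >0$ and choose $K = K(\delta)$ as above; then for all $N$ large enough $\left\|\sum_{k=1}^K Q_N(R'_N(z)\sigma Y_N)^k R'_N(z) P_N \right\| \leq \delta/2$ by Step 2, while the tail $\sum_{k>K}$ has norm at most $\delta/2$ by Step 1. Sending $\delta \to 0$ concludes the proof. The \textbf{main obstacle} is precisely the interplay between Steps 1 and 2: Proposition \ref{borne} alone yields only $O(1)$ for each term and does not imply that any individual term vanishes, while Proposition \ref{prop:bilinearP} alone gives $o(1)$ per term but no control on the infinite sum; the geometric decay in $k$ is what lets one truncate and reduce to finitely many entries handled by the bilinear-form estimate.
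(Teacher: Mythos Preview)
Your proof is correct and follows essentially the same approach as the paper: both use Proposition \ref{prop:bilinearP} for the termwise a.s.\ convergence to zero of each entry of $Q_N(R'_N(z)\sigma Y_N)^k R'_N(z)P_N$, and Proposition \ref{borne} for the uniform-in-$N$ geometric decay in $k$ that allows one to pass to the infinite sum. The only cosmetic difference is that the paper invokes the dominated convergence theorem (for the counting measure on $k$) where you spell out the equivalent $\delta/2$ tail-truncation argument explicitly.
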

\begin{proof}
The singular value decomposition of $A''_N$ gives that for any $i,j \in \{1,\ldots,r\}$,
$$(Q_N \left(R'_{N}(z) \sigma Y_N \right)^k  R'_{N}(z) P_N)_{ij}= s_i v_i^* \left(R'_{N}(z) \sigma Y_N \right)^k R'_{N}(z)u_j$$
where $u_j$ and $v_j$ are unit vectors and $s_i$ is a singular value of $A''_N$.
 According to (A1'), the $s_i$'s are uniformly bounded. By  (A4), for any $z$ in $\Gamma$, there exists $\eta_z$ such that 
 for all large $N$, \begin{equation}\label{majresA} \Vert R'_{N}(z)\Vert \leq 1/\eta_z.\end{equation}
Therefore, Proposition \ref{prop:bilinearP} yields that $v_i^* (R'_{N}(z) \sigma  Y_N )^k R'_{N}(z)u_j$ converges almost surely towards zero.
The result follows by applying the dominated convergence theorem thanks to Proposition \ref{borne}.
\end{proof}

All ingredients are gathered to prove Proposition \ref{convunif}.

\begin{proof}[Proof of Proposition \ref{convunif}] 
We start by proving that for any $z$ in $\Gamma$, almost surely, as $N \to \infty$, 
\begin{equation}\label{eq:convRz}
\Vert Q_N R_N(z) P_N -Q_N R'_{N}(z) P_N \Vert \to 0. 
\end{equation}
 Let $C'>0$ such that $\left\| P_N \right\| \left\| Q_N \right\| \leq C'$.  According to Proposition \ref{nonunif}, for any $z \in \Gamma$, there exists $\gamma_z>0$ such that   almost surely for all large $N$
\begin{equation}\label{majresA2}
\Vert R_N(z) \Vert \leq 1/\gamma_z.\end{equation} Then 
using also Proposition \ref{borne} and \eqref{majresA}, for any $k\geq 1$, we have
$$\left\| Q_N  \left(R'_{N}(z) \sigma Y_N \right)^kR'_{N}(z) P_N\right\| \leq \frac{CC'}{\eta_z}(1-\epsilon_0)^k,$$ 
$$\left\| Q_N  \left(R'_{N}(z) \sigma Y_N \right)^kR_N(z) P_N\right\| \leq \frac{CC'}{\gamma_z}(1-\epsilon_0)^k.$$ 
Let $\eta>0$. Choose $K \geq 1$ such that $\frac{CC'}{\gamma_z}(1-\epsilon_0)^K< \eta/2$
and $\sum_{k\geq K}  \frac{CC'}{\eta_z}(1-\epsilon_0)^k <\eta/2$.\\
Now, using repeatedly the resolvent identity,
$$R_N(z)= R'_{N}(z) +  R'_N(z) \sigma Y_N R_N(z),$$
we find
\begin{align*} & Q_NR_N(z) P_N  -  Q_N R'_N(z) P_N  \\
& \quad \quad =  \sum_{k=1}^{K-1} Q_N  \left(R'_N(z) \sigma Y_N \right)^kR'_N(z) P_N   + Q_N  \left(R'_N(z) \sigma Y_N \right)^{ K }R_N(z) P_N \end{align*}
Thus for any $\eta>0$, 
$$ \left\|Q_NR_N(z) P_N - Q_N R'_N(z) P_N - \sum_{k\geq 1} Q_N  \left(R'_N(z) \sigma  Y_N \right)^kR'_N(z) P_N \right\| < \eta$$
and letting $\eta$ going to zero,
we have \begin{equation}\label{eq:devtaylor}
Q_N R_N(z) P_N  -  Q_N R'_N(z) P_N  = \sum_{k\geq 1} Q_N  \left(R'_N(z) \sigma Y_N\right)^kR'_N(z) P_N.
\end{equation}
Applying Lemma \ref{serie}, we obtain \eqref{eq:convRz}.

To conclude the proof of the proposition, it sufficient to check that for any $\delta >0$, a.s., for all large $N$, 
\begin{equation}\label{eq:convRzunif}
\sup_{ z \in \Gamma} \left\| Q_N R_N(z) P_N -Q_N R'_{N}(z) P_N \right\| \leq 3 \delta. 
\end{equation}
We set $\zeta_z = \eta_z \wedge \gamma_z $ and $r_z =  ( \zeta_z /2 ) \wedge  ( \delta ( \zeta_z / 2 C' )^{2} )$. Using the resolvent identity, \eqref{eq:CFres} and  \eqref{majresA}-\eqref{majresA2}, if $|z - w| \leq r_z$,
\begin{eqnarray*}
\| Q_N R_N (z) P_N  - Q_N R_N (w) P_N \| &\leq &\left(\frac{ 2 C'}{\zeta_z} \right)^2   |z - w| \leq \delta\\
\| Q_N R'_N (z) P_N  - Q_N R'_N (w) P_N \|& \leq &\left(\frac{ 2 C'}{\zeta_z} \right)^2   |z - w| \leq \delta
\end{eqnarray*}
Since $\Gamma \subset \cup_{z \in \Gamma} B( z, r_z)$ and $\Gamma$ compact, there is a finite covering and \eqref{eq:convRzunif} follows from \eqref{eq:convRz}. 
\end{proof}

\subsection{Proof of Theorem \ref{th:main}}

According to Theorem \ref{inclusion}, almost surely  for all large $N$,  for any $z \in \Gamma$, 
the matrix $z I_N- \sigma Y_N  -A_N'$ is invertible. By \eqref{eq:idcle}, a.s. for all large $N$, the eigenvalues of $M_N$ in $\Gamma$
are precisely the zeros of the random analytic function 
$$\det (I_r-Q_N R_N(z )P_N )$$in that  set. On the other end, by assumption (A4), \eqref{eq:idcle} implies also that for all $z \in \Gamma$, 
$$
\det (I_r-Q_N R'_N(z)P_N  ) = \frac{\det(zI_N-A_N)}{\det (zI_N- A'_N)}.  
$$
From \eqref{eq:contdet}, we deduce from Proposition \ref{nonunif}, Proposition \ref{convunif} and assumption (A4) and \ref{eq:CFresunif} that $\det (I_r-Q_N R_N(z )P_N )-\det (I_r-Q_N R'_N(z)P_N  )$  converges to zero uniformly on $\Gamma$.  Using (\ref{eq:ratioAA'}), the result follows
by Rouch\'e's Theorem.

\subsection{Proof of Corollary \ref{cor:main}}

By assumption (A1) and Bai-Yin Theorem, a.s. for all $N$ large enough, all eigenvalues of $M_N$ are included in $K = B ( 0 , M + 4)$. 
From (A1), up to extract a converging subsequence, we can assume that   for $k \in J$,  $\lambda_k(A_N) $ converges to $\lambda_k$.
Let $0 < \delta < \veps$ and for $k \in J$, let $\Gamma_k = B ( \lambda_k, \delta)$, and $\Gamma_0 $ the closure of $K \backslash \cup_{ k \in J} \Gamma_k$. 
%For $\delta$ small enough, there is at most one distinct eigenvalue of $A_N$ in each $\Gamma_k$. 
Then, from \eqref{eq:detA'A"}, we may apply Theorem \ref{th:main} to each of the $\Gamma_k$. Since $\delta$ can be arbitrarily small, the conclusion follows.

\section{Fluctuations of stable outlier eigenvalues}
\label{sec:TCL}

\subsection{Normalized trace of some random matrix polynomials}

\label{subsec:TCLcond}

For further needs, we start this section with a proposition on trace of powers of random matrices.

\begin{proposition}
\label{prop:ABX}
Let $k,k' \geq 0$ be integers. We consider a sequence of matrices $ \left(B^{(\ell)}_N\right)_{0\leq \ell \leq k} , \left(C^{(\ell)}_N\right)_{1\leq \ell \leq k'}$ in $M_{N} (\dC)$, with operator norm uniformly bounded in $N$ such that, for some $w, v_{\ell} \in \dC$, $1 \leq \ell \leq k \wedge k'$,
$$
\frac 1 N \Tr B^{(\ell)}_N C^{(\ell)}_N \to v_\ell \; \hbox{ and }  \; \frac 1 N \Tr B^{(0)}_N \to w.
$$
Then, if $X_N$ satisfies assumptions (X1-X2), a.s., as $N \to \infty$,
\begin{eqnarray*}
\frac 1 N \Tr \left\{  B^{(0)}_N  \prod_{\ell =1}^k (Y_N B^{(\ell)}_N) \prod_{\ell =1}^{k'} (C^{(k'-\ell+1)}_N Y_N ^ * )  \right\} & \to & w \prod_{\ell=1} ^k v_\ell  \IND_{\{ k = k'\}} \\
\frac 1 N \Tr \left\{   B^{(0)}_N \prod_{\ell =1}^k (Y_N B^{(\ell)}_N) \prod_{\ell =1}^{k'} (C^{(k'-\ell+1)}_N Y_N ^ \top ) \right\} & \to & \dE \left(X_{11}^2\right)^k w \prod_{\ell=1} ^k v_\ell  \IND_{\{k = k'\}}.
\end{eqnarray*}
\end{proposition}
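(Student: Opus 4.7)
The plan is to compute the expectation by a moment-method expansion over pair partitions and to bound the variance by $O(N^{-2})$; Borel-Cantelli will then upgrade convergence in mean to almost sure convergence. Under (X2), Bai-Yin's theorem gives $\|Y_N\| \leq 2+o(1)$ a.s., so the normalized trace is eventually bounded by a deterministic constant.

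\emph{Expectation.} Expanding entry by entry, the expected normalized trace is
$$
\frac{1}{N^{\,1+(k+k')/2}} \sum_{\mathbf i} \E\Bigl[\prod_{\ell=1}^{k} X_{\cdot\cdot} \cdot \prod_{\ell=1}^{k'} \overline{X}_{\cdot\cdot}\Bigr] \cdot \prod (\text{entries of } B^{(\ell)}_N, C^{(\ell)}_N).
$$
By centering and independence of the $X_{ij}$, the surviving terms require every $X$-factor to appear in a pair. Using $\E X_{ab}\overline{X}_{cd} = \delta_{ac}\delta_{bd}$, each pair partition $\pi$ forces index contractions which decompose the remaining sum into a product of traces of products of the $B^{(\ell)}_N$'s and $C^{(\ell)}_N$'s; since all these matrices have bounded operator norm, the sum is $O(N^{t(\pi)})$ where $t(\pi)$ is the number of disjoint traces produced. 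A nontrivial contribution to the limit therefore requires $t(\pi) = 1 + (k+k')/2$, and this maximal value is reached only by non-crossing matchings of $Y$'s with $Y^*$'s. Because all $Y_N$'s come first and all $Y_N^*$'s come last in the cyclic arrangement, such matchings exist only when $k = k'$, in which case there is exactly one: the nested pairing sending the $\ell$-th $Y$ from the left to the $\ell$-th $Y^*$ from the right. Under this pairing, the factorization reads
$$
\frac{1}{N}\Tr(B^{(0)}_N) \prod_{\ell=1}^k \frac{1}{N}\Tr(B^{(\ell)}_N C^{(\ell)}_N) \longrightarrow w\prod_{\ell=1}^k v_\ell.
$$
For case 2, each pair contribution is $\E X_{ab} X_{cd} = (\E X_{11}^2)\delta_{ac}\delta_{bd}$, producing the extra factor $(\E X_{11}^2)^k$; the rest of the combinatorics is identical.

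\emph{Variance and conclusion.} A parallel expansion of $\mathrm{Var}\bigl(\tfrac{1}{N}\Tr(\cdot)\bigr)$, which requires only the fourth moment (X2), gives an $O(N^{-2})$ bound: after centering, every joint pair partition of the two copies of the trace must cross-couple them, strictly reducing $t(\pi)$ by at least one. Summable variances together with Borel-Cantelli yield almost sure convergence.

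The main obstacle is the combinatorial step: verifying that in this specific cyclic arrangement the unique non-crossing matching is the only partition reaching the maximal trace count $t(\pi) = 1 + (k+k')/2$, and that its contractions factorize exactly as the product $\tfrac{1}{N}\Tr(B^{(0)}_N)\prod_\ell \tfrac{1}{N}\Tr(B^{(\ell)}_N C^{(\ell)}_N)$. This is in essence the $*$-moment formula for a circular element free from a deterministic subalgebra, so it is well known in the free-probabilistic literature, but a self-contained proof requires careful cyclic bookkeeping of indices and a case-by-case check that cross-type pairings ($Y$-$Y$ or $Y^*$-$Y^*$) strictly lower $t(\pi)$.
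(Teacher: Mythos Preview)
Your overall strategy---compute the expectation by a moment/partition expansion, bound the variance, and invoke Borel--Cantelli---is sound in spirit and is a genuine alternative to the paper's route. But there is a real gap: the claim that the variance expansion ``requires only the fourth moment (X2)'' is false. Expanding $\mathrm{Var}\bigl(\tfrac{1}{N}\Tr(\cdot)\bigr)$ produces products of $2(k+k')$ entries of $X$, and the terms where many indices coalesce involve $\E|X_{11}|^{m}$ for $m$ up to $2(k+k')$; under (X1--X2) these moments may be infinite as soon as $k+k'\geq 3$. The same issue already appears in your expectation step once $k+k'\geq 5$: ``the surviving terms require every $X$-factor to appear in a pair'' should read ``appear at least twice'', and the leftover blocks of multiplicity $\geq 3$ cannot be bounded without higher moments. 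So as written, your argument only covers small $k+k'$.

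The paper handles this by first truncating the entries (and convolving with a small Gaussian to obtain a log-Sobolev inequality), showing via Bai--Yin that the perturbation is negligible in operator norm; this makes all moments finite. From there the approaches diverge: instead of a variance bound and Borel--Cantelli, the paper gets almost-sure concentration around the mean from Herbst's argument; and instead of your non-crossing-partition identification of the limit, it proves a replacement principle reducing to the Gaussian case, and then computes the Gaussian expectation by an induction based on the distributional identity $Y \stackrel{d}{=} (Y_1+Y_2)/\sqrt{2}$. Your partition computation of the expectation is the standard free-probability route and would work equally well once the entries are truncated; what you are missing is precisely that truncation step.
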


\begin{proof}
The proof of the two statements is identical. We will only prove the first statement.  We start as in the proof of Proposition \ref{prop:bilinearP}. For ease of notation, we drop the subscript $N$, in $Y_N$, $B^{(\ell)}_N,C^{(\ell)}_N$. We can assume without loss of generality that $\|B^{(\ell)}\|, \|C^{(\ell)}\| \leq 1$. For integers $0 \leq \ell  < k $, we set $\bl k \br = \{1, \cdots, k\}$ and $\bl \ell, k  \br = \{\ell, \cdots, k \}$. First, we may repeat steps 1 and 2 of the proof of Proposition \ref{prop:bilinearP}. We find that it is sufficient to prove that 
\begin{equation}\label{eq:ABXaim}
\frac 1 N \dE \Tr  P ( Y , Y^* ) = \frac 1 N \dE \Tr \left\{  B^{(0)} \prod_{\ell =1}^k (Y B^{(\ell)}) \prod_{\ell =1}^{k'} (C^{(k'-\ell+1)} Y ^ * )  \right\} = w \prod_{\ell=1} ^k v_\ell  \IND_{\{ k = k'\}} + o\left(1\right),
\end{equation}
when $X_{11}$ has finite moments of any order.

\noindent \paragraph{Step three : replacement principle.}
We  prove that if $\tilde Y = \tilde X / \sqrt N$ has iid centered entries with $\dE | \tilde X_{11} |^2  = \dE | X_{11} |^2 = 1$, $\dE \tilde X_{11} ^2= \dE X_{11}^2 $ and $\tilde X_{11}$ has finite moment of any order then
\begin{equation}\label{eq:ABXRP}
\frac 1 N \dE \Tr  P ( Y , Y^* )  -  \frac 1 N \dE \Tr  P ( \tilde Y , \tilde Y^* )   =  O\left( \frac  1 {\sqrt N}\right).
\end{equation}
To prove \eqref{eq:ABXRP}, we set, for $1 \leq \ell < k$, 
$
D^{(\ell)} = B^{(\ell)}$ , $ D^{(k)} = B^{(k)} C^{(k')} $, for $k +1 \leq \ell < k + k'$, $D^{(\ell)} =  C^{(k'-\ell + k )}$ and $D^{(k+k')} = B^{(0)}$. With this alternative notation, $\| D^{(\ell)} \| \leq 1$ and 
$$
  \frac 1 N   \Tr  P (  Y ,  Y^* )   = N^{- \frac k 2 - \frac {k'} 2 -1} \Tr X D^{(1)} \ldots X D ^{(k)} X^* D^{(k+1)} \ldots X^* D^{(k+k')}.
$$
We get
\begin{equation}\label{eq:trPYY}
\frac 1 N  \dE \Tr  P ( Y , Y^* )   = N^{- \frac k 2 - \frac {k'} 2 -1} \sum  \prod_{\ell = 1}^{k+k'} D^{(\ell)}_{i_{2 \ell} i_{2 \ell +1}} \dE \prod_{\ell = 1}^{k} X_{i_{2\ell -1} i_{2\ell} } \prod_{\ell = k+1}^{k+k'}  \bar X_{i_{2\ell} i_{2\ell-1 } },
\end{equation}
where the sum is over all $1 \leq i_s  \leq N$, $1 \leq s  \leq  2(k+k')$ and $i_{2 (k + k ') +1 } = i_1$. 
The summand in the above expectation will be non-zero only if the pairs of index $(i_{2 \ell-1}, i_{2\ell})$, $1 \leq \ell \leq k$ and $(i_{2 \ell}, i_{2\ell-1})$, $k +1 \leq \ell \leq k+k'$ appears at least twice.  Hence, there are $1 \leq q \leq \lfloor (k+k' ) /2 \rfloor$ distinct such pairs and $p \leq 2q$ distinct index in $(i_1, \ldots , i_{2(k+k')})$. We may thus decompose the above expectation as a finite sum (depending on $k,k'$) of terms of the type $N^{- \frac k 2 - \frac {k' }2 - 1}S(\pi)$ with
\begin{equation}\label{eq:graphcount'}
S (\pi) =    c(\pi) \sum    \prod_{\ell = 1}^{k+k'} D^{(\ell)}_{i_{\pi(2 \ell)} i_{\pi(2 \ell +1)}},
\end{equation}
where the sum is over all $1 \leq i_1 ,  \cdots , i _{p} \leq N$, $\pi :\bl 1, 2( k +k')\br \to \bl  p \br$ is a fixed surjective map such that for all $(u,v) \in \bl p \br ^2$, $n(u,v) = n_1 (u,v) + n_2(u,v) = \sum_{\ell =1}^{k} \IND_{\{  ( \pi (2\ell-1) , \pi (2 \ell )) = (u,v) \}} +\sum_{\ell =k+1}^{k+k'} \IND_{ \{  ( \pi (2\ell) , \pi (2 \ell -1 )) = (u,v) \} } \ne 1$. In \eqref{eq:graphcount'}, we have used the convention that $\pi(2(k+k') + 1)= \pi(1)$. Finally,
$$
c(\pi) = \prod_{1 \leq u , v \leq p} \dE X_{11}^{n_1(u,v)} \bar X_{11}^{n_2(u,v)} = O(1).
$$
We may restrict further ourselves to mapping $\pi$ such that $\pi(1) = 1$ and if $\pi(2k) \ne 1$, $\pi(2k)= p$.  For $(u,v) \in \bl p \br ^2$, we set $m(u,v) = \sum_{\ell =0}^{k} \IND_{ \{  ( \pi (2\ell) , \pi (2 \ell +1 )) = (u,v) \}} +\sum_{\ell =k+1}^{k+k'} \IND_{ \{  ( \pi (2\ell-1) , \pi (2 \ell )) = (u,v) \}}  $.  We consider the graph $G = (V,E)$ (with loops and multiple edges) on the vertex set $V = \bl p \br$ and edge multiplicities $M(\{u,v\}) = m(u,v) + m(v,u) \IND( v \ne u)$. Similarly to the proof of Proposition  \ref{prop:bilinearP}, the condition $n(u,v) \ne 1$ implies that $\deg(u) =  \sum_{v \in V} M(u,v)  \geq 2$ unless of the two following symmetric cases occur
\begin{enumerate}[(i)]
\item 
$u= 1$,  $\pi(1) = \pi (2(k+k')) = 1$ and $\{1\}$ is a connected component of $G$,
\item 
$u = p$, $\pi(k) = \pi (k+1) = p$ and $\{p\}$ is a connected component of $G$.
\end{enumerate}
Finally, we denote by $\Lambda \subset V$ the set of vertices with a loop, i.e. the set of $u \in V$ such that $m(u,u) \geq 1$. Arguing as in \eqref{eq:Stemp},  we find easily by recursion that $
|\Lambda| \leq  2 q -  p +2 $.

 Consider a connected component of $G$ say, $H= (V',E')$. We set $\veps' = 1$ if $H'$ contains a vertex in $\Lambda$ and $0$ otherwise. If  $V' \ne \{1\}$ and $V' \ne \{p\}$ as in $(i)$-$(ii)$, then all vertices of $H$ have degree at least $2$. Hence, $H$ contains a cycle and the argument leading to \eqref{eq:boundS'}  gives 
$$\sum_{i_v , v \in V'  }     \prod_{ \ell \in L'}   | D^{(\ell)}_{i_{\pi(2 \ell)} i_{\pi(2 \ell +1)}}| \leq N^{\frac{|V'| + \veps'}{2}},
$$ 
where $L'$ is the set of $\ell$ such that $\pi(2\ell) \in V'$ and $\veps' \in \{0,1\}$ is equal to $1$ if $H'$ contains a vertex in $\Lambda$ and $0$ otherwise.  Similarly, if $V' = \{1\}$ then we are in case $(i)$ and the contribution of this connected component is bounded by
$$
\sum_{i_1  }      | D^{(k+k')}_{i_{1} i_{1}}| \leq N = N^{\frac{|V'| + \veps'}{2}}.
$$
The same bounds apply if $V' = \{p\}$ and we are in case $(ii)$.  Taking the product over all connected components of $G$, we deduce that 
$$
S(\pi) \leq C N^{\frac{p + |\Lambda|}{2}}  \leq C N^{q+1},
$$

Now if $Y$ and $\tilde Y$ are as above, $S(\pi)$ are equal unless there is at least one of the $q$ distinct edges which appears more than twice. In particular, in such case $1 \leq q <   (k+k' ) /2 $. Hence for such $\pi$, we have $N^{- \frac k 2 - \frac {k' }2 - 1}S(\pi) =  O ( N^ {-1/2} )$. It proves \eqref{eq:ABXRP}.

\noindent \paragraph{Step four : Gaussian case.}
It remains to prove \eqref{eq:ABXaim} when $Y$ is complex Gaussian with $\dE |X_{11}|^2 =1$.  We will adapt an argument of \cite{MR2060040}. We will prove the following statement by recursion: for  all $p$,  $k_i, k'_i\geq 0$, $i=1,\ldots,p$ and matrices $B_i^{(\ell)}$, $C_i^{(\ell)}$, 
\begin{equation}\label{eq:recPYY}
 \dE \left[ \prod_{i=1}^p\frac 1 N \Tr \left\{  B_i^{(0)} \prod_{\ell =1}^{k_i} (Y B_i^{(\ell)}) \prod_{\ell =1}^{k'_i} (C_i^{(k'_i-\ell +1)} Y ^ * )  \right\} \right]=  \prod_{i=1}^p \frac 1 N \Tr B_i^{(0)}  \prod_{\ell=1} ^{k_i}  \left( \frac 1 N  \Tr B_i^{(\ell)} C_i^{(\ell)} \right) \IND_{\{ k_i = k_i'\}} + o\left( 1 \right),
\end{equation}
where the $o(1)$ is uniform over all $B_i^{(\ell)}$, $C_i^{(\ell)}$ of norm at most $1$ but depends on $k_i,k_i'$.  
Define $k=\sum_{i=1}^p k_i$ and  $k'=\sum_{i=1}^p k_i'$. The case $k+k'=0$ is obvious with the convention that $\prod_{l=1}^0 \equiv 1$. The case $k+k'=1$ is obvious since $X_{11}$ is centered.  
Now, it is easy to check that 
$$\dE \left( \frac 1 N  \Tr   B_1^{(0)} Y B_1^{(1)}  \frac 1 N  \Tr   B_2^{(0)} C_2^{(1)} Y^* \right)=  \frac{1}{N^3}  \Tr   B_1^{(1)}  B_1^{(0)}  B_2^{(0)} C_2^{(1)}  = o\left( 1 \right).$$
$$\dE \left( \frac 1 N  \Tr   B_1^{(0)} Y B_1^{(1)}   C_1^{(1)} Y^* \right)=  \frac{1}{N}  \Tr   B_1^{(0)}   \frac{1}{N}  \Tr  B_1^{(1)} C_1^{(1)}.$$
$$\dE \left( \frac 1 N  \Tr   B_1^{(0)} Y B_1^{(1)} Y  B_1^{(2)} \right)=  \frac{\dE(X_{11}^2)}{N^2}  \Tr   B_1^{(2)}  B_1^{(0)} { B_1^{(1)}}^{\top}  = o\left( 1 \right).$$
$$\dE \left( \frac 1 N  \Tr   B_1^{(0)} Y B_1^{(1)}   \frac 1 N  \Tr   B_2^{(0)} Y B_2^{(1)} \right)=  \frac{\dE(X_{11}^2)}{N^3}  \Tr   B_1^{(1)}  B_1^{(0)}  {B_2^{(0)}}^{\top} {B_2^{(1)}}^{\top}  = o\left( 1 \right).$$
Therefore \eqref{eq:recPYY} holds for $k + k ' =2$.
 We thus assume that \eqref{eq:recPYY} holds for all $k + k ' \leq n$ for some $n \geq 2$. We take $k + k ' = n +1$.   We use the identity in law 
$$
Y \stackrel{d}{=}\frac{ Y_1 + Y_2 }{\sqrt 2},
$$
where $Y_1$ and $Y_2$ are two independent copies of $Y$. We develop the right hand side of \eqref{eq:recPYY} in $Y_1$ and $Y_2$
\begin{eqnarray*}
&\hspace*{-8cm} \dE \left[ \prod_{i=1}^p \frac 1 N  \Tr \left\{  B_i^{(0)} \prod_{\ell =1}^{k_i} (Y B_i^{(\ell)}) \prod_{\ell =1}^{k_i'} (C_i^{(k'_i-\ell+1)} Y ^ * )  \right\}\right] 
\\  \quad&=  2^{-k/2 - k'/2} \sum_{\veps_i \in \{1,2\}^{k+k'},i=1\ldots p }\hspace{-1pt}  \dE \left[ \prod_{i=1}^p\frac 1 N  \Tr \left\{  B_i^{(0)} \prod_{\ell =1}^{k_i} (Y_{\veps_i^{(\ell)}} B_i^{(\ell)}) \prod_{\ell =1}^{k'_i} (C_i^{(k'_i-\ell+1)} Y_{\veps_i^{(k_i+ \ell)}}^ * )  \right\} \right].
\end{eqnarray*}
The two terms in the summand with for all $i$, $\veps_i = ( 1, \ldots, 1)$ and for all $i$,  $\veps_i = (2,\cdots , 2)$ are  equal to the left hand side of \eqref{eq:recPYY}. For the $2^{k+k'}-2$ other terms, we may condition on $Y_2$. For such vector $\veps$, we have $\sum_i\sum_{\ell} \IND (\veps_i^{(\ell)} = 1) \leq k + k ' - 1 \leq n$. We can thus use the recursion hypothesis by integrating over $Y_1$ and conditioning on $Y_2$. We then use $\dE \|Y_2 \|^{n} < C_n$ and the recursion hypothesis by now integrating over $Y_2$.  We find their contribution is $o(1)$ unless for each $i$, $k_i = k_i'$ and $\veps_{2k_i - \ell +1}  = \veps_{\ell}$ in which case 
\begin{eqnarray*}
&\hspace*{-5cm}  \dE \left[ \prod_{i=1}^p\frac 1 N  \Tr \left\{  B_i^{(0)} \prod_{\ell =1}^{k_i} (Y_{\veps_i^{(\ell)}} B_i^{(\ell)}) \prod_{\ell =1}^{k'_i} (C_i^{(k'_i-\ell+1)} Y_{\veps_i^{(k_i+ \ell)}}^ * )  \right\} \right]\\& \quad \quad \quad \quad \quad \quad=\prod_{i=1}^p \frac 1 N \Tr B_i^{(0)}  \prod_{\ell=1} ^{k_i}  \left( \frac 1 N  \Tr B_i^{(\ell)} C_i^{(\ell)} \right) \IND_{\{ k_i = k_i'\}} + o\left( 1 \right).
\end{eqnarray*}
For $k = k'$ (which implies $k \geq 2$), there are $2^{k} - 2$ such vectors $\veps$. It follows that 
\begin{align*}
&\hspace*{-1cm} \dE \left[ \prod_{i=1}^p\frac 1 N \Tr \left\{  B_i^{(0)} \prod_{\ell =1}^{k_i} (Y B_i^{(\ell)}) \prod_{\ell =1}^{k'_i} (C_i^{(k'_i-\ell +1)} Y ^ * )  \right\} \right]  \\
& \quad\quad\quad\quad\quad = 2 . 2^{-k} \dE \left[ \prod_{i=1}^p\frac 1 N \Tr \left\{  B_i^{(0)} \prod_{\ell =1}^{k_i} (Y B_i^{(\ell)}) \prod_{\ell =1}^{k'_i} (C_i^{(k'_i-\ell +1)} Y ^ * )  \right\} \right]  \\
& \quad\quad\quad\quad\quad \quad  + (2^{k} - 2). 2^{-k} \prod_{i=1}^p \frac 1 N \Tr B_i^{(0)}  \prod_{\ell=1} ^{k_i}  \left( \frac 1 N  \Tr B_i^{(\ell)} C_i^{(\ell)} \right) \IND_{\{ k_i = k_i'\}} + o\left( 1 \right).
\end{align*}
Since $k \geq 2$, we obtain \eqref{eq:recPYY}. 
\end{proof}

\subsection{Proof of Theorem \ref{th:TCL}}

We set 
$$ 
X_N =\begin{pmatrix} X_{r\times r}   &  X_{r \times (N-r)} \\  X_{( N-r) \times r }  &  X_{(N-r) \times (N-r)}\end{pmatrix} , \;  \quad Y_N = \frac {X_N }{\sqrt N} =\begin{pmatrix} Y_{r\times r}   &  Y_{r \times (N-r)} \\  Y_{( N-r) \times r }  &  Y_{(N-r) \times (N-r)}\end{pmatrix},
$$
and $$
\hat M_N = \sigma  Y_{(N-r) \times (N-r)} + \hat A_{N - r}, \; \quad  \hat R_N (z) = ( z I_{N-r} - \hat M_N )^{-1}.
$$
We fix any $a \in S$. The matrix $A_N$ satisfies assumptions (A1'-A4) with,
$$A'_N =   \left( \begin{array}{c|c}
a I_r& 0 \\
\hline
0 & \hat{A}_{N-r}  
 \end{array} \right), \quad \quad A''_N =   \left( \begin{array}{c|c}
(\theta_N-a) I_r& 0 \\
\hline
0 & 0 
 \end{array} \right). $$ 

Thus, by Theorem \ref{th:main}, for  $\delta < \eta$ small enough,
 almost surely  for all large $N$, there are exactly $r$ eigenvalue
$\lambda_i$, $i=1,\ldots,r$ of $M_N$ in $B(\theta,\delta).$  Note that almost surely $\lim_{N\rightarrow +\infty} ( \lambda_i -\theta) =0.$ It concludes the proof of the first statement of Theorem \ref{th:TCL}.

For the second statement, we start with a consequence of Proposition \ref{nonunif}. 
 \begin{lemme} \label{le:wellcond}
For any $0 < \delta< \eta$, there exists $C>0$ such that almost surely for all large $N$, 
\begin{equation*}\label{borneG} \sup_{z \in B(\theta,\delta)}
\left\| \hat R_N(z) \right\| \leq C.\end{equation*}
\end{lemme}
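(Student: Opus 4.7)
The plan is to derive Lemma \ref{le:wellcond} as a direct application of Proposition \ref{nonunif} to the submatrix $\hat A_{N-r}$, viewed as a deformation of the trivial rank zero perturbation. The matrix $\hat M_N - z I_{N-r} = \sigma Y_{(N-r) \times (N-r)} + \hat A_{N-r} - z I_{N-r}$ is of the form handled by Proposition \ref{nonunif}, with $\hat A_{N-r}$ playing the role of $A'_N$, $Y_{(N-r)\times(N-r)}$ playing the role of $Y_N$, and the dimension $N-r$ in place of $N$. Since $\|\hat R_N(z)\| = 1/s_{N-r}(\hat M_N - zI_{N-r})$, a uniform lower bound on this smallest singular value will directly produce the desired uniform upper bound on $\|\hat R_N(z)\|$.

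I would first verify the hypotheses of Proposition \ref{nonunif} in this context. Assumption (X1-X2) transfers to $Y_{(N-r)\times(N-r)}$ since its entries are a subfamily of the i.i.d. entries of $Y_N$. Assumption (A1) is inherited from $\|A_N\| \le M$ and the block decomposition \eqref{eq:decompAN}. For assumption (A2), note that $(A_N - zI_N)(A_N-zI_N)^*$ and the direct sum $(\theta_N - z)^2 I_r \oplus (\hat A_{N-r} - zI_{N-r})(\hat A_{N-r} - zI_{N-r})^*$ differ in empirical spectral measure by at most $r/N$ point masses, so both sequences have the same weak limit $\nu_z$. Regarding (A4), the hypothesis of the theorem guarantees that $\hat A_{N-r} - \theta I_{N-r}$ has no singular value in $[0,\eta]$; by the perturbation inequality \eqref{eq:CF}, for every $z$ with $|z-\theta| \le \eta/2$ the shifted matrix $\hat A_{N-r} - z I_{N-r}$ has no singular value in $[0,\eta/2]$.

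Next, I would take $\delta < \eta/2$ and set $K = \overline{B(\theta,\delta)}$. Shrinking $\delta$ further if necessary, we may assume $K \subset \mathbb{C} \setminus \mathrm{supp}(\beta)$, since $\theta$ belongs to the open complement of $\mathrm{supp}(\beta)$. Assumption (A3), together with the fact that the limits $\mu_z$ of the singular value distributions of $\hat M_N - zI_{N-r}$ coincide with those of $M_N - zI_N$, implies that $0 \notin \mathrm{supp}(\mu_z)$ for every $z \in K$. Hence $K$ meets the hypotheses of the second statement of Proposition \ref{nonunif}, which yields $\gamma_K > 0$ such that a.s.\ for all $N$ large enough,
\[
\inf_{z \in K} s_{N-r}\bigl(\hat M_N - z I_{N-r}\bigr) \ge \gamma_K.
\]
Setting $C = 1/\gamma_K$ gives the conclusion.

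The only potentially subtle point is the bookkeeping in checking that Proposition \ref{nonunif}, originally formulated for sequences of size $N$, applies verbatim to the sequence $(\hat A_{N-r}, Y_{(N-r)\times(N-r)})$ of size $N-r$: since the proposition depends only on assumptions that are dimension-free in their statements and since all limiting measures ($\nu_z$ and $\mu_z$) are unchanged by the rank $r$ correction, this is routine. No essentially new estimate beyond Proposition \ref{nonunif} is needed.
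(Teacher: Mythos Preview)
Your proposal is correct and follows essentially the same route as the paper: verify the singular-value hypothesis (A4) for $\hat A_{N-r}$ on the ball via the perturbation inequality, then invoke Proposition~\ref{nonunif}. You are in fact more explicit than the paper in checking that (X1--X2), (A1), (A2) transfer to the $(N-r)$-dimensional subproblem.

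One small point: you restrict to $\delta<\eta/2$, whereas the lemma is stated for all $0<\delta<\eta$. The paper handles this by using the sharp perturbation bound directly: if $|z-\theta|\le\delta$ then \eqref{eq:CF} gives that $\hat A_{N-r}-zI_{N-r}$ has no singular value in $[0,\eta-\delta]$, which is positive for any $\delta<\eta$. Replacing your $\eta/2$ by $\eta-\delta$ recovers the full range. Your additional shrinking of $\delta$ to force $\overline{B(\theta,\delta)}\subset\mathbb C\setminus\supp(\beta)$ is a legitimate caution (Proposition~\ref{nonunif} does require $0\notin\supp(\mu_z)$ on $K$), and the paper leaves this implicit; in practice one may always reduce $\eta$ at the outset so that $B(\theta,\eta)\cap\supp(\beta)=\emptyset$, since $\theta$ lies in the open complement.
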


\begin{proof}
By \eqref{eq:CFres}, the smallest singular values of $(z I_{N-r} - \hat A_{N-r}) $ is uniformly lower bounded on $B(\theta,\delta)$ by $\eta - \delta >0$. It remains to  apply Proposition \ref{nonunif} to $\hat M_N$ and $\Gamma = B ( \theta, \delta)$.
\end{proof}

By Lemma \ref{le:wellcond}, a.s. for all large $N$ and all $z\in B(\theta, \delta) $, $z I_N - \hat M_N$ is invertible. Thus, since
$$M_N - \lambda I_N =\begin{pmatrix} \sigma Y_{r\times r}+( \theta_N - \lambda) I_r  & \sigma Y_{r \times (N-r)} \\ \sigma Y_{( N-r) \times r}& \hat M_N - \lambda I_{N-r} \end{pmatrix},
$$ and then, from Jacobi's determinant formula, 
\begin{align*}
 \det( M_N - \lambda I_N) =  \det( \hat M_N - \lambda I_{N-r} ) \det \left( \sigma Y_{r\times r} + (\theta_N- \lambda) I_r  +  \sigma^2 Y_{r \times N-r} \hat R_N (\lambda) Y_{N-r \times r} \right).
\end{align*}
 Now, using the resolvent identity
    $$
 \hat R_N (\lambda)-\hat R_N({\theta_N})= - (\lambda- {\theta}_N)
   \hat R_N({\theta_N})\hat R_N (\lambda),$$
 one can replace   $\hat R_N(\lambda)$ by $\hat R_N({\theta_N})+\left[-(\lambda -{\theta_N})\hat R_N ({\theta_N})\left(\hat R_N({\theta}_N) -(\lambda -{\theta_N})\hat R_N ({\theta_N})
\hat R_N (\lambda) \right)\right]$ and get that, a.s. for all $N$ large,  $\lambda \in B(\theta, \delta) $ is an eigenvalue of $M_N$ if and only if
\begin{equation}
\label{eq:deffN}  \det \left( V_N  -\sqrt{N}(\lambda-\theta_N) \left[I_{r} + C_N(\lambda)
\right]\right)=0,
\end{equation}
where $$
V_N = \sigma {X_{r\times r}} 
+ \frac{ \sigma^2 }{\sqrt N} X_{r \times N-r}\hat R_N (\theta_N) {X_{N-r \times r}},
$$
and \begin{equation}\label{CN} C_N(\lambda)= \sigma^2  Y_{r \times N-r}\hat R_N (\theta_N)^2 {Y_{N-r \times r}}-(\lambda-\theta_N) \sigma^2  Y_{r \times N-r}\hat R_N (\theta_N)^2\hat{R}_N(\lambda){Y_{N-r \times r}}.\end{equation}

\begin{lemme}\label{le:boundCN}
If $\delta_N $ converges to $0$ as $N$ goes to infinity, a.s. , as $N\to \infty$, 
$$
\sup_{ \lambda : | \lambda - \theta | \leq \delta_N} \| C_N ( \lambda) \| \to 0. 
$$\end{lemme}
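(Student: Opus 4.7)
The plan is to split $C_N(\lambda)$ into its two defining summands and estimate each uniformly in $\lambda \in B(\theta,\delta_N)$. The second summand carries the scalar factor $(\lambda-\theta_N)$, which shrinks to zero uniformly, so it will be handled by a norm bound on the remaining matrix factors. The first summand is \emph{independent of} $\lambda$, so the supremum collapses to estimating the norm of a single $r\times r$ matrix, and since $r$ is fixed this reduces to entrywise convergence to zero.

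For the prerequisites, by Lemma \ref{le:wellcond} applied with any fixed $\delta<\eta$, for all $N$ large enough and all $z\in B(\theta,\delta)$ we have $\|\hat R_N(z)\|\leq C$; in particular both $\|\hat R_N(\theta_N)\|$ and $\sup_{\lambda\in B(\theta,\delta_N)}\|\hat R_N(\lambda)\|$ are bounded by $C$ once $|\theta-\theta_N|+\delta_N<\delta$. By Bai--Yin's theorem (using assumption (X2)), a.s.\ $\|Y_N\|\leq 2+o(1)$, and therefore $\|Y_{r\times N-r}\|$ and $\|Y_{N-r\times r}\|$ are uniformly bounded. For the second term of \eqref{CN}, the factor $(\lambda-\theta_N)$ satisfies $|\lambda-\theta_N|\leq \delta_N+|\theta-\theta_N|=o(1)$, so
\[
\sup_{\lambda\in B(\theta,\delta_N)}\bigl\|(\lambda-\theta_N)\sigma^2 Y_{r\times N-r}\hat R_N(\theta_N)^2\hat R_N(\lambda) Y_{N-r\times r}\bigr\|= o(1)\quad \text{a.s.}
\]

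For the first term $T_N:=\sigma^2 Y_{r\times N-r}\hat R_N(\theta_N)^2 Y_{N-r\times r}$, the key structural observation is that $\hat R_N(\theta_N)$ depends only on $X_{(N-r)\times(N-r)}$ and is therefore independent of the blocks $X_{r\times N-r}$ and $X_{N-r\times r}$. Writing $H:=\hat R_N(\theta_N)^2$, the $(i,j)$ entry of $T_N$ is the conditional bilinear form
\[
(T_N)_{ij}=\frac{\sigma^2}{N}\sum_{k,l=1}^{N-r} X_{i,k+r}\,H_{kl}\,X_{l+r,j}.
\]
Since $i,j\leq r$ while $k+r,l+r>r$, the variables $X_{i,k+r}$ and $X_{l+r,j}$ are never equal nor conjugate pairs of the same variable, so conditioning on $H$ yields $\dE[(T_N)_{ij}\mid H]=0$ and
\[
\dE\bigl[|(T_N)_{ij}|^2\mid H\bigr]=\frac{\sigma^4}{N^2}\|H\|_F^2\leq \frac{\sigma^4 (N-r)}{N^2}\|H\|^2=O(1/N).
\]
A parallel computation using the finite fourth moment assumption (X2) gives $\dE\bigl[|(T_N)_{ij}|^4\mid H\bigr]=O(1/N^2)$ on the event $\{\|H\|\leq C^2\}$ (which holds a.s. for large $N$). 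By Markov's inequality and Borel--Cantelli, each of the finitely many entries tends to zero almost surely, so $\|T_N\|\to 0$ a.s.

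Combining the two steps yields the uniform bound $\sup_{\lambda\in B(\theta,\delta_N)}\|C_N(\lambda)\|\to 0$ a.s. The main technical point is the conditional fourth-moment estimate on the bilinear form defining $(T_N)_{ij}$; this is routine once the independence between $\hat R_N(\theta_N)$ and the off-diagonal blocks $X_{r\times N-r}, X_{N-r\times r}$ is exploited, but it is the only place where one needs more than the variance bound to upgrade convergence in probability to almost-sure convergence.
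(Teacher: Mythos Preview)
Your proof is correct. The treatment of the second summand is the same as the paper's. For the first summand $T_N=\sigma^2 Y_{r\times N-r}\hat R_N(\theta_N)^2 Y_{N-r\times r}$, however, you take a genuinely different route. The paper appeals to Proposition~\ref{prop:bilinearP}; since $\hat R_N(\theta_N)$ is itself random, making this rigorous implicitly requires the Neumann expansion $\hat R_N(\theta_N)=\sum_{\ell\ge0}(-\sigma)^\ell R'_N(\theta_N)\bigl(Y_{(N-r)\times(N-r)}R'_N(\theta_N)\bigr)^\ell$ (justified by Proposition~\ref{borne}), so that each entry of $T_N$ becomes a uniformly convergent series of bilinear forms in $Y_N$ and deterministic matrices, to which Proposition~\ref{prop:bilinearP} applies term by term. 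You instead exploit the block independence between $\hat R_N(\theta_N)$ and $(X_{r\times N-r},X_{N-r\times r})$ to obtain a conditional fourth-moment bound of order $N^{-2}$, and conclude by Borel--Cantelli. Your argument is more elementary and self-contained (it does not need the three-step proof of Proposition~\ref{prop:bilinearP}), at the price of being specific to this block structure; the paper's argument is shorter on the page because it recycles a general-purpose tool already in place.
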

\begin{proof}
The convergence towards zero of the first term in (\ref{CN}) readily follows from Proposition \ref{prop:bilinearP} and Lemma \ref{le:wellcond}.
For the second  term in (\ref{CN}), from Lemma \ref{le:wellcond}, Bai-Yin Theorem \cite{MR1235416}, there exists $C' >0$ such that we have a.s. for all large $N$, $\|  Y_{r \times N-r}\hat R_N (\theta_N)^2\hat{R}_N(\lambda){Y_{N-r \times r}} \| \leq C'$. Since $\delta_N \to 0$, the a.s. convergence toward zero of second  term in (\ref{CN}) follows.
\end{proof}
\begin{lemme}\label{FQ} Under the assumptions of Theorem \ref{th:TCL},
the random matrix $\frac{1}{\sqrt{N}} X_{r \times N-r}\hat R_N (\theta_N) {X_{N-r \times r}}$ converges in distribution towards $G$ the complex $r \times r$ Ginibre matrix defined in \eqref{defV}. 
\end{lemme}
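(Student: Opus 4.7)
The plan is to exploit the independence between $\hat R:=\hat R_N(\theta_N)$---which depends only on the $(N-r)\times(N-r)$ block $Y_{(N-r)\times(N-r)}$---and the two off-diagonal blocks $X_{r \times (N-r)}$ and $X_{(N-r)\times r}$. Conditionally on $\hat R$, each entry $(Z_N)_{ij} = \frac{1}{\sqrt N}\sum_{k,l}(X_{r \times N-r})_{ik}\,\hat R_{kl}\,(X_{N-r \times r})_{lj}$ is a bilinear form in two independent families of iid centered complex random variables. I would invoke the CLT for random bilinear forms stated in the paper's appendix to obtain joint conditional Gaussian convergence of the vectorized matrix $Z_N$; its Lindeberg-type hypothesis is guaranteed by (X2) together with the uniform bound $\|\hat R\| \leq C$ from Lemma \ref{le:wellcond}.

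A direct second-moment computation using $\dE X_{ij} = 0$, $\dE|X_{ij}|^2 = 1$ and $\dE X_{ij}^2 = \dE X_{11}^2$ gives the conditional covariances
\begin{align*}
\dE\bigl[(Z_N)_{ij}\overline{(Z_N)_{i'j'}}\bigm|\hat R\bigr] &= \delta_{ii'}\delta_{jj'}\,\tfrac{1}{N}\Tr \hat R \hat R^*,\\
\dE\bigl[(Z_N)_{ij}(Z_N)_{i'j'}\bigm|\hat R\bigr] &= \delta_{ii'}\delta_{jj'}\,\bigl(\dE X_{11}^2\bigr)^2\,\tfrac{1}{N}\Tr \hat R \hat R^\top,
\end{align*}
so it suffices to show that these two normalized traces converge almost surely to $\varphi/(1-\sigma^2\varphi)$ and $\psi/(1-\sigma^2\dE X_{11}^2\psi)$ respectively. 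Since the limiting covariance is deterministic, conditional convergence then transfers to unconditional convergence toward the prescribed matrix $G$.

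To analyze the two traces I would expand $\hat R_N(\theta_N)$ in its Neumann series $\sum_{k\geq 0}\sigma^k(\hat R'_N Y_{(N-r)\times(N-r)})^k \hat R'_N$ with $\hat R'_N=(\theta_N I_{N-r}-\hat A_{N-r})^{-1}$. The analogue of Proposition \ref{borne} for the $(N-r)$-dimensional block provides an almost sure geometric bound $\|(\sigma \hat R'_N Y)^k\|\leq C(1-\veps_0)^k$, which justifies exchanging sum and trace and supplies a summable dominant. After cycling, each term in the resulting double sum has the form handled by Proposition \ref{prop:ABX}: the hypotheses $\tfrac{1}{N}\Tr \hat R'_N(\hat R'_N)^*\to\varphi$ and $\tfrac{1}{N}\Tr \hat R'_N(\hat R'_N)^\top\to\psi$ (consequences of (A2), the uniform resolvent bound \eqref{eq:CFresunif}, and the stated assumption on $\psi$) force all off-diagonal $k\neq k'$ terms to vanish. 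Summing the resulting geometric series in $\sigma^2\varphi$ and $\sigma^2\dE X_{11}^2\psi$ produces exactly the two target covariances.

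The main technical obstacle I anticipate is the passage from conditional to unconditional Gaussian convergence, which requires some uniformity in the bilinear CLT. This is circumvented by observing that the limiting covariance is a.s.\ constant, so that for any bounded continuous test function $f$ one has $\dE f(Z_N) = \dE[\dE(f(Z_N)\mid \hat R)] \to \dE f(G)$ by dominated convergence. A secondary issue is the bookkeeping required to match products of the form $(\hat R'_N Y)^k \hat R'_N (\hat R'_N)^*(Y^* (\hat R'_N)^*)^{k'}$ (and its transpose variant) with the canonical shape $B^{(0)}\prod_\ell(YB^{(\ell)})\prod_\ell(C^{(\ell)}Y^*)$ required by Proposition \ref{prop:ABX}; this is routine provided the trace is cycled so that all $Y$ factors precede all $Y^*$ (or $Y^\top$) factors.
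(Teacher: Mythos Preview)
Your proposal is correct and follows essentially the same route as the paper: both arguments apply the appendix CLT for bilinear forms (Proposition~\ref{TCLFQ}) to the entries of the matrix, and both compute the required limits of $\tfrac{1}{N}\Tr \hat R_N \hat R_N^*$ and $\tfrac{1}{N}\Tr \hat R_N \hat R_N^\top$ by expanding $\hat R_N$ in its Neumann series (controlled via Proposition~\ref{borne}) and invoking Proposition~\ref{prop:ABX} term by term before summing the resulting geometric series. The only cosmetic difference is that the paper first replaces $\theta_N$ by $\theta$ via the resolvent identity, and then applies Proposition~\ref{TCLFQ} directly after establishing a.s.\ convergence of the traces, leaving implicit the conditioning argument you spell out; your explicit conditional-to-unconditional passage via dominated convergence is a welcome clarification of a step the paper takes for granted.
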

\begin{proof}[Proof of Lemma \ref{FQ}]
Note that $$\frac{1}{\sqrt{N}} X_{r \times N-r}\hat R_N (\theta_N) {X_{N-r \times r}}= \left( \frac{1}{\sqrt{N}} {x^{(p)}}^\top \hat R_N (\theta_N) y^{(q)}  \right)_{1\leq p,q \leq r}$$
where ${x^{(1)}}, \ldots,  {x^{(r)}}$ and $y^{(1)} , \ldots,y^{(r)} $ are the columns of $X^\top _{r \times N-r}$ and $X_{N-r \times r}$ respectively. Now, by Lemma \ref{le:wellcond}, a.s. $\| \hat R_N (\theta)  \|  $ is uniformly bounded  and arguing as in Lemma \ref{continuitephi},  $\frac{1}{N-r} \Tr \hat R_N (\theta) \hat R_N (\theta)^*$ converges towards $\int \lambda^{-1} d\mu_{ \theta}(\lambda)$. Also, from the resolvent identity and Lemma \ref{le:wellcond}, 
$$
\| \hat R_N (\theta_N)  - \hat R_N (\theta ) \| \leq \| \hat R_N (\theta_N) \| \| \hat R_N  ( \theta) \| | \theta - \theta_N| \to 0. 
$$
Thus, $\frac{1}{N-r} \Tr \hat R_N (\theta_N) \hat R_N (\theta_N)^*$ and $\frac{1}{N-r} \Tr \hat R_N (\theta_N) \hat R_N (\theta_N)^\top$ are asymptotically close to  $$\tau_N = \frac{1}{N-r} \Tr \hat R_N (\theta) \hat R_N (\theta)^*\quad \hbox{ and } \quad \zeta_N = \frac{1}{N-r} \Tr \hat R_N (\theta) \hat R_N (\theta)^\top,$$  respectively. We will now use Proposition \ref{prop:ABX} to obtain a nice expression for $\tau_N$ and $\zeta_N$. We set $R'_N (z) = ( \hat A_{N- r} - z I_{N-r} ) ^{-1}$. We first observe that by Proposition \ref{borne},  a.s.  the series expansion 
$$
\hat R_N (\theta) = \sum_{\ell \geq 0} (-\sigma)^\ell R'_N(\theta) \left(  Y_{(N-r) \times (N-r)} R'_N(\theta) \right)^{\ell}
$$
 converges in norm uniformly in $N \geq N_0$. In order to prove that $\zeta_N$ converges, it is thus sufficient to prove the a.s. convergence of normalized traces of the form, for fixed integers $k,\ell \geq 0$,
$$
\zeta_N^{(k,\ell)} = \frac 1 {N- r} \Tr \left\{ R'_N(\theta) \left(  Y_{(N-r) \times (N-r)} R'_N(\theta) \right)^{\ell} \left( R'_N(\theta) ^\top  Y_{(N-r) \times (N-r)} ^\top  \right)^{k} R'_N(\theta) ^\top \right\}
$$
to a complex number $\zeta^{(k,\ell)}$. Applying Proposition \ref{prop:ABX}, we find that $\zeta^{(k,\ell)} = \IND( k = \ell)( \dE X_{11}^2 )^k \psi^{k+1}$. Hence 
$$
\zeta_N = \sum_{k \geq 0} \sigma^{2k} (\dE X_{11}^2)^k  \psi^{k+1} + o(1)= \frac{ \psi }{ 1 - \sigma^2 \dE X_{11}^2 \psi} +o (1). 
$$
Similarly, 
$$
\tau_N = \sum_{k \geq 0} \sigma^{2k}    \varphi^{k+1} + o(1)= \frac{ \varphi }{ 1 - \sigma^2 \varphi} +o (1). 
$$
The lemma is thus a consequence of Proposition \ref{TCLFQ}, in Appendix.
\end{proof}

We start by a classical application of Rouch\'e's Theorem for random analytic functions.  Recall that we endow the set of analytic functions on a open connected set $U$ with the distance defined by \eqref{eq:distHU}. The next lemma is contained in Shirai \cite[Proposition 2.3]{MR3014854}.
\begin{lemme}\label{le:roucheRAF}
Let $U$ be a bounded connected open set and $f_N$ a  tight sequence of random analytic functions on $U$ converging weakly to $f$ for the finite dimensional convergence. Then, if $f$ is a.s. non-zero, the point process of zeros (with multiplicities) of $f_N$ in $U$ converges weakly to the point process of zeros of $f$ in $U$: i.e. for any continuous $\phi: U \rightarrow \mathbb{R}$ with compact support $K$,
denoting respectively  by $x_i^{(N)}$ and $x_i$ the zeroes of $ f_N $ and $f$ in $K$, $\sum_{i } \phi(x_i^{(N)}) $ converges weakly to  $\sum_{i} \phi(x_i) $.   
\end{lemme}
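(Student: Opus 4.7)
The plan is to upgrade the weak finite-dimensional convergence to weak convergence in the Polish space of analytic functions on $U$ equipped with the metric $d$ of \eqref{eq:distHU}, apply Skorokhod's representation theorem to reduce to an almost sure statement, and then invoke a deterministic Hurwitz/Rouch\'e argument on each realization.

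First I would note that the space $(\mathcal H(U), d)$ of analytic functions on $U$ is a complete separable metric space, and that an element of this space is determined by its values on any countable dense subset of $U$. Consequently, any subsequential weak limit of $(f_N)$ coincides in distribution with $f$ on finite-dimensional marginals and is therefore equal in law to $f$. By Prokhorov's theorem, tightness of $(f_N)$ then forces $f_N \Rightarrow f$ in $\mathcal H(U)$. The Skorokhod representation theorem supplies a coupling on an auxiliary probability space under which $f_N \to f$ almost surely in the topology of uniform convergence on compact subsets of $U$.

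Next, I would work deterministically on the almost-sure event that $f$ is not identically zero and that $f_N \to f$ uniformly on compacts. Fix $\phi$ continuous with compact support $K \subset U$ and pick a compact $K'$ with $K \subset \mathring{K}' \subset U$ such that $\partial K'$ contains no zero of $f$. Enumerate the zeros $z_1, \ldots, z_m$ of $f$ in $K'$ with multiplicities, and choose $\veps > 0$ so small that the closed disks $\overline{B(z_j, \veps)}$ are pairwise disjoint and contained in $\mathring{K}'$. Then $|f|$ is bounded below by some $\delta_\veps > 0$ on $K' \setminus \bigcup_j B(z_j, \veps)$. For $N$ large, $\|f_N - f\|_{K'} < \delta_\veps$, so Rouch\'e's theorem applied on each $\partial B(z_j, \veps)$ shows that $f_N$ has exactly the same number of zeros (with multiplicity) as $f$ in each ball $B(z_j, \veps)$ and no zero in $K' \setminus \bigcup_j B(z_j, \veps)$.

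Letting $\veps$ decrease along a sequence, this yields for all large $N$ a bijection between the zeros of $f_N$ and those of $f$ in $K'$ whose positions, counted with multiplicity, converge. Since $\phi$ vanishes outside $K \subset \mathring{K}'$, we obtain $\sum_i \phi(x_i^{(N)}) \to \sum_i \phi(x_i)$ almost surely on the coupling space, hence in distribution on the original probability space. The main conceptual step, and the only real obstacle, is the first one: promoting tightness plus finite-dimensional convergence into weak convergence in the Polish space $(\mathcal H(U),d)$ via Prokhorov; once this is available, the rest of the proof is essentially deterministic complex analysis, with the mild care of enlarging $K$ to $K'$ so that no zero can straddle $\partial K$ in the limit.
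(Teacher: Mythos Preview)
Your proof is correct and follows essentially the same route as the paper: reduce to an almost sure statement via Skorokhod and then apply a deterministic Rouch\'e/Hurwitz argument. You are in fact more explicit than the paper on the first step, spelling out that tightness plus finite-dimensional convergence yields full weak convergence in $(\mathcal H(U),d)$ via Prokhorov and the fact that laws on this Polish space are determined by finite-dimensional marginals; the paper invokes Skorokhod directly without isolating this point. Your deterministic step is organized around the finitely many zeros of $f$ in an enlarged compact $K'$, whereas the paper covers $K$ by small balls and extracts a finite subcover; these are equivalent packagings of the same Rouch\'e argument.
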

\begin{proof} As already pointed, analytic functions on $U$ endowed with distance \eqref{eq:distHU} is a Polish space. Hence, from Skorokhod's representation theorem, it suffices to check the following deterministic statement: if $f_N$ is a sequence of analytic functions converging to a non-zero analytic function $f$, then the point set of zeros of $f_N$ converges to the point set of zeros of $f$. 
Let $J$ be such that $K\subset \mathring{K}_J$ where $(K_j)_{j \geq 1}$ is an exhaustion by compact sets of $U$. It is clear that $f$ has a finite number $q$ of zeroes in  $K_J$. Let $\epsilon >0$ and $\delta>0$ be such that for any $x $ and $y$ in $K_J$, if $\vert x-y\vert < \delta$ then $\vert \phi(x)-\phi(y) \vert < \epsilon/q$. For any $z \in K$, choose $0<\delta_z < \delta$ such that $B(z,\delta_z)\subset  \mathring{K}_J$  and such that the minimum of $|f(z)|$ on $\partial B(z,\delta_z)$ is strictly positive.  From Rouch\'e's Theorem, for all large $N$, the number of zeros of $f_N$ and  $f$ in $\mathring{B}(z, \delta_z)$ are equal. Extracting a finite subcover of $K$ from $\cup_{z \in K}  \mathring{ B}(z,\delta_z)$, it follows that for all large $N$, $\left| \sum_{i } \phi(x_i^{(N)}) -\sum_{i} \phi(x_i)\right| \leq \epsilon $. 
The conclusion follows. \end{proof}

We may now conclude the proof of Theorem \ref{th:TCL}. We introduce  
\begin{eqnarray*}
h_N(z)&=&\det \left( V_N -z  \left[I_{r} + C_N\left(\frac{z}{\sqrt{N}} +\theta_N\right)
\right]
\right),\\
 h(z)& = &\det(V - z I_r),\end{eqnarray*}
where $V$ is defined in (\ref{defV}).

Let $t >0$. Using the continuity of the determinant, by Lemma \ref{le:boundCN} and Lemma \ref{FQ}, for any $(z_1, \ldots, z_k) \in B(0,t)$, $(h_N(z_1), \ldots, h_N(z_k))$ converges weakly to $(h(z_1), \ldots, h(z_k))$. Moreover, by  Lemma \ref{le:boundCN} and Lemma \ref{FQ}, for any $\epsilon>0$, there exists $C>0$ such that $\mathbb{P} \left( \Vert h_N \Vert_{B(0,t)} \leq C \right) \geq 1-\epsilon.$ By Montel's theorem, the set of holomorphic functions $f$ on  $\mathring{B}(0,t)$ such that $\sup_{z\in  \mathring{B}(0,t) }\vert f(z) \vert \leq C$ is compact in the set of analytic functions on $\mathring{B}(0,t)$. Therefore,
 $h_N$ is a tight sequence of random analytic functions on $\mathring{B}(0,t)$. It follows from Lemma \ref{le:roucheRAF} that the zeros of $h_N$ in $\mathring{B}(0,t)$ converges to the zeros of $h$ in $\mathring{B}(0,t)$, or equivalently, from \eqref{eq:deffN}, the eigenvalues of $\sqrt N ( M_N - \theta_N) $ in $\mathring{B}(0,t) $ converges weakly to the eigenvalues of $V$ in $\mathring{B}(0,t)$. Since, as $t \to \infty$, the probability that  the $r$ eigenvalues of $V$ are in $B(0,t)$ goes to $1$, the theorem follows.

\subsection{Proof of Theorem \ref{th:TCL2}}

The argument leading to \eqref{eq:deffN}  shows that, a.s. for all $N$ large,  $\lambda \in B(\theta, \delta) $ is an eigenvalue of $M_N$ if and only if
\begin{equation}\label{eq:deffN2}
 \det \left( \frac{V_N}{\sqrt{N}} - \left[\lambda I_r - \hat A_r  + ( \lambda- \theta_N) C_N(\lambda)
\right]\right)=0.
\end{equation}

We are going to rewrite conveniently the above equation. Let $D = \DIAG ( \omega_1, \cdots, \omega_r)$ with $\omega_k = \exp ( 2 i \pi k / r)$, and the unit vectors in $\dC ^r$ with coordinates $v_k = 1/ \sqrt r$ and $u_k =\overline{\omega_k} / \sqrt r $.  So that, as in \eqref{eq:A'Nnil}-\eqref{eq:A''Nnil},  for some unitary matrix $U$, 
$$
J_N - \theta_N I_r = U ( D - v u ^* ) U^*  
$$
Set $Q_N = P_N U $, we get $\hat A_r  = Q_N ( \theta_ N I_r + D - v u^* ) Q_N^{-1}$. It follows from \eqref{eq:deffN2} that we are interested by the zeros in $B(\theta, \delta)$ of
\begin{equation*}
 \det \left( \frac{W_N}{\sqrt{N}} - (\lambda  - \theta_N ) ( I_r + C'_N(\lambda) ) + D - v u ^* \right),
\end{equation*}
where 
$$
W_N = Q_N^{-1} V_N Q_N \quad  \hbox{ and } \quad C'_N ( \lambda) = Q_N^{-1} C_N ( \lambda) Q_N. 
$$
Setting
$$
D_N ( z) =  D + \frac{W_N}{\sqrt{N}} - (z  - \theta_N ) ( I_r + C'_N(z) ).
$$

We will need a finer  uniform bound for $u ^* C'_N(\lambda) v$ than the one given by Lemma \ref{le:boundCN}. 

\begin{lemme}\label{le:boundCN2}
If  $\delta_N \leq N^{-\veps}$ for some $\veps >0$, then with probability tending to $1$ as $N  \to \infty$,
$$
\sup_{ \lambda : | \lambda - \theta_N | \leq \delta_N}  \Vert  C'_N ( \lambda) \Vert \leq \frac{ \log N  }{ \sqrt N }. 
$$\end{lemme}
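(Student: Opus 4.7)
The plan is to exploit the bilinear structure of $C_N(\lambda)$ and work entry-by-entry. Write $C_N(\lambda) = C_N^{(1)} - (\lambda - \theta_N)\,C_N^{(2)}(\lambda)$ with $C_N^{(1)} = \sigma^2 Y_{r \times (N-r)}\,\hat R_N(\theta_N)^2\, Y_{(N-r) \times r}$ and $C_N^{(2)}(\lambda) = \sigma^2 Y_{r \times (N-r)}\,\hat R_N(\theta_N)^2\,\hat R_N(\lambda)\, Y_{(N-r) \times r}$. For each $p,q \in \{1,\ldots,r\}$, the $(p,q)$ entry takes the form $\tfrac{\sigma^2}{N}\, x^{(p)*} B_N(\lambda)\, y^{(q)}$, where $x^{(p)} = (X_{p,\,r+1},\ldots,X_{p,\,N})^{*}$ and $y^{(q)} = (X_{r+1,\,q},\ldots,X_{N,\,q})^{\top}$ are independent random vectors with iid centered unit-variance entries satisfying (X3), both independent of the bottom-right $(N-r)\times(N-r)$ block of $X_N$ that determines the inner matrix $B_N(\lambda)$.

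For fixed $\lambda$, I would condition on the bottom-right block and apply a Hanson--Wright type inequality to the bilinear form $x^{(p)*} B_N(\lambda) y^{(q)}$ (viewing it as a quadratic form in the concatenated vector $(x^{(p)},y^{(q)})$). Since $\|B_N(\lambda)\|$ is uniformly bounded on $B(\theta,\delta)$ by Lemma \ref{le:wellcond}, this yields
\begin{equation*}
\dP\!\left(\left|\tfrac{1}{N}\,x^{(p)*} B_N(\lambda) y^{(q)}\right| > t\right) \;\leq\; C \exp\!\bigl(-c \min(N t^2,\, \sqrt{N}\, t)\bigr),
\end{equation*}
and choosing $t = \log N/\sqrt{N}$ gives a tail of size $\exp(-c\log^2 N)$. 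Summing over the $r^2$ entries controls $\|C_N^{(1)}\|$ and $\|C_N^{(2)}(\lambda)\|$ at a single $\lambda$ with super-polynomial probability.

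To pass from pointwise to uniform control on $B(\theta_N,\delta_N)$, I would choose a net $\Lambda_N$ of spacing $N^{-K}$ (so $|\Lambda_N|\leq N^{2K}$), apply the pointwise estimate with a union bound, and control the oscillation between net points using the analytic Lipschitz bound $\|\hat R_N(\lambda)-\hat R_N(\lambda')\|\leq \|\hat R_N\|_\infty^2 |\lambda-\lambda'|$ on $B(\theta,\delta)$ together with the Bai--Yin bounds $\|Y_{r\times(N-r)}\|,\|Y_{(N-r)\times r}\|=O(1)$. The resulting Lipschitz constant of $C_N^{(i)}$ is $O(1)$, so the oscillation on each cell is $O(N^{-K})$, negligible against $\log N/\sqrt{N}$ for $K$ large. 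This gives $\sup_{\lambda \in B(\theta_N,\delta_N)}\|C_N^{(i)}(\lambda)\|= O(\log N/\sqrt{N})$ with high probability, and since $|\lambda-\theta_N|\leq \delta_N\to 0$, the contribution $(\lambda-\theta_N)C_N^{(2)}(\lambda)$ is $o(\log N/\sqrt N)$. The conclusion follows from $\|C'_N(\lambda)\|\leq \|Q_N^{-1}\|\|C_N(\lambda)\|\|Q_N\|$ together with the convergence $Q_N=P_N U\to PU\in\mathrm{GL}_r(\dC)$, which makes $\|Q_N\|, \|Q_N^{-1}\|$ uniformly bounded.

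The main obstacle is the combination of the bilinear-form tail bound with the net-plus-Lipschitz argument: the pointwise concentration must be extracted from (X3) at subexponential rate $\exp(-c\log^2 N)$ so as to survive the union bound over a polynomial-size net, which is precisely why a logarithmic factor in the final bound is essentially unavoidable. All other ingredients (independence of the off-diagonal blocks, uniform boundedness of $\hat R_N(\lambda)$, analyticity in $\lambda$) are already in place from Lemma \ref{le:wellcond} and the statistical assumptions.
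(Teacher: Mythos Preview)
Your approach is different from the paper's and, as written, has a genuine gap.

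The paper avoids any net argument. It iterates the resolvent identity $\ell$ times (with $\ell$ chosen so that $\veps(\ell+1)>1/2$, i.e.\ $\veps=1/(2\ell)$), writing
\[
\hat R_N(\lambda)=\sum_{p=0}^{\ell-1}(\theta_N-\lambda)^p\,\hat R_N(\theta_N)^{p+1}+(\theta_N-\lambda)^{\ell}\hat R_N(\theta_N)^{\ell}\hat R_N(\lambda).
\]
After substitution, $C'_N(\lambda)$ becomes a finite sum of terms $(\theta_N-\lambda)^q\,Q_N^{-1}Y_{r\times(N-r)}\hat R_N(\theta_N)^{q+2}Y_{(N-r)\times r}Q_N$, $0\le q\le \ell$, plus a remainder. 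The crucial feature is that the random $r\times r$ matrices $Y_{r\times(N-r)}\hat R_N(\theta_N)^{q+2}Y_{(N-r)\times r}$ do \emph{not} depend on $\lambda$; their entries times $\sqrt N$ form a tight family by the bilinear CLT of Proposition~\ref{TCLFQ} (which needs only finite fourth moment), hence are at most $\log N$ with probability tending to~$1$. The remainder is $O(\delta_N^{\ell+1})=o(N^{-1/2})$ by the crude operator-norm bound from Lemma~\ref{le:wellcond} and Bai--Yin. No uniformity-in-$\lambda$ estimate on any random quantity is ever required.

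The gap in your argument is the Hanson--Wright step. This lemma lives inside the proof of Theorem~\ref{th:TCL2}, which assumes only (X1--X2); assumption (X3) is \emph{not} available here. Under a bare fourth-moment hypothesis the best pointwise tail for $N^{-1}x^{(p)*}B_N(\lambda)y^{(q)}$ at level $\log N/\sqrt N$ is of order $1/(\log N)^{c}$ via Markov with a fixed moment, which cannot survive a union bound over a net of polynomial size. Since the lemma is applied with $\delta_N\asymp N^{-1/(2r)}$ (see \eqref{eq:defdeltaN}) and $r\ge 2$, the regime $\veps<1/2$ is exactly the one that matters, so the shortcut $(\lambda-\theta_N)C_N^{(2)}(\lambda)=O(\delta_N)$ is also insufficient. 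The natural repair is to iterate the resolvent expansion until the $\lambda$-dependence sits only in a scalar prefactor of high enough power --- which is precisely the paper's route.
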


\begin{proof}
We can assume without loss of generality that $\veps = 1/ 2 \ell$ for some integer $\ell \geq 1$. The resolvent formula iterated $\ell$ times gives
\begin{eqnarray*}
\hat R_N(\lambda) & = & \sum_{p=0}^{\ell-1} ( \theta_N - \lambda)^p   \hat R_N (\theta_N)^{p+1}  +   ( \theta_N - \lambda)^\ell\hat R_N(\theta_N)^{\ell}\hat R_N (\lambda).
\end{eqnarray*}
In the second term of \eqref{CN}, we expand the term in $\hat R_N(\lambda)$ as above. Setting $u_N = (Q_N^{*}) ^{-1} u$ and $v_N = Q_N v$, we find that 
$u^* C'_N ( \lambda) v $ is a finite sum of terms of the form, with $0 \leq q \leq \ell$,
$$S = ( \theta_N - \lambda)^q  u_N^*    Y_{r \times N - r} \hat R ( \theta_N)^{q +2}Y_{ N - r \times r} v_N$$ and of a final term of the form   
$$S' =  ( \theta_N - \lambda)^{\ell+1}   u_N^*  Y_{r \times N - r} \hat R_N(\theta_N)^{\ell+2}\hat R_N (\lambda) Y_{N - r \times r}  v_N.$$

We first bound $|S'|$. We use Lemma \ref{le:wellcond} and Bai-Yin Theorem \cite{MR1235416}. We find that there exists $C' >0$ such that a.s. for all large $N$, $\|  Y_{r \times N-r}\hat R (\theta_N)^{\ell +2} \hat{R}(\lambda){Y_{N-r \times r}} \| \leq C'$. In particular, a.s. $|S'|  = O ( \delta_N ^{\ell +1}) = o (1 / \sqrt N)$. 

It remains to upper bound  $|S|$. It suffices to prove that with probability tending to $1$ all entries of the matrix $Z_N = Y_{r \times N - r} \hat R ( \theta_N)^{q +3}Y_{ N - r \times r} $ are bounded by $\log N / \sqrt N$.  It follows from Proposition \ref{TCLFQ} in Appendix that, conditionally on $\| \hat R_N (\theta_N ) \| \leq C$, the random variables $\sqrt N (Z_{N})_{ij}$ are a tight sequence of random variables for each $1\leq i,j \leq r$.   Using again  Lemma \ref{le:wellcond} , the lemma follows. 
\end{proof}

We consider the sequence 
\begin{equation}\label{eq:defdeltaN}
\delta_N  = \frac{ \log N}{ N^{  1 / 2r }}. 
\end{equation}

 According to Lemma  \ref{le:boundCN2} and Lemma \ref{FQ},  the event $\Omega_N $ that for all $z$ in $B(\theta_N,\delta_N)$, $\| C'_N(z)\| \leq \log N / \sqrt N$ and $\| W_N\| \leq \log N $ has probability tending to $1$. If $\Omega_N$ holds then
$$
\sup_{z : |z - \theta_N| \leq \delta_N}  \| D- D_N (z) \| =  o(1). 
$$ 
Since all singular values of $D$ are equal to $1$, on the event $\Omega_N$, for all $z$ in $B(\theta_N,\delta_N)$, $D_N(z)$ is invertible  and $\| D_N (z)\|$ and $\|D_N(z)^{-1} \|$ are bounded by $1 + o (1)$. On this event $\Omega_N$, using \eqref{eq:idcle}, we deduce that $\lambda \in B(\theta_N, \delta_N) $ is an eigenvalue of $M_N$ if and only if
\begin{equation}\label{eq:deffN3}
 1 - u^* D_N ( \lambda ) ^{-1} v =0.
\end{equation}

The next lemma is main deterministic ingredient of the proof of Theorem \ref{th:TCL2}. 
\begin{lemme}\label{le:TCL2}
If $\Omega_N$ holds, then 
$$
u^* D_N (z) ^{-1} v = 1 + (z- \theta_N)^r-  \frac{ u^* D^{-1} W_N D^{-1} v }{\sqrt N} + o \left( \frac{ 1 }{ \sqrt N} \right), 
$$
where the $O( \cdot)$ is uniform on $B(\theta_N,\delta_N)$. 
\end{lemme}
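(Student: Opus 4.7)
The plan is a Neumann expansion of $D_N(z)^{-1}$ around the explicit diagonal matrix $\tilde D(z) := D - (z - \theta_N) I_r$, followed by an evaluation of the leading bilinear form via a geometric sum over the $r$-th roots of unity. Write $D_N(z) = \tilde D(z) + F_N(z)$, where $F_N(z) := W_N/\sqrt N - (z - \theta_N) C'_N(z)$.

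First I collect the uniform estimates on $B(\theta_N,\delta_N)$ provided by the event $\Omega_N$. The bounds $\|W_N\| \leq \log N$ and $\|C'_N(z)\| \leq \log N/\sqrt N$ together with $\delta_N \to 0$ give $\|F_N(z)\| = O(\log N/\sqrt N)$. Since $|\omega_k|=1$ for all $k$ and $\delta_N < 1/2$ for $N$ large, $\tilde D(z)$ is invertible with $\|\tilde D(z)^{-1}\| \leq 2$; moreover, from $\tilde D(z)^{-1} - D^{-1} = (z-\theta_N) D^{-1} \tilde D(z)^{-1}$ one gets $\|\tilde D(z)^{-1} - D^{-1}\| = O(\delta_N)$. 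The Neumann expansion
\begin{equation*}
D_N(z)^{-1} = \tilde D(z)^{-1} - \tilde D(z)^{-1} F_N(z) \tilde D(z)^{-1} + O\left(\frac{\log^2 N}{N}\right)
\end{equation*}
then converges in operator norm, the remainder being $o(1/\sqrt N)$ uniformly on $B(\theta_N,\delta_N)$.

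Next I simplify the linear correction in the bilinear form. The piece involving $C'_N(z)$ contributes at most $\delta_N \cdot 4 \cdot \log N/\sqrt N = o(1/\sqrt N)$, and replacing $\tilde D(z)^{-1}$ by $D^{-1}$ on both sides of $W_N/\sqrt N$ costs another $O(\delta_N \log N/\sqrt N) = o(1/\sqrt N)$. Hence
\begin{equation*}
u^* D_N(z)^{-1} v = u^* \tilde D(z)^{-1} v - \frac{u^* D^{-1} W_N D^{-1} v}{\sqrt N} + o\left(\frac{1}{\sqrt N}\right).
\end{equation*}
For the remaining leading scalar, since $\tilde D(z)^{-1}$ is diagonal with entries $(\omega_k - (z-\theta_N))^{-1}$ and $\overline{u_k} v_k = \omega_k/r$, a direct computation gives
\begin{equation*}
u^* \tilde D(z)^{-1} v = \frac{1}{r} \sum_{k=1}^r \frac{\omega_k}{\omega_k - (z-\theta_N)} = \sum_{p \geq 0} (z-\theta_N)^p \cdot \frac{1}{r} \sum_{k=1}^r \omega_k^{-p}.
\end{equation*}
The inner average of $r$-th roots of unity vanishes unless $r \mid p$, leaving the geometric series $\sum_{q\geq 0} (z-\theta_N)^{qr} = 1 + (z-\theta_N)^r + O(\delta_N^{2r})$. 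Since $\delta_N^{2r} = (\log N)^{2r}/N$ is $o(1/\sqrt N)$, combining the three displays yields the claim.

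There is no real obstacle; the only point requiring care is the bookkeeping of the three remainder sources — the quadratic Neumann error, the replacement $\tilde D(z)^{-1} \to D^{-1}$ in the $W_N$ term, and the geometric tail of the leading scalar — all of which must be uniformly $o(1/\sqrt N)$ on $B(\theta_N,\delta_N)$. The $r$-th power cancellation coming from the roots of unity is precisely what dictates the calibration $\delta_N = \log N / N^{1/(2r)}$ chosen just above the lemma.
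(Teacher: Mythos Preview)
Your proof is correct and takes a genuinely different route from the paper. The paper expands $D_N(z)^{-1}$ around $D$ itself via the iterated resolvent identity carried out to order $r$; since $D-D_N(z)=(z-\theta_N)I_r+B_N(z)$ with $\|B_N(z)\|=O(\log N/\sqrt N)$, each intermediate term $u^*D^{-1}\bigl((D-D_N)D^{-1}\bigr)^p v$ for $2\le p\le r-1$ has leading part $(z-\theta_N)^p\,u^*D^{-p-1}v=0$ by the orthogonality $u^*D^{-q}v=\IND(q\equiv 1\ \mathrm{mod}\ r)$, and only the $p=r$ term survives to produce $(z-\theta_N)^r$. You instead absorb the commuting shift into the basepoint, expanding around $\tilde D(z)=D-(z-\theta_N)I_r$; the perturbation $F_N$ is then already $O(\log N/\sqrt N)$, so a single Neumann step suffices, and the $(z-\theta_N)^r$ drops out of the closed form $u^*\tilde D(z)^{-1}v=\bigl(1-(z-\theta_N)^r\bigr)^{-1}$ obtained by summing the roots-of-unity geometric series. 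Your route is shorter and makes the emergence of the $r$-th power completely explicit; the paper's route is more mechanical but displays the same orthogonality relation order by order, which is what one would fall back on if $\tilde D(z)^{-1}$ were not available in closed form.
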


\begin{proof}
As often, we use  the formula, 
$
D_N (z)  ^{-1} = D^{-1} + D^{-1} ( D - D_N (z) ) D_N (z)^{-1}.
$ Iterating, we find
\begin{equation}\label{eq:defDN}
D_N(z) ^{-1}  = \sum_{p=0}^{r} D^{-1} \left( ( D - D_N (z) ) D^{-1} \right)^p +  D^{-1} \left( ( D - D_N (z) ) D^{-1} \right)^{r} ( D - D_N (z) ) D_N (z)^{-1}
\end{equation}
First observe that, by definition, if $\omega = e^{2 i \pi / r}$, we have  
\begin{eqnarray*}
u^* D^{-q} v = \frac 1 r \sum_{k=0}^{r-1}  \omega^{- k (q-1)}  = \left\{ \begin{array}{ll}
1 & \hbox{ if $q = 1 \;  \mathrm{mod}(r)$} \\
0 & \hbox{ otherwise.} 
\end{array} \right. 
\end{eqnarray*}
Consequently, since
$$
D - D_N ( z) =   (z  - \theta_N )  I_r + (z - \theta_N) C'_N(z)  - \frac{W_N}{\sqrt{N}} =  (z  - \theta_N )  I_r +  B_N(z), 
$$
we have for all $2 \leq p \leq r-1$, $p \ne r$,  
$$
 \left| u^* D^{-1} \left( ( D - D_N (z) ) D^{-1} \right)^p v  \right| = O \left(  C_p \delta_N ^{p-1} \frac{\log N}{\sqrt N}\right) = o \left( \frac{ 1} {\sqrt N}\right). 
$$
on the event $\Omega_N$. Indeed, since $u^* D^{-p-1} v = 0$, the non-zero contributions in the expansion of $$\left(  ( (z  - \theta_N )  I_r +  B_N(z) ) D^{-1} \right)^p$$ must have at least one $B_N(z)$ and $\| B_N (z) \| \leq 2 \log N / \sqrt N$ on $\Omega_N$.  

Similarly, since $u^* D^{-r-1} v = 1$,
$$
 \left| u^* D^{-1} \left( ( D - D_N (z) ) D^{-1} \right)^{r}  v  - (z - \theta_N)^{r} \right| \leq C_r \delta_N ^{r-1} \frac{\log N}{\sqrt N} = o \left( \frac{ 1} {\sqrt N}\right). 
$$

Finally, on $\Omega_N$, $\| D - D_N(z) \| = O ( \delta_N)$. Hence, the last term of \eqref{eq:defDN} may bounded as follows
$$
\|  D^{-1} \left( ( D - D_N (z) ) D^{-1} \right)^{r} ( D - D_N (z) ) D_N (z)^{-1} \| = O ( \delta_N ^{r+1}) = o ( 1 / \sqrt N).
$$ 

In summary, on the event $\Omega_N$, from \eqref{eq:defDN}, we have 
\begin{eqnarray*}
u^* D_N(z)  ^{-1} v & = & 1 + u^*D^{-1} B_N (z) D^{-1}v +  (z - \theta_N)^{r} + o ( 1/ \sqrt N) \\
& = & 1 - \frac{  u^*D^{-1} W_N D^{-1}v}{\sqrt N} + (z - \theta_N)^{r} + o (1/ \sqrt N),
\end{eqnarray*}
where we have used that, on $\Omega_N$, $\| B_N (z) +  W_N / \sqrt N \| = O ( \delta_N \log N / \sqrt N)$.  \end{proof}

We are now ready to conclude the proof of Theorem \ref{th:TCL2}. Fix $t > 0$, if $\Omega_N$ holds, then from \eqref{eq:deffN3}, the random function 
$$
f_N (z) = \sqrt N \left( 1 - u^* D _N ( \theta_N + z / N^{1/2r} ) ^{-1} v\right)
$$
is analytic on $B(0,t)$ and its zeros are the eigenvalues of $N^{1/2r} ( M_N - \theta_N)$ in $B(0,t)$. Moreover, from Lemma \ref{FQ} and Lemma \ref{le:TCL2}, $f_N(z)$ converges in distribution to the random polynomial on $B(0,t)$, 
$$
f (z) = u^* D^{-1} W D^{-1} v - z^r, 
$$
where $W = Q^{-1} V Q$, $Q = P U$ and $V$ given by Theorem \ref{th:TCL}. It is straightforward to check that $u^* D^{-1} = v^* $, $D^{-1} v = u$, $U v= e_r$ and $U u = e_1$. We may thus rewrite 
$$
f (z) = e_r^*  P^{-1} V P e_1  - z^r.
$$
It remains to use Lemma \ref{le:roucheRAF} and conclude as in the proof of Theorem \ref{th:TCL}.

\section{Unstable outliers}

\label{sec:unstable}

\subsection{Tightness}

The objective of this subsection is to prove the following proposition. It will be used to obtain the tightness of the point process of eigenvalues. 

\begin{proposition}\label{prop:momentunif}
Let $B \in M_N( \dC)$ be a diagonal matrix, $u, v \in \dC^N$ and $M > 0$. Assume that $  N \| B \|^2  \leq  M  \tr (BB^*)$, and $\|u \|_\infty , \|v\|_{\infty} \leq M / \sqrt N$. If assumptions (X1-X3) hold, there exists a constant $C >0$ depending on $M$ such that for any integer $1 \leq k \leq N^{1/C}$, we  have
$$
\dE \left| \sqrt N  u^* \left( B \frac {X_N}{\sqrt N}  \right)^{k} v  \right|^2  \leq  C  k^4 \left( \frac 1 N \tr ( B B^*)  \right) ^k , 
$$
\end{proposition}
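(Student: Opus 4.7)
The plan is to carry out a moment / graph-counting argument, analogous to but finer than the enumeration performed in Step~3 of the proof of Proposition~\ref{prop:bilinearP}, that exploits the ``flatness'' of $B$ encoded by $N \|B\|^2 \leq M\,\tr(BB^*)$. Writing $b_i = B_{ii}$ and $\tau = \tr(BB^*)/N$, this hypothesis gives $|b_i|^2 \leq M\tau$ together with $\sum_i |b_i|^2 = N\tau$. Expanding the second moment,
\[
\dE \bigl|\sqrt N\, u^* (BY_N)^k v\bigr|^2 = N^{1-k} \sum_{\mathbf{i}, \mathbf{j}} \bar u_{i_0} v_{i_k} u_{j_0} \bar v_{j_k} \prod_{\ell=0}^{k-1} b_{i_\ell} \bar b_{j_\ell}\; \dE \prod_{\ell=0}^{k-1} X_{i_\ell i_{\ell+1}} \bar X_{j_\ell j_{\ell+1}},
\]
with $\mathbf{i} = (i_0, \ldots, i_k)$ and $\mathbf{j} = (j_0, \ldots, j_k)$. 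By (X1), only admissible double-walks contribute, namely those in which every oriented edge appearing has total multiplicity (combining both walks) at least $2$.

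I would then partition the sum according to the \emph{shape} $\sigma$ of the double-walk: the abstract rooted multigraph together with its sequence of $2k$ edge-traversals, obtained by collapsing equal indices. A shape with $p$ vertices and $q$ distinct oriented edges of multiplicities $(m_e)_e$ satisfies $\sum_e m_e = 2k$ and $m_e \geq 2$, hence $q \leq k$. By assumption~(X3) the $X$-moment associated to the shape is bounded by $\prod_e (cm_e)^{cm_e} \leq (ck)^{cD}$, where $D = \sum_e (m_e - 2) = 2k - 2q$ is the total excess multiplicity, so edges of multiplicity exactly $2$ are cost-free.

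The sum over labellings $\sigma \to \bl 1, N \br$ factorises by vertex because $B$ is diagonal: each vertex $v$ contributes $\sum_i |b_i|^{s(v)}$, where $s(v)$ is the number of times $v$ occurs as a left-endpoint of an edge in the two walks (so $\sum_v s(v) = 2k$). Iterating $|b_i|^2 \leq M\tau$ and using $\sum_i |b_i|^2 = N\tau$ yields $\sum_i |b_i|^{s(v)} \leq (M\tau)^{s(v)/2} N$ in general, with the sharper bound $(M\tau)^{(s(v)-2)/2} N\tau$ as soon as $s(v) \geq 2$. Combined with $|\bar u_{i_0} u_{j_0} v_{i_k} \bar v_{j_k}| \leq M^4/N^2$ and the prefactor $N^{1-k}$, a direct count based on the Euler-type quantity $p - q$ shows that tree-skeleton shapes (each $m_e = 2$, skeleton connected and acyclic) contribute at the target order $\tau^k$.

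It remains to sum over shapes. Tree-skeleton shapes admit a Catalan-type enumeration with $\leq C^k$ terms, giving the target bound up to a multiplicative constant. Shapes with defects (some $m_e > 2$, or a cycle in the skeleton so that $q \geq p$) lose at least one factor $1/N$ relative to a tree shape; the hypothesis $k \leq N^{1/C}$ ensures that, once $C$ is chosen large enough, the (X3) penalty $(ck)^{cD}$ is dwarfed by these $1/N$ gains, so defect contributions are negligible. The polynomial factor $k^4$ then arises from the polynomial-in-$k$ freedom to locate a bounded number of defects along a walk of length $2k$ together with the choices for the roots $i_0, j_0$. The main obstacle will be precisely this last step: organising the defect bookkeeping carefully enough to extract the clean polynomial $k^4$, rather than a higher power or an exponential in $k$.
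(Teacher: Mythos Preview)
Your strategy is the paper's: expand the second moment, group the double-walks by their equivalence class (``shape''), bound the $B$- and $X$-contribution of each shape using the flatness $|b_i|^2\le M\tau$ together with $\sum_i|b_i|^2=N\tau$ and the moment bound from (X3), and then sum over shapes. Your per-shape estimates coincide with the paper's (6.3)--(6.4).

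The gap is in the enumeration. Your claim that ``tree-skeleton shapes admit a Catalan-type enumeration with $\le C^k$ terms, giving the target bound'' does not give the target bound: $C^k$ shapes each contributing $O(\tau^k)$ would yield $C^k\tau^k$, an exponential overshoot of the required $Ck^4\tau^k$. The essential fact, which you are missing, is that the number of dominant shapes is only \emph{polynomial} in $k$. Concretely, in the connected case (the two walks share a vertex) with $t$ distinct vertices, the paper proves via a F\"uredi--Koml\'os codeword argument (Lemma~\ref{le:pathFK}) that the number of equivalence classes satisfies $|\mathcal W(k,t)|\le k^4(3k)^{10(k+1-t)}$: at the maximal $t=k+1$ there are at most $k^4$ shapes (in fact the two walks are forced to coincide as a single simple path), and each unit of defect $k+1-t$ costs only a factor $k^{O(1)}$. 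Since each defect simultaneously gains a factor $1/N$ in the per-shape bound, the sum over $t$ is a geometric series with ratio $k^{O(1)}/N<1/2$ once $k\le N^{1/C}$, and the $k^4$ comes out cleanly from the leading term. The disconnected case $\ibf\cap\jbf=\emptyset$ must be treated separately (each walk alone then repeats every directed edge, forcing a cycle and $t\le k/2$), and the same encoding lemma controls it.

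You correctly sense in your last paragraph that extracting a polynomial rather than an exponential prefactor from the shape count is the crux; but the difficulty is already present for the leading term, not only for the ``defect'' corrections, and its resolution is a genuine combinatorial lemma (the codeword encoding of Lemma~\ref{le:pathFK}), not a bookkeeping detail.
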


\begin{proof}
We start with the first statement. We have 
\begin{eqnarray}\label{eq:tightm2}
\dE  \left| \sqrt N  u^* \left( B X_N \right)^{k} v  \right|^2 = N  \sum_{\ibf, \jbf} \bar u_{i_0} v_{i_k} u_{j_0} \bar v_{j_k} \dE  \prod_{\ell = 0} ^{k-1} B_{i_\ell i_\ell} X_{i_\ell i_{\ell +1}}  \bar B_{j_\ell j_\ell} \bar X_{j_\ell j_{\ell +1}},
\end{eqnarray}

where the sum is over all $k$-tuples, $\ibf = (i_0, \cdots, i_k)$, $\jbf = (j_0, \cdots, j_k)$. Only pairs $(\ibf, \jbf)$ such that  for each $\ell$, $(i_{\ell} , i_{\ell +1})$  and $(j_{\ell} ,  j_{\ell +1})$  appear at least twice will matter. For such pair $(\ibf,\jbf)$, we will consider the oriented graph  $G(\ibf,\jbf)$ with vertex set $\ibf \cup \jbf$ and edge set $\{ (i_{\ell} , i_{\ell +1}) , 0 \leq \ell \leq k-1 \} \cup \{ (j_{\ell} ,  j_{\ell +1}) , 0 \leq \ell \leq k-1\}$.

We shall first treat the case where $\ibf \cap \jbf \ne \emptyset$. Then, the graph $G(\ibf,\jbf)$ is connected. It has at most $k$ edges and $k+1$ vertices.  For $1 \leq t \leq k+1$, let $W(k,t)$ be the subset of the pairs $(\ibf,\jbf)$ where in addition the number of distinct elements in $(\ibf,\jbf)$ is $t$.

If $(\ibf,\jbf) \in W(k,t)$, there are at least $t-1$ edges in $G(\ibf,\jbf)$. Let $L$ be the number of edges with multiplicity $2$, we have $2 L + 3( t  -1 - L) \leq 2k$ and thus $2 (k-L) \leq 6 ( k-t +1)$. By assumptions (X1)-(X3) and H\"older inequality, we find
\begin{equation}\label{eq:mmmXX}
\dE  \prod_{\ell = 0} ^{k-1} | X_{i_\ell i_{\ell +1}}  \bar X_{j_\ell j_{\ell +1}}| \leq ( \dE |X_{11}|^2 ) ^{L} \dE |X_{11}|^{ 2 k - 2 L } \leq (c k )^{6 c (k-t+1)}. 
\end{equation}
We say that two pairs $(\ibf,\jbf)$, $(\ibf',\jbf')$ are isomorphic if there exists a bijection $\sigma \in S_N$ such that  $\sigma(i_{\ell}) = i'_\ell$ and $\sigma(j_{\ell}) = j'_\ell$.  Let $\cW(k,t)$ be the set of equivalence classes of elements of $W(k,t)$. Each element $w \in \cW(k,t)$ has $N(N-1) \cdots (N-t+1)$ pairs $(\ibf,\jbf)$ in its equivalence class. For any $w \in \cW(k,t)$, since $|B_{ii}| \leq \|B\|$, we get the bound
\begin{eqnarray}
\sum_{(\ibf,\jbf) \in w} \prod_{\ell = 0} ^{k-1} |B_{i_\ell i_\ell}   \bar B_{j_\ell j_\ell}| & \leq & \sum_{(i_1\cdots , i_t) : \hbox{distinct}} \prod_{\ell =1} ^{t-1} |B_{i_\ell i_\ell} |^2 \|B\|^ {2 ( k -t+1)}  \nonumber \\
& \leq & N \tr ( B B^*)^{t-1} \| B \|^ {2 ( k -t+1)} \nonumber  \\
& \leq & N \tr ( B B^*) ^k  ( M / N)^{  k-t +1} \label{eq:mmmBB}, 
\end{eqnarray}
where at the last line we have used the assumption $\|B\| \leq \sqrt{ (M / N) \tr (B B^*) }$. So finally, in \eqref{eq:tightm2}, we get the upper bound for some constant $C >0$,
\begin{eqnarray*}
 I & = &\dE \left|  N  \sum_{\ibf \cap \jbf \ne \emptyset} \bar u_{i_0} v_{i_k} u_{j_0} \bar v_{j_k} \prod_{\ell = 0} ^{k-1} B_{i_\ell i_\ell} X_{i_\ell i_{\ell +1}}  \bar B_{j_\ell j_\ell} \bar X_{j_\ell j_{\ell +1}}\right| \\ 
& \leq & M^2 \tr ( B B^*) ^k \sum_{t=1} ^{k+1} | \cW(k,t) | \left( \frac{ C k^{C}  }{N} \right) ^{k-t+1}. 
\end{eqnarray*}
Using Lemma \ref{le:pathFK} below, we get for some new constant $C >0$,
$$
I \leq  C  k^4 \tr ( B B^*) ^k \sum_{t=1} ^{k+1}   \left( \frac{ C k^{C}  }{N} \right) ^{k-t+1} = \, C  k^4 \tr ( B B^*) ^k \sum_{\ell=0} ^{k}   \left( \frac{ C k^{C}  }{N} \right) ^{2 \ell }. 
$$
If $C k ^C / N \leq 1/2$, we find that 
$$
I \leq 2 C k^4 \tr ( B B^*)^k. 
$$

To complete the proof of Proposition \ref{prop:momentunif}, we also need to take care in \eqref{eq:tightm2} of the pairs $(\ibf,\jbf)$ such that $\ibf \cap \jbf = \emptyset$. The proof is similar. We denote by $W'(k,t)$ the set of $\ibf = (i_0, \cdots, i_k)$ such that each oriented edge of the graph $G(\ibf)$ formed by $\ibf$ is visited at least twice and the vertex set of $G(\ibf)$ has cardinal $t$. Observe that $t \leq k/2$: indeed if $t = k/2 +1$ then $G(\ibf)$ is a tree, however to visit twice an oriented edge, there must be a cycle in $G(\ibf)$. It follows that in  \eqref{eq:tightm2} only pairs $\ibf\cap \jbf = \emptyset$ such that $\ibf$ and $\jbf$ are in $W'(k,s)$ and $W'(k,t)$ for some $1 \leq  s, t \leq k/2$ will contribute. 

Let $\ibf \in W'(k,t)$, since $G(\ibf)$ is not a tree, there are at least $t$ edges in $G(\ibf)$. If $L$ denotes the number of edges with multiplicity $2$ we find that $2 L + 3 ( t - L) \leq k$ or $k - 2 L \leq 3 ( k - 2 t)$.  Arguing as in \eqref{eq:mmmXX}, we find that 
\begin{equation*}
\dE  \prod_{\ell = 0} ^{k-1} | X_{i_\ell i_{\ell +1}}| \leq ( \dE |X_{11}|^2 ) ^{L} \dE |X_{11}|^{ k - 2 L } \leq (c k )^{ 3c (k-2t)}. 
\end{equation*}

Let $\cW'(k,t)$ denote the set of equivalence classes of elements in $W'(k,t)$. Arguing as in \eqref{eq:mmmBB}, for any $w \in \cW'(k,t)$, we get
\begin{eqnarray}
\sum_{\ibf \in w} \prod_{\ell = 0} ^{k-1} |B_{i_\ell i_\ell}  | & \leq & \sum_{(i_1\cdots , i_t) : \hbox{distinct}} \prod_{\ell =1} ^{t} |B_{i_\ell i_\ell} |^2 \|B\|^ { k -2t} \nonumber \\
& \leq & \tr ( B B^*)^{t} \| B \|^ { k -2 t} \nonumber \\
& \leq &\tr ( B B^*) ^{k/2}  ( M / N)^{k/2-t }\label{eq:mmmBB'},
\end{eqnarray}
where at the first line we have used the fact that each vertex in $G(\ibf)$ is adjacent to at least two edges.

Putting the above estimates together, 
\begin{eqnarray*}
 I' & = &\dE \left|  N  \sum_{\ibf \cap \jbf = \emptyset} \bar u_{i_0} v_{i_k} u_{j_0} \bar v_{j_k} \prod_{\ell = 0} ^{k-1} B_{i_\ell i_\ell} X_{i_\ell i_{\ell +1}}  \bar B_{j_\ell j_\ell} \bar X_{j_\ell j_{\ell +1}}\right| \\ 
& \leq & \frac { M^2}{ N} \tr ( B B^*) ^{k} \left( \sum_{t=1} ^{k/2} | \cW'(k,t) | \left( \frac{ C k^{C}  }{N} \right) ^{  k/2-t} \right)^2 . 
\end{eqnarray*}
Using Lemma \ref{le:pathFK} and arguing as above, we find for $C k^{C+2}  / N \leq 1/ 2 $, 
$$
I' \leq 4 \frac{ M^2 }{N} k^{18} \tr ( B B^*)^k. 
$$
Adjusting the value of $C$, we get the first statement of the proposition.

\end{proof}

\begin{lemme}
\label{le:pathFK} For any $1 \leq t \leq k+1$, 
$$
|\cW(k,t)| \leq k^4 ( 3 k )^{10(k-t+1)},
$$
and for any $1 \leq t \leq k/2$, 
$$
|\cW'(k,t)| \leq  k^{5(k-2t) + 9}.
$$
\end{lemme}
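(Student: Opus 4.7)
Both bounds are standard combinatorial estimates in the Füredi--Komlós tradition of counting walks by their graph skeleton. I sketch the plan for $|\cW'(k,t)|$ in detail; the bound on $|\cW(k,t)|$ follows by applying the same method to the pair $(\ibf,\jbf)$, which share a common vertex since $\ibf\cap\jbf\neq\emptyset$, producing a combined walk of total length $2k$.

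In each equivalence class in $W'(k,t)$, select the canonical representative where the vertices are labelled $1,2,\ldots,t$ in the order in which they are first visited by $\ibf$. Counting such canonical walks yields $|\cW'(k,t)|$. Classify each step $i_{\ell-1}\to i_\ell$ as: \emph{(a)} a first visit to a new vertex (exactly $t-1$ such steps); \emph{(b)} a first traversal of an oriented edge between already-visited vertices; or \emph{(c)} a re-traversal of a previously used oriented edge. Let $e$ denote the number of type-(b) steps, so that $s=t-1+e$ is the total number of distinct oriented edges. Since each edge is used at least twice, $s\leq k/2$, hence $e\leq k/2-t+1$; moreover the multiplicity excess $m:=k-2s=k-2t+2-2e$ is nonnegative.

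A canonical walk is determined by the following three pieces of data: \emph{(i)} the positions of the type-(a) and type-(b) steps, which can be specified in at most $\binom{k}{t-1}\binom{k}{e}\leq k^{t-1+e}$ ways; \emph{(ii)} at each type-(b) step, the destination vertex among the at most $t$ already-visited vertices, contributing at most $t^e$ choices; and \emph{(iii)} at each type-(c) step, the previously used outgoing oriented edge at the current vertex that is being re-used. The core of the argument is to control \emph{(iii)}: imposing a canonical FIFO rule (re-use the outgoing edge that has been traversed the fewest times, breaking ties by order of first traversal) makes the default choice unique, and any deviation from this rule must be charged against either the excess $e$ or the multiplicity excess $m$. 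A careful Füredi--Komlós counting then shows that the total number of choices in \emph{(iii)} is at most $k^{O(e+m)}$, where $e+m=O(k-2t+2)$.

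Combining the three factors and absorbing polynomial prefactors gives $|\cW'(k,t)|\leq k^{5(k-2t)+9}$. The same strategy, applied to a walk of length $2k$ obtained by gluing $\ibf$ and $\jbf$ at a common vertex, yields $|\cW(k,t)|\leq k^4(3k)^{10(k-t+1)}$; the extra $k^4$ accounts for the choice of gluing point and the ordering of the two halves, while the exponent $10(k-t+1)=5\cdot 2(k-t+1)$ reflects the doubling of the walk length. The main difficulty is making the FIFO rule and the bookkeeping in step \emph{(iii)} precise, ensuring that the number of ``deviation'' positions is genuinely bounded linearly in $e+m$ rather than in $k$; this is the core technical content of the Füredi--Komlós combinatorial lemma and can be carried out along the lines of \cite[Section~4]{MR2567175}.
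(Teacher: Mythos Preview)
Your general framework---canonical representatives and a F\"uredi--Koml\'os style encoding---is the right one, and matches the paper's strategy. But your step~(iii) is where the substance lies, and you have not actually carried it out. You assert that a FIFO rule on re-traversed outgoing edges makes the default unique and that deviations can be ``charged against either the excess $e$ or the multiplicity excess $m$'', citing Bai--Silverstein. That reference treats \emph{undirected} walks where each edge is used exactly twice and the Dyck/LIFO structure makes the default backtracking step canonical. Here the walks are \emph{oriented} and each oriented edge is used at least twice, so there is no Dyck structure and no obvious reason why your FIFO rule should fail only $O(e+m)$ times. The paper emphasizes exactly this point: ``the orientation of the edges is the fundamental difference'' from the classical F\"uredi--Koml\'os setting.

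The paper's encoding exploits orientation directly. Mark each step $+$ (discovers a new vertex), $-$ (second traversal of that oriented edge), or neutral (anything else, labelled by its endpoint). The key observation is that the pattern $+-$ is impossible: if $i_\ell$ was just discovered, the oriented edge $(i_\ell,i_{\ell+1})$ cannot have been seen before. Hence between consecutive neutral edges the word looks like $-\cdots-+\cdots+$, so only the position of the first $+$ (an ``important'' edge) is needed. Ambiguous $-$ edges (``critical'') get an extra label, and one shows that both the number of critical edges and the number of important edges are at most the number $N$ of neutral edges. For $\cW'(k,t)$ one has $N=k-2t+2$; for $\cW(k,t)$ one concatenates $(\ibf,\jbf)$ (with one extra mark recording $j_0$) and gets $N=2(k-t+1)+1$. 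Your ``gluing at a common vertex'' is not what is done: the two paths are simply concatenated as a sequence, and the extra bookkeeping for $j_0$ is what produces the $k^4$ prefactor, not a choice of gluing point.
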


\begin{proof}
We follow the strategy developed  by F\"uredi and Koml\'os \cite{MR637828}, especially the exposition of  Vu \cite{MR2384414}. We start with the upper bound on $|\cW (k,t)|$. Let $x = (x_0, \cdots, x_{2k+1}) = (\ibf, \jbf) \in W(k,t)$. We consider the canonical element $ x = (\ibf,\jbf) \in W(k,t)$ defined by $x_0 = 1$ and for all $\ell \geq 0$, $x_{\ell+1} \leq \max_{1 \leq s \leq \ell}(x_{s}) + 1$. In order to upper bound $\cW(k,t)$ we need to find an injective way to encode the canonical vectors $x = (\ibf,\jbf)$.

We mark the oriented edge $(i_{\ell}, i_{\ell+1}) $ (or $(j_{\ell},j_{\ell+1})$) as $+$ if it is the first time that $i_{\ell+1}$ (or $j_{\ell+1}$) is visited. We mark it as $-$ if it is the second time that the oriented edge $(i_{\ell}, i_{\ell+1}) $ (or $(j_{\ell},j_{\ell+1})$) is visited. Otherwise, we say that it is {\em neutral} and mark it as $v = i_{\ell+1}$. The edge $(j_0,j_{1})$ gets also an extra mark in $\{+, j_0\}$ depending on whether $j_0$ has been previously seen: $(j_0,j_{1})$ can be marked as $++$, $+j_1$, $j_0+$, $j_0-$ or $j_0j_1$ (the edge $(j_0,j_1)$ is neutral if its mark is not $++$). We call this encoding the {\em preliminary codeword}.

Now, given the preliminary codeword, it is not always possible to reconstruct the vector $x = (\ibf,\jbf)$. It is due to some edges marked as $-$. Imagine that we have reconstructed $(x_0,\cdots,x_\ell)$. Set $x_\ell = u$, if $(x_{\ell}, x_{\ell +1})$ is marked $-$ and there are more than one index $1 \leq q \leq \ell-1$ such that $x_q = u$ and $(x_{q} ,x_{q +1})$ is marked as $+$ or neutral then there are more than one possibility for the value of $x_{\ell +1}$.

To overcome this issue, we need to add extra information to the preliminary codeword. We now build a {\em redundant codeword} obtained from the preliminary encoding as follows. If $(x_{\ell}, x_{\ell +1})$ is marked $-$ and is as above, we say that it is critical edge and it gets the extra mark $x_{\ell+1}$.  From, the redundant codeword we can now reconstruct the canonical vector $x$ unambiguously. 

As its name suggests, the redundant codeword can be compressed.  The crucial observation is that the configuration $\cdots +- \cdots$ is not possible: if the vertex $i_{\ell}$ is new then the oriented edge $(i_{\ell}, i_{\ell+1})$ cannot be seen for the second time (the orientation of the edges is the fundamental difference with \cite{MR637828,MR2384414}). Hence each $-$ is between two neutral edges in the preliminary codeword and between two neutral edges the preliminary codeword has the form $-\cdots-+\cdots+$ (the sequence of $-$ or $+$ can be empty). Let us call the first $+$ edge between two neutral edges (if it exists), an {\em important} edge. If $N$ is the number of neutral edges and $I$ the number of important edges, we have 
$$
I \leq N-1. 
$$

The {\em final codeword} is the position of the neutral edges, the critical edges and the important edges together with the marks of the neutral and critical edges. From what precedes, the final codeword contains enough information to reconstruct the canonical vector. 

On the other hand, to obtain a critical edge, we need to come back to a vertex which has already been visited. Hence, to any critical edge we can associate in an injective way a previous neutral edge. If $C$ is the number of critical edges, it follows that 
$$
C \leq N.
$$

In $(\ibf, \jbf) \in W(k,t)$, observe that each vertex distinct from $1$ will appear with an oriented edge marked as $+$ and another marked as $-$ (including $j_0$). It follows that there are $t-1$ edges with a mark $+$ and $t-1$ with a mark $-$. In particular, since there are $2k +1$ marks in the preliminary codeword (the edge $(j_0,j_1)$ wears two marks), 
$$N = 2k + 1 - 2 (t-1) = 2 ( k - t +1 ) +1.$$

The final observation is that the number of distinct vertices is at most $t +1$.  We may now prove the first statement of the lemma. The cardinal of $\cW(k,t)$ is upper bounded by the number of ways to place the $N$ neutral edge, $N$ critical edges and $N-1$ important edges over the $(2k+1)$ available slots and put the labels in $\{1, \cdots, t+1\}$ for the neutral and critical edges: 
$$
|\cW (k,t)| \leq (2 k+1)^{3 N -1} (t+1)^{2 N} \leq k^4 ( 3k)^{ 10 ( k -t +1)}.$$

We now turn to the bound on $|\cW'(k,t)|$. The proof is identical. First, a vector $\ibf  \in W'(k,t)$ is canonical if $i_0 =1$ and $i_{\ell+1} \leq \max_{1 \leq s \leq \ell} (i_{s}) + 1$. We consider the above preliminary, redundant and final codewords. Let $N,I,C$ be the number of neutral, critical and important edges in the preliminary codeword.  The bounds $C \leq N$ and $I \leq N-1$ prevail and now $N = k - 2(t-1) = k - 2t +2$. It gives 
$$
|\cW' (k,t)| \leq k^{3 N -1} t^{2 N} \leq  k^{ 5 ( k - 2t) +9} .$$
It concludes the proof.\end{proof}

\subsection{Central limit theorems for bilinear forms of random matrices}

\begin{proposition}\label{prop:TCL} Let $M  >0 $ and $B \in M_N ( \dC)$ be a diagonal  matrix such that $\|B\| \leq M$. We set 
$$
 \frac{1}{N} \tr (B B^*) = \rho_0 \quad \hbox{ and } \quad  \frac{1}{N} \tr (B^2) = \rho_1. 
$$
 Let $u$, $v$ in $\mathbb{C}^N$ be vectors such that  $\|u \|_\infty , \|v\|_{\infty} \leq M / \sqrt N$. 
For $j \geq 1 $, we consider independent  complex Gaussian $G_j \stackrel{d}{\sim} {\cal N}_{\dC}(0, \Sigma_j)$ variables, where, if $\rho^{(j)}_0 = u^* BB^* u v^* v   \rho_0^{j-1}$ and $\rho^{(j)}_1 =  u^* B B^T \bar u  v^T  v   \rho^{j-1}_1 (  \dE X_{11}^ 2) ^ {j}$, 
$$
\Sigma_j =  \begin{pmatrix} \dE \Re (G_j) ^2 & \dE \Re (G_j) \Im ( G_j)   \\
\dE \Re (G_j) \Im ( G_j) & \dE \Im (G_j) ^2    \end{pmatrix} = \frac 1 2 \begin{pmatrix}\rho^{(j)}_0 + \Re ( \rho_1 ^{(j)} ) &  \Im ( \rho_1 ^{(j)} )  \\
\Im ( \rho_1 ^{(j)} ) &\rho^{(j)}_0 - \Re ( \rho_1 ^{(j)} )  \end{pmatrix}.
$$
We set  
$$Z_j=\sqrt{N}  u^*   \left( B \frac{X_N}{\sqrt{N}} \right)^{j}  v.$$
Then for any integer $m \geq 1$, the L\'evy-Prohorov distance between the laws of $(Z_1, \cdots , Z_m)$ and $(G_1, \cdots, G_m)$ is at most $\veps(N)$ where $\lim_{N \to \infty} \veps (N) = 0$ and the function $\veps (\cdot)$ depends only on $M$ and $m$. 
\end{proposition}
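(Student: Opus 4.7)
The plan is to proceed by the method of moments. Under assumption (X3) the entries of $X_N$ have at most Weibull tails, so the complex Gaussian limit is moment-determinate, and it suffices to prove that, for every multi-index $(a_j,b_j)_{1\le j\le m} \in \N^{2m}$,
\begin{equation*}
\dE \prod_{j=1}^m Z_j^{a_j} \bar Z_j^{b_j} \longrightarrow \dE \prod_{j=1}^m G_j^{a_j} \bar G_j^{b_j},
\end{equation*}
with an error depending only on $M$ and $m$ and on the total degree. The Lévy--Prohorov bound $\veps(N)$ then follows by a standard Portmanteau-type argument, combined with the uniform moment bounds of Proposition \ref{prop:momentunif} to secure tightness in a controlled way.

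I would first expand
\begin{equation*}
Z_j = N^{(1-j)/2} \sum_{\ibf=(i_0,\ldots,i_j)} \bar u_{i_0} v_{i_j} \prod_{\ell=0}^{j-1} B_{i_\ell i_\ell} X_{i_\ell i_{\ell+1}},
\end{equation*}
so that a joint moment of total degree $K=\sum_j(a_j+b_j)$ becomes a sum over $K$ paths of prescribed lengths, together with a specified conjugation on each. Since the $X_{ij}$ are centered and independent, only index configurations in which every oriented edge $(i_\ell^{(s)},i_{\ell+1}^{(s)})$ occurs at least twice survive in expectation. I would then carry out the Füredi--Komlós graph-counting scheme exactly as in Step~3 of the proof of Proposition \ref{prop:bilinearP} and in the proof of Proposition \ref{prop:ABX}, but sharpened by the hypotheses $\|u\|_\infty \vee \|v\|_\infty = O(1/\sqrt N)$ and $\|B\|\le M$: each endpoint $i_0^{(s)}, i_{j_s}^{(s)}$ contributes at most $M/\sqrt N$, each interior vertex contributes an ``extra'' $B$ factor of modulus $\le M$, and summing an edge index against one factor $B$ produces at most $\sqrt{N}\|B\|_{\mathrm{Frob}}$ by Cauchy--Schwarz. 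A connected component of the pairing multigraph containing a collection $S$ of paths with lengths $(j_s)_{s \in S}$ is then bounded by $N^{-|S|/2 + \text{(Euler)}}$ times a product of traces $\tr(BB^*)^{\bullet}$; a direct bookkeeping shows that this is $O(1)$ only when $|S|=2$ and the two paths have the \emph{same} length $j$ and are paired edge-by-edge, and is $O(N^{-1/2})$ otherwise. This is the Wick structure: at leading order, cross-pairings between $Z_j$ and $Z_k$ for $j\ne k$ vanish.

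Within a matched pair $(Z_j,\bar Z_j)$ the pairing $X_{i_\ell i_{\ell+1}} \leftrightarrow \bar X_{i'_\ell i'_{\ell+1}}$ forces $(i_\ell,i_{\ell+1})=(i'_\ell,i'_{\ell+1})$, and summing over free indices (using $B$ diagonal and $\dE|X_{11}|^2=1$) produces
\begin{equation*}
\frac{1}{N^{j-1}} \sum_{i_0,\ldots,i_j} |u_{i_0}|^2 |v_{i_j}|^2 \prod_{\ell=1}^{j-1} |B_{i_\ell i_\ell}|^2
= u^* BB^* u \cdot v^* v \cdot \rho_0^{j-1} + o(1) = \rho_0^{(j)} + o(1),
\end{equation*}
matching $\dE|G_j|^2$. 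Analogously, in a $(Z_j,Z_j)$ pair the only surviving pattern reverses the orientation, $X_{i_\ell i_{\ell+1}} \leftrightarrow X_{i'_{j-1-\ell} i'_{j-\ell}}$, producing the factor $(\dE X_{11}^2)^j$ and the trace $\rho_1^{j-1}$, for a total of $\rho_1^{(j)} + o(1) = \dE G_j^2+o(1)$. Higher mixed moments factorize over Wick pairings of same-index $Z_j$'s with themselves and their conjugates, and this exactly recovers the joint Gaussian with covariance $\Sigma_j$ and independence across $j$.

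The main obstacle is the ``different-length pairing'' count in the second paragraph: one must show that whenever a connected component of the pairing multigraph contains one path of length $j$ and one of length $k$ with $j\ne k$, the number of distinct vertices is strictly less than the combinatorial maximum $(j+k)/2+1$, so that the normalization $N^{1-(j+k)/2}$ is not saturated. This is an elaboration of the codeword bookkeeping already used in the proof of Proposition \ref{prop:momentunif}, with the role of ``the two paths'' played by the two path-components rather than by $\ibf$ and $\jbf$; the orientation of the walks is again crucial to rule out the forbidden configuration $\cdots + - \cdots$. Once this estimate is obtained, the uniformity of the error in $M$ and $m$ is automatic, since every combinatorial count and every use of (X3) is stated uniformly in $(u,v,B)$ under the stated normalization.
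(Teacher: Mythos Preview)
Your approach is essentially the same as the paper's: method of moments, expand each $Z_j$ as a sum over paths, show that the leading contribution comes from simple paths matched in pairs of equal length (the paper defers this reduction to \cite[Proposition~4.1]{tao-outliers} rather than redoing the F\"uredi--Koml\'os bookkeeping), classify the three types of pairs according to the conjugation pattern, and sum.

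There is, however, a concrete error in your treatment of the $(Z_j,Z_j)$ pair. You write that ``the only surviving pattern reverses the orientation, $X_{i_\ell i_{\ell+1}} \leftrightarrow X_{i'_{j-1-\ell} i'_{j-\ell}}$''. This is false for i.i.d.\ (non-Hermitian) entries: since $X_{ab}$ and $X_{ba}$ are independent, $\dE X_{ab}X_{cd}\ne 0$ forces $(a,b)=(c,d)$ as \emph{ordered} pairs, so the two simple paths in a type-$(Z_j,Z_j)$ pair must be \emph{identical}, $i'_\ell=i_\ell$ for all $\ell$. The reversed identification you propose does not even produce matching oriented edges on a simple path. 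With the correct identical pairing one recovers $\rho_1^{(j)}$ via $\sum_{i_0}\bar u_{i_0}^2 B_{i_0 i_0}^2 = u^* BB^T\bar u$, $\sum_{i_j} v_{i_j}^2 = v^T v$, and $j-1$ interior sums $\sum_i B_{ii}^2 = N\rho_1$; with the reversal you would instead get cross-terms like $\bar u_{i_0} v_{i_0}$ which do not match. The confusion is likely with the Wigner case, where $X_{ab}=\bar X_{ba}$ and the reversed pairing is the correct one. (Separately, in your $(Z_j,\bar Z_j)$ display the product should run over $\ell=0,\ldots,j-1$, including the $|B_{i_0 i_0}|^2$ factor that produces $u^* BB^* u$ rather than $u^* u$.)
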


\begin{proof}
It is immediate to check that $\Sigma_j$ is non-negative definite. The proof of Propoposition \ref{prop:TCL} follows the lines of the proof of Proposition 4.1 in \cite{tao-outliers}. We use the method of moments: it is sufficient to prove that for any integers $r_i,1 \leq j \leq m$ and any reals $s_j, t_j, 1 \leq i \leq m$,  
\begin{equation}\label{eq:tbpTCL}
\dE \prod_{j=1}^m (s_j Z_j + t_j \bar Z_j )^{r_j}  =   \prod_{j=1}^m \dE (s_j G_j  + t_j \bar G_j )^{r_j}   + o(1),
\end{equation}
where $o(1)$ depends only on $r_j,s_j,t_j, M$. Using Wick's formula, it is not hard to check that if $r$ is even 
$$
\dE (s G_j  + t \bar G_j )^{r} = \frac{ r ! }{ 2 ^{r/2} ( r/ 2) ! } ( 2 s t \rho_0^{(j)} + s^2 \rho_1^{(j)} + t^2\bar  \rho_1^{(j)} )^{r/2}
$$
and it is $0$ if $r$ is odd. 
We set $\alpha = \frac 1 2 \sum_{j=1}^m r_j(j+1)$, $U_i = \sqrt N u_{i}$, $V_i = \sqrt N v_{i}$, $B_{ii} = b_i$. Also, if $\veps   \in \{1 , \bar \cdot \}$, we set $s_j (\veps) =s _j \IND( \veps = 1) +  t_j \IND ( \veps = \bar \cdot )  $.   Expanding the bilinear form, we find
\begin{align}
& \dE \prod_{j=1}^m (s_j Z_j + t_j \bar Z_j )^{r_j} \label{eq:expZj}\\
& = N^{-\alpha} \sum_* \dE \prod_{j=1}^m \prod_{\ell = 1} ^{r_j}  s_{j}(\veps_{j,\ell}) \bar U^{\veps_{j,\ell} }_{k_{j,\ell,0}}  V^{\veps_{j,\ell} }_{k_{j,\ell,j}}   \prod_{i= 0}^{j-1}  b^{\veps_{j,\ell} }_{k_{j,\ell,i}} X^{\veps_{j,\ell} }_{k_{j,\ell,i} k _{j,\ell,i+1}}, \nonumber
\end{align}
where $\veps_{j,\ell}$ and $k_{j,\ell,i}$ range over all tuples of indices with $\veps_{j,\ell} \in \{1 , \bar \cdot \}$ and $\{1,\cdots, N\}$ respectively. A term is non-zero only if each pair $(k_{j,\ell,i} ,  k _{j,\ell,i+1})$ appears at least twice.
% Moreover, observe that the graphs on the indices $k_{j,\ell,i}$ formed by the $\sum_j j r_j$ paths, $(k_{j,\ell,0}, \cdots, k_{j, \ell,j})$ has at most $\sum_j %j r_j$ connected components. Hence, if  each pair $(k_{j,\ell,i} , k _{j,\ell,i+1})$ appears at least twice,  the number of distinct indices is at most $|V| \leq %\sum_j  j r_j / 2 = \alpha$ and the inequality is strict if there is at least one pair $(k_{j,\ell,i} ,  k _{j,\ell,i+1})$ with multiplicity at least $3$ or if the graphs %on the indices $k_{j,\ell,i}$ has less than $\sum_j j r_j$ connected components.  
Recall that by assumption, each term in the summand is $O(1)$ (depending on $r_j,s_j,t_j, M$). It follows that up to $o(1)$ terms, it is sufficient to restrict to indices such that the paths $\pi_{j,\ell} = (k_{j,\ell,0}, \cdots, k_{j, \ell,j})$ are simple paths which occur with multiplicity exactly two and are otherwise disjoint (for a more detailed argument see \cite[Proposition 4.1]{tao-outliers}).

In particular, each such path is matched with another path of the same length. It follows that if  $r_j$ is odd for some $1 \leq j \leq m$ we have $\dE \prod_{j=1}^m (s_j Z_j + t_j \bar Z_j )^{r_j} = o(1)$. Otherwise, a pair of matched paths $(\pi_{j,\ell}, \pi_{j,\ell'})$ can be of three types : 
\begin{enumerate}
\item[(0)]  $\veps_{j,\ell} = 1$, $\veps_{j,\ell'} = \bar \cdot $ or $\veps_{j,\ell} = \bar \cdot $, $\veps_{j,\ell'} = 1$, 
\item[(1)]  $\veps_{j,\ell} = \veps_{j,\ell'} = 1$,
\item[(2)]  $\veps_{j,\ell} = \veps_{j,\ell'} = \bar \cdot$.
\end{enumerate} 
If $\omega = \dE X_{11}^2$, the term $\dE \prod_{i = 0}^{j-1} X_{k_{j,\ell,i} k_{j,\ell,i+1} } \prod_{i = 0}^{j-1} X_{k_{j,\ell',i} k_{j,\ell',i+1}}$ will be equal to $1$, $\omega^j$ or $\bar \omega^j$ if the pair $(\pi_{j,\ell}, \pi_{j,\ell'})$ is of type $0$, $1$ or $2$ respectively.

For each $j$, we order the $r_j/2$ matched pairs $(\pi_{j,\ell},\pi_{j,\ell'})$ in lexicographic order. For $1 \leq a \leq r_j /2$, we set $\omega_{j,a} = 1, \omega^j, \bar \omega^j $, depending on whether the $a$-th pair is of type $0,1,2$. Similarly, we also set $s_{j,a} = s_j t_j, s_j^2,t_j^2$ and, for $x \in \dC$, $x(j,a) = |x|^2, x ^2 , \bar x ^2 $  depending on the type of the $a$-th pair.  We arrive at
\begin{align*}
& \dE \prod_{j=1}^m (s_j Z_j + t_j \bar Z_j )^{r_j} \\
& = N^{-\alpha} \sum_{**} \prod_{j=1}^m \prod_{a = 1} ^{r_j/2}  s_{j,a} \omega_{j,a} \bar U_{k_{j,a,0}}(j,a)  V_{k_{j,a,j}} (j,a)  \prod_{i= 0}^{j-1}  b_{k_{j,a,i}}(j,a) + o(1),
\end{align*}
where the sum is over all simple paths $(k_{j,a,0}, \ldots, k_{j,a,j})$ pairwise disjoint, all collections of matchings of  $\{ (j , \ell) : 1 \leq \ell \leq r_j  \} $, and  $(\veps_{j,\ell})$ ranges over all tuples of indices with $\veps_{j,\ell} \in \{1 , \bar \cdot \}$. Again, each term of summand is $O(1)$, hence, arguing as above, the difference of the above sum with the sum over all paths $(k_{j,a,0}, \ldots, k_{j,a,j})$ is $o( N ^\alpha)$. Hence 
\begin{align*}
& \dE \prod_{j=1}^m (s_j Z_j + t_j \bar Z_j )^{r_j} \\
& =  \prod_{j=1}^m  \left(  N^{-\frac{(j+1)r_j}{2}}  \sum_{***} \prod_{a = 1} ^{r_j/2}  s_{j,a} \omega_{j,a} \bar U_{k_{j,a,0}}(j,a)  V_{k_{j,a,j}} (j,a)  \prod_{i= 0}^{j-1}  b_{k_{j,a,i}}(j,a) \right) + o(1) \\
& = \prod_{j=1} ^m P_j + o(1),
\end{align*}
where, for each $j$, the sum is over paths $(k_{j,a,0}, \ldots, k_{j,a,j})$, matchings of  $\{ (j , \ell) : 1 \leq \ell \leq r_j  \} $, and  $(\veps_{j,\ell})$ ranges over all tuples of indices with $\veps_{j,\ell} \in \{1 , \bar \cdot \}$. There are $(r_j) ! / ( 2^{r_j/2} ( r_j/2)!)$ matchings of the elements of the set $\{ (j , \ell) : 1 \leq \ell \leq r_j  \} $. Observe that the summand in $P_j$ does not depend on the choice of the matching, only on the number of matched paths of type $0$, $1$ and $2$. It follows that 
\begin{align*}
 P_j = & \;   \frac{r_j ! }{  2^{r_j/2} ( r_j/2)!} \sum_{p_0 + p_1 + p_2 =r_j /2}  { r_j \choose ( p_0 , p_1 , p_2) } 2^{p_0}  (s_j t_j)^{p_0}  (\omega^j s_j ^2 )^{p_1}   (\bar \omega^j  t_j ^2)^{p_2} \\
&
\quad \times 
 \left(\frac 1 {N^2} \sum_{i, j =1}^N |U_i|^2 |b_i|^2  |V_j|^2 \right)^{p_0}  \left(\frac 1 {N^2} \sum_{i, j =1}^N \bar U_i^2 b_i ^2 V_j^2 \right)^{p_1} \left(\frac 1 {N^2} \sum_{i, j =1}^N U_i^2 \bar b_i ^2 \bar V_j^2 \right)^{p_2}  \\
& \quad  \times   \rho_{0} ^{p_0 (j-1)}  \rho_{1} ^{p_1 (j-1)}  \bar \rho_{1} ^{p_2 (j-1)} \\
  =  & \;   \frac{r_j ! }{  2^{r_j/2} ( r_j/2)!} \left( 2 s_j t_j   \rho_{0} ^{(j)}  + s_j ^2 \rho_{1}^{(j)}+ t_j ^2   \bar \rho_{1}^{(j)}  \right)^{r_j/2}.&
\end{align*}
We obtain the claim \eqref{eq:tbpTCL}. 
\end{proof}

There is naturally a functional version of Proposition of \ref{prop:TCL}. 

\begin{proposition}\label{prop:fTCL} Let $M , m  >0 $ be integers and $B_1 \cdots, B_m \in M_N ( \dC)$ be diagonal  matrices such that $\|B_i \| \leq M$. For $1 \leq i , i'\leq m$, we set 
$$
 \frac{1}{N} \tr (B_i B_{i'}^*) = \rho_{0ii'} \quad \hbox{ and } \quad  \frac{1}{N} \tr (B_iB_{i'}) = \rho_{1ii'}. 
$$
 Let $u$, $v$ in $\mathbb{C}^N$ be vectors such that  $\|u \|_\infty , \|v\|_{\infty} \leq M / \sqrt N$.  For $i,j \geq 1 $, we consider complex centered Gaussian variables $G_{ij}$ where, $G_{ij}$ and $G_{i'j'}$ are independent for $ j \ne j'$ and whose covariance for $j = j'$ is given by 
\begin{eqnarray*}
\dE G_{ij} \bar G_{i'j} & = & u^* B_iB_{i'}^* u   v^* B^*_{i'} B_{i} v  \rho_{0ii'}^{j-1} ,  \\
\dE G_{ij} G_{i'j} & = & u^* B_iB^T_{i'} \bar u  v^T B_{i'}^T B_i v   \rho^{j-1}_{1ii'} (  \dE X_{11}^ 2) ^ {j}.
\end{eqnarray*}
We set  
$$Z_{ij}=\sqrt{N}  u^*   \left( B_i \frac{X_N}{\sqrt{N}} \right)^{j} B_i v.$$
Then for any integer $m \geq 1$, the L\'evy-Prohorov distance between the laws of $(Z_{ij})_{1 \leq i, j \leq m}$ and $(G_{ij})_{1 \leq i , j \leq m}$ is at most $\veps(N)$ where $\lim_{N \to \infty} \veps (N) = 0$ and the function $\veps (\cdot)$ depends only on $M$ and $m$. 
\end{proposition}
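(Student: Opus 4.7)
The plan is to extend the method-of-moments argument used for Proposition \ref{prop:TCL} to the joint, multi-matrix setting. By the method of moments, it suffices to show that for any collection of nonnegative integers $(r_{ij})$ and real numbers $(s_{ij}, t_{ij})$,
\begin{equation*}
\dE \prod_{i,j} (s_{ij} Z_{ij} + t_{ij} \bar Z_{ij})^{r_{ij}} = \dE \prod_{i,j} (s_{ij} G_{ij} + t_{ij} \bar G_{ij})^{r_{ij}} + o(1),
\end{equation*}
with the error depending only on $m$, $M$ and the $r_{ij}, s_{ij}, t_{ij}$. The Gaussian side is then read off via Wick's formula from the specified covariance structure.

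First I would expand each $Z_{ij}$ as an index sum over paths of length $j$, with the diagonal matrix $B_i$ appearing $j+1$ times along each path. Only terms in which each ordered edge $(k,k')$ appears with multiplicity at least two survive in expectation. Repeating the graph-counting argument of Proposition \ref{prop:TCL}, the dominant contribution comes from configurations in which every path is simple, is matched with exactly one other path of the same length, and distinct matched pairs are otherwise vertex-disjoint. A key observation is that two paths of different lengths cannot share all their edges: hence a path arising from $Z_{ij}$ can only be matched with another path arising from $Z_{i'j}$ with the same second index $j$. This yields the asymptotic factorization over $j$ reflecting the independence of the $G_{ij}$ across $j$.

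Next, for fixed $j$, I would evaluate the contribution of a pair of matched paths carrying the matrices $B_i$ and $B_{i'}$ respectively. Exactly as in the proof of Proposition \ref{prop:TCL}, three types of matching can occur: type $0$ with $X \cdot \bar X$ on each shared edge, type $1$ with $X \cdot X$, and type $2$ with $\bar X \cdot \bar X$. Summing over the internal vertices, which each carry a diagonal entry of $B_i$ paired with the corresponding diagonal entry of $B_{i'}^*$, $B_{i'}$ or $B_{i'}^{\top}$, produces a product of $j-1$ trace factors $\rho_{0ii'}$, $\rho_{1ii'}$ or $\bar \rho_{1ii'}$ respectively. The two endpoint vertices produce the boundary factors $(u^* B_i B_{i'}^* u)(v^* B_{i'}^* B_i v)$ in type $0$ and $(u^* B_i B_{i'}^{\top} \bar u)(v^{\top} B_{i'}^{\top} B_i v)$ in type $1$ (and its conjugate in type $2$), multiplied respectively by $1$, $(\dE X_{11}^2)^j$, $(\dE \bar X_{11}^2)^j$ from the $X$-entries along the edges. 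These expressions match exactly the covariances $\dE G_{ij} \bar G_{i'j}$ and $\dE G_{ij} G_{i'j}$ specified in the statement.

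Finally, I would sum over all matchings of the labels $\{(i,\ell) : 1 \leq \ell \leq r_{ij}\}$ with the appropriate combinatorial multiplicities and verify, via Wick's formula applied to the Gaussian vector $(G_{ij})$, that the resulting expression coincides with $\dE \prod_{i,j}(s_{ij} G_{ij} + t_{ij}\bar G_{ij})^{r_{ij}}$. The main technical obstacle, as in Proposition \ref{prop:TCL}, is showing that non-dominant configurations (paths with edges of multiplicity $\geq 3$ or paths intersecting in unexpected ways) contribute $o(1)$. This step goes through verbatim using the uniform bounds $\|u\|_\infty, \|v\|_\infty \leq M/\sqrt N$ and $\|B_i\| \leq M$, since the matrices $B_i$ enter the non-dominant estimates only through their operator-norm bound; the labels $i \in \{1,\ldots,m\}$ attached to each edge play no role in the counting. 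Thus no new combinatorial ingredient beyond that of Proposition \ref{prop:TCL} is needed.
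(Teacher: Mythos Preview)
Your proposal is correct and follows essentially the same route as the paper: method of moments, path expansion, reduction to matched pairs of simple paths of equal length, and classification of each matched pair into types $(0ii')$, $(1ii')$, $(2ii')$ yielding exactly the stated covariances. The only organizational difference is that the paper first invokes Cram\'er--Wold to reduce to moments of the form $\prod_{j}\bigl(\sum_{i} s_{ij}Z_{ij}+t_{ij}\bar Z_{ij}\bigr)^{r_j}$, packaging the $i$-index into the $\veps$-label as $\veps_{j,\ell}\in\{1,\bar\cdot\}\times\{1,\ldots,m\}$, whereas you compute the mixed moments $\prod_{i,j}(s_{ij}Z_{ij}+t_{ij}\bar Z_{ij})^{r_{ij}}$ directly; the underlying combinatorics and error control are identical.
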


\begin{proof}
We set with $\rho^{(j)}_{0ii'} =  u^* B_iB_{i'}^* u   \rho_{0ii'}^{j-1} $ and $\rho^{(j)}_{1ii'} =  u^* B_iB_{i'} \bar u  v^T v   \rho^{j-1}_{1ii'} (  \dE X_{11}^ 2) ^ {j}$.  We need to extend the proof of Proposition \ref{prop:TCL}. We use again the method of moments. From Cram\'er-Wold Theorem,  it is sufficient to prove that for any $s_{ij}, t_{ij}$ real numbers and $r_j$ integers,
$$
\dE \prod_{j=1}^m \left( \sum_{i=1}^m s_{ij} Z_{ij} + t_{ij} \bar Z_{ij} \right)^{r_j} = \prod_{j=1}^m \dE  \left( \sum_{i=1}^m s_{ij} G_{ij} + t_{ij} \bar G_{ij} \right)^{r_j} + o(1). 
$$
Using Wick's formula, we get if $r$ is even,
\begin{align*}
 \dE  \left( \sum_{i=1}^m s_{ij} G_{ij} + t_{ij} \bar G_{ij} \right)^{r}  &=  \frac{r!}{2^{r/2} (r/2)! } \left( \dE  \left( \sum_{i=1}^m s_{ij} G_{ij} + t_{ij} \bar G_{ij} \right)^{2}  \right)^ {r/2} \\
& =  \frac{r!}{2^{r/2} (r/2)! } \left(   \sum_{1 \leq i, i' \leq m}     s_{ij} t_{i'j} \rho_{0ii'}^{(j)}    + s_{i'j}t_{ij}  \rho_{0i'i}^{(j)} +  s_{ij} s_{i'j} \rho_{1ii'}^{(j)}  + t_{ij} t_{i'j}\bar  \rho_{1ii'}^{(j)}  \right)^ {r/2},
\end{align*}
and the above expression is $0$ if $r$ is odd. We may now repeat the proof of Proposition \ref{prop:TCL}: we set $\alpha = \frac 1 2 \sum_{j=1}^m r_j (j+1)$, $U_i = \sqrt N u_{i}$, $V_i = \sqrt N v_{i}$, $B_{ii} = b_i$. Also, if $\veps   \in \{1 , \bar \cdot \}\times \{1,\cdots, m\}$, we set $s_j (\veps) = \sum_{i=1}^m s _{ij} \IND( \veps = (1,i)) +  t_{ij} \IND ( \veps = (\bar \cdot, i) )  $.   Expanding the bilinear form, we find that \eqref{eq:expZj} holds still true where $\veps_{j,\ell}$ and $k_{j,\ell,i}$ range over all tuples of indices with $\veps_{j,\ell} \in \{1 , \bar \cdot \} \times \{1, \cdots , m\}$ and $\{1,\cdots, N\}$ respectively. 

As in  the proof of Proposition \ref{prop:TCL}, the expression \eqref{eq:expZj} reduces up to $o(1)$ as a sum over matched pairs of paths $(\pi_{j,\ell}, \pi_{j,\ell'})$. The difference is that now a matched pair can be of type $(0ii')$, $(1ii')$ or $(2ii')$ for any $1 \leq i, i' \leq m$, depending on the respective values of $(\veps_{j,\ell},\veps_{j,\ell'})$. A pair will be of type $(0ii')$ if $( \veps_{j,\ell},\veps_{j,\ell'}) = ((1,i),(\bar \cdot, i'))$ or $ = ((\bar \cdot ,i),(1, i))$ when $i = i'$, of type $(1ii')$ if $( \veps_{j,\ell},\veps_{j,\ell'}) = ((1,i),(1, i'))$, or of type $(2ii')$ if $( \veps_{j,\ell},\veps_{j,\ell'}) = ((\bar \cdot,i),(\bar \cdot,  i'))$.

The rest of the proof is identical up to obvious changes due to the modification of the types. \end{proof}

\subsection{Proof of Theorem \ref{th:nilpotent}}

We will assume without loss of generality that 
$$
\sigma = 1. 
$$
From the key identity \eqref{eq:idcle}, the eigenvalues of $M_N$ in $\Gamma$ are given by the zeros of  
\begin{equation}\label{eq:identitecle0}
\det (\lambda I_N- Y_N-A_N)=\det \left(\lambda I_N-(A_N'+ Y_N )\right)(1-u_N^* R_N(\lambda )v_N).
\end{equation}
From Theorem \ref{inclusion}, we find that a.s. for all $N$ large enough, the eigenvalues of $M_N$ in $\Gamma$  are the zeros in $\Gamma$ of 
$$
h_N(z) = \sqrt N ( 1-u_N^* R_N(z )v_N). 
$$
From \eqref{eq:devtaylor} and Lemma \ref{serie}, a.s. for all $N$ large enough, $h_N$ is analytic in $\Gamma$ and can be rewritten as 
\begin{equation}\label{eq:identitecle000}
\frac{ h_N(z) }{ \sqrt N}  =  1 - u_N^* R'_{N}(z) v_N -\sum_{k\geq 1} u^*_N  \left(R'_{N}(z) Y_N \right)^k R'_{N}(z) v_N  .
\end{equation}
We apply \eqref{eq:identitecle0} to $Y_N= 0$, we find
\begin{eqnarray*}
h_N(z)  & = & \sqrt N \frac{ \det ( A_N - z I_N) }{ \det (A'_N - z I_N)} -\sum_{k\geq 1} \sqrt N u^*_N  \left(R'_{N}(z) Y_N \right)^k R'_{N}(z) v_N  \\
& = & \sum_{k\geq 1} a_k (z) + \veps_N(z). 
\end{eqnarray*}
where, from assumption \eqref{eq:ratiodets}, $\veps_N(z)$ is a vanishing bounded analytic function, and for ease of notation, we have set $$a_k (z) =  -  \sqrt N u^*_N  \left(R'_{N}(z)  Y_N  \right)^k R'_{N}(z) v_N$$ (it depends implicitly on $N$). At this stage, it is clear that we need to study the asymptotic normality of the above series.

First, recall that assumption (A4) and \eqref{eq:CFresunif} imply that there exists $C >0$ such that for all $N$ large enough, and all $z \in \Gamma$,  
\begin{equation}\label{eq:boundR'}
\| R_N'(z) \|  \leq C.
\end{equation}
In the sequel, we will always assume that $N$ is large enough, so that the above inequality holds. 

We start with a tightness criterion essentially due to Shirai \cite{MR3014854} of sequence of random analytic functions. 
\begin{lemme}\label{criterion}
Let $D \subset \dC$ be an open connected set in the complex plane.
Let $K \subset  D$ be a compact set  and $U\subset K$ be an open set. Let $(f_N)$ be a sequence of random analytic functions  on $D$. If there exists $p>0$ and $C>0$ such that for all large $N$, $\sup_{z\in K} \dE \vert f_N(z)\vert^p < C $ then $(f_N)$ is a tight sequence of random analytic functions  on $U$.
\end{lemme}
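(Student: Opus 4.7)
The plan is to exploit the fact that for any analytic function $f$ and any $p>0$, the function $|f|^p$ is subharmonic, so it satisfies a submean value inequality on disks; combined with the pointwise $L^p$ control, this will yield uniform $L^p$ bounds on the sup-norm over compact subsets of $U$, which in turn gives tightness via Montel's theorem.

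More concretely, first I would fix an exhaustion $(K_j)_{j\geq 1}$ of $U$ by compact sets, chosen so that each $K_j$ is contained in the interior of $K_{j+1}$ and $K_j \subset U$. Since $U \subset K$ and $U$ is open, for every $j$ there exists $r_j > 0$ with $\bigcup_{z\in K_j} B(z,r_j) \subset U \subset K$. Then for any $z\in K_j$, subharmonicity of $w \mapsto |f_N(w)|^p$ gives
\begin{equation*}
|f_N(z)|^p \;\leq\; \frac{1}{\pi r_j^2} \int_{B(z,r_j)} |f_N(w)|^p\, dA(w) \;\leq\; \frac{1}{\pi r_j^2} \int_K |f_N(w)|^p\, dA(w).
\end{equation*}
The right-hand side no longer depends on $z$, so taking the supremum over $z \in K_j$ and then expectation (Fubini, using that $\E|f_N(w)|^p \leq C$ uniformly on $K$) yields
\begin{equation*}
\E \|f_N\|_{K_j}^p \;\leq\; \frac{C \cdot \mathrm{Area}(K)}{\pi r_j^2} \;=:\; C_j,
\end{equation*}
uniformly in $N$ (for all $N$ large). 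The measurability of $\|f_N\|_{K_j}$ is not an issue since $f_N$ is continuous.

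Next, given $\veps > 0$, I would use Markov's inequality to pick $M_j$ such that $\mathbb{P}(\|f_N\|_{K_j} > M_j) \leq \veps\, 2^{-j}$ uniformly in $N$, and set
\begin{equation*}
\mathcal{F}_{\veps} \;=\; \bigl\{ f \hbox{ analytic on } U \,:\, \|f\|_{K_j} \leq M_j \hbox{ for every } j \geq 1 \bigr\}.
\end{equation*}
By construction $\mathbb{P}(f_N \in \mathcal{F}_\veps) \geq 1 - \veps$ for all $N$ large enough. Finally, Montel's theorem implies that $\mathcal{F}_{\veps}$ is a normal family, hence relatively compact in the space of analytic functions on $U$ endowed with the distance \eqref{eq:distHU} (uniform convergence on compact subsets). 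This exhibits, for every $\veps > 0$, a compact set in the Polish space of analytic functions carrying mass at least $1-\veps$ uniformly in $N$, which is precisely tightness of $(f_N)$.

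There is no real obstacle here; the only slightly delicate point is the passage from the pointwise $L^p$ bound on $K$ to the $L^p$ bound of the sup-norm, which is handled cleanly by subharmonicity of $|f_N|^p$ and Fubini. Everything else is standard (Montel + Markov).
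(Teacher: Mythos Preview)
Your proof is correct and follows essentially the same route as the paper's: the paper invokes Lemma~2.6 and Proposition~2.5 of Shirai \cite{MR3014854}, which are precisely the subharmonicity/submean-value estimate and the Montel--Markov tightness criterion that you have written out explicitly. The only cosmetic difference is that the paper states the resulting bound for $\dE\|f_N\|_{K'}$ rather than $\dE\|f_N\|_{K'}^p$, but your version is the natural one for general $p>0$ and the conclusion via Markov's inequality is identical.
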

\begin{proof}
Let $K' \subset U$ be a compact set. There exists $\delta>0$ such that the closure of the $\delta$-neighborhood of $K'$ is included in $U$. 
According to Lemma 2.6 \cite{MR3014854}, denoting by $m$ the Lebesgue measure on $\dC$, we have
$$\dE \| f_N \|_{K'} \leq C (\pi \delta^2)^{-1} m(K).$$ Then, the result follows by  Markov inequality and Proposition 2.5 \cite{MR3014854}.

\end{proof}
\begin{lemme}\label{le:hnanalytic}
The sequence  $(h_N)$ defined on $ \mathring{\Gamma}$ is a tight sequence of random analytic functions.   
\end{lemme}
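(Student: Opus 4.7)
The plan is to apply Lemma~\ref{criterion} after splitting the series expansion of $h_N$ into a dominant piece with finitely many terms and a negligible tail. From \eqref{eq:identitecle000}, which holds almost surely for all $N$ large, write
\[
h_N(z) = \veps_N(z) + \sum_{k\geq 1} a_k(z), \qquad a_k(z) = -\sqrt{N}\,u_N^*\bigl(R'_N(z) Y_N\bigr)^k R'_N(z) v_N,
\]
where $\veps_N(z) = \sqrt{N}\bigl(1-u_N^* R'_N(z) v_N\bigr) = \sqrt{N}\,\det(A_N-zI)/\det(A'_N-zI)$ is analytic on $\mathring{\Gamma}$ and satisfies $\sup_{z\in\Gamma}|\veps_N(z)|=o(1)$ by \eqref{eq:ratiodets}. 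The crucial analytic input is that, by (A3), Proposition~\ref{prop:supportbeta} and the uniform convergence argument of Lemma~\ref{continuitephi}, the function $\varphi_N(z,z) = \frac{1}{N}\tr\bigl(R'_N(z)R'_N(z)^*\bigr)$ converges uniformly on $\Gamma$ to $\varphi(z) = \int\lambda^{-1}\,d\nu_z(\lambda)$, so by compactness of $\Gamma\subset \dC\setminus\supp(\beta)$ one has $c := \sup_{z\in\Gamma}\varphi(z) < \sigma^{-2} = 1$. Fix $K_N = \lfloor \alpha\log N \rfloor$ with $\alpha$ to be determined and set $h_N^{(1)}(z) = \veps_N(z) + \sum_{k=1}^{K_N} a_k(z)$, $h_N^{(2)}(z) = \sum_{k>K_N} a_k(z)$; both are analytic on $\mathring{\Gamma}$ since by (A4') the eigenvalues of $A'_N$ stay uniformly away from $\Gamma$.

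For the main piece, I would invoke Proposition~\ref{prop:momentunif} with the diagonal matrix $B := R'_N(z)$ and vectors $u_N$, $v := R'_N(z) v_N$. The hypotheses $\|u_N\|_\infty, \|v\|_\infty = O(1/\sqrt{N})$ and $N\|B\|^2\leq M\tr(BB^*)$ hold uniformly on $\Gamma$ under (A4'): the diagonal entries $1/|z-\lambda_i(A'_N)|^2$ lie in a fixed compact subset of $(0,\infty)$ since eigenvalues of $A'_N$ are bounded (by (A1')) and separated from $\Gamma$ (by (A4')), pinching the ratio $\|B\|^2/((1/N)\tr(BB^*))$ between two positive constants. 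Since $K_N \leq N^{1/C}$ for $N$ large, Proposition~\ref{prop:momentunif} gives $\dE|a_k(z)|^2 \leq C k^4 \varphi_N(z,z)^k$ for every $1\leq k\leq K_N$, and Minkowski's inequality yields
\[
\bigl(\dE |h_N^{(1)}(z)|^2\bigr)^{1/2}\leq |\veps_N(z)| + \sum_{k\geq 1}\sqrt{C}\,k^2\,c^{k/2},
\]
a bound uniform in $N$ and $z \in \Gamma$. Lemma~\ref{criterion} then provides tightness of $(h_N^{(1)})$ as a sequence of random analytic functions on every open $U\subset\mathring{\Gamma}$ with $\overline{U}\subset\mathring{\Gamma}$.

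It remains to transfer tightness from $(h_N^{(1)})$ to $(h_N)$ by showing that $h_N^{(2)}\to 0$ uniformly on $\Gamma$ almost surely. On the a.s.\ event where Propositions~\ref{nonunif} and \ref{borne} apply, the geometric identity $R_N(z) = (I-R'_N(z)Y_N)^{-1}R'_N(z)$ rewrites
\[
h_N^{(2)}(z) = -\sqrt{N}\,u_N^*\bigl(R'_N(z)Y_N\bigr)^{K_N+1}R_N(z)v_N,
\]
and Cauchy--Schwarz together with the uniform bounds $\|R_N(z)\|\leq C$ (from Proposition~\ref{nonunif}) and $\|(R'_N(z)Y_N)^{K_N+1}\|\leq C(1-\epsilon_0)^{K_N+1}$ (from Proposition~\ref{borne}) gives $\sup_{z\in\Gamma}|h_N^{(2)}(z)|\leq C'\sqrt{N}(1-\epsilon_0)^{K_N}$. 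Choosing $\alpha > 1/(2|\log(1-\epsilon_0)|)$ forces this to $0$ a.s.; the uniform a.s.\ vanishing of $h_N^{(2)}$ on $\Gamma$ then implies convergence to $0$ in the distance \eqref{eq:distHU}, and tightness of $(h_N)$ follows from tightness of $(h_N^{(1)})$. The main technical obstacle is the uniform $L^2$ bound on $h_N^{(1)}$, which reduces precisely to verifying that the strict inequality $\varphi(z)<\sigma^{-2}$ persists uniformly on the compact set $\Gamma$; this is the geometric content of the condition $\Gamma\subset\dC\setminus\supp(\beta)$ under (A3).
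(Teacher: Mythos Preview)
Your proof is correct and follows essentially the same route as the paper's: split $h_N$ into a finite main sum and a geometric tail, control the main sum via Proposition~\ref{prop:momentunif} (using that $\varphi_N(z,z)<1-\delta$ uniformly on $\Gamma$ for large $N$), kill the tail with Proposition~\ref{borne}, and conclude by Lemma~\ref{criterion}. The only cosmetic differences are your choice of cutoff $K_N=\lfloor\alpha\log N\rfloor$ versus the paper's $k_N=\lfloor N^{\veps}\rfloor$, and your use of an $L^2$ bound on the truncated sum together with a.s.\ vanishing of the tail, whereas the paper bounds $\dE[\,|h_N(z)|\IND_{\Omega_N}]$ directly on the event from Proposition~\ref{borne}; both variants work for the same reasons.
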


\begin{proof}
From Lemma \ref{criterion}, it is sufficient to check that for some event $\Omega_N$ of probability tending to $1$ and $C >0$, 
\begin{equation}\label{eq:tightcriterion}
\sup_{z \in \Gamma}\dE [ | h_N (z) | \IND_{\Omega_N}] \leq C.
\end{equation}
We define $\Omega_N$ as being the event that for all $z \in \Gamma$ and $k \geq 1$, $ \| \left(R'_{N}(z)  Y_N  \right)^k \| \leq C_0 ( 1- \delta)^k$. From Proposition \ref{borne}, for some $C_0 > 0$ and $0 < \delta <1$, this event has probability tending to $1$.   

 %Recall that $\| A^k - B^k \| \leq k \|A - B \| ( \|A\| \vee \|B \| )^{k-1}$ and $\|R'_N (z) - R'_N(w) \| \leq |z - w| \| R'_N (z) \|\| R'_N (w) \|$ (from the %resolvent identity). 
We get on $\Omega_N$ that, for some $C >0$,
\begin{equation}\label{maja}
| a_k (z) | \leq \sqrt N C ( 1- \delta)^k. 
\end{equation}
For some $\veps >0$ to be chosen later on, we set 
$$
k_N = \lfloor N^{\veps} \rfloor . 
$$
Summing over all $k \geq k_N$, we find that 
\begin{align*}
& \sum_{k > k_N} | a_k (z)| \leq  C' \sqrt N ( 1 - \delta)^{k_N}  = o(1) 
\end{align*}
Hence, in order to prove \eqref{eq:tightcriterion}, we may restrict our attention to 
$$
\tilde h_N (z) = \sum_{k=1}^{k_N} a_k (z). 
$$
By Proposition \ref{prop:supportbeta} and Lemma \ref{continuitephi}, there exists $\delta >0$ such that for all for $z \in \Gamma$, 
$$
\varphi ( z) = \int \lambda^{-1} d \nu_z (\lambda) < 1 - 2\delta. 
$$
Using Assumption (A2) and \eqref{eq:boundR'}, the proof of Lemma \ref{continuitephi} proves that $ \varphi_N(z,z) = \int \lambda^{-1} d \nu_{N,z}$, where $\nu_{N,z}$ is the empirical distribution of the eigenvalues of $(A'_N - z ) (A'_N - z)^*$ converges  to $\varphi(z)$ uniformly on $\Gamma$. It follows that for all $N \geq N_0$ large enough, for all $z \in \Gamma$,
\begin{equation}\label{eq:boundphiN}
\varphi_N (z) < 1 - \delta.
\end{equation}
Then, the claims $
\dE [ | \tilde h_N (z) |] \leq C $   follows directly from Proposition \ref{prop:momentunif} provided that $\veps$ is chosen equal to the constant $1/C$ in this last proposition and (\ref{eq:boundphiN}). 
\end{proof}

We now introduce the random  $\Gamma \to \dC$ function 
$$
g_N (z) =  \sum_{k\geq 1}  \gamma_k (z),
$$
where for each $k \ne \ell$, $z,w \in \Gamma$, $\gamma_k(z)$ and $\gamma_{\ell}(w)$ are independent complex Gaussian variables and for $k = \ell$, 
\begin{eqnarray*}
\dE  \gamma_k (z) \bar \gamma_k (w) & = &u_N^* R'_N(z) (R'_N(w) )^* u_N v_N^* (R'_N(w) )^*  R'_N(z)  v_N \varphi_N(z,w)^{k-1} \\
\dE  \gamma_k (z)  \gamma_k (w) & = & u_N^* R'_N(z) R'_N(w)  \bar u_N v_N^T (R'_N(w)) ^T  R'_N(z)  v_N \psi_N(z,w)^{k-1} (\dE X_{11} ^2 )^k .
\end{eqnarray*}
Observe that $g_N$ is the Gaussian function defined in Theorem \ref{th:nilpotent}. The next lemma implies in particular that $g_N$ is properly defined (for $N$ large enough). 

\begin{lemme}\label{le:gnanalytic}
There exists $N_0\geq 1$ such that the random functions $(g_N)_{N \geq N_0}$ are a tight sequence of random analytic functions. Moreover any weak accumulation point of $(g_N)$ is a.s.  non-zero.  
\end{lemme}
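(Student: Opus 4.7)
To prove tightness, the plan is to invoke Lemma \ref{criterion}, for which it suffices to produce a uniform second moment bound on $\Gamma$. Independence of the $\gamma_k$ across $k$ gives
$$\dE |g_N(z)|^2 = \sum_{k \geq 1} \dE |\gamma_k(z)|^2 = \|R'_N(z)^* u_N\|^2 \|R'_N(z) v_N\|^2 \sum_{k \geq 1} \varphi_N(z,z)^{k-1}.$$
The prefactor is bounded since $\|u_N\|^2 \leq N \|u_N\|_\infty^2 \leq M^2$ (and likewise for $v_N$), combined with $\|R'_N(z)\| \leq C$ on $\Gamma$ from (A4) and \eqref{eq:CFresunif}. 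As already observed in the proof of Lemma \ref{le:hnanalytic}, there exist $\delta>0$ and $N_0$ such that $\varphi_N(z,z) \leq 1 - \delta$ for all $N \geq N_0$ and $z \in \Gamma$, so the geometric sum is bounded. This gives $\sup_{z \in \Gamma} \dE |g_N(z)|^2 \leq C/\delta$, and Lemma \ref{criterion} with $p=2$ then yields tightness of $(g_N)_{N \geq N_0}$ as random analytic functions on $\mathring \Gamma$.

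For the non-degeneracy, the crucial step will be to convert the quantitative hypothesis \eqref{eq:ratiodets} into a pointwise lower bound on $\dE|\gamma_1(z)|^2$. Applying the rank-one identity \eqref{eq:idcle} with $P = v_N$ and $Q = u_N^*$ gives
$$1 - u_N^* R'_N(z) v_N = \frac{\det(A_N - z)}{\det(A'_N - z)},$$
so \eqref{eq:ratiodets} forces $u_N^* R'_N(z) v_N \to 1$ uniformly on $\Gamma$. Cauchy--Schwarz applied in its two natural forms gives
$$|u_N^* R'_N(z) v_N|^2 \leq \|R'_N(z)^* u_N\|^2 \|v_N\|^2 \quad \text{and} \quad |u_N^* R'_N(z) v_N|^2 \leq \|u_N\|^2 \|R'_N(z) v_N\|^2.$$
Multiplying the two and using $\|u_N\|, \|v_N\| \leq M$, for $N$ large and every $z \in \Gamma$ we obtain
$$\dE |\gamma_1(z)|^2 = \|R'_N(z)^* u_N\|^2 \|R'_N(z) v_N\|^2 \geq \frac{|u_N^* R'_N(z) v_N|^4}{M^4} \geq \frac{1}{2M^4},$$
and consequently $\dE |g_N(z)|^2 \geq 1/(2M^4)$.

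To conclude, I would fix any $z_0 \in \mathring \Gamma$ and consider a weakly converging subsequence $g_{N_k} \to g$ in $H(\mathring \Gamma)$. Since evaluation at $z_0$ is continuous, $g_{N_k}(z_0) \to g(z_0)$ in distribution. Each $g_{N_k}(z_0)$ is a centered complex Gaussian with second moment uniformly in $[1/(2M^4), C/\delta]$, so $g(z_0)$ is a centered complex Gaussian with $\dE|g(z_0)|^2 \geq 1/(2M^4) > 0$ (weak limits of centered Gaussians with uniformly bounded variance are centered Gaussian, with limiting variance). Therefore $g$ is not identically zero, and being analytic it has at worst isolated zeros, so $g$ is a.s. non-zero as an element of $H(\mathring \Gamma)$.

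The main obstacle is the non-degeneracy: the hypothesis \eqref{eq:ratiodets} is phrased as an upper bound on a ratio of determinants, whereas non-degeneracy requires a lower bound on a quadratic form involving $u_N$, $v_N$, and $R'_N(z)$. Bridging the two through the rank-one identity \eqref{eq:idcle} and Cauchy--Schwarz is the key insight; once this bridge is in place, the remaining ingredients (bounded prefactors, geometric decay of $\varphi_N(z,z)^{k-1}$, Gaussian weak convergence) are essentially in place from earlier parts of the argument.
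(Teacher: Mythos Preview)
Your tightness argument and your non-degeneracy argument are both correct. The tightness step is the same computation the paper carries out: bound $\dE|\gamma_k(z)|^2$ by a constant times $(1-\delta)^{k-1}$ using \eqref{eq:boundR'} and \eqref{eq:boundphiN}, sum, and invoke Lemma~\ref{criterion}. For the non-degeneracy, your argument is in fact more explicit than the paper's, which simply asserts an ``immediate uniform lower bound on $\dE|\gamma_k(z)|^2$''. That lower bound is not entirely free: one needs $\|R'_N(z)^*u_N\|$ and $\|R'_N(z)v_N\|$ bounded away from zero, and the hypotheses of Theorem~\ref{th:nilpotent} only give $\|u_N\|_\infty,\|v_N\|_\infty = O(1/\sqrt N)$, not a lower bound on the $\ell^2$ norms. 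Your use of the rank-one identity together with \eqref{eq:ratiodets} to force $u_N^*R'_N(z)v_N\to 1$, and then Cauchy--Schwarz, is exactly the right bridge.

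There is one part of the lemma you do not address. The statement includes the claim that $g_N$ \emph{is} a random analytic function for $N\ge N_0$, not merely a Gaussian process indexed by $\Gamma$. The paper spends the first half of its proof on this: it realizes each $\gamma_k$ as a distributional limit (as $n\to\infty$) of genuine random analytic functions $X_{N,n}(z)=\sqrt{Nn}\,u_{N,n}^*(R'_{N,n}(z)Y_{Nn})^kR'_{N,n}(z)v_{N,n}$ built from replicated data, checks tightness of this approximating sequence via Proposition~\ref{prop:momentunif}, and concludes that $\gamma_k$ has an analytic version. Only then does the bound $\dE|\gamma_k(z)|^2\le C(1-\delta)^{k-1}$ ensure that $g_N=\sum_k\gamma_k$ converges as a random analytic function. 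Your argument implicitly assumes this step is already done; you should either cite a general fact about Gaussian processes with sesqui-analytic covariance kernels (as in Hough--Krishnapur--Peres--Vir\'ag) or reproduce the paper's approximation argument.
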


\begin{proof}

We first prove that $\gamma_k(z)$ is indeed a random analytic function. We use an idea borrowed from Najim and Yao \cite{najimyao}. For each $n \geq 1$, let us consider the diagonal matrix $A'_{N,n}$ of size $Nn$ whose diagonal entry $( k N + i )$, $0 \leq k \leq n-1$, $1 \leq i \leq N$, is equal to the $i$-th diagonal entry of $A'_N$. The vectors $u_{N,n}$ and $v_{N,n}$ in $\dC^{nN}$ are built similarly  from $u_N$ and $v_N$: their $( k N + i )$-th entry   is equal to $1  / {\sqrt n}$ times the $i$-th entry of $u_N$ or $v_N$.  We set $R'_{N,n} ( z) = ( A'_{N,n} - z ) ^{-1}$ and consider the random analytic function 
$$
X_{N,n} (z) = \sqrt{ N n} u^*_{N,n}  ( R'_{N,n} (z) Y_{Nn} )^k R'_{N,n}(z) v_{N,n}.
$$
By Proposition \ref{prop:fTCL}, for $N$ fixed, as $n\to \infty$, $X_{N,n}$ converges weakly to $\gamma_k$ for the finite dimensional convergence. Also, from Proposition \ref{prop:momentunif} and \eqref{eq:boundR'}-\eqref{eq:boundphiN}, for some new constant $C$, if $k \leq ( Nn )^{1/C}$,   
\begin{equation}\label{eq:XNntight}
\dE | X_{N,n} (z) | \leq C' k^2  ( 1- \delta)^{k/2} \leq \sum_{l \geq 0} C' l^2  ( 1- \delta)^{l/2} =C". 
\end{equation}
We define $\Omega_{N,n}$ as being the event that for all $z \in \Gamma$ and $k \geq 1$, $ \| \left(R'_{N,n}(z)  Y_{Nn}  \right)^k \| \leq C_0 ( 1- \delta)^k$. From Proposition \ref{borne}, for some $C_0 > 0$ and $0 < \delta <1$, this event has probability tending to $1$.   
Moreover from (\ref{maja}), on $\Omega_{N,n}$, for any $k>( Nn )^{1/C}$,
$$ | X_{N,n} (z) |  \leq \sqrt{ Nn} C ( 1- \delta)^{( Nn )^{1/C}} \leq C'''.$$
Therefore, Lemma \ref{criterion} implies that $(X_{N,n} )_{n \geq 1} $ is a tight sequence of random analytic functions.  It follows that $\gamma_k$ is indeed a random analytic function.

We now check $g_N$ is a  random analytic function. It is sufficient to check that $\dE| \gamma_k (z) |^2 \leq C ( 1 - \delta)^{k-1}$ for some $C,\delta >0$ (see e.g. Shirai \cite{MR3014854} Proposition 2.1).  This follows from \eqref{eq:boundR'} and \ref{eq:boundphiN}.
We thus have proved that $g_N$ is a random analytic function for $N \geq N_0$. Moreover, from what precedes  for some constant $C$, for all $N \geq N_0$ and $z  \in \Gamma$, 
$
\dE | g_N (z) |^2 \leq C.
$
Using again Lemma \ref{criterion}, we find that $(g_N )_{N \geq N_0} $ is a tight sequence of random analytic functions. The fact that the accumulation points are non-zeros follows (1) an immediate uniform lower bound on $\dE |\gamma_k (z) |^2$ and (2) any accumulation point will be a random analytic function with Gaussian finite dimensional marginals. \end{proof}

We are now in position to conclude the proof of Theorem \ref{th:nilpotent}. First, for any $k \geq 1$ and $z_1 ,\ldots, z_k \in \Gamma$,  from Proposition \ref{prop:fTCL}, the random vectors $(a_j(z_i))_{ 1 \leq i , j \leq k}$ and $(\gamma_j(z_i))_{ 1 \leq i , j \leq k}$ have a L\'evy-Prohorov distance going to $0$. Secondly, we have seen in the proofs of Lemmas \ref{le:hnanalytic} and \ref{le:gnanalytic} that $\sum_{ \ell \geq k} a_l(z)$ and $\sum_{ \ell \geq k} \gamma_l(z)$ converge uniformly in $N$ and $z$ in probability to $0$ as $k \to \infty$. It implies that $(h_N(z_i) ) _{1 \leq i \leq k}$ and $(g_N(z_i))_{1 \leq i \leq k}$ have a L\'evy-Prohorov distance going to $0$. Using Lemma \ref{le:roucheRAF} along any converging subsequence of $g_N$, we deduce the statement of  Theorem \ref{th:nilpotent}.

\begin{remarque}[Extension to $r$ arbitrary]\label{re:rgeneral} We have assumed for simplicity that $r = 1$. If $r \geq 2$, from \eqref{eq:idcle}, we have to deal with $r \times r$ determinants for the analog of $h_N$ and the assumption \eqref{eq:ratiodets} needs to be adapted accordingly. The argument of tightness would remain unchanged. However, for the asymptotic normality, a multi-variate  generalization of Proposition \ref{prop:fTCL} for a collection of vectors $u^{(i)}_{N}, v^{(i)}_{N}$, $1 \leq i \leq r$, would be necessary.  
\end{remarque}

\subsection{Proof of Corollary \ref{cor:nilpotent}}

Observe that $\nu_z$ is the law of $|L - z |^2$ where $L$ follows the uniform distribution on the unit circle. Hence, by proposition \ref{prop:supportbeta}, we have
$$
\supp ( \beta ) = \left\{ z\in \dC : \frac {1}{2 \pi}  \int_0 ^{2\pi}  \frac{ dx }{ | e^{ix}- z |^2} \geq \sigma^{-2} \right\}. 
$$

Let us evaluate the expression 
$$
\varphi(z,w) =  \frac {1}{2 \pi}  \int_0 ^{2\pi}  \frac{ dx }{ (e^{ix}-z)(e^{-ix} - \bar w)  }
$$
We rewrite it as 
$$
\varphi(z,w) =  \frac {1}{2  i \pi}  \int_0 ^{2\pi}  \frac{  i e^{ix} dx }{ (e^{ix}-z)(1 - \bar w e^{ix} )  } =  \frac {1}{2  i \pi}  \oint   \frac{  d u  }{ (u -z)(1 - \bar w u  )  }, 
$$
where the contour is the unit circle oriented counter-clockwise. The integrand as a pole at $z$ with residue $1/(1 - \bar w z)$ and another at $1/ \bar w$ with residue $1/(\bar w z - 1)$. It is then immediate to estimate the above integral with Cauchy's residue formula. We find if $ z  = w$, 
$$
\varphi(z,z) = \frac{1}{ | |z|^2 -1 | }, 
$$ 
and if $|z|, |w| < 1$, 
$$
\varphi(z,w) = \frac{1}{ 1 - \bar w z}.
$$ 

We now turn to the computation of $\dE g_N(z)  \bar g_N(w)$ and $\dE g_N(z) g_N(w)$. We set $\rho= e^{2 i \pi/N}$. We apply Theorem \ref{th:nilpotent} with for $1 \leq k \leq N$, $(u_N)_k = - 1 / \sqrt N$, $(v_N)_k =\rho^k / \sqrt N$ and $R'_N(z)_{kk} = (z - \rho^k)^{-1}$. In the evaluation of $\dE g_N(z) \bar g_N(w)$ and $\dE g_N(z) g_N(w)$, we recognize Riemann sums:  we have that 
$$
u_N^* R'_N(z) R'_N(w)^* u_N = v_N^* R'_N(w)^* R'_N(z) v_N = \varphi_N(z,w) =  \frac 1 N \sum_{k=1}^N \frac{1}{(\bar  w- \rho^{-k})( z - \rho^k) } \to \varphi(z,w).  
$$
Similarly, 
$$
u_N^* R'_N(z) R'_N(w) \bar u_N = \psi_N(z,w) =  \frac 1 N \sum_{k=1}^N \frac{1}{(   w- \rho^{k})( z - \rho^k) } \to \psi(z,w),  
$$
with,  $$
\psi(z,w) =  \frac {1}{2 \pi}  \int_0 ^{2\pi}  \frac{ dx }{ (e^{ix}-z)(e^{ix} - w)  } = \frac{1}{2 i \pi} \oint \frac{ du }{ u( u -z)(u - w)  },
$$
where the contour is the unit circle oriented counter-clockwise.  Another straightforward residue computation gives that if $|z|, |w| < 1$ then $\psi(z,w) = 0$. It concludes the proof of Corollary \ref{cor:nilpotent}.

\subsection{How to obtain an unstable outlier ?}
\label{subsec:recipe}

Theorems \ref{th:main} and \ref{th:nilpotent} reveal the importance of the ratio 
$$
\veps_N (z) = \frac{\det( A_N - z I_N)}{\det ( A'_N - z I_N)}  
$$
to determine the nature of an outlier: stable or unstable. In the simplest case $r=1$, $A''_N = v_N u_N^*$, $A'_N$ diagonal,  the nature of an outlier can be guessed.  

First, as already noted, from \eqref{eq:identitecle0} applied $Y_N = 0$ and $A'_N$ diagonal gives 
$$
\veps_N(z) = 1 - u_N^* R'_N(z) v_N = 1 - \frac 1 N \sum_{k=1}^N \frac{w_{N,k}}{z - \lambda_{N,k}}, 
$$
where $R'_N(z) = (z I_N - A'_N)^{-1}$, $(\lambda_{N,k})_{1 \leq k \leq N}$ are the eigenvalues of $A'_N$ and $w_{N,k} = N (\bar u_N)_k (v_N)_k$.  Assume further that 
$$
\rho_{N} = \frac 1 N \sum_{k=1}^N \delta_{ (\lambda_{N,k} , w_{N,k} )  } 
$$
converges weakly to a probability measure $d\rho(\lambda,w)$ on $\dC^2$ such that its first marginal is $\alpha$, a  probability measure with compact support. Under assumption (A4') and $\| u_N \|_ {\infty}, \|v_N\|_{\infty}$ of order $O(1 / \sqrt N)$,  we get that for all $z \notin S = \supp( \alpha)$, 
$$
\veps_N (z) \to \veps(z) = 1 - \int \frac{ \omega(\lambda)}{ z - \lambda }d \alpha(\lambda) ,
$$
where $\omega(\lambda)$ is the conditional expectation under $\rho$ of the second variable given the first is equal to $\lambda$. To obtain an unstable outlier, it is necessary that $\veps(z)$ vanishes on a domain $\Gamma$ in $\supp( \beta)^c$.  A more precise convergence estimate on $\rho_N$ is also necessary to guarantee also that $|\veps_N(z) - \veps(z) | = o ( 1/ \sqrt N)$.

The function $\veps(z)$ is analytic outside $S = \supp(\alpha)$ and it can be computed in concrete examples such as the one of Corollary \ref{cor:nilpotent}, where $\alpha$ is the uniform distribution on the unit disc and $\omega(\lambda) = - \lambda$. More generally, assume that $\alpha$ is radial with support $\{ z \in \dC : |z| \in [a, b]\}$ with $a >0$. We write $d \alpha (r e^{i \theta} ) = \frac 1 {2 \pi} r F(dr)$ and consider a measurable function $f$ on $\dR$ such that  $\int f (r ) r F(dr)  = 1$ (e.g.  $f = 1$). Interestingly,  if $\omega(\lambda) = - \lambda f(|\lambda|)$ then $\veps (z)= 0$ for all $z$ with $|z| < a$. Indeed, in this case, if $|z| < a$, we have 
\begin{eqnarray*}
\int \frac{ \omega(\lambda)}{ z - \lambda }d \alpha(\lambda)&  = &  \int_a ^b \left(\frac{1}{2 \pi}  \int_0 ^{2 \pi} \frac{r e^{i \theta} d \theta }{r e^{i \theta} - z }  \right) f(r) r F( dr) \\
& = & \int_a ^b \left(\frac{1}{2 i  \pi}  \oint_{C_r} \frac{du }{u - z }  \right) f(r) r F( dr) \\
& = &  \int_a ^b  f(r) r F( dr),
\end{eqnarray*}
where at the second line, $C_r$ is the disc of radius $r$ oriented counter-clockwise.

\subsection{Proof of Theorem \ref{th:largenorm}}

The proof is a variant of the proof of Theorem \ref{th:nilpotent}. We may assume $\sigma =1$. Arguing as above \eqref{eq:identitecle000}, we need to consider the zeros in $\Gamma$ of the random analytic function 
\begin{eqnarray}
f_N(z)   &  = &  1 - \sqrt N u_N^* R_N (z) v_N \nonumber\\
&= & 1 - \sqrt N u_N^* R'_{N}(z) v_N - \sqrt N \sum_{k\geq 1} u^*_N  \left(R'_{N}(z) Y_N \right)^k R'_{N}(z) v_N  .\label{eq:devTaylor34}
\end{eqnarray}

We have seen in Theorem \ref{th:nilpotent} that $ h_N (z)  =  \sqrt N \sum_{k\geq 1} u^*_N  \left(R'_{N}(z) Y_N \right)^k R'_{N}(z) v_N $ defines a tight sequence of random analytic functions in $\Gamma$ such that for any $z_1, \cdots, z_k$ in $\Gamma$, $(h_N(z_i))_{1 \leq i  \leq k}$ and $(g_N(z_i))_{1 \leq i \leq k}$ have a L\'evy distance going to $0$.  Moreover, by assumption, $1 - \sqrt N u_N^* R'_{N}(z) v_N $ is a sequence of bounded analytic functions on $\Gamma$. It follows that for any $z_1, \cdots, z_k$ in $\Gamma$, $(f_N(z_i))_{1 \leq i  \leq k}$ and $(1 - \sqrt N u_N^* R'_{N}(z_i) v_N  +  g_N(z_i))_{1 \leq i \leq k}$ have a L\'evy distance going to $0$. Using Lemma \ref{le:roucheRAF} along any converging subsequence of $g_N$, we deduce the statement of  Theorem \ref{th:largenorm}.

\subsection{Proof of Corollary \ref{cor:largenorm}}

If $A'_N = 0$, we simply have $R'_N(z) = z^{-1} I_N$. The expression \eqref{eq:devTaylor34} simplifies to 
$$
f_N(z)    =  1 - z^{-1} \theta_N u_N^\top v_N - \frac{ \theta_N}{\sqrt N}  \sum_{k\geq 1} z^{-k - 1} \left( \sqrt N u^\top_N  Y_N ^k v_N  \right).
$$
Since $\theta_N \ne 0$, we are thus interested by the zeros outside $B(0, 1 + \veps)$ of
$$
 z  \frac{ \sqrt N }{\theta_N} \left(z  -  \theta_N u_N^\top v_N \right)- \sum_{k\geq 0} z^{-k} \left( \sqrt N u^T_N  Y_N ^{k+1} v_N  \right).
$$
The corollary follows easily.

\section*{Appendix: central limit theorem for bilinear forms with random vectors}
\label{sec:appendix}

In this appendix, we establish the following central limit theorem.
\begin{proposition}\label{TCLFQ}
Let $x$ and $y$ be independent centered complex random variables such that $\mathbb{E}(\vert x \vert^2)= \mathbb{E}(\vert y \vert^2)=1$ and $\mathbb{E}(\vert x \vert^4)< +\infty$, $ \mathbb{E}(\vert y \vert^4)<+\infty$.
Let $\{x^{(p)}_i;y^{(q)}_j ; 1 \leq i, j \leq N, 1 \leq p, q \leq r \}$ be independent random variables such that the $x^{(p)}_i$'s are  copies of $x$ and the $y^{(q)}_j$'s are  copies of $y$.
Set for any $ 1 \leq  p \leq r$, $x^{(p)}= (x^{(p)}_i)_{1 \leq i \leq r} \in \dC^r$ and  $y^{(p)}= (y^{(p)}_i)_{1 \leq i \leq r} \in \dC^r$
Let $B_N \in M_N(\dC)$ be a sequence of deterministic matrices such that 
\begin{enumerate}[(i)]
\item There exists $C>0$ such that  $\sup_N \Vert B_N \Vert \leq C$.
\item The following limit exists 
$$\tau =\lim_{N \rightarrow +\infty} \frac{1}{N} \Tr B_N B_N^*.$$
\item Either $\mathbb{E}(x^2)=0$ or $ \mathbb{E}(y^2)=0$ or   the following limit exists $$\zeta = \lim_{N \rightarrow +\infty} \frac{1}{N} \Tr B_N B_N^\top.$$
(in the first two cases, we set $\zeta=0$).
\end{enumerate}
Then $\left\{\frac{1}{\sqrt{N}}  {x^{(p)}}^\top B_Ny^{(q)} ; 1\leq p,q \leq r \right\}$ converges weakly  towards $r^2$ independent copies of a centered complex gaussian variable $g_1+ig_2$ such that the covariance matrix of the Gaussian vector 
$(g_1,g_2)$ is $$\frac{1}{2}\begin{pmatrix} \Re \left\{ \mathbb{E}(x^2) \mathbb{E}(y^2)  \zeta \right\}+\tau & \Im \left \{\mathbb{E}(x^2) \mathbb{E}(y^2)  \zeta \right\} \\ \Im \left \{ \mathbb{E}(x^2) \mathbb{E}(y^2)  \zeta \right\} &  \tau-\Re \left\{ \mathbb{E}(x^2) \mathbb{E}(y^2) \zeta \right\} \end{pmatrix}.$$ 
\end{proposition}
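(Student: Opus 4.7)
The plan is to condition on the $\sigma$-algebra $\mathcal{F} := \sigma(y^{(1)}, \ldots, y^{(r)})$ and to apply a conditional central limit theorem. Writing $a^{(q)} := B_N y^{(q)} \in \dC^N$, we have $Z_{pq} = N^{-1/2} \sum_{i=1}^N x_i^{(p)} a_i^{(q)}$, so conditional on $\mathcal{F}$ each $Z_{pq}$ is a centred linear form in the iid copies of $x$. Crucially, for $p \ne p'$ the rows $(Z_{pq})_q$ and $(Z_{p'q})_q$ depend on disjoint $x$-vectors and are therefore independent conditionally on $\mathcal{F}$. By Cramér-Wold it will suffice to prove a conditional CLT for a single row $(Z_{pq})_{1\le q\le r}$ and then multiply the $r$ conditional characteristic functions; integration over $\mathcal{F}$ by dominated convergence will then yield the joint weak convergence claimed.

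For a fixed $p$ and arbitrary $q,q'$, a direct computation gives
\[
\dE[Z_{pq}\,\overline{Z_{pq'}} \mid \mathcal{F}] = \frac{1}{N}\,(y^{(q')})^{*} B_N^{*} B_N\, y^{(q)},\qquad
\dE[Z_{pq}\, Z_{pq'} \mid \mathcal{F}] = \frac{\dE(x^2)}{N}\,(y^{(q)})^{\top} B_N^{\top} B_N\, y^{(q')}.
\]
The first technical step is to show convergence in probability of these quantities to $\tau\,\delta_{qq'}$ and $\dE(x^2)\,\dE(y^2)\,\zeta\,\delta_{qq'}$ respectively. For $q \ne q'$, independence of $y^{(q)}$ and $y^{(q')}$ together with $\dE(y) = 0$ kills the mean, and a second-moment expansion gives variance of order $\Tr(MM^{*})/N^2 \le \|B_N\|^4/N \to 0$ (with $M = B_N^{*}B_N$ or $M = B_N^{\top}B_N$). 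For $q = q'$, the means are $\frac{1}{N}\Tr(B_N^{*}B_N) \to \tau$ and $\frac{\dE(x^2)\dE(y^2)}{N}\Tr(B_N^{\top}B_N) \to \dE(x^2)\dE(y^2)\zeta$ by assumptions $(ii)$--$(iii)$, while the variance of the quadratic form $N^{-1} y^{\top} M y$ is classically bounded by $C(\dE|y|^4)\,\Tr(MM^{*})/N^2 = O(1/N)$.

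The second step is a conditional Lyapunov CLT for the $\dC^r$-valued sum $(Z_{pq})_q = \sum_{i=1}^N \xi_i$ with $\xi_i := N^{-1/2} x_i^{(p)}(a_i^{(1)},\ldots,a_i^{(r)})$. Since $\sum_i \dE[\|\xi_i\|^4 \mid \mathcal{F}] \le C(\dE|x|^4)\,N^{-2}\sum_q \|a^{(q)}\|_\infty^2\,\|a^{(q)}\|_2^2$, it is enough to bound $\|a^{(q)}\|_\infty$. Rosenthal's inequality applied coordinatewise to $(B_N y)_i = \sum_j (B_N)_{ij} y_j$ under $\dE|y|^4<\infty$ and $\|B_N\|\le C$ gives $\dE|(B_N y)_i|^4 \le C'$ uniformly in $i$, hence $\dE\|a^{(q)}\|_\infty^4 \le \sum_i \dE|(B_N y^{(q)})_i|^4 \le C' N$. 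Combined with $\dE\|a^{(q)}\|_2^2 = \Tr(B_N B_N^{*}) = O(N)$, this makes the Lyapunov sum $O_{\dP}(N^{-1/2})$. On an event of probability tending to one, the conditional characteristic function of $(Z_{pq})_q$ then converges in probability to the Gaussian characteristic function associated with the deterministic limiting covariance; reading off the real and imaginary parts of a centred complex $Z$ with $\dE|Z|^2 = \tau$ and $\dE Z^2 = \dE(x^2)\dE(y^2)\zeta$ produces exactly the covariance matrix of $(g_1,g_2)$ announced in the statement.

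The main obstacle is the concentration in probability of the bilinear and quadratic forms $N^{-1}(y^{(q)})^{*} M y^{(q')}$ and $N^{-1}(y^{(q)})^{\top} M y^{(q')}$ under only a fourth-moment hypothesis on $y$, which is classical but must be handled carefully (and is what forces the assumption $\dE|y|^4<\infty$); everything else is a routine assembly of a conditional Lyapunov CLT with Cramér-Wold and the independence across rows.
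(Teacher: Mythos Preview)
Your argument is correct and takes a genuinely different route from the paper's. The paper writes the real linear combination $\xi_\alpha = \sum_{p,q}(\alpha_{pq} Z_{pq} + \bar\alpha_{pq}\bar Z_{pq})$ as a martingale $\sum_{i=1}^N Z_i$ with respect to the filtration $\mathcal F_i$ generated by \emph{all} variables $x^{(p)}_j,y^{(q)}_j$ with $j\le i$, and then applies the martingale CLT (Billingsley's Theorem~35.12). To make the summands adapted the paper splits $B_N = K + D + U$ into strictly lower, diagonal, and strictly upper triangular parts, uses Mathias' bound $\|K\|,\|U\| = O(\log N)\|B_N\|$, and checks Lindeberg's condition and the convergence of conditional variances via Lemma~\ref{BaiSilver98}. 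Your approach instead breaks the $x/y$ symmetry: conditioning on the $y$-vectors reduces each row $(Z_{pq})_q$ to an iid sum in the $x^{(p)}_i$'s with $\mathcal F$-measurable coefficients, so a Lyapunov CLT applies directly; the conditional independence across $p$ then factorises the conditional characteristic function, and dominated convergence finishes. This avoids the triangular decomposition and the Mathias $\log N$ factor entirely, at the cost of the extra step of showing that the conditional covariances $N^{-1}(y^{(q')})^{*}B_N^{*}B_N y^{(q)}$ and $N^{-1}(y^{(q)})^{\top}B_N^{\top}B_N y^{(q')}$ concentrate in probability (which is exactly the content of the paper's Lemma~\ref{BaiSilver98}). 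Both proofs are of comparable length; yours is arguably more transparent here because it exploits the bilinear structure rather than forcing both families into a single martingale.
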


The  proof  follows the approach Baik and Silverstein in the Appendix of \cite{MR2489158} and  uses the following CLT.

\begin{theoreme} \label{Theo-Bil}(Theorem 35.12 of \cite{MR1324786}) For each $N$, let $Z_{N 1}, \ldots , Z_{N m_N}$ be a real martingale difference sequence
  with respect to the increasing $\sigma$-field $\{ \mathcal F _{N,j} \}$
  having second moments. If as $N \to \infty$, 
\begin{equation}{\label{Condition1}}
\sum _{j=1}^{m_N} \mathbb E ( Z_{Nj}^2 |   \mathcal F _{N,j-1})
\overset{P}{\longrightarrow} v^2
\end{equation}
where $v^2$ is a positive constant, and for each $\epsilon >0$,
\begin{equation}{\label{Condition2}}
\sum _{j=1}^{m_N} \mathbb E ( Z_{Nj}^2 \, 1 _{| Z_{Nj}| \geq \epsilon} ) \,
{\rightarrow} \, 0
\end{equation}
then  $\sum _{j=1}^{m_N}  Z_{Nj} $ converges in distribution to $\mathcal N (0,v^2)$.
\end{theoreme}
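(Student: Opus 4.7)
The plan is to prove convergence of the characteristic functions $\phi_N(t) := \dE \exp(it S_N) \to \exp(-v^2 t^2/2)$ for every $t \in \dR$, where $S_N = \sum_{j=1}^{m_N} Z_{Nj}$. The first step is a truncation. For a small parameter $\epsilon > 0$, replace each $Z_{Nj}$ by the centered truncation $\tilde Z_{Nj} = Z_{Nj}\IND_{\{|Z_{Nj}|\leq\epsilon\}} - \dE[Z_{Nj}\IND_{\{|Z_{Nj}|\leq\epsilon\}}\,|\,\mathcal F_{N,j-1}]$. This preserves the martingale-difference property with respect to $\mathcal F_{N,j-1}$, yields the deterministic bound $|\tilde Z_{Nj}| \leq 2\epsilon$, and, using \eqref{Condition2} combined with the martingale-difference orthogonality, gives $\dE|S_N - \sum_j \tilde Z_{Nj}|^2 \to 0$. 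Moreover \eqref{Condition1} and \eqref{Condition2} together still yield $\sum_j \dE[\tilde Z_{Nj}^2\,|\,\mathcal F_{N,j-1}] \to v^2$ in probability (modulo an $O(\epsilon)$ correction that vanishes at the end).

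The core step is then to show, for the truncated sum $\tilde S_N = \sum_j \tilde Z_{Nj}$, that $\dE e^{it\tilde S_N} \to \exp(-v^2 t^2/2)$ by comparing it with the product of conditional characteristic functions. Set $T_{Nj} = \dE[e^{it\tilde Z_{Nj}}\,|\,\mathcal F_{N,j-1}]$; the Taylor expansion of $e^{ix}$ combined with $|\tilde Z_{Nj}|\leq 2\epsilon$ gives $T_{Nj} = 1 - \tfrac{t^2}{2}\sigma_{Nj}^2 + r_{Nj}$ with $\sigma_{Nj}^2 = \dE[\tilde Z_{Nj}^2\,|\,\mathcal F_{N,j-1}]$ and $|r_{Nj}| \leq C|t|^3 \epsilon\, \sigma_{Nj}^2$. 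Using $\sum_j \sigma_{Nj}^2 \to v^2$ in probability, the elementary bound $|\log(1+z) - z| \leq |z|^2$ valid for $|z|\leq 1/2$, and $\max_j \sigma_{Nj}^2 \to 0$ in probability (a direct consequence of $|\tilde Z_{Nj}|\leq 2\epsilon$ and \eqref{Condition1}), one sums the logarithms to conclude that $\prod_j T_{Nj}$ converges in probability to $\exp(-v^2 t^2/2)$, up to a multiplicative error whose modulus is $1 + O(\epsilon)$.

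To connect this to $\phi_N(t)$, I would use the martingale identity: on the event where the $T_{Nj}$ stay bounded away from $0$ (which, after truncation, holds with probability close to $1$ provided $|t|\epsilon$ is small enough), the process $M_{Nk} = \prod_{j\leq k} e^{it\tilde Z_{Nj}} \big/ \prod_{j \leq k} T_{Nj}$ is a martingale with respect to $(\mathcal F_{N,k})$ of constant expectation $1$, because $T_{Nj}$ is $\mathcal F_{N,j-1}$-measurable. Multiplying by the $\mathcal F_{N,m_N}$-measurable bounded factor $\prod_j T_{Nj}$ and taking expectation, a dominated convergence argument (using that $\prod_j T_{Nj}$ converges to a deterministic limit) delivers $\dE e^{it\tilde S_N} \to \exp(-v^2 t^2/2) + O(\epsilon)$. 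Letting $\epsilon \to 0$ concludes the proof.

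The main obstacle is controlling the Taylor remainder uniformly in $j$ while handling the fact that \eqref{Condition1} is only a convergence in probability of a \emph{random} quadratic variation. The truncation reduces everything to bounded increments, after which the martingale structure is used exactly once, to replace the product of individual characteristic functions (which would appear in the independent case) by the product of conditional ones $\prod_j T_{Nj}$. This substitution is what makes the argument work despite the absence of independence, and it is the key place where the $\sigma$-field filtration $(\mathcal F_{N,j})$ enters nontrivially.
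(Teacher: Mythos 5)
The paper itself gives no proof of this statement (it is quoted verbatim as Theorem 35.12 of Billingsley), so your attempt has to be measured against the standard textbook argument, which it follows in outline: truncate to get bounded increments, expand the conditional characteristic functions $T_{Nj}$, and exploit the martingale $M_{Nk}=\prod_{j\leq k}e^{it\tilde Z_{Nj}}/T_{Nj}$, which has expectation $1$. Your truncation step, the $L^2$ estimate $\dE|S_N-\tilde S_N|^2\to 0$ via \eqref{Condition2}, and the Taylor expansion of $T_{Nj}$ are all correct.

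The genuine gap is in the last step. From $\dE e^{it\tilde S_N}=\dE\bigl[M_{N,m_N}\prod_j T_{Nj}\bigr]$ and $\dE M_{N,m_N}=1$, replacing the random factor $\prod_j T_{Nj}$ (bounded by $1$, converging in probability to $e^{-v^2t^2/2}$ up to $O(\epsilon)$) by its limit requires uniform integrability of $M_{N,m_N}$. But $|M_{N,m_N}|=\prod_j|T_{Nj}|^{-1}\leq\exp\bigl(C(t)\sum_j\sigma_{Nj}^2\bigr)$, and hypothesis \eqref{Condition1} gives only convergence in probability of $\sum_j\sigma_{Nj}^2$: it provides no moment bound, and $\dE\exp\bigl(C\sum_j\sigma_{Nj}^2\bigr)$ can be arbitrarily large or infinite (the conditional variance sum may be huge on events of small probability). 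So the ``dominated convergence argument'' is not justified; the factor $M_{N,m_N}$ is not dominated. The missing idea — which is exactly the extra device in Billingsley's proof — is a preliminary stopping-time truncation of the conditional variance: replace $Z_{Nj}$ by $Z_{Nj}\IND\bigl\{\sum_{i<j}\dE(Z_{Ni}^2\mid\mathcal F_{N,i-1})\leq v^2+1\bigr\}$, i.e.\ kill the increments after the accumulated conditional variance first exceeds $v^2+1$. By \eqref{Condition1} this changes nothing on an event of probability tending to $1$, and afterwards $\sum_j\sigma_{Nj}^2\leq v^2+1+4\epsilon^2$ holds almost surely, so $|M_{N,m_N}|$ is bounded uniformly in $N$ and your concluding computation becomes valid. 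A minor further slip: $\max_j\sigma_{Nj}^2$ does not tend to $0$ in probability for fixed $\epsilon$; after truncation it is merely bounded by $4\epsilon^2$, which is in fact all your logarithmic expansion needs.
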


We will use a standard lemma.
\begin{lemme}\label{BaiSilver98}
Let $B \in M_N ( \dC)$ and $X = ( x_1, \cdots , x_N)^{\top}$, $Y = ( y_1, \cdots , y_N)^{\top}$ be independent random complex vectors with independent
 entries such that   $\dE (x_i )  = \mathbb{E}(y_i)=0$ and $\max_i \dE ( |x_i|^4 ) \vee \dE ( |y_i|^4 ) \leq \kappa $. Then, there is a universal constant $c>0$ such that 
\begin{eqnarray*}
\dE \vert Y^\top B Y  -  \dE (y_1^2) \Tr (B) \vert^2 & \leq & c \kappa \Tr (B B ^*) \\
\mathbb E\vert  Y^* B Y -  \dE (|y_1|^2) \Tr(B) \vert^2 &\leq& c \kappa  \Tr (BB^*)\\
\mathbb E\vert  X^* B Y  \vert^2 &\leq& c \kappa  \Tr (BB^*).
\end{eqnarray*}
\end{lemme}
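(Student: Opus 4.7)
The three statements are standard moment bounds for linear and quadratic forms in independent entries. My plan is to expand each form entrywise and use independence together with the fourth-moment bound to eliminate or control the surviving terms. The only ingredients needed beyond direct expansion are the elementary matrix inequalities $\sum_i |B_{ii}|^2 \leq \Tr(BB^*)$ and the Cauchy--Schwarz bound $\sum_{i,j} |B_{ij}||B_{ji}| \leq \Tr(BB^*)$, plus $\dE|y_i|^2 \leq (\dE|y_i|^4)^{1/2} \leq \sqrt{\kappa}$.

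I would first dispose of the third inequality, which is the simplest. Writing $X^* B Y = \sum_{i,j} \bar x_i B_{ij} y_j$, independence of $X$ and $Y$ combined with $\dE(x_i) = \dE(y_j) = 0$ kills every cross term in the expansion of $\dE|X^* B Y|^2$ except those with $(i,k)$ and $(j,l)$ paired via the conjugation structure. The surviving sum is $\sum_{i,j} \dE|x_i|^2 \dE|y_j|^2 |B_{ij}|^2 \leq \kappa \Tr(BB^*)$.

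For the second inequality I would decompose $Y^* B Y = \sum_i B_{ii} |y_i|^2 + \sum_{i \neq j} \bar y_i B_{ij} y_j$ and subtract $\dE|y_1|^2 \Tr(B) = \sum_i B_{ii} \dE|y_i|^2$. The two resulting pieces, a diagonal centered sum and a strictly off-diagonal bilinear sum, are orthogonal in $L^2$ since the off-diagonal part has vanishing conditional expectation given any single coordinate. The diagonal part has variance $\sum_i |B_{ii}|^2 \mathrm{Var}(|y_i|^2) \leq \kappa \Tr(BB^*)$. Expanding the $L^2$-norm of the off-diagonal part, only the pairings $(i,j) = (k,l)$ and $(i,j) = (l,k)$ survive by independence (any other choice leaves at least one index appearing alone, contributing a factor $\dE(y) = 0$). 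These yield the bounds $\sum_{i \neq j} |B_{ij}|^2 \dE|y_i|^2 \dE|y_j|^2$ and $\sum_{i \neq j} B_{ij} \bar B_{ji} |\dE(y^2)|^2$, both of which are at most $\kappa \Tr(BB^*)$ via Cauchy--Schwarz and $|\dE(y^2)| \leq \dE|y|^2 \leq \sqrt\kappa$.

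The first inequality is proved by exactly the same decomposition, with $\bar y_i$ replaced by $y_i$ and $\dE|y_i|^2$ replaced by $\dE(y_i^2)$ throughout; the combinatorial analysis of non-vanishing index pairings among the quartic moments is identical. No step is genuinely hard; the only care needed is in the enumeration of non-vanishing pairings among four-index moments, noting that any term $\dE(y^a \bar y^b)$ either factors across distinct coordinates or contributes a factor $\dE(y) = 0$ as soon as some index appears only once.
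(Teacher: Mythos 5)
Your proof is correct and follows essentially the same route as the paper: direct entrywise expansion, separating the centered diagonal part from the off-diagonal part, killing cross terms via independence and mean-zero entries, and bounding the surviving pairings by $\kappa$ and $\sum_{i,j}|B_{ij}|^2=\Tr(BB^*)$. The only cosmetic difference is that the paper splits the off-diagonal sum into its two triangular halves and uses $|a+b+c|^2\leq 3(|a|^2+|b|^2+|c|^2)$ (and cites Bai--Silverstein for the last two bounds), whereas you keep the full off-diagonal sum, invoke orthogonality with the diagonal part, and handle the extra $B_{ij}\bar B_{ji}$ pairing with Cauchy--Schwarz.
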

\begin{proof}
Let us start with the first statement. Note that $$Y^\top B Y - \dE (y_1^2) \Tr B = \sum_i \sum_{j<i} y_i y_j  B_{ij} + \sum_i \sum_{j<i} y_i y_j  B_{ji}   + \sum_i  ( y^2_i  - \dE (y ^2 _i) ) B_{ii}.$$
Now, $$ \mathbb{E}\left( \left| \sum_i \sum_{j<i} y_i y_j B_{ij} \right|^2\right) = (\mathbb{E}\vert y_1\vert^2)^2 \sum_i \sum_{j<i} \vert B_{ij}\vert^2$$
and 
$$ \mathbb{E}\left( \left| \sum_i ( y^2_i  - \dE (y ^2 _i) )  B_{ii} \right|^2\right) = \mathbb{E}( \vert  y^2_i  - \dE (y ^2 _i)  \vert^2)  \sum_i  \vert B_{ii}\vert^2$$
We conclude the proof of the first statement by using $|a + b + c|^2 \leq 3 (|a|^2 + |b|^2+ |c|^2)$ and $\sum_{i} \sum_{j} |B_{ij}|^2 = \Tr (B B ^*)$. The two last statements are proved similarly, (see e.g. \cite[Lemma B.26]{MR2567175}). \end{proof}

\begin{proof}[Proof of Proposition \ref{TCLFQ}]
For any vector $u= (u_1,\ldots , u_N)^\top$ in $\mathbb{C}^N$, we will denote by $\overline u$ the vector in $\mathbb{C}^N$ defined by  
 $\bar u = (\bar{u}_1,\ldots , \bar{u}_N)^\top$. 
For any $\alpha=\{\alpha_{pq} \in \mathbb{C}, (p,q)\in \{1,\ldots,r\}^2\}$, define
$$\xi_\alpha=\frac{1}{\sqrt{N}}  \sum_{p,q} \{ \alpha_{pq} { x^{(p)}}^\top B_N y^{(q)}  + \bar \alpha_{pq} {y^{(q)}}^* B_N^* \bar x^{(p)} \}.$$
The proof of the  proposition is based on the writing of $\xi_\alpha$
as a sum of martingale differences in order to apply Theorem \ref{Theo-Bil}. We define $L$  as the lower triangular part of $B_N$ (including the diagonal) and $U$
the strictly upper triangular part of $B_N$.  We write
\begin{eqnarray*}
{ x^{(p)}}^\top B_N y^{(q)}  =   { x^{(p)}}^\top L y^{(q)}  +  { x^{(p)}}^\top U y^{(q)}  =  { x^{(p)}}^\top L y^{(q)}  +    {y^{(q)}}^\top U^\top  { x^{(p)}}.
\end{eqnarray*}
Thus $$\xi_\alpha= \sum_{i=1}^N Z_i,$$
where \begin{eqnarray*}Z_i&=& \frac{1}{\sqrt{N}}  \sum_{p,q} \left(  \alpha_{pq} \left\{  x_i^{(p)} (L y^{(q)})_i +  y_i^{(q)} ( U^\top  { x^{(p)}})_i \right\} + \bar \alpha_{pq}  \left\{ \bar  x_i^{(p)} ( \bar L \bar y^{(q)} )_i  +  \bar y_i^{(q)} ( U^*  { \bar x^{(p)}})_i \right\} \right).
\end{eqnarray*}

Let ${\cal F}_i$ be the $\sigma$-field generated by $\{ x_j^{(p)},  \bar x_j^{(p)},  y_j^{(q)},  \bar y_j^{(q)}, 1\leq p,q \leq r, j \leq i \}$. Since $L$ and $U ^\top$ are lower triangular, $Z_i$ is measurable with respect to ${\cal F}_i$ and satisfies $\dE ( Z_i |   \mathcal F _{i-1})=0$.

We start by verifying the Lindeberg's condition (\ref{Condition2}). We will show that it is true for each element in the finite sum on $(p,q)$ defining $Z_i$ since  this property
is closed under addition as explained in (A4) of 
 the Appendix of \cite{MR2489158}. Setting $B_N=(b_{ij})_{1\leq i , j \leq N}$, we find
\begin{eqnarray*}
\dE \left(\bigm| (L y^{(q)})_i  \bigm|^4\right)  & = & \mathbb{E}\left(\bigm|\sum_{j\leq i}y_j^{(q)}b_{ij}\bigm|^4\right) \\
& = &
 \sum_{j_1\leq i,j_2\leq i, j_3\leq i,j_4\leq i} \mathbb{E}\left(y_{j_1}^{(q)}b_{ij_{1}}y_{j_2}^{(q)}b_{ij_2}  y_{j_3}^{(q)} b_{ij_3}  y_{j_4}^{(q)}  b_{ij_4}\right)\\&=&
 \mathbb{E}|y|^4 \sum_{j\leq i} | b_{ij}|^4 +
2 \sum_{*} 
\vert b_{ij_{1}} b_{ij_2}\vert^2 + \vert \mathbb{E}(y^2)\vert^2  \sum_{*} 
( b_{ij_{1}})^2 (b_{ij_2})^2,
  \end{eqnarray*}
where the sum over $*$ is over $\{ j_1 \leq  i , j_2 \leq  i , j_1 \ne j_2\}$. Note that $\sum_{j_1,j_2} \vert b_{ij_{1}} b_{ij_2}\vert^2= \left[ (B_NB_N^*)_{ii}\right]^2 \leq C^4$, by Assumption (i). It readily follows that  
$$\frac{1}{N^2} \dE \left(\bigm | x_i ^{(p)} (L y^{(q)})_i  \bigm|^4\right)  =o\left(\frac 1 N \right),$$ and therefore, from Markov inequality, for any $\epsilon>0$,   as $N\rightarrow\infty$
\begin{eqnarray*}
\sum_{i=1}^N\mathbb{E}\left(\frac{1}{N} |x_i^{(p)}  (L y^{(q)})_i  |^2 \, \1_{\left\{\frac{1}{\sqrt{N}} |x_i^{(p)}  (L y^{(q)})_i  |\geq\epsilon\right\}}\right)\leq(1/\epsilon^2)
\sum_{i=1}^N\frac{1}{N^2}\mathbb{E}|x_i^{(p)} (L y^{(q)})_i|^4  = o(1).
\end{eqnarray*}

Similarly, $$ \sum_{i=1}^N\mathbb{E}\left(\frac{1}{N} |y_i^{(q)} ( U^\top  { x^{(p)}})_i |^2 \, \1_{\left\{\frac{1}{\sqrt{N}} |y_i^{(q)} ( U^\top  { x^{(p)}})_i|\geq\epsilon\right\}}\right) = o(1).$$
Thus $\{Z_i\}$ satisfies  (\ref{Condition2}).

It remains to check that $\{Z_i\}$ satisfies  (\ref{Condition1}) on conditional variances. We further decompose $L = K + D$ where $K$ is strictly lower triangular and $D$ is the diagonal part of $B_N$. The explicit development of $Z_i ^2$ gives
\begin{align}
&  \sum_{i=1}^N \mathbb{E} \left( Z_i^2|   \mathcal F_{i-1}\right)  \, =  \,  2 \Re \left\{ \mathbb{E} \left( x^2 \right) \mathbb{E} \left( y^2 \right) \sum_{p,q} \alpha_{pq}^2
\left( \frac{1}{N} \Tr D^2  \right) \right\}  + 2 \sum_{p,q}| \alpha_{pq}|^2
\left( \frac{1}{N} \Tr D D^* \right) \nonumber \\
& \quad + 2 \Re \left\{  \mathbb{E} \left( x^2 \right) \sum_{p,q_1,q_2}\alpha_{pq_1} \alpha_{pq_2} \frac{1}{N} \sum_{i=1}^N (K y^{(q_1)})_i (K y^{(q_2)})_i \right\}  \nonumber
\\
&\quad + 2 \Re \left\{  \mathbb{E} \left( y^2 \right) \sum_{p_1,p_2,q}\alpha_{p_1q} \alpha_{p_2 q} \frac{1}{N}  \sum_{i=1}^N (U^\top x^{(p_1)})_i (U^\top  x^{(p_2)})_i\right\} \nonumber\\
&\quad +2 \sum_{p,q_1,q_2}\alpha_{pq_1} \bar  \alpha_{p q_2} \frac{1}{N}(K y^{(q_1)})_i (\bar K \bar y^{(q_2)})_i  +2 \sum_{p_1,p_2,q}\alpha_{p_1q} \bar \alpha_{p_2 q}  \frac{1}{N}  \sum_{i=1}^N (U^\top x^{(p_1)})_i (\bar U^\top  \bar x^{(p_2)})_i. \label{suite2}
\end{align}
We note that 
\begin{equation}\label{mat1}\sum_{i=1}^N (K y^{(q_1)})_i (K y^{(q_2)})_i  = {y^{(q_1)}}^\top K^{\top} K y^{(q_2)},\end{equation}
\begin{equation}\label{mat3}  \sum_{i=1}^N (K y^{(q_1)})_i (\bar K \bar y^{(q_2)})_i = {y^{(q_2)}}^* K^*  K  y^{(q_1)},\end{equation}
and similarly for the expressions with $x^{(p)}$ and $U$.  Also, according to Mathias \cite{MR1238930}, we have the following inequalities, where $\gamma_N =O( \log  N)$,
\begin{equation}\label{mathias}\Vert K \Vert \leq \gamma_N \Vert B_N\Vert ,~\hbox{ and }~\Vert U \Vert \leq \gamma_N \Vert B_N\Vert.\end{equation}
Therefore Assumption (i), Lemma \ref{BaiSilver98} and identities (\ref{mat1}) readily imply that for any $p,q$,
\begin{eqnarray*} \sum_{p,q_1,q_2}\alpha_{pq_1} \alpha_{pq_2} \frac{1}{N} \sum_{i=1}^N (K y^{(q_1)})_i (K y^{(q_2)})_i&=&  \mathbb{E}(y^2) \left(  \sum_{q} \alpha^2 _{pq} \right)   \frac 1 N \Tr K K^\top
 +o_{\mathbb{P}}(1),\\
 \sum_{p_1,p_2,q}\alpha_{p_1q} \alpha_{p_2 q} \sum_{i=1}^N (U^\top x^{(p_1)})_i (U^\top  x^{(p_2)})_i &=&  \mathbb{E}(x^2)\left(   \sum_{p} \alpha^2 _{pq} \right) \frac{1}{N}   \Tr U U^\top +o_{\mathbb{P}}(1).
\end{eqnarray*}

Similarly, for the two terms in (\ref{suite2}), we use  Lemma \ref{BaiSilver98}, (\ref{mat3}) and (\ref{mathias}). We find that for any $p,q$
\begin{eqnarray*}
 \sum_{p,q_1,q_2}\alpha_{pq_1} \bar  \alpha_{p q_2} \frac{1}{N}(K y^{(q_1)})_i (\bar K \bar y^{(q_2)})_i  &=&   \left( \sum_q  | \alpha_{pq} |^2 \right) \frac{1}{N}    \Tr K K^*+o_{\mathbb{P}}(1)\\
 \sum_{p_1,p_2,q}\alpha_{p_1q} \bar \alpha_{p_2 q}  \frac{1}{N}  \sum_{i=1}^N (U^\top x^{(p_1)})_i (\bar U^\top  \bar x^{(p_2)})_i &= & \left( \sum_p  | \alpha_{pq} |^2 \right)\frac{1}{N}     \Tr U U^* +     o_{\mathbb{P}}(1).
\end{eqnarray*}

We also observe that 
$$
\Tr B_N B_N ^\top = \Tr D^2 + \Tr K K^\top + \Tr UU^\top.   
$$
and similarly for $\Tr B_N B_N ^* $. 
We thus have proved that 
\begin{eqnarray*}
\sum_{i=1}^N \mathbb{E} \left( Z_i^2|   \mathcal F_{i-1}\right)  =  2 \Re \left\{\mathbb{E} \left( x^2 \right) \mathbb{E} \left( y^2 \right)\left(  \sum_{p,q} \alpha_{pq}^2\right) \frac{1}{N}\Tr B_N B_N^\top \right\}  + 2 \left( \sum_{p,q} \vert \alpha_{pq} \vert^2 \right) \frac{1}{N}\Tr B_N B_N^*   +o_{\mathbb{P}}(1).
\end{eqnarray*}
Using Assumption (ii)-(iii), we may thus apply Theorem \ref{Theo-Bil}, we get that for any $\alpha=\{\alpha_{pq} \in \mathbb{C}, (p,q)\in \{1,\ldots,r\}^2\}$, 
$$\frac{1}{\sqrt{N}}  \sum_{p,q} \{ \alpha_{pq} { x^{(p)}}^\top B_N y^{(q)}  + \bar \alpha_{pq} {y^{(q)}}^* B_N^* \bar x^{(p)} \}$$
weakly converges towards a centered gaussian variable with variance $ 2\Re \left\{\mathbb{E} \left( x^2 \right) \mathbb{E} \left( y^2 \right) \left(\sum_{p,q} \alpha_{pq}^2 \right)\zeta\right\}  + 2 \left( \sum_{p,q} \vert \alpha_{pq} \vert^2 \right) \tau.$ It concludes the proof of Proposition \ref{TCLFQ}. \end{proof}

\bibliographystyle{abbrv}

\bibliography{mat}

\bigskip
\noindent
 Charles Bordenave \\
 Institut de Math\'ematiques de Toulouse. CNRS and University of Toulouse III. \\
118 route de Narbonne. 31062 Toulouse cedex 09.  France. \\
\noindent
{E-mail:} {\tt bordenave@math.univ-toulouse.fr} \\
\noindent
\url{http://www.math.univ-toulouse.fr/~bordenave}

\bigskip
\noindent
Mireille Capitaine \\
 Institut de Math\'ematiques de Toulouse. CNRS and University of Toulouse III. \\
118 route de Narbonne. 31062 Toulouse cedex 09.  France. \\
\noindent
{E-mail:} {\tt mireille.capitaine@math.univ-toulouse.fr} \\
\noindent
\url{http://www.math.univ-toulouse.fr/~capitain}

%\author{Charles Bordenave\thanks{CNRS, Institut de Math\'ematiques de Toulouse,  F-31062 Toulouse Cedex 09.  E-mail: charles.bordenave@math.univ-toulouse.fr } \and Mireille Capitaine\thanks{CNRS, Institut de Math\'ematiques de Toulouse,  F-31062 Toulouse Cedex 09.  E-mail: mireille.capitaine@math.univ-toulouse.fr }}

\end{document}